\newcommand{\R}{{\mathbb R}}
\newcommand{\NN}{{\mathbb N}}
\newcommand{\RR}{{\mathbb R}}
\newcommand{\EX  }{{\mathbb E}}
\newcommand{\EE}{{\mathbb E}}
\newcommand{\sech}{\operatorname{sech}}
\newcommand{\defeq}{\stackrel{\rm{def}}{=}}
\numberwithin{equation}{section}
\newtheorem{theorem}{Theorem}[section]
\newtheorem{lemma}[theorem]{Lemma}
\newtheorem{remark}[theorem]{Remark}
\newtheorem{Th}[theorem]{Theorem}
\newtheorem{Prop}[theorem]{Proposition}
\title[Stochastic focusing NLS]{Well-posedness of the focusing stochastic nonlinear\\ Schr\"odinger equation: $L^2$-critical and supercritical cases}
\author[A. Millet]{Annie Millet}
\address{SAMM, EA 4543,
Universit\'e Paris 1 Panth\'eon Sorbonne, 90 Rue de
Tolbiac, 75634 Paris Cedex France {\it and} LPSM, UMR 8001 
  Universit\'e de Paris et Sorbonne Universit\'e, France}
\email{annie.millet@univ-paris1.fr} 
\author[S. Roudenko]{Svetlana Roudenko}
\address{Department of Mathematics \& Statistics\\Florida International University,  Miami, FL, USA}
\curraddr{}
\email{sroudenko@fiu.edu}
\subjclass[2020]{60H15, 35R60, 35Q55} 
\keywords{stochastic NLS equation, additive noise, multiplicative noise, Stratonovich integral, well-posedness, blow-up} 
\begin{document}
\begin{abstract}
We study the focusing $L^2$-critical and supercritical stochastic nonlinear Schr\"odinger equation subject to additive or multiplicative noise. We investigate global or long time behavior of solutions 
in $H^1$, which would correspond to global well-posedness in the deterministic case, with either deterministic or random initial data, and establish quantitative information about the well-posedness time, its probability and bounds on the solution in both cases.  We then give criteria for finite time blow-up with positive probability for an $H^1$-valued initial data with positive energy in both cases. 
\end{abstract}

\maketitle

\tableofcontents

\section{Introduction}\label{s1}
We study the focusing stochastic nonlinear Schr\"odinger (SNLS) equation  
subject to random perturbations of additive or multiplicative noise
\begin{align}\label{E:NLS}
\begin{cases} 
iu_t -\big(  \Delta u + |u|^{2\sigma} u \big) = \epsilon \, f(u), 
\quad (t,x)\in  [0,\infty) \times {\mathbb R}^n \, , \\
u(0,x)=u_0,
\end{cases}
\end{align}
where the initial data $u_0$ can be either deterministic or stochastic of $H^1(\mathbb R)$-type and the term $f(u)$ stands for a stochastic perturbation driven 
by the noise $W(dt,dx)$ white in time with some spatial regularity.

The deterministic setting ($\epsilon = 0$) has been extensively investigated in the last several decades. The existence and uniqueness of 
solutions goes back to Ginibre-Velo \cite{GV1979}, see also \cite{GV1985}, \cite{K1987}, \cite{T1987}, \cite{CW1988}, \cite{CW1990}, 
and the books \cite{Caz-book}, \cite{LinPon2015} for further references.

During their lifespans, solutions to the deterministic equation conserve the mass $M(u)$ and the energy (or Hamiltonian)  defined as
\begin{equation}\label{M-H}
 M(u)=\|u\|_{L^2(\RR^n)}^2 \quad \mbox {\rm and } \quad 
H(u)=\frac{1}{2} \|\nabla u\|_{L^2(\RR^n)}^2 - \frac{1}{2\sigma +2} \|u\|_{L^{2\sigma+2}(\RR^n)}^{2\sigma +2}.
\end{equation}
(The momentum is also conserved, but we omit it for this paper.)

The deterministic equation is invariant under the scaling: if $u(t,x)$ is a solution to \eqref{E:NLS} with $\epsilon=0$, 
then so is $u_\lambda(t,x) = \lambda^{1/\sigma} u(\lambda^2 t, \lambda x)$. Under this scaling the Sobolev $\dot{H}^s$ norm of 
solutions is invariant if $s = s_c$, the scaling index, defined as 
\begin{equation}\label{E:sc}
s_c = \frac{n}2 - \frac1{\sigma}.
\end{equation}
According to the sign and value of $s_c$, the NLS equation is classified as the $L^2$ (or mass)-subcritical equation if $s_c<0$ (or $\sigma < \frac{2}{n}$); 
the 
$L^2$-critical provided $s_c=0$ (or $\sigma=\frac{2}{n}$); the $\dot{H}^1$ (or energy)-critical if $s_c=1$ (or $\sigma = \frac{2}{n-2}$ for $n\geq 3$);  an intercritical (or mass-supercritical and energy-subcritical) if $s_c\in (0,1)$; and energy-supercritical if $s_c>1$. 

In the random setting ($\epsilon \neq 0$), well-posedness of solutions 
in both multiplicative (Stratonovich) and additive noise cases of the SNLS equation \eqref{E:NLS} was studied by de Bouard \& Debussche in a series of papers, \cite{deB_Deb_CMP}--\cite{deB_Deb_AnnProb}  
with further analysis and numerics in \cite{deB_Deb_DiM2001}, \cite{Deb_DiM2002}.
In these references,  the existence and uniqueness of local solutions 
in $H^1(\mathbb R^n)$ was obtained;  see Theorem \ref{local_wp} for a multiplicative stochastic  perturbation and Theorem \ref{lwp_add} for an additive one. 
Furthermore, they were able to extend local solutions to global in the $L^2$-subcritical case \cite{deB_Deb_CMP}, \cite{deB_Deb_H1}, as well as to prove 
(and confirm numerically) existence of blow-up solutions in the $L^2$-critical and intercritical cases for negative energy initial data \cite{deB_Deb_DiM2001}, \cite{deB_Deb_PTRF}, 
\cite{deB_Deb_AnnProb}, \cite{Deb_DiM2002}. 
Moreover, their striking result in \cite{deB_Deb_AnnProb} was that in the intercritical setting for the SNLS equation subject to a multiplicative perturbation driven by a quite regular noise, 
blow up occurs instantaneously with positive probability 
 for any data  in a subset of $H^2 $ (regardless of the energy sign); see a more detailed discussion about the blow-up in the beginning of Section \ref{S:B}. 
\smallskip

In this paper we study global behavior of $H^1$ solutions in the {\it $L^2$-critical} and {\it intercritical} stochastic NLS equation ($0 \leq s_c <1)$ in both multiplicative and additive noise cases
 (for precise definitions, see \S \ref{S:noise-multi} for multiplicative noise and \S \ref{S:noise-add} for additive noise). The $\dot{H}^1$-critical case ($s_c=1$) is studied in the sequel \cite{MR2025b}.  
\smallskip

In the deterministic setting the extension from local well-posedness to global in the $L^2$-critical and supercritical cases of the dispersive equations has 
been a subject of intense investigation in the last thirty years, in particular, identifying various thresholds for the global in time solutions vs.
finite time blow-up solutions. It has been known since 70s that in the $L^2$-critical and supercritical cases the $H^1$ solutions of the NLS equation 
may not exist globally for all time, since a blow-up in finite time may occur; the existence of such solutions is shown by a convexity argument on the 
variance quantity \eqref{def_V} \cite{VPT, Zakharov, Glassey}, see also books \cite{SS1999}, \cite{Fibich2015}.
An important ingredient in identifying the existence of blow-up solutions 
and in obtaining thresholds for the global versus local in time existence, is the so-called {\it ground state} 
solution of the form $u(t,x) = e^{it}Q(x)$, where 
$Q$ is an $H^1(\mathbb R^n)$ positive 
solution to 
\begin{equation}\label{E:Q}
-Q+\Delta Q +Q^{2\sigma +1} = 0.
\end{equation}
This solution is unique 
(e.g., see \cite{Str77}, \cite{BL}, \cite{W83}, \cite{Kwong}); when $s_c <1$ (in the energy-subcritical case) the ground state has an exponential decay $\sim e^{-|x|}$ at infinity; in one dimension it is explicit $Q(x) = (1+\sigma)^\frac1{2\sigma} \sech^{\frac1{\sigma}}(\sigma \, x)$, $\sigma>0$.

In the $L^2$-critical case (deterministic NLS) solutions with the mass below that of the ground state exist globally. 
This relies on the fact that the ground state is the minimizer of the corresponding Gagliardo-Nirenberg inequality and identifies its sharp constant;  see \eqref{GN} and \eqref{constant_GN}.    
This was shown by Weinstein in \cite{W83} and has been used extensively since then. 
We are able to obtain the same (a.s.) global existence in $H^1$ result in the multiplicative Stratonovich noise case (which conserves mass a.s. with deterministic initial data 
(Theorem \ref{gwp_critical}) and with random initial data whose mass is a.s. less than the mass of the ground state 
(for a precise statement, see Theorem \ref{gwp_critical_randomIC}).
In the case of an additive noise we deduce a certain control on the time existence of solutions 
so that on this time interval solutions remain controlled by the ground state mass 
(since the mass is not conserved in the additive setting);  
see Theorem \ref{Th_mass_crit_add}. Results for the global solutions in the $L^2$-critical case have been obtained in other works for the multiplicative (Stratonovich) noise (thus, preserving mass), however, with either stronger regularity assumptions on noise (e.g., in \cite[Remark 1.7]{FX}, 
for a sufficiently fast decaying covariance operator $\phi$ the authors show that in 1D SNLS the initial data with the mass below that of the $Q$ produces an $L^2$ global solution) or for the finite-dimensional noise  (e.g., see \cite{BRZ2014}, \cite{BRZ2016}), 
where a rescaling transformation of a Doss–Sussman type $y(t) = e^{-W}u(t)$ is applied to convert SNLS to a random NLS.

In this paper it suffices to consider the multiplicative noise \eqref{R-noise} with the minimal regularity condition (H1) and boundedness (H2), see \S \ref{S:noise-multi}.  
For the additive setting we consider a complex-valued noise with the regularity assumption of a Hilbert-Schmidt operator from $L^2$ to $H^1$ 
(to enable the use of energy estimates). We mention that local well-posedness for lower regularity $H^s$, $s<1$, 
and rougher noise have been considered in \cite{OPW}. For this paper we are interested in finite energy solutions, 
hence, the $H^1$ local well-posedness proved by de Bouard-Debussche \cite{deB_Deb_H1} suffices.

More importantly, we address the {\it intercritical} case, where we show a dichotomy similar to the deterministic case. 
The dichotomy in the nonlinear dispersive equations was originally introduced in the energy-critical NLS by Kenig \& Merle \cite{KenMer},
where the authors considered solutions below the energy of the ground state and characterized two possible solution behaviors: 
global existence (and thus scattering) or finite time blow-up. In \cite{HR2007}, \cite{HR2008}, \cite{DHR} the second author with her 
collaborators showed a dichotomy of solutions in the intercritical case of the NLS equation ($0<s_c<1$) using the scaling-invariant quantities: 
\begin{equation*}
H(u) M({u})^{\alpha} \quad \mbox{and} \quad  
\|{\nabla u}\|_{L^2(\mathbb{R}^n)} \|{u}\|_{L^2(\mathbb{R}^n)}^{\alpha},
\end{equation*} 
where
\begin{equation}\label{E:alpha}
\alpha = \frac{1-s_c}{s_c} \equiv \frac{2-(n-2)\sigma}{n\sigma -2}.
\end{equation}
(They also proved the $H^1$ scattering in 3D cubic NLS in both radial \cite{HR2008} and non-radial \cite{DHR} cases, which was later extended 
to all intercritical cases in \cite{Guevara}, \cite{FX-Caz},\cite{AN2013}. However, scattering is not the subject of this paper.)   

\begin{theorem}[\cite{HR2007, HR2008, DHR}]\label{T:main-deter}
Let $u_0\in H^1(\R^n)$ and $0<s_c<1$. Suppose 
\begin{equation}\label{E:ME}
H(u_0) M(u_0)^\alpha < H(Q) M(Q)^\alpha. 
\end{equation}  
\begin{itemize}
\item {\underline{\rm Part 1.}} 
If 
\begin{equation}\label{E:gm1}
\|{\nabla u_0}\|_{L^2(\R^n)} \|u_{0}\|^{\alpha}_{L^2(\R^n)}<\|{\nabla Q}\|_{L^2(\R^n)}\|{Q}\|^{\alpha}_{L^2(\R^n)},
\end{equation}
then the solution $u(t)$ is global in both time directions and
\begin{equation}\label{E:gm1-t}
\|{\nabla u(t)}\|_{L^2(\R^n)} \|u_0\|^{\alpha}_{L^2(\R^n)} < \|{\nabla Q}\|_{L^2(\R^n)}\|{Q}\|^{\alpha}_{L^2(\R^n)},
\end{equation}
for all time $t \in \mathbb R$. Moreover, the solution scatters in $H^1(\mathbb R^n)$.

\item {\underline{\rm Part 2.}} 
If 
\begin{equation}\label{E:gm2}
\|{\nabla u_0}\|_{L^2(\R^n)} \|u_{0}\|^{\alpha}_{L^2(\R^n)} > \|{\nabla Q}\|_{L^2(\R^n)}\|{Q}\|^{\alpha}_{L^2(\R^n)},
\end{equation}
then 
\begin{equation}\label{E:gm2-t}
\|{\nabla u(t)}\|_{L^2(\R^n)} \|u_{0}\|^{\alpha}_{L^2(\R^n)} > \|{\nabla Q}\|_{L^2(\R^n)}\|{Q}\|^{\alpha}_{L^2(\R^n)},
\end{equation}
for all $t$ in the maximal time interval of existence and if $|{x}| u_0\in L^2(\mathbb R^n)$ or $u$ is radial, then the solution blows up in finite time in both time directions.
\end{itemize}
\end{theorem}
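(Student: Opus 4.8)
The plan is to reduce everything to the variational characterization of $Q$. First I would record the two facts attached to \eqref{E:Q}: the sharp Gagliardo--Nirenberg inequality $\|v\|_{L^{2\sigma+2}}^{2\sigma+2}\le C_{GN}\|\nabla v\|_{L^2}^{n\sigma}\|v\|_{L^2}^{2\sigma+2-n\sigma}$, whose optimizer is $Q$, together with the Pohozaev identities obtained by testing \eqref{E:Q} against $Q$ and against $x\cdot\nabla Q$. These give $\|\nabla Q\|_{L^2}^2=\frac{n\sigma}{2(\sigma+1)}\|Q\|_{L^{2\sigma+2}}^{2\sigma+2}$, an explicit expression for $C_{GN}$ in terms of $\|\nabla Q\|_{L^2}$ and $\|Q\|_{L^2}$, and $H(Q)=\frac{s_c}{n}\|\nabla Q\|_{L^2}^2$, with $H$ as in \eqref{M-H}. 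Set $y(t):=\|\nabla u(t)\|_{L^2}\,\|u_0\|_{L^2}^{\alpha}$; since mass is conserved, $\|u(t)\|_{L^2}=\|u_0\|_{L^2}$, so this is the quantity appearing in \eqref{E:gm1-t}--\eqref{E:gm2-t}. Inserting the sharp Gagliardo--Nirenberg bound into the conserved energy and multiplying by $M(u_0)^{\alpha}$, the specific value of $\alpha$ in \eqref{E:alpha} is exactly what makes the right-hand side homogeneous in $y(t)$, and one obtains, for all $t$ in the maximal interval of existence,
\[
H(u_0)\,M(u_0)^{\alpha}\ \ge\ g\big(y(t)\big),\qquad g(y):=\tfrac12 y^2-\tfrac{C_{GN}}{2\sigma+2}\,y^{\,n\sigma}.
\]
A direct computation with the above constants shows that $g$ increases on $(0,y^\ast)$ and strictly decreases on $(y^\ast,\infty)$ down to $-\infty$ (here $n\sigma>2$ since $s_c>0$), where $y^\ast=\|\nabla Q\|_{L^2}\|Q\|_{L^2}^{\alpha}$ and, crucially, $g(y^\ast)=\max_{y>0}g(y)=H(Q)M(Q)^{\alpha}$.

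The dichotomy is then a trapping argument. Hypothesis \eqref{E:ME} reads $H(u_0)M(u_0)^{\alpha}<g(y^\ast)$, hence $g(y(t))<g(y^\ast)$, so $y(t)\ne y^\ast$ throughout the maximal interval; since $t\mapsto y(t)$ is continuous ($u$ is an $H^1$-valued continuous solution), it remains in the connected component of $\{y>0:g(y)<g(y^\ast)\}$ picked out by $y(0)$. Under \eqref{E:gm1} we have $y(0)<y^\ast$, hence $y(t)<y^\ast$ for all $t$, which is \eqref{E:gm1-t}; this is a uniform-in-time bound on $\|\nabla u(t)\|_{L^2}$, hence on $\|u(t)\|_{H^1}$, so the blow-up alternative of the local well-posedness theory upgrades $u$ to a global solution, in both time directions by time reversibility. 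Under \eqref{E:gm2} we have $y(0)>y^\ast$, hence $y(t)>y^\ast$ on the maximal interval, which is \eqref{E:gm2-t}; moreover, since $g$ strictly decreases past $y^\ast$, there is a unique $y^{\ast\ast}>y^\ast$ with $g(y^{\ast\ast})=H(u_0)M(u_0)^{\alpha}$, and $g(y(t))\le g(y^{\ast\ast})$ then forces $y(t)\ge y^{\ast\ast}$, i.e.\ a uniform lower bound $\|\nabla u(t)\|_{L^2}^2\ge (y^{\ast\ast})^2\|u_0\|_{L^2}^{-2\alpha}>(y^\ast)^2\|u_0\|_{L^2}^{-2\alpha}$.

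For the blow-up claim in Part 2 I would use the variance functional. If $|x|u_0\in L^2$, then $V(t):=\int_{\R^n}|x|^2|u(t,x)|^2\,dx$ is finite and $C^2$ on the maximal interval, and, after invoking the classical virial identity $V''(t)=8\|\nabla u(t)\|_{L^2}^2-\frac{4n\sigma}{\sigma+1}\|u(t)\|_{L^{2\sigma+2}}^{2\sigma+2}$ and substituting the conserved energy for the nonlinear term,
\[
V''(t)=8n\sigma\,H(u_0)-4(n\sigma-2)\,\|\nabla u(t)\|_{L^2}^2 .
\]
The identity $4(n\sigma-2)(y^\ast)^2=8n\sigma\,H(Q)M(Q)^{\alpha}$ (a consequence of $\sigma s_c=\tfrac{n\sigma-2}{2}$ and $H(Q)=\tfrac{s_c}{n}\|\nabla Q\|_{L^2}^2$), combined with the uniform lower bound on $\|\nabla u(t)\|_{L^2}^2$ from the previous step and the strict gap $y^{\ast\ast}>y^\ast$ (which is exactly \eqref{E:ME} again), yields $V''(t)\le-\delta<0$ for a fixed $\delta>0$; then $V(t)\le V(0)+V'(0)\,t-\tfrac{\delta}{2}t^2$ must become negative in finite positive and finite negative time, contradicting $V\ge0$, so the maximal existence time is finite in both directions. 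For radial $u_0$ without the weight hypothesis, I would replace $V$ by a truncated variance $\int \chi_R(|x|)\,|u|^2$ with $\chi_R(r)\sim r^2$ near the origin, bounded second derivatives, and support in $|x|\le 2R$; the corresponding virial identity carries error terms supported in $|x|\ge R$, which I would control by the radial Gagliardo--Nirenberg (Strauss) inequality together with the already-established uniform bound on $\|\nabla u(t)\|_{L^2}$ and conservation of mass, absorbing them by taking $R$ large. This localization, which I expect to be the most technical point of the blow-up part, is the only place where the radial hypothesis enters. Finally, the scattering assertion in Part 1 lies outside this elementary circle of ideas: it is obtained via the concentration--compactness/rigidity method and is quoted from \cite{HR2007,HR2008,DHR} and its extensions.
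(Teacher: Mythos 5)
The paper does not actually prove Theorem \ref{T:main-deter}: it is quoted from \cite{HR2007,HR2008,DHR}, and only its ingredients --- the sharp constant \eqref{constant_GN}, the Pokhozhaev identities \eqref{E:normQ1}--\eqref{E:normQ2}, the function $f(x)=\tfrac12(x^2-Bx^{n\sigma})$ and the identification \eqref{identification} --- are assembled in Section \ref{S:Q} and then adapted to the stochastic setting. Your sketch reconstructs exactly this standard argument: conservation of mass and energy plus sharp Gagliardo--Nirenberg give $H(u_0)M(u_0)^\alpha\ge f\big(y(t)\big)$ with $y(t)=\|\nabla u(t)\|_{L^2}\|u_0\|_{L^2}^\alpha$, the continuity/trapping dichotomy at $x^*=\|\nabla Q\|_{L^2}\|Q\|_{L^2}^\alpha$ yields \eqref{E:gm1-t} (hence global existence via the blow-up alternative) or \eqref{E:gm2-t} with the strict gap $y(t)\ge y^{**}>x^*$, and the virial identity $V''=8n\sigma H(u_0)-4(n\sigma-2)\|\nabla u\|_{L^2}^2$ together with $4(n\sigma-2)(x^*)^2=8n\sigma H(Q)M(Q)^\alpha$ gives $V''\le -\delta<0$ and finite-time blow-up; all the identities you invoke check out, and scattering is correctly deferred to the cited works since it requires concentration--compactness, not these elementary tools. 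One slip in the radial part of Part 2: you propose to control the exterior error terms of the truncated virial by ``the already-established uniform bound on $\|\nabla u(t)\|_{L^2}$'', but in the blow-up regime the only established bound on the gradient is the \emph{lower} bound $y(t)\ge y^{**}$, not an upper bound. The correct mechanism (as in the cited references) is to estimate the exterior nonlinear term by the Strauss radial inequality and mass conservation, obtaining a contribution of order $R^{-\sigma(n-1)}\|\nabla u\|_{L^2}^{\sigma}$, and to absorb it for $R$ large into the coercive negative multiple of $\|\nabla u\|_{L^2}^2$ furnished by the strict gap (this uses $\sigma<2$, which holds in the intercritical range for $n\ge 3$); with that correction the blow-up sketch matches the known proof.
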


In this paper we obtain some analog of Theorem \ref{T:main-deter} for the stochastic NLS for both multiplicative and additive noise cases. In particular, under the initial mass-energy assumption \eqref{E:ME}, we show that a similar global behavior as in Part 1 holds on a positive time interval with a positive probability, see Theorems \ref{th-wp-inter-multi}, \ref{th-wp-inter-multi-randomIC} in the multiplicative noise case and Theorem \ref{th-wp-inter-add} in the additive noise setting. Note that one of the main difficulties that we encounter is that the energy is not conserved in either cases of noise, and the mass is not conserved in the additive case. Thus, we are not able to obtain a completely global existence result as in the deterministic case  \eqref{E:gm1}-\eqref{E:gm1-t}, but instead we provide quantitative estimates on the maximal time $T$ of solution existence, the positive probability with which it occurs, and the time bounds on the $H^1$ norm of the solution.   
The maximal time $T$ inversely depends on the size of the noise, and thus, as it goes to zero, the time $T$ approaches infinity, consistent with the deterministic case.

We also obtain an analog of \eqref{E:gm2}-\eqref{E:gm2-t} in Part 2 (blow-up) for deterministic and random initial $H^1$ data with positive energy that blows up in finite time with positive probability, see Theorems \ref{th_blowup1}, \ref{th_blowup2} for the multiplicative case and Theorem \ref{th_blowup1-add} in the additive noise setting. Such blow-up is possible if the size of the noise is small.  
We remark that our blow-up results extend the ones 
from  \cite{deB_Deb_PTRF} and \cite{deB_Deb_AnnProb}. For a {\it multiplicative} noise, in \cite[Remark~4.2]{deB_Deb_AnnProb}
the authors proved blow-up with positive probability for the initial conditions with negative energy, under 
regularity conditions on the noise same as ours and finite variance initial data (i.e., $u_0 \in \Sigma 
: = H^1(\R^n) \cup \{|x|u \in L^2(\R^n)\}$).  
In \cite{deB_Deb_AnnProb} the authors showed that in the intercritical case in dimensions 2 and 3 (i.e., when $s_c>0$ or $\sigma>1$ in dimension 2 and $\sigma \in (\frac{2}{3},2)$ in dimension 3),
the blow-up occurs instantaneously 
(that is, before any fixed positive time $T$) with positive probability for {\it any} initial data on a set $\Sigma^2:=H^2(\R^n) \cap \{|x|^2 u \in L^2(\R^n)\}$, a {subset} of $\Sigma$. This is a stronger result than the one we obtain in  Theorems \ref{th_blowup1}, \ref{th_blowup2}, 
but it holds on a smaller class of initial data $\Sigma^2$ (and thus, for more regular solutions than the finite energy ones) and more regular noise (Hilbert-Schmidt from $L^2$ to $H^2$ and non-degenerate), compared to our results. 
In the case of an {\it additive} noise, \cite[Proposition~3.2]{deB_Deb_PTRF} proved that when the covariance operator $\phi$ is Hilbert-Schmidt from $L^2$ to $\Sigma$, $\gamma$-radonifying from
$L^2$ to $L^{4\sigma +2}$ and bounded from $L^2$ into $H^2\cap L^\infty$, the blow-up occurs again instantaneously 
provided that the energy of the initial condition is ``very negative". Once more our blow-up is not instantaneous, but our noise is less regular and the blow-up can occur for positive energy initial data.  

We also mention that dynamics of global behavior of solutions in SNLS with various approximations of white noise, including blow-up solutions, profiles, rates, location, 
we have investigated numerically in \cite{MRY} and with several types of colored in space noise in \cite{MRRY}. These works extended the initial numerical investigations for SNLS in 
\cite{deB_Deb_DiM2001} and \cite{Deb_DiM2002}. 
\smallskip

To simplify notation, we often omit the space $\mathbb R^n$ from the Sobolev or Lebesgue spaces notation such as $H^1(\mathbb R^n)$ and just write  $H^1$, etc. 
\smallskip

The paper is organized as follows: in Section \ref{S:Q} we review the important properties of the ground state $Q$ and some interpolation inequalities, 
connecting with the $H^1$ global existence (deterministic) results in the $L^2$-critical and intercritical cases. In Section \ref{S:M} we review the 
multiplicative (Stratonovich) noise and obtain an analog of global well-posedness in that case for both $L^2$-critical and intercritical cases. 
In Section \ref{S:A} we consider the additive noise and obtain an analog of global well-posedness in that case. 
Note that since in that case neither mass nor energy are preserved, the statement 
of the result in the additive noise case is 
more involved than in the case of multiplicative noise.
Finally, in Section \ref{S:B} we study the blow-up for $H^1$ data for both cases of noise.

\subsection*{Acknowledgments} 
This work started when the first author visited Florida International University in 2019-20.  
She would like to thank FIU for the support and excellent working conditions. 
A.M.'s research has been conducted within the FP2M federation (CNRS FR 2036). S.R. was partially supported by the NSF grants DMS-2055130 and DMS-2452782.

\section{Preliminaries on the ground state}\label{S:Q}

We first recall the Gagliardo-Nirenberg interpolation inequality,  $\sigma > 0$ (and $\sigma < \frac{2}{n-2}$, if $n \geq 3$)  (e.g., \cite[Thm 4.1, Rem 4.2]{Fa_Li_Po}) 
\begin{equation}\label{GN}
\|u\|_{L^{2\sigma +2}(\RR^n)}^{2\sigma +2} \leq C_{GN} \, \|\nabla u\|_{L^2(\RR^n)}^{n \sigma} \; \|u\|_{L^2(\RR^n)}^{2 - (n-2)\sigma}.
\end{equation}
When $\sigma = \frac{2}{n-2}$ ($n \geq 3$), this inequality becomes the famous Sobolev embedding of 
$\dot{H}^1(\RR^n)$ into 
$ L^{\frac{2n}{n-2}}(\RR^n)$, 
\begin{equation}\label{E:Sobolev}
\|u\|_{L^{\frac{2n}{n-2}}(\RR^n)} \leq C_n \| \nabla u\|_{L^2(\RR^n)}.
\end{equation}
In order to record the sharp constant $C_{GN}$ in \eqref{GN}, we recall the Pokhozhaev identities
(which are obtained by multiplying \eqref{E:Q} either by $Q$ or $x\!\cdot\!\nabla Q$ and integrating by parts)  
\begin{align*}
\|Q\|_{L^{2\sigma +2}(\RR^n)}^{2\sigma +2} &= \|Q\|_{L^2(\RR^n)}^2 + \| \nabla Q\|_{L^2(\RR^n)}^2,  \\
\frac{n}{2\sigma +2} \|Q\|_{L^{2\sigma +2}(\RR^n)}^{2\sigma +2} & = \frac{n}{2} \|Q\|_{L^2(\RR^n)}^2  + \frac{n-2}{2} \| \nabla Q\|_{L^2(\RR^n)}^2.
\end{align*}
Solving for the $L^2(\RR^n)$ norm of the gradient of $Q$ and the $L^{2\sigma +2}(\RR^n)$ norm of $Q$ 
in terms of its mass, we get 
\begin{align}
\|  \nabla Q\|_{L^2(\RR^n)}^2 &= 
\frac{n \sigma}{2- (n-2) \sigma}
\|Q\|_{L^2(\RR^n)}^2 
\equiv \frac{n}{2 (1-s_c)} \|Q\|^2_{L^2(\mathbb R^n)},  
\label{E:normQ1}
\\ 
\|Q\|_{L^{2\sigma +2}(\RR^n)}^{2\sigma +2} & = {\frac{2(\sigma +1)}
{2- (n-2)\sigma}} \|Q\|_{L^2(\RR^n)}^2  \equiv \Big( 1+ \frac{n}{2(1-s_c)}\Big) \|Q\|^2_{L^2(\RR^n)}  . \label{E:normQ2}
\end{align}
For convenience, we record the energy of the ground state in terms of its mass or its gradient,
\begin{align}\label{H_Q}
H(Q)= & \frac{s_c}{2(1-s_c)} \,
\|Q\|_{L^2(\RR^n)}^2  
\quad \mbox{or} \quad 
H(Q)=  \frac{s_c}{n} \,
\| \nabla Q \|_{L^2(\RR^n)}^2.
\end{align}

Since the ground state $Q$ is the minimizer of the Gagliardo-Nirenberg inequality \eqref{GN}, 
we deduce, together with \eqref{E:normQ1} and \eqref{E:normQ2}, the optimal constant $C_{GN}$ in 
\eqref{GN}:  
\begin{equation}\label{constant_GN}
\hspace{2cm}  C_{GN} = \frac{K}{\|Q\|_{L^2(\RR^n)}^{2\sigma}}, \quad \mbox{\rm where } ~ 
K=\frac{2(\sigma+1) [ 2-(n-2)\sigma ]^{\frac{n\sigma -2}{2}}}{(n\sigma)^{\frac{n\sigma}{2}}}.
\end{equation} 
One can similarly deduce a sharp constant for the Sobolev inequality \eqref{E:Sobolev}, but this is not needed in this paper.

In the $L^2$-critical case ($s_c=0$ or $\sigma=\frac2{n}$), note that $K=\sigma+1$ and $C_{GN} = \tfrac{\sigma+1}{\|Q\|^{4/n}}$, so we have
\begin{equation}\label{E:sc=0}
\| \nabla u(t)\|_{L^2(\RR^n)}^2 \Big(1 - {\|Q\|_{L^2(\RR^n)}^{-4/n}} \; 
\|u (t)\|_{L^2(\RR^n)}^{4/n} \Big) \leq 2\, H(u(t)), 
\end{equation}
which guarantees the boundedness of the $\dot{H}^1$ norm of the solution $u(t)$ (i.e., the $L^2$ norm of $\nabla u(t)$),  
provided that the mass is conserved (or at least upper bounded) as well as the energy; then global well-posedness follows, assuming 
\begin{equation}\label{E:mass-W}
\|u_0\|_{L^2(\mathbb R^n)}^2<\|Q\|^2_{L^2(\mathbb R^n)},
\end{equation}
a well-known condition for the global well-posedness in the deterministic $L^2$-critical case, \cite{W83}.  
Unlike the additive noise case, where neither mass nor energy are conserved, the results obtained for a multiplicative stochastic perturbation, for which mass is a.s. conserved, 
will resemble the  deterministic NLS.

From the definition of energy \eqref{M-H} and the Gagliardo-Nirenberg inequality, 
we deduce the following bound on the gradient in the intercritical range $0< s_c <1 $,
\begin{equation}\label{upper_gradient}
\|\nabla u (t)\|_{L^2(\RR^n)}^2 \leq 2 H(u(t)) + \frac{ C_{GN}}{\sigma +1} \|\nabla u(t)\|^{n\sigma}_{L^2(\RR^n)} \|u(t)\|_{L^2(\RR^n}^{2-(n-2)\sigma},
\end{equation}
which implies (denoting by $B: = \frac{C_{GN}}{\sigma + 1} $ and recalling $\alpha = \frac{1-s_c}{s_c}$) 
\begin{equation}\label{E:X}
X(t)^2 - B X(t)^{{n\sigma}} \leq 2 H(u(t)) M(u(t))^{\alpha},
\end{equation} 
where  $X(t)$ is defined as
\begin{equation}\label{E:defX}
X(t) \defeq   \|\nabla u(t)\|_{L^2(\RR^n)} \|u(t)\|_{L^2(\RR^n)}^{\alpha}, 
\end{equation}
and is scale-invariant in the deterministic case, see depiction on the left of Figure \ref{F:1}.

Here, one may note that a smallness of $X(t)$ in this intercritical case would guarantee global existence of the solution $u(t)$ in the deterministic framework, 
since the right-hand side in \eqref{E:X} is bounded by the conserved quantities in that case. However, since the energy is not conserved in the stochastic framework and the mass is only (a.s.) 
conserved in the case of a multiplicative (Stratonovich) perturbation, we have to control the time dependence of the energy $H(u(t))$ (as well as the mass in the additive stochastic perturbation). 
The arguments used below are inspired by \eqref{E:X}, but the proofs and statements of the results are more intricate compared to the $L^2$-critical case, especially for an additive stochastic 
perturbation.

\section{Maximal existence time - Multiplicative noise}\label{S:M}

In this section, we study the case of the nonlinear Schr\"odinger (NLS) equations subject to a multiplicative random perturbation, which is defined in terms of a Stratonovich integral. The driving noise is white in time and has spatial regularity. As proved in \cite[Prop. 4.4]{deB_Deb_H1}, the noise given via Stratonovich integral implies that the mass is a.s. preserved, which has a physical meaning. 

\subsection{Preliminaries on multiplicative noise and local well-posedness.} \label{S:noise-multi}
Let $(\Omega, {\mathcal F},P)$ be a probability space endowed with a filtration $({\mathcal F}_t)_{t\geq 0}$.
Let $(\beta_k)_{k\in \NN}$ be a sequence of independent real-valued Brownian motions on $[0,\infty)$ with respect to the filtration 
$({\mathcal F}_t)_{t\geq 0}$ and let $(e_k)_{k\in \NN}$ be an orthonormal basis of the set $L^2(\RR^n;\RR)$ of real-valued square integrable functions on $\RR^n$. We define a Wiener process on the space $L^2(\RR^n;\RR)$ as 
\begin{equation} \label{R-noise}
W(t,\cdot,\omega)=\sum_{k=0}^\infty \beta_k(t,\omega) \phi \, e_k(\cdot),
\end{equation}
where $\phi$ is a bounded operator from $L^2(\RR^n;\RR)$ into itself. 

Note that if $\phi$ is defined through a kernel ${\mathcal{K}}$, that is, for any square integrable function $u$, 
$\phi u(x)=\int_{\RR^n}{\mathcal K}(x,y) u(y) dy$, 
then the correlation function of the noise is 
\[ \EX\Big( \frac{\partial W}{\partial t}(t,x) \frac{\partial W}{\partial t}(s,y)\Big) = c(x,y) \delta_{t-s}, \quad 
\mbox{\rm with} \quad c(x,y) = \int_{\RR^n} {\mathcal K}(x,z) {\mathcal K}(y,z) dz.\]

We suppose that the operator $\phi$ satisfies the following assumption (H1):
\smallskip

\noindent {$\bullet$ \bf Condition (H1)} ({\it Regularity of the multiplicative noise})
\begin{equation}\label{H1} 
\hspace{1cm}(\rm {\bf H1}) \hspace{2cm}
\phi \in L^{0,1}_{2,\RR} \cap 
R \big(L^2_{\RR}(\RR^n), W_{\RR}^{1,\kappa} (\RR^n)\big). \hspace{5cm}
\end{equation} 

Here, $L^{0,1}_{2,\RR}$ denotes the set of Hilbert-Schmidt 
operators from $L^2_{\RR}(\RR^n)$ into itself and 
$ R \big(L^2_{\RR}(\RR^n), W_{\RR}^{1,\kappa} (\RR^n)\big)$ denotes the set of $\gamma$-radonifying operators from $L^2(\RR^n;\RR)$  
to the Sobolev space
$W^{1,\kappa}(\RR^n)$ for
some $\kappa >0$ to be specified later.

Set
$$
\|\phi\|_\kappa \defeq \|\phi\|_{L^{0,1}_{2,\RR}} + \|\phi\|_{ R(L^2_{\RR}(\RR^n), W_{\RR}^{1,\kappa}(\RR^n) )}
$$
and
$$
F_\phi(x)=\sum_{k\geq 0} \big( \phi e_k(x) \big)^2.
$$
We consider the complex-valued process $u(t)$, solution to the initial value problem of the following stochastic NLS equation
\begin{align}\label{NLS_Stra}
i d_t u(t) - \big( \Delta u(t) + |u(t)|^{2\sigma} u(t) \big) dt = &\;  u(t) \circ dW(t) \\
\defeq & \; u(t) dW(t) - \frac{i}{2} F_\phi u(t) dt, \label{NLS_Ito} \\
u(0)=&\; u_0.  \nonumber 
\end{align}
In the equation \eqref{NLS_Stra} the term on the right-hand side $u(t) \circ dW(t)$ denotes the Stratonovich integral,  and in \eqref{NLS_Ito} the term $u(t) dW(t)$ is the classical It\^o integral; the term
$\frac{i}{2} F_\phi u(t) dt$ is the Stratonovich-It\^o correction term. Note that the factor $-i$ is due to the fact that, before computing  this correction, we  have to multiply the equation \eqref{NLS_Stra} by $-i$, and afterwards, multiply the corresponding equation  by $i$ 
to deduce \eqref{NLS_Ito}. 
\smallskip

We make a stronger assumption on the
covariance operator $\phi$ of the stochastic perturbation in order to describe the time evolution of the expected maximal energy of the solution,
namely, $\EE(\sup_{s\leq t} H(u(s)))$, see Lemma \ref{time_evol_H_2.4} below.
For that 
we describe a property of $\gamma$-radonifying operators.  The following result is due to Kwapie\'n \& Szyma\'nski \cite{Kwa_Szy} (see also \cite[Thm 3.5.10]{Bog} and \cite[Thm 3.23]{vanNee}).

\begin{Th}[\cite{Kwa_Szy}]\label{Kwapien}
Let ${\mathcal H}$ be a separable Hilbert space, ${\mathcal B}$ be 
a Banach space and $L$ be a $\gamma$-radonifying operator
from ${\mathcal H}$ to ${\mathcal B}$. Then there exists an orthonormal basis $\{ e_k\}_{k\geq 0}$ of ${\mathcal H}$ such that
\[ \sum_{k\geq 0} \| L e_k\|_{\mathcal B}^2<\infty.\]
\end{Th}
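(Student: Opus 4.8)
Here is the proof we would give (this is the Kwapie\'n--Szyma\'nski theorem \cite{Kwa_Szy}). The plan is a \emph{block--rotation} argument, built on the following observation: for a fixed orthonormal basis $(f_k)_{k\ge0}$ of $\mathcal H$ the quantity $\sum_k\|Lf_k\|_{\mathcal B}^2$ depends on the basis, but its Gaussian average does not — if $(\eta_k)_{k\ge0}$ are i.i.d.\ standard real Gaussian variables, then by the very definition of $R(\mathcal H,\mathcal B)$ the series $\sum_k\eta_kLf_k$ converges in $L^2(\Omega;\mathcal B)$ for one (hence every) orthonormal basis, with $\EE\|\sum_k\eta_kLf_k\|_{\mathcal B}^2=\|L\|_{R(\mathcal H,\mathcal B)}^2<\infty$. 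So the strategy is to rotate a given basis, blockwise, into one for which $\sum_k\|Lf_k\|_{\mathcal B}^2$ is finite. We may assume $\ker L=\{0\}$, since any orthonormal basis of $\ker L$ contributes $0$ to the sum and can be appended at the end.

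The first step I would carry out is a rotation lemma: \emph{given finitely many $h_1,\dots,h_j\in\mathcal B$, there exists $U\in O(j)$ such that the vectors $h_i':=\sum_{l=1}^jU_{il}h_l$ satisfy $\sum_{i=1}^j\|h_i'\|_{\mathcal B}^2\le\EE\|\sum_{l=1}^j\eta_lh_l\|_{\mathcal B}^2$.} I would prove this by averaging: let $U$ be Haar--distributed on $O(j)$, so that each row of $U$ is uniform on the sphere $S^{j-1}$; then $\EE_U\|h_i'\|_{\mathcal B}^2=\EE_v\|\sum_lv_lh_l\|_{\mathcal B}^2$ for $v$ uniform on $S^{j-1}$. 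Decomposing a standard Gaussian vector $g\in\RR^j$ into its (independent) magnitude and direction and using $\EE|g|^2=j$, one gets $\EE\|\sum_l\eta_lh_l\|_{\mathcal B}^2=\EE|g|^2\cdot\EE\|\sum_l(g_l/|g|)h_l\|_{\mathcal B}^2=j\,\EE_v\|\sum_lv_lh_l\|_{\mathcal B}^2$, hence $\EE_U\sum_{i=1}^j\|h_i'\|_{\mathcal B}^2=\EE\|\sum_l\eta_lh_l\|_{\mathcal B}^2$, and a set of positive Haar measure of matrices $U$ does at least as well as this average.

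The second step assembles the basis. Fix any orthonormal basis $(f_k)_{k\ge0}$ of $\mathcal H$ and set $\epsilon_m:=\EE\|\sum_{k>m}\eta_kLf_k\|_{\mathcal B}^2$ for $m\ge-1$, so that $\epsilon_{-1}=\|L\|_{R(\mathcal H,\mathcal B)}^2<\infty$; since $\sum_{k\le m}\eta_kLf_k$ converges in $L^2(\Omega;\mathcal B)$ we have $\epsilon_m\to0$. Choose integers $-1=m_0<m_1<m_2<\cdots$ with $\epsilon_{m_p}\le2^{-p}$ for $p\ge1$, put $I_p=\{m_{p-1}+1,\dots,m_p\}$, and apply the rotation lemma to each finite family $(Lf_l)_{l\in I_p}$ to obtain an orthogonal transformation $U^{(p)}$ of $\mathrm{span}\{f_l:l\in I_p\}$; define $e_i':=\sum_{l\in I_p}U^{(p)}_{il}f_l$ for $i\in I_p$. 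Rotating an orthonormal basis within each of the mutually orthogonal blocks again yields an orthonormal basis $(e_i')_{i\ge0}$ of $\mathcal H$, and $Le_i'=\sum_{l\in I_p}U^{(p)}_{il}Lf_l$; since the block sum $\sum_{l\in I_p}\eta_lLf_l$ equals the difference in $L^2(\Omega;\mathcal B)$ of the tails past $m_{p-1}$ and past $m_p$, the lemma gives $\sum_{i\in I_p}\|Le_i'\|_{\mathcal B}^2\le\EE\|\sum_{l\in I_p}\eta_lLf_l\|_{\mathcal B}^2\le2\epsilon_{m_{p-1}}+2\epsilon_{m_p}$. Summing over $p\ge1$ yields $\sum_{i\ge0}\|Le_i'\|_{\mathcal B}^2\le2\epsilon_{m_0}+4\sum_{p\ge1}\epsilon_{m_p}\le2\|L\|_{R(\mathcal H,\mathcal B)}^2+4<\infty$, which is the claim (one appends an orthonormal basis of $\ker L$ if it is nontrivial).

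The step I expect to be the crux is the rotation lemma — more precisely, the idea of rotating blocks of a basis at all, together with the observation that Haar averaging over $O(j)$ combined with the independence of magnitude and direction for a Gaussian vector turns the basis-invariant Gaussian average into a genuine bound on $\sum_i\|h_i'\|_{\mathcal B}^2$. Everything afterwards — that the tails $\epsilon_m$ vanish, the choice of the block endpoints $m_p$, and the telescoping estimate for each block — is routine.
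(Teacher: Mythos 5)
Your proof is correct. Note that the paper does not prove this statement at all: it is quoted verbatim from Kwapie\'n--Szyma\'nski \cite{Kwa_Szy} (with pointers to \cite{Bog} and \cite{vanNee}), so there is no in-paper argument to compare against. Your block-rotation scheme --- the Haar-averaging rotation lemma (using the independence of the magnitude and direction of a standard Gaussian vector, with $\EE|g|^2=j$), the choice of block endpoints $m_p$ with tail Gaussian variances $\epsilon_{m_p}\le 2^{-p}$, and the telescoping bound $\EE\|\sum_{l\in I_p}\eta_l Lf_l\|_{\mathcal B}^2\le 2\epsilon_{m_{p-1}}+2\epsilon_{m_p}$ --- is essentially the original Kwapie\'n--Szyma\'nski argument and is complete; the only ingredient taken as known is the standard fact that $\gamma$-radonifying implies $L^2(\Omega;\mathcal B)$-convergence of the Gaussian series $\sum_k\eta_kLf_k$ for an orthonormal basis (It\^o--Nisio/Fernique), which is a legitimate citation.
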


We next pose a boundedness assumption 
on the noise, which is helpful to control some terms in the energy in order to have a global solution in $H^1$ 
in the intercritical case. 
\medskip

\noindent {$\bullet$ \bf Condition (H2)} ({\it Boundedness 
of the driving noise)} 
\smallskip

There exists an orthonormal basis $\{ e_k\}_{k\geq 0}$ of $L^2_{\RR}(\RR^n)$ such that
$  f^1_\phi: =\sum_{k\geq 0} |\nabla(\phi e_k) |^2$ is a bounded function: 
\begin{equation}\label{M_phi}
(\rm \bf {H2}) \hspace{2cm} M_\phi \defeq \| f^1_\phi\|_{L^\infty(\RR^n)} =\sup_{x\in \RR^n}  \sum_{k\geq 0} |\nabla(\phi e_k) (x) |^2 < \infty. \qquad
\qquad 
\end{equation} 

\begin{remark}
It is easy to see that if $\phi\in L_{2,\RR}^{0,1}$  is 
$\gamma$-radonifying from $L^2_{\RR}(\RR^n)$ to $\dot{W}_{\RR}^{1,\infty}(\RR^n)$, it satisfies the condition {\bf (H2)}.
In that case, using Theorem \ref{Kwapien}, one deduces the existence of an orthonormal basis $\{ e_k\}_{k\geq 1}$  of $L^2_{\RR}(\RR^n)$ such that 
$M_\phi \leq \sum_{k\geq 0} \| \nabla (\phi e_k)\|_{L^\infty(\RR^n)}^2 <\infty$
and that  $\phi$ is $\gamma$-radonifying from 
$L^2_{\RR}(\RR^n)$ to any $W_{\RR}^{1,\kappa}$ space
for $\kappa \in [2,+\infty)$.
\end{remark}

\smallskip

To recall some known results on the local well-posedness 
of solutions to the SNLS equation with multiplicative noise \eqref{NLS_Stra}, we introduce some notation.  
\smallskip

\noindent {$\bullet$ \bf Condition (H3)} ({\it Range of nonlinearity powers  $\sigma$}) 
\smallskip

Let the power of nonlinearity $\sigma$ satisfy the following assumption: 
\begin{equation*}\label{H2} 
(\rm \bf {H}3) \hspace{2cm}
\left\{ \begin{array}{ll}
\sigma >0 & \mbox{ \rm if } \; n=1 \; \mbox{ \rm or }\; n=2, \\
0<\sigma < 2& \mbox{ \rm if }\;  n=3,\\
\frac{1}{2}\leq \sigma <\frac{2}{n-2}
& \mbox{ \rm if }\; n=4,5.
\end{array} \right. \qquad \qquad 
\end{equation*}
Note that the above range of $\sigma$ includes the $L^2$-critical and the intercritical range ($0 \leq s_c < 1$) 
in dimensions $1\leq n \leq 4$, and in dimension $n=5$ an extra restriction $\sigma \geq \frac{1}{2}$ makes part of the intercritical range $\frac25 \leq \sigma < \frac12$ not accessible yet by the local well-posedness (i.e., the local theory is only available for the range $\frac12 \leq s_c <1$ in 5d, see below Theorem \ref{local_wp}).
We therefore set the notation for range $\mathfrak R$ for the $L^2$-critical powers of nonlinearity as well as for the intercritical range of powers, based on the condition {\bf (H3)}:
\begin{equation}\label{R:crit}
\mathfrak R_{crit} = 
\left\{ \begin{array}{ll}
\sigma = \frac2{n} & \mbox{ \rm if } \; n=1, 2,3,4, 
\end{array} \right. \qquad \qquad
\end{equation}
\begin{equation}\label{R:inter}
\mathfrak R_{inter} = 
\left\{ \begin{array}{ll}
\frac2{n} < \sigma <\infty & \mbox{ \rm if }\;  n=1, 2, 
\\
\frac2{n}<\sigma < \frac2{n-2}& \mbox{ \rm if }\;  n=3,4, \\
\frac{1}{2}\leq \sigma <\frac{2}{n-2} 
& \mbox{ \rm if }\; n=5.
\end{array} \right.
\end{equation}

\bigskip

A pair $(r,p)$ of positive numbers is referred to as an admissible pair (or a Strichartz pair for the Lebesgue space $L^r_t L^p_x$) if 
\begin{equation}\label{E:pair}
\qquad \qquad \frac{2}{r} + \frac{n}{p}=\frac{n}2, 
\quad \mbox{provided} \quad p \geq 2. 
\end{equation}

We are now ready to state Theorem 4.1 from \cite{deB_Deb_H1}, which gives sufficient conditions for the existence and uniqueness of a local solution $u$ to the SNLS with multiplicative noise \eqref{NLS_Stra} in dimensions one to five. 
\smallskip

\begin{Th}	(\cite[Thm 4.1]{deB_Deb_H1})\label{local_wp}
Let $\phi$ satisfy Condition {\bf (H1)} with $\kappa >  2n$  and  $\sigma$ satisfy Condition {\bf (H3)}.
Then there exists an admissible pair $(r,p)$ such that for any ${\mathcal F}_0$-measurable initial condition $u_0$ 
taking values in $H^1(\RR^n)$, there exists
a stopping time $\tau^*(u_0)>0$ and a unique solution to \eqref{NLS_Stra} starting from $u_0$, which belongs a.s. to  $C([0;\tau];H^1)
\cap L^r([0,\tau] ; W^{1,p})$ for every  stopping time $\tau<\tau^*(u_0)$. Moreover, we have a.s.
\[ \tau^*(u_0,\omega)=+\infty \quad \mbox{\rm or } \quad \limsup_{t\to \tau^*(u_0,\omega)} \|u(t)\|_{H^1}=+\infty.\]
\end{Th}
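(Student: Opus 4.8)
The plan is to put \eqref{NLS_Stra}--\eqref{NLS_Ito} in mild (Duhamel) form and to solve the resulting fixed-point problem in a Strichartz-type space, using a truncation to absorb both the non-Lipschitz nonlinearity and the multiplicative noise coefficient. Writing $S(t)=e^{it\Delta}$ for the free Schr\"odinger group, a solution should satisfy
\[
u(t)=S(t)u_0 - i\int_0^t S(t-s)\big(|u|^{2\sigma}u\big)(s)\,ds - i\int_0^t S(t-s)u(s)\,dW(s) - \tfrac12\int_0^t S(t-s)(F_\phi u)(s)\,ds .
\]
First I would fix an admissible pair $(r,p)$ with $p$ just above $2$, chosen so that the exponent $\sigma$ permitted by Condition {\bf (H3)} is energy-subcritical with respect to the scaling of $L^r([0,T];W^{1,p})$ in all dimensions $1\le n\le 5$, and work in $X_T:=C([0,T];H^1)\cap L^r([0,T];W^{1,p})$. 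The two deterministic Duhamel terms are handled by the homogeneous and inhomogeneous Strichartz estimates together with a fractional Leibniz rule, which bound $\big\||u|^{2\sigma}u\big\|_{L^{r'}([0,T];W^{1,p'})}$ by $T^{\theta}\|u\|_{X_T}^{2\sigma+1}$ for some $\theta>0$; the restriction on $\sigma$ in {\bf (H3)} is precisely what makes this estimate and the matching dual pair available.

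The genuinely new ingredient is the estimate for the stochastic convolution $Z(t)=\int_0^t S(t-s)u(s)\,dW(s)$. I would use a stochastic Strichartz inequality, proved either by the factorization method or by combining the Burkholder--Davis--Gundy inequality with the $L^r_tW^{1,p}_x$ smoothing of $S$, to bound $\mathbb E\|Z\|_{X_T}^q$ by a constant times $\mathbb E\big(\int_0^T \sum_{k\ge 0}\|u(s)\,\phi e_k\|_{W^{1,p}}^2\,ds\big)^{q/2}$. The product estimate $\|v\,\psi\|_{W^{1,p}}\lesssim \|\psi\|_{W^{1,\kappa}}\|v\|_{W^{1,p}}$, valid because $\kappa>2n$ gives $W^{1,\kappa}\hookrightarrow L^\infty$ with room to spare, together with the $\gamma$-radonifying part of Condition {\bf (H1)}, reduces this to $\|\phi\|_\kappa^2\int_0^T\|u(s)\|_{W^{1,p}}^2\,ds$, i.e.\ a term that can be made small by shrinking $T$. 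The It\^o correction $F_\phi u$ is lower order and is controlled the same way, using $F_\phi\in W^{1,\infty}$, which follows from {\bf (H1)}.

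With these estimates in hand, for each $R>0$ I would truncate the equation by multiplying the nonlinear and noise coefficients by a smooth cutoff $\theta_R$ of $\|u\|_{L^r([0,t];W^{1,p})}$, which makes them globally Lipschitz on $X_T$; a standard Banach fixed-point argument then produces a unique solution $u_R$ on $[0,T]$ for $T$ small depending only on $R$, and iterating on consecutive intervals extends $u_R$ to $[0,\infty)$. An It\^o-formula computation for $\|u_R(t)\|_{H^1}^2$ — at the $L^2$ level the Stratonovich formulation cancels the would-be growth coming from $u\,dW$, and at the $\nabla$ level the stochastic contributions are absorbed via $M_\phi$, Young's inequality, and Gronwall — gives an a priori bound for $\mathbb E\sup_{s\le t}\|u_R(s)\|_{H^1}^2$. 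Setting $\tau_R:=\inf\{t\ge 0:\|u\|_{L^r([0,t];W^{1,p})}\ge R\}$, uniqueness up to truncation shows $u_R=u_{R'}$ on $[0,\tau_R]$ for $R'\ge R$, so $\tau^*(u_0):=\lim_{R\to\infty}\tau_R$ together with the common restriction defines a unique maximal solution. The blow-up alternative is then standard: on the event $\{\tau^*<\infty\}$ one cannot have $\limsup_{t\to\tau^*}\|u(t)\|_{H^1}<\infty$, for otherwise the local existence result could be applied starting from a time just below $\tau^*$ to extend the solution past $\tau^*$, contradicting maximality.

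I expect the main obstacle to be the stochastic Strichartz estimate itself: one must control a stochastic convolution in the non-Hilbertian space $L^r([0,T];W^{1,p})$ with $r,p\neq 2$, which is exactly why the factorization trick and the $\gamma$-radonifying framework enter, and the precise hypotheses on $\phi$ in {\bf (H1)} (Hilbert--Schmidt on $L^2$ and $\gamma$-radonifying into $W^{1,\kappa}$ with $\kappa>2n$) are tuned so that the pointwise product estimate, the summation over $k$, and the Strichartz loss close simultaneously for every $1\le n\le 5$ within the range of $\sigma$ allowed by {\bf (H3)}.
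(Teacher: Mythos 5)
This theorem is not proved in the present paper at all: it is quoted verbatim from de Bouard--Debussche \cite[Thm 4.1]{deB_Deb_H1}, so there is no internal proof to compare against. Your outline --- mild formulation of \eqref{NLS_Stra}, truncation in the $L^r([0,T];W^{1,p})$ norm, deterministic Strichartz bounds for the nonlinearity, a Burkholder/$\gamma$-radonifying estimate for the stochastic convolution using the product bound afforded by $\kappa>2n$, a Banach fixed point for the truncated problem, and removal of the cutoff via the stopping times $\tau_R$ with the standard maximality argument for the blow-up alternative --- is essentially the strategy of that reference, and is correct in substance (the only notable deviation is the suggestion to take $p$ just above $2$, whereas the cited proof works with $p=2\sigma+2$ and its conjugate Strichartz pair, which is what makes the nonlinear and difference estimates close and is the source of the restriction $\sigma\geq\tfrac12$ for $n=4,5$ in {\bf (H3)}).
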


\subsection{Estimates for mass and energy}

Proposition 4.4 in \cite{deB_Deb_H1} proves that under the conditions of Theorem \ref{local_wp} and due to the fact that 
in the equation \eqref{NLS_Stra} we are using a  Stratonovich integral, 
we have a.s. mass conservation of the local solution $u(t)$ to this equation 
 for all times  up to the stopping time $\tau^\ast(u_0)$. 
 More precisely,
\begin{equation}\label{mass_conserv}
M\big( u(t) \big) = M\big( u(0) \big) \quad \mbox{\rm a.s. for any }\; t<\tau^*(u_0).
\end{equation}
Furthermore, Proposition 4.5 in \cite{deB_Deb_H1} proves that under the assumptions of Theorem \ref{local_wp}, one has 
the following time evolution of the energy of the local solution $u(t)$ to the equation \eqref{NLS_Stra}:
\begin{align}\label{energy}
H\big( u(\tau) \big) = &\; H\big( u_0\big) - \mbox{\rm Im } \!\int_{\RR^n} \!\int_0^\tau\!  \bar{u}(s,x)  \nabla u(s,x)  \cdot \nabla dW(s) dx \nonumber \\
&\; +\frac{1}{2} \sum_{k\geq 0} \int_0^\tau \int_{\RR^n} |u(s,x)|^2 \, |\nabla (\phi e_k)(x)|^2 dx ds
\end{align}
for any stopping time $\tau < \tau^*(u_0)$ a.s. 

Note that if $\phi$ satisfies the {\bf (H2)}  condition, we can bound the last term in \eqref{energy}  as
$$
\sum_{k\geq 0}    \int_0^\tau \int_{\RR^n} |u(s,x)|^2 \, |\nabla \phi e_k(x)|^2 dx ds \leq M_\phi\, M\big( u_0) \, \tau \quad \mbox{for}
\quad  \tau < \tau^*(u_0).
$$
Such bounds will be useful in the energy estimates later.  
\smallskip

Using the mass conservation \eqref{mass_conserv} and the time evolution of energy \eqref{energy}, as well as the Gagliardo-Nirenberg inequality \eqref
{GN},
Theorem 4.6 in \cite{deB_Deb_H1} shows that under the assumptions of Theorem \ref{local_wp}, 
if $\sigma < \frac{2}{n}$ ($L^2$-subcritical case), then the solution of \eqref{NLS_Stra} given in Theorem \ref{local_wp} is global, 
that is, $\tau^*(u_0)=+\infty$ a.s.

We next suppose that the assumptions of Theorem \ref{local_wp} are satisfied with  $\sigma \in \mathfrak R_{crit}$ or $\sigma \in \mathfrak R_{inter}$. 
We aim to prove either global in time well-posedness or well-posedness on some time interval $[0,T]$ 
with strictly positive probability. 
This will require certain constraints on $M(u_0)$, $H(u_0)$ and $\| \nabla u_0\|_{L^2(\RR^n)}$ relative to
$M(Q)$, $H(Q)$ and $\|\nabla Q\|_{L^2(\RR^n)}$, where $Q$ is the ground state as in \eqref{E:Q} and discussed in Section \ref{S:Q}. The relations are similar to those in the deterministic case and while the approach is inspired by that case too, treating the stochastic equation brings several challenges, in particular, because the energy is not conserved.
\medskip

The following lemma provides bounds for the expected value of the energy of the solution uniformly in time.
As in the deterministic setting of the intercritical case,   the energy is multiplied by some power of mass. For the next lemma this power can be any positive value, however, 
later this power will be chosen as $\alpha$ in \eqref{E:alpha} so that the mass-energy expression would correspond to a similar quantity in the deterministic framework of the intercritical case.

\begin{lemma}	\label{time_evol_H_2.4}
Let the assumptions of Theorem \ref{local_wp} be satisfied and suppose that $\phi$ also satisfies the condition {\bf (H2)}.  Let  
$u_0$ be ${\mathcal F}_0$-measurable,   $H^1(\RR^n)$-valued such that  $\EE(\|u_0\|_{H^1(\RR^n)}^2\big) <\infty$   
and let $u$ denote the unique local solution of \eqref{NLS_Stra}. Then for
$\tau < \tau^*(u_0)$, we have that for any $\alpha \geq 0$
\begin{align}	\label{upp_bd_H}	
\EX \Big(M(u_0)^\alpha  \sup_{s\leq \tau} H\big( u(s)\big) \Big) \leq & \; \EX \big(M(u_0)^\alpha H(u_0)\big) +
 \frac{1}{2} M_\phi  \, \EX \big( M\big( u_0)^{\alpha+1} \tau\big)  \nonumber \\
 & \; + 3 \sqrt{M_\phi} \, \EX \Big( \sqrt{\tau} M( u_0)^{\alpha + \frac{1}{2}}  \sup_{s\leq \tau} \| \nabla u(s) \|_{L^2(\RR^n)}\Big).
\end{align}
\end{lemma}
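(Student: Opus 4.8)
The plan is to combine the energy evolution identity \eqref{energy} with the a.s.\ mass conservation \eqref{mass_conserv}, the boundedness coming from condition {\bf (H2)}, and a Burkholder--Davis--Gundy estimate applied conditionally on $\mathcal F_0$. Fix $\tau<\tau^*(u_0)$ and, using \eqref{energy}, write for $s\le\tau$
\[
H\big(u(s)\big) = H(u_0) - N(s) + R(s),
\]
where $N(s):=\mathrm{Im}\int_{\RR^n}\int_0^s \bar u(r,x)\,\nabla u(r,x)\cdot\nabla dW(r)\,dx$ and $R(s):=\tfrac12\sum_{k\ge0}\int_0^s\int_{\RR^n}|u(r,x)|^2|\nabla(\phi e_k)(x)|^2\,dx\,dr$. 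Here $R$ is non-decreasing, and by mass conservation \eqref{mass_conserv} together with {\bf (H2)} one has $R(s)\le\tfrac12 M_\phi M(u_0)s\le\tfrac12 M_\phi M(u_0)\tau$ for all $s\le\tau$; also $(N(s))_{s\le\tau}$ is a continuous real martingale. Hence
\[
\sup_{s\le\tau} H\big(u(s)\big)\ \le\ H(u_0) + \tfrac12 M_\phi M(u_0)\,\tau + \sup_{s\le\tau}|N(s)|.
\]

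Next I would multiply this inequality by the nonnegative $\mathcal F_0$-measurable factor $M(u_0)^\alpha$ and take expectations. The terms $\EX\big(M(u_0)^\alpha H(u_0)\big)$ and $\tfrac12 M_\phi\,\EX\big(M(u_0)^{\alpha+1}\tau\big)$ are precisely the first two terms on the right-hand side of \eqref{upp_bd_H}, so it remains to estimate $\EX\big(M(u_0)^\alpha\sup_{s\le\tau}|N(s)|\big)$. Expanding $\nabla dW(r)=\sum_k\nabla(\phi e_k)\,d\beta_k(r)$, the quadratic variation of $N$ is $\langle N\rangle_\tau=\sum_{k\ge0}\int_0^\tau\big(\mathrm{Im}\int_{\RR^n}\bar u\,\nabla u\cdot\nabla(\phi e_k)\,dx\big)^2 dr$. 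By Cauchy--Schwarz in $x$, $\big|\int\bar u\,\nabla u\cdot\nabla(\phi e_k)\,dx\big|^2\le\|\nabla u(r)\|_{L^2}^2\int|u(r,x)|^2|\nabla(\phi e_k)(x)|^2\,dx$; summing over $k$ and invoking {\bf (H2)} (see \eqref{M_phi}) and mass conservation gives the pathwise bound
\[
\langle N\rangle_\tau\ \le\ M_\phi\int_0^\tau\|\nabla u(r)\|_{L^2}^2\,\|u(r)\|_{L^2}^2\,dr\ =\ M_\phi\,M(u_0)\int_0^\tau\|\nabla u(r)\|_{L^2}^2\,dr\ \le\ M_\phi\,M(u_0)\,\tau\,\sup_{s\le\tau}\|\nabla u(s)\|_{L^2(\RR^n)}^2.
\]
Applying the Burkholder--Davis--Gundy inequality in the form $\EX\big(\sup_{s\le\tau}|N(s)|\,\big|\,\mathcal F_0\big)\le 3\,\EX\big(\langle N\rangle_\tau^{1/2}\,\big|\,\mathcal F_0\big)$ and then taking the $M(u_0)^\alpha$-weighted expectation yields
\[
\EX\big(M(u_0)^\alpha\sup_{s\le\tau}|N(s)|\big)\ \le\ 3\,\EX\big(M(u_0)^\alpha\langle N\rangle_\tau^{1/2}\big)\ \le\ 3\sqrt{M_\phi}\;\EX\Big(\sqrt\tau\,M(u_0)^{\alpha+\frac12}\sup_{s\le\tau}\|\nabla u(s)\|_{L^2(\RR^n)}\Big),
\]
which is exactly the third term of \eqref{upp_bd_H}; adding the three contributions proves the lemma.

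Because $u$ is only a local solution (Theorem \ref{local_wp}) and its $H^1$ norm is not a priori globally integrable, the argument should first be run with $\tau$ replaced by $\tau\wedge\theta_m$, where $\theta_m:=\inf\{t\ge0:\|u(t)\|_{H^1(\RR^n)}\ge m\}\wedge m$ (and, if needed, with $M(u_0)^\alpha$ replaced by $M(u_0)^\alpha\wedge m$ to make all weighted expectations finite), so that $N$ stopped at $\tau\wedge\theta_m$ is a genuine $L^1$-martingale to which the conditional BDG inequality applies and all manipulations above are legitimate. One then lets $m\to\infty$, using $\theta_m\uparrow\tau^*(u_0)$ from the blow-up alternative in Theorem \ref{local_wp} and the a.s.\ continuity of $s\mapsto H(u(s))$ and $s\mapsto\|\nabla u(s)\|_{L^2(\RR^n)}$ on $[0,\tau]$, and concludes by monotone convergence; the inequality \eqref{upp_bd_H} is understood in $[0,+\infty]$ if some moment of the data happens to be infinite.

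The main obstacle is the stochastic term: correctly identifying the stochastic integral in \eqref{energy} as a martingale with the stated quadratic variation, deriving the clean pathwise bound $\langle N\rangle_\tau\le M_\phi M(u_0)\,\tau\,\sup_{s\le\tau}\|\nabla u(s)\|_{L^2}^2$ from {\bf (H2)}, and combining BDG with the $\mathcal F_0$-measurable weight $M(u_0)^\alpha$ together with the localization forced by working with a merely local solution; once these are in place the remainder is bookkeeping.
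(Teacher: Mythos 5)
Your proposal is correct and follows essentially the same route as the paper: the energy identity \eqref{energy}, the Davis (BDG) inequality with constant $3$, Cauchy--Schwarz in $x$, mass conservation and condition {\bf (H2)} to bound the quadratic variation, and extraction of $\sqrt{\tau}\,\sup_{s\le\tau}\|\nabla u(s)\|_{L^2}$. The only cosmetic differences are that the paper absorbs the $\mathcal F_0$-measurable weight $M(u_0)^\alpha$ directly into the stochastic integrand rather than using a conditional BDG estimate, and leaves the stopping-time localization to the places where the lemma is applied (e.g.\ via $\tau_R$ in Theorem \ref{gwp_critical}), whereas you carry it out inside the proof.
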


\begin{remark}
Even though the following  assumptions are not required in the statement of this lemma, to use it we will further require  
$\EX \big(M(u_0)^\alpha H(u_0)\big) <\infty$ and $\EX \big( M\big( u_0)^{\alpha+1}\big)<\infty$.
\end{remark}

\begin{proof} [Proof of Lemma \ref{time_evol_H_2.4}]
Multiplying \eqref{energy} by $M(u_0)^\alpha $, which is ${\mathcal F}_0$-measurable, using the Davis inequality, 
 the Cauchy-Schwarz inequality, then  the mass conservation \eqref{mass_conserv}, and condition {\bf (H2)}, we obtain for $\tau < \tau^*(u_0)$ a.s.
\begin{align*}	
\EX \Big( & M(u_0)^\alpha \sup_{s\leq \tau} H\big( u(s)\big) \Big) \leq  
\EX \big( M(u_0)^\alpha H( u_0)\big) +  \frac{1}{2} M_\phi   \EX \Big(M( u_0)^\alpha \int_0^\tau M(u(s)) ds \Big)  \\ 
&
 \qquad \qquad \qquad \quad   + 3 \EX \Big[ \Big\{ \int_0^\tau \sum_{k\geq 0} 
  \Big( \int_{\RR^n}  M(u_0)^{\alpha}\bar{u}(s,x) \, \nabla u (s,x) \cdot \nabla (\phi e_k)(x) dx \Big)^2 ds
 \Big\}^{\frac{1}{2}} \Big]  \\ 
\leq  & \; \EX \big(M(u_0)^\alpha H( u_0)\big) +  \frac{1}{2} M_\phi  \EX \big(M( u_0)^{\alpha+1} \tau \big) \\
&  \qquad \qquad \qquad \quad   + 3  \EX \Big[ \Big\{ \int_0^\tau  M(u_0)^{2\alpha} 
 \Big( \int_{\RR^n} |\bar{u}(s,x)|^2  f^1_\phi(x) dx \Big) \Big( \int_{\RR^n}|\nabla u(s,x)|^2  dx \Big)  ds \Big\}^{\frac{1}{2}} \Big] \nonumber  \\
\leq  &\;  \EX \big( M(u_0)^\alpha H( u_0)\big) +  \frac{1}{2} M_\phi  \EX \big(M( u_0)^{\alpha +1} \tau \big)  + 3 \sqrt{M_\phi}  \EX \Big( \Big\{\!\! \int_0^\tau 
\!\! M\big( u_0\big)^{2\alpha +1} 
 \| \nabla u(s)\|_{L^2}^2 ds \Big\}^{\frac{1}{2}} \Big) \\
 \leq & \; \EX \big(M(u_0)^\alpha  H( u_0)\big) +  \frac{1}{2} M_\phi  \EX \big(M( u_0)^{\alpha +1}  \tau \big)  + 3 \sqrt{M_\phi} 
\EX \Big( \sqrt{\tau} M( u_0)^{\alpha + \frac{1}{2}} \sup_{s\leq \tau} \| \nabla u(s) \|_{L^2}\Big),
\end{align*} 
which completes the proof of the lemma.
\end{proof}
Now that we have either conservation of mass \eqref{mass_conserv} or bounds on energy \eqref{upp_bd_H}, we study the global behavior of solutions in each $L^2$-critical and intercritical cases. 

\subsection{The $L^2$-critical case}\label{mass_critical_Stra}
In this section we consider $s=0$, $\sigma \in \mathfrak R_{crit}$, that is, $\sigma = \frac{2}{n}$ with $n\leq 4$, where the local well-posedness 
is known as discussed in condition (H3); we remark that our results also hold conditionally (upon the local well-posedness) for $n>4$. 

We prove the following global existence result, first considering 
{\it deterministic} initial data (but evolved under the stochastic NLS flow) and then {\it random} initial data (also evolved under the SNLS flow). 

\begin{Th}\label{gwp_critical}
Let  $u_0\in H^1(\RR^n)$, $\sigma \in \mathfrak R_{crit}$, and $\phi$ satisfy condition {\bf (H1)} with $\kappa > 2n$ and condition {\bf (H2)}. 
Suppose that 
\begin{equation}\label{E:mass-condition}
\|u_0\|_{L^2(\RR^n)}< \|Q\|_{L^2(\RR^n)}.
\end{equation}
Then the unique solution $u(t)$ to \eqref{NLS_Stra} is global,
that is, $\tau^*(u_0)=+\infty$ a.s. .
\end{Th}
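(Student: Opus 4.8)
The plan is to combine the a.s.\ mass conservation \eqref{mass_conserv}, the sharp Gagliardo--Nirenberg bound \eqref{E:sc=0}, and the energy estimate of Lemma \ref{time_evol_H_2.4} into an a priori bound on $\EX\big(\sup_{s\le\tau}\|u(s)\|_{H^1}^2\big)$ that is \emph{uniform} over stopping times $\tau$ dominated by a fixed $T$, and then to conclude with the blow-up alternative of Theorem \ref{local_wp}. Since $u_0$ is deterministic, \eqref{mass_conserv} gives $\|u(s)\|_{L^2}=\|u_0\|_{L^2}<\|Q\|_{L^2}$ for all $s<\tau^*(u_0)$, so
\[
\delta_0 \defeq 1 - \|Q\|_{L^2(\RR^n)}^{-4/n}\,\|u_0\|_{L^2(\RR^n)}^{4/n} \in (0,1]
\]
is strictly positive and \eqref{E:sc=0} reads $\|\nabla u(s)\|_{L^2}^2 \le \tfrac{2}{\delta_0}H(u(s))$; in particular $H(u(s))\ge 0$ throughout.

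Next I would fix $T>0$ and $N\in\NN$ and set $\tau_N \defeq T\wedge\inf\{t\ge 0:\|u(t)\|_{H^1}\ge N\}$. Using the blow-up alternative one checks that $\tau_N<\tau^*(u_0)$ for every $N,T$: if $\tau^*(u_0)\le T$ the quantity $\|u(t)\|_{H^1}$ must exceed $N$ at some time strictly before $\tau^*(u_0)$, while if $\tau^*(u_0)>T$ then $\tau_N\le T<\tau^*(u_0)$. Hence $\sup_{s\le\tau_N}\|u(s)\|_{H^1}\le N$, the random variable $Y_N\defeq\sup_{s\le\tau_N}H(u(s))$ lies in $[0,N^2/2]$, and Lemma \ref{time_evol_H_2.4} applies with $\alpha=0$ and $\tau=\tau_N$ (its extra integrability hypotheses being trivial for deterministic $u_0$). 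Bounding $\tau_N\le T$, inserting $\sup_{s\le\tau_N}\|\nabla u(s)\|_{L^2}\le\sqrt{2/\delta_0}\,\sqrt{Y_N}$ from the first step, and then applying Jensen's inequality $\EX\sqrt{Y_N}\le\sqrt{\EX Y_N}$, we reach an inequality of the form
\[
\EX(Y_N) \le C_1(T) + C_2(T)\sqrt{\EX(Y_N)},\qquad
C_1(T)=H(u_0)+\tfrac12 M_\phi M(u_0) T,\quad
C_2(T)=3\sqrt{2 M_\phi M(u_0) T/\delta_0},
\]
with $C_1(T),C_2(T)$ independent of $N$. Solving this quadratic inequality gives $\EX(Y_N)\le C(T)$ with $C(T)$ independent of $N$, and therefore $\EX\big(\sup_{s\le\tau_N}\|u(s)\|_{H^1}^2\big)\le M(u_0)+\tfrac{2}{\delta_0}C(T)\eqqcolon C'(T)$.

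Finally, Chebyshev's inequality yields $\PX\big(\sup_{s\le\tau_N}\|u(s)\|_{H^1}\ge N\big)\le C'(T)/N^2$. On $\{\tau^*(u_0)\le T\}$ the blow-up alternative forces $\|u(\tau_N)\|_{H^1}=N$ (by continuity of $t\mapsto u(t)$ in $H^1$ up to $\tau_N<\tau^*(u_0)$), so $\{\tau^*(u_0)\le T\}\subseteq\{\sup_{s\le\tau_N}\|u(s)\|_{H^1}\ge N\}$; letting $N\to\infty$ gives $\PX(\tau^*(u_0)\le T)=0$, and since $T>0$ is arbitrary, $\tau^*(u_0)=+\infty$ a.s.

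The main obstacle is the circular coupling in Lemma \ref{time_evol_H_2.4}: the bound on $\EX(\sup_s H(u(s)))$ already carries $\EX(\sup_s\|\nabla u(s)\|_{L^2})$ on its right-hand side. This is exactly what the mass condition \eqref{E:mass-condition} buys us, since it upgrades \eqref{E:sc=0} to the genuine control $\|\nabla u\|_{L^2}^2\le\tfrac{2}{\delta_0}H(u)$ with a \emph{fixed} constant, allowing the loop to be closed through the elementary quadratic inequality above; a secondary technical point is the stopping-time localization by $\tau_N$, needed to ensure all expectations are finite before the uniform bound is established.
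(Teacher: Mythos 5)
Your proposal is correct and follows essentially the same route as the paper: mass conservation plus the sharp Gagliardo--Nirenberg coercivity \eqref{E:sc=0} to control $\|\nabla u\|_{L^2}^2$ by $H(u)$, the energy bound of Lemma \ref{time_evol_H_2.4} with $\alpha=0$ on a localized stopping time keeping the $H^1$ norm (hence the energy) finite, closing the self-referential inequality, and then ruling out $\tau^*(u_0)\le T$ via a Markov/Chebyshev-type argument. The only differences are cosmetic: you close the loop with Jensen and an explicit quadratic inequality where the paper uses Cauchy--Schwarz plus Young with absorption, and you conclude directly by Chebyshev and $N\to\infty$ where the paper argues by contradiction along a sequence of stopping times increasing to $\tau^*(u_0)$.
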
 

\begin{proof}
In the proof, despite the fact that $u_0$ is deterministic, we write upper estimates in terms of $\EE(M(u_0))$, which later is used for a random initial data. 
Recalling the inequality \eqref{E:sc=0} in the case
when $\sigma = \frac{2}{n}$, 
we let $\mu = 1- \Big( \frac{\|u_0\|_{L^2(\RR^n)}}{\|Q\|_{L^2(\RR^n)}} \Big)^{\frac{4}{n}}$; 
then the condition \eqref{E:mass-condition} guarantees that $\mu \in (0,1)$.

For any $R>0$ and any stopping time $\tau < \tau^*(u_0)$ a.s. set
$$
\tau_R=\inf\{ t\geq 0 \; : \; \|u(t)\|_{\dot{H}^1(\RR^n)} \geq R\} \wedge \tau.
$$ 

Using the mass conservation \eqref{mass_conserv},
we deduce that if $u$ is the solution to \eqref{NLS_Stra}, then 
the inequality \eqref{E:sc=0} yields 
\begin{equation}\label{upper_gradient_masscrit}
\| \nabla u(t) \|_{L^2(\RR^n)}^2 \leq \frac{2 H( u(t) )}{\mu} \quad \mbox{a.s.}
\end{equation}
for any time $t<\tau^*(u_0)$.
Substituting this upper estimate in \eqref{upp_bd_H} with $\alpha =0$, we deduce that 
for any stopping time $\tau <\tau^*(u_0)$ a.s., $R>0$  and $\epsilon >0$, we obtain
\begin{align*}
\EX \Big( &\sup_{s\leq \tau_R} H\big( u(s)\big) \Big) \leq  H( u_0)+  \tfrac{1}{2} M_\phi   \EX \big( M( u_0)\tau_R\big)
+ 3 \sqrt{M_\phi} 
\EX \Big( \sqrt{\tau_R} \sqrt{M\big( u_0\big)} \sup_{s\leq \tau_R}  \sqrt{\tfrac{2}{\mu} \, H\big( u(s)\big)} \Big)  \nonumber \\
&\leq H( u_0) +  \tfrac{1}{2} M_\phi \EX \big( M( u_0)\tau_R\big)
 + 3 \sqrt{M_\phi}  \sqrt{\tfrac{2}{\mu}}   \big[  \EX \big( M( u_0)\tau_R\big) \big]^{\frac{1}{2}}
\Big[ \EX \Big(  \sup_{s\leq \tau_R}  H\big( u(s)\big) \Big)\Big]^{\frac{1}{2}}  \nonumber \\
&\leq  H(u_0) + M_\phi  \EX \big(M( u_0) \tau_R\big) \, \Big[ \frac{1}{2} + \frac{9}{4} \, \frac{2}{\mu} \, \frac{1}{\epsilon} \Big] + \epsilon \,
\EX \Big(  \sup_{s\leq \tau_R}  H\big( u(s)\big) \Big),
\end{align*} 
where in the second line we used the Cauchy-Schwarz inequality and in the last one the Young inequality.  
 
From the definition of the Hamiltonian, it trivially follows that $H(u)\leq \frac{1}{2} \|\nabla u\|_{L^2(\RR^n)}^2$,
and hence, we have $\EX \big[ \sup_{s\leq \tau_R} H\big( u(s)\big) \big] \leq \frac{R^2}{2}<\infty$. 
This implies that for $\epsilon \in (0,1)$  and any $R>0$
\begin{equation}    \label{upper_H}
\EX  \Big( \sup_{s\leq \tau_R} H\big( u(s)\big) \Big) \leq   \frac{1}{1-\epsilon} \Big[  H(u_0) + M_\phi  \EX \big( M( u_0) \tau_R \big) \Big( \frac{1}{2} +
\frac{9}{2\mu \epsilon}\Big) \Big].
\end{equation}
As $R\to +\infty$, we have $\tau_R\to \tau$ a.s. and $\sup_{s\leq \tau_R} H\big( u(s)\big) $ increases to  $\sup_{s\leq \tau} H\big( u(s)\big) $ a.s.. 
Since $u_0$ has finite energy,
the monotone convergence theorem implies that for $\epsilon \in (0,1)$, 
\[ \EX  \Big( \sup_{s\leq \tau} H\big( u(s)\big) \Big) \leq   \frac{1}{1-\epsilon} \Big[  H(u_0) + M_\phi  \EX \big( M( u_0)\tau\big) \Big( \frac{1}{2} +
\frac{9}{2\mu \epsilon}\Big) \Big].
\]
Hence, using  \eqref{upper_gradient_masscrit}, we deduce that  for every stopping time $\tau <\tau^*(u_0)$  and $T>0$, we have 
\begin{equation}    \label{Esup_H^1} 
\EX  \Big( \sup_{s\leq \tau\wedge T } \|\nabla u(s)\|_{L^2}^2  \Big) \leq  
\frac{2}{\mu(1-\epsilon)} \Big[  H(u_0) +  M_\phi  T \, \EX \big(  M( u_0)\big) \Big( \frac{1}{2} +
\frac{9}{2\mu \epsilon}\Big) \Big].
\end{equation}
Suppose that $P(\tau^*(u_0)<\infty)>0$. Then there exists $T_0>0$ such that $P(\tau^*(u_0)< T_0)=a>0$. Let $\{ \tau_N\}_N$ be a sequence of stopping times increasing to $\tau^*(u_0)$. Since mass is preserved, almost surely, using Theorem~\ref{local_wp}, we deduce that for $N$ large enough, we have $P(\sup_{s\leq \tau_N\wedge T_0} \|\nabla u(s)\|_{L^2}^2\geq N)\geq \frac{a}{2}$. 
This implies $\EX \big( \sup_{s\leq \tau_N \wedge T_0} \|\nabla u(s)\|_{L^2}^2 \big) \geq \frac{Na}{2}$, which contradicts \eqref{Esup_H^1} for $N$ large enough. 

Therefore,   $\tau^*(u_0)=+\infty$ a.s., and we have global well-posedness in the $L^2$-critical case. 
\end{proof}

The next result gives sufficient conditions for {\it random} initial data to have a.s. global well-posedness.
\begin{Th}\label{gwp_critical_randomIC}
Let 
$u_0$ be an $H^1(\RR^n)$-valued, $\sigma \in \mathfrak R_{crit}$, ${\mathcal F}_0$-measurable random initial condition, $\phi$ satisfy condition {\bf (H1)} 
with $\kappa > 2n$ and condition {\bf (H2)}. 
Suppose  that for some constant $\nu \in (0,1)$
\begin{equation}\label{E:mass-nu} 
\|u_0\|_{L^2(\RR^n)}\leq  \nu \|Q\|_{L^2(\RR^n)} ~~\rm{a.s.}  
\end{equation} 
and $\EX(\| u_0\|_{H^1(\RR^n)}^2)<+\infty$. 
Then the unique solution $u(t)$ to \eqref{NLS_Stra} is global,
that is, $\tau^*(u_0)=+\infty$ a.s..
\end{Th}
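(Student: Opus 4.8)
The plan is to mimic the proof of Theorem \ref{gwp_critical}, but now carrying the (random) initial mass through every estimate and using the deterministic bound \eqref{E:mass-nu} at the key place where \eqref{E:sc=0} is invoked. First I would fix $\mu = 1-\nu^{4/n} \in (0,1)$; since $\|u_0\|_{L^2}^{4/n} \le \nu^{4/n}\|Q\|_{L^2}^{4/n}$ a.s., the inequality \eqref{E:sc=0} together with mass conservation \eqref{mass_conserv} gives, almost surely and for all $t<\tau^*(u_0)$,
\[
\|\nabla u(t)\|_{L^2(\RR^n)}^2 \le \frac{2H(u(t))}{\mu},
\]
exactly as in \eqref{upper_gradient_masscrit}, the point being that the coefficient $\mu$ is now a deterministic constant, not a random one, so it can be pulled out of all expectations.

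Next I would introduce the localizing stopping times $\tau_R = \inf\{t\ge 0 : \|u(t)\|_{\dot H^1(\RR^n)} \ge R\}\wedge \tau$ for a generic stopping time $\tau < \tau^*(u_0)$, and feed the bound above into Lemma \ref{time_evol_H_2.4} with $\alpha = 0$. This yields
\[
\EX\Big(\sup_{s\le \tau_R} H(u(s))\Big) \le \EX(H(u_0)) + \tfrac12 M_\phi \,\EX(M(u_0)\tau_R) + 3\sqrt{M_\phi}\,\EX\Big(\sqrt{\tau_R}\,\sqrt{M(u_0)}\,\sup_{s\le\tau_R}\sqrt{\tfrac{2}{\mu}H(u(s))}\Big).
\]
Applying Cauchy--Schwarz (in $\omega$) to split the last term, then Young's inequality with parameter $\epsilon\in(0,1)$, and absorbing the resulting $\epsilon\,\EX(\sup_{s\le\tau_R}H(u(s)))$ into the left-hand side — which is legitimate because $\EX(\sup_{s\le\tau_R}H(u(s))) \le R^2/2 < \infty$ by the crude bound $H(u)\le \tfrac12\|\nabla u\|_{L^2}^2$ and because $\EX(\|u_0\|_{H^1}^2)<\infty$ controls $\EX(H(u_0))$ and $\EX(M(u_0))$ — one obtains
\[
\EX\Big(\sup_{s\le\tau_R} H(u(s))\Big) \le \frac{1}{1-\epsilon}\Big[\EX(H(u_0)) + M_\phi\,\EX(M(u_0)\tau_R)\Big(\tfrac12 + \tfrac{9}{2\mu\epsilon}\Big)\Big].
\]
Letting $R\to\infty$, so that $\tau_R\uparrow\tau$ and the suprema increase, the monotone convergence theorem removes the truncation, and then \eqref{upper_gradient_masscrit} converts the energy bound into a bound on $\EX(\sup_{s\le\tau\wedge T}\|\nabla u(s)\|_{L^2}^2)$ that is linear in $T$ and finite for every $T>0$.

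Finally I would run the same contradiction argument as at the end of Theorem \ref{gwp_critical}: if $P(\tau^*(u_0)<\infty)>0$ then there is $T_0$ with $P(\tau^*(u_0)<T_0)=a>0$, and taking a sequence of stopping times $\tau_N\uparrow\tau^*(u_0)$, Theorem \ref{local_wp} (blow-up alternative) forces $P(\sup_{s\le\tau_N\wedge T_0}\|\nabla u(s)\|_{L^2}^2\ge N)\ge a/2$ for $N$ large, hence $\EX(\sup_{s\le\tau_N\wedge T_0}\|\nabla u(s)\|_{L^2}^2)\ge Na/2$, contradicting the uniform-in-$N$ bound with $T=T_0$. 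I do not expect a genuine obstacle here: the only point requiring care — and the reason the hypothesis is stated with a \emph{deterministic} $\nu$ rather than just $\|u_0\|_{L^2}<\|Q\|_{L^2}$ a.s. — is that $\mu$ must be bounded away from $0$ uniformly in $\omega$, otherwise the factor $1/\mu$ inside the expectations could blow up; the extra integrability hypothesis $\EX(\|u_0\|_{H^1}^2)<\infty$ similarly ensures every expectation appearing is finite and that the absorption step and the final contradiction are valid.
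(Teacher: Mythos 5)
Your proposal is correct and follows essentially the same route as the paper: replace the random $\tilde\mu(\omega)$ by the deterministic lower bound $\mu=1-\nu^{4/n}$ coming from \eqref{E:mass-nu}, substitute $\EX(H(u_0))$ and $\EX(M(u_0))$ (finite by the $H^1$-moment hypothesis) into the estimates of Theorem \ref{gwp_critical}, and conclude with the same localization, monotone convergence, and blow-up-alternative contradiction. No gaps.
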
 
\begin{proof}
For an $\omega \in \Omega$, set $\tilde{\mu}(\omega) =  1- \Big( \frac{\|u_0(\omega)\|_{L^2(\mathbb R^n)}}{\|Q\|_{L^2(\mathbb R^n)}} \Big)^{\frac{4}{n}}$,
which by the assumption \eqref{E:mass-nu} implies that $\tilde \mu(w) \in (0,1)$. Even more precise, 
\eqref{E:mass-nu} yields $\tilde{\mu}(\omega) \geq \mu:= 1- \nu^{\frac{4}{n}}$ a.s., where the constant $\mu$ is positive.
Notice that since $\EX \big( \| \nabla u_0\|_{L^2}^2\big)<\infty$, we have $\EX \big( H(u_0)\big) <\infty$.
We then proceed as in the proof of Theorem \ref{gwp_critical}, replacing $H(u_0)$ by $\EX \big( H(u_0) \big)$ in upper estimates of the
energy of the solution  \eqref{upper_H}. 
Since $\EX \big( M(u_0)\big) <\infty$, we deduce that  for any stopping time $\tau < \tau^*(u_0)$ and any $T>0$, we have  $\EX\big( M(u_0) (\tau\wedge T) \big) < T \, \EX \big( M(u_0)\big)$.
Proceeding as in the proof of Theorem \ref{gwp_critical} completes this argument. 
\end{proof}

\subsection{Intercritical case}\label{inter_Stra}
In this section we consider the intercritical range $0<s_c<1$, which implies that due to the available local well-posedness (namely, the restriction in dimension 5) from Theorem 
\ref{local_wp}, unconditionally we can only consider the nonlinearity power $\sigma \in \mathfrak R_{inter}$.
Note that our results hold for the rest of the intercritical range {\it conditionally} upon the local existence. 
We next prove one of the main results of this section, which gives an estimate for the existence time of the solution.  
Similar to the deterministic case, we consider a quantity $X(t)$ similar to that in  \eqref{E:defX},  
and as in \eqref{E:X}, we multiply the energy by $M(u_0)^{\alpha}$ with $\alpha$ from \eqref{E:alpha}.

We first deal with a {\it deterministic} initial condition. 

\begin{Th} \label{th-wp-inter-multi}
Consider \eqref{NLS_Stra} with $\sigma \in \mathfrak {R}_{inter}$ and recall $\alpha$ from \eqref{E:alpha}.
Let   $u_0\in H^1(\RR^n)$  
be such that for some $\beta \in (0,1)$, we have  
\begin{equation}\label{Hyp_HM}
H(u_0) M(u_0)^\alpha = \beta  H(Q) M(Q)^\alpha 
\end{equation}
and
\begin{equation}\label{Hyp_gradM}
\| \nabla u_0\|_{L^2(\RR^n)} \|u_0\|_{L^2(\RR^n)}^\alpha < \|\nabla Q\|_{L^2(\RR^n)} \|Q\|_{L^2(\RR^n)}^\alpha.
\end{equation}

For $M_\phi$ as in \eqref{M_phi}, define 
\begin{equation}\label{T_max}
T^\ast = 
\frac{\big[\sqrt{n+{ 2s_c } (1 - \beta)/9\,} - \sqrt{n}\big]^2}{2(1-s_c)}
\frac{9}{M_\phi} \Big( \frac{M(Q)}{M(u_0)}\Big)^{\frac1{s_c}}.
\end{equation}

Then $P(\tau^*(u_0) >T)>0$ for $T<T^*$
and 
\begin{equation}\label{lower_E_tau}
\EX (\tau^*(u_0)) \geq  {T}^*.
\end{equation} 
Furthermore, the $L^2$ norm of the gradient remains uniformly bounded on $[0,t]$ for any $\displaystyle t \leq \sup_{\delta <1} \tau_\delta$, 
where $\tau_\delta$ is defined in \eqref{tau_delta} and the bound is given in \eqref{E:grad-bound1}.
\end{Th}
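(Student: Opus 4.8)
The plan is to run a continuity/bootstrap argument on the scale-invariant quantity $X(t) = \|\nabla u(t)\|_{L^2}\|u_0\|_{L^2}^\alpha$, exploiting mass conservation \eqref{mass_conserv} to freeze the $\|u(t)\|_{L^2}^\alpha$ factor at its initial value. From \eqref{E:X}, along the flow we have $X(t)^2 - B\,X(t)^{n\sigma} \le 2\,H(u(t))\,M(u_0)^\alpha$, and by \eqref{E:normQ1}--\eqref{constant_GN} the function $g(y) = y^2 - B\,y^{n\sigma}$ has its maximum exactly at $y = \|\nabla Q\|_{L^2}\|Q\|_{L^2}^\alpha$ with value $g\big(\|\nabla Q\|_{L^2}\|Q\|_{L^2}^\alpha\big) = 2\,H(Q)M(Q)^\alpha$ (using $n\sigma > 2$ in the intercritical range). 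First I would fix $\delta < 1$ and define the stopping time
\begin{equation}\label{tau_delta}
\tau_\delta = \inf\Big\{ t \geq 0 \,:\, \|\nabla u(t)\|_{L^2(\RR^n)}\|u_0\|_{L^2(\RR^n)}^\alpha \geq \delta^{-1/2}\, \|\nabla Q\|_{L^2(\RR^n)}\|Q\|_{L^2(\RR^n)}^\alpha \Big\} \wedge \tau^*(u_0),
\end{equation}
chosen so that, for $\delta$ close enough to $1$ depending on $\beta$, the solution stays strictly on the "subcritical" branch $\{y < \|\nabla Q\|_{L^2}\|Q\|_{L^2}^\alpha\}$ where $g$ is increasing — this is where \eqref{Hyp_gradM} is used to place $u_0$ on the correct side.

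The core estimate is to control $\EX\big(\sup_{s \le \tau_\delta \wedge T} H(u(s)) M(u_0)^\alpha\big)$ via Lemma \ref{time_evol_H_2.4}. On the interval $[0,\tau_\delta]$ we have the a priori bound $\|\nabla u(s)\|_{L^2} \le \delta^{-1/2}\|\nabla Q\|_{L^2}\,(M(Q)/M(u_0))^{\alpha/2}\cdot \|Q\|_{L^2}^{\alpha}/\|u_0\|_{L^2}^\alpha$; substituting this into the third term of \eqref{upp_bd_H} (with $\alpha$ the intercritical exponent) and using \eqref{H_Q} to rewrite $\|\nabla Q\|_{L^2}^2 = (n/s_c)H(Q)$, I would obtain, after Cauchy--Schwarz and Young, an inequality of the form
\begin{equation*}
\EX\Big(\sup_{s\le\tau_\delta\wedge T} H(u(s))M(u_0)^\alpha\Big) \le \beta\, H(Q)M(Q)^\alpha + C_1\, T\, M_\phi\, M(u_0)^{\alpha+1} + C_2\sqrt{M_\phi T}\,\sqrt{\tfrac{n}{s_c\delta}}\,\sqrt{H(Q)}\,M(Q)^{(\alpha+1)/2}\sqrt{\EX(\cdots)},
\end{equation*}
which after absorbing the square-root term gives an explicit bound on the left-hand side. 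Then I would re-inject \eqref{E:X}: on $\{\tau_\delta < \tau^*(u_0)\}\cap\{\tau_\delta \le T\}$ one has $X(\tau_\delta) = \delta^{-1/2}\|\nabla Q\|_{L^2}\|Q\|_{L^2}^\alpha$, so $g(X(\tau_\delta)) \le 2H(u(\tau_\delta))M(u_0)^\alpha$; but for $\delta$ sufficiently close to $1$ (the precise threshold produces the constant $\sqrt{n + 2s_c(1-\beta)/9}-\sqrt n$ appearing in \eqref{T_max}), $g(\delta^{-1/2}\|\nabla Q\|_{L^2}\|Q\|_{L^2}^\alpha)$ strictly exceeds $2\beta H(Q)M(Q)^\alpha$ plus the small-$T$ error, a contradiction once $T < T^*$. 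Optimizing over $\delta$ and tracking the algebra (using $M(Q)^{\alpha+1} = M(Q)^{1/s_c}$ since $\alpha+1 = 1/s_c$) yields precisely \eqref{T_max}; this gives $P(\tau_\delta > T) > 0$, hence $P(\tau^*(u_0) > T) > 0$, and a Borel--Cantelli / Fatou argument over a sequence $T \uparrow T^*$ gives $\EX(\tau^*(u_0)) \ge T^*$ as in \eqref{lower_E_tau}. The gradient bound claimed in the theorem is then read off as
\begin{equation}\label{E:grad-bound1}
\|\nabla u(t)\|_{L^2(\RR^n)} \le \Big(\sup_{\delta<1}\delta^{-1/2}\Big)^{\!-}\cdot\frac{\|\nabla Q\|_{L^2(\RR^n)}\|Q\|_{L^2(\RR^n)}^\alpha}{\|u_0\|_{L^2(\RR^n)}^\alpha} = \frac{\|\nabla Q\|_{L^2(\RR^n)}\|Q\|_{L^2(\RR^n)}^\alpha}{\|u_0\|_{L^2(\RR^n)}^\alpha}, \qquad t \le \sup_{\delta<1}\tau_\delta,
\end{equation}
i.e. $X(t) \le \|\nabla Q\|_{L^2}\|Q\|_{L^2}^\alpha$ on that interval, the exact analog of \eqref{E:gm1-t}.

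The main obstacle I anticipate is the self-referential nature of the energy bound: the third term of \eqref{upp_bd_H} contains $\sup_{s\le\tau}\|\nabla u(s)\|_{L^2}$, which is controlled by $H(u(s))$ only through \eqref{E:X}, and that control degenerates near the critical value $\|\nabla Q\|_{L^2}\|Q\|_{L^2}^\alpha$ where $g$ has zero derivative. The delicate point is choosing the cutoff level $\delta^{-1/2}$ so that we stay far enough from criticality for $g$ to be quantitatively bounded below away from $2\beta H(Q)M(Q)^\alpha$, yet close enough that the resulting $T^*$ is nontrivial — balancing these forces the particular constant in \eqref{T_max}. A secondary technical nuisance is that closing the Young-inequality absorption requires the left-hand side of \eqref{upp_bd_H} to be a priori finite, which is handled exactly as in the proof of Theorem \ref{gwp_critical} by first working with the truncated stopping time $\tau_\delta \wedge \tau_R$, bounding $\sup H \le R^2/2$, and then letting $R \to \infty$ by monotone convergence.
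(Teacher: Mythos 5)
Your mechanism for the first assertion is essentially the paper's, with one structural change: you stop when the gradient quantity $X(t)=\|\nabla u(t)\|_{L^2}\|u_0\|_{L^2}^\alpha$ reaches the level $\delta^{-1/2}x^*$ (a level \emph{above} the peak $x^*=\|\nabla Q\|_{L^2}\|Q\|_{L^2}^\alpha$), whereas the paper stops when the energy--mass quantity $M(u_0)^\alpha H(u(s))$ reaches $\delta\,H(Q)M(Q)^\alpha$, see \eqref{tau_delta}. Your variant can be made to work for $P(\tau^*(u_0)>T)>0$: up to your stopping time the gradient is bounded by the \emph{deterministic} level, so the third term of \eqref{upp_bd_H} is bounded by an explicit constant, and at the stopping time the pointwise inequality \eqref{E:X} forces $M(u_0)^\alpha H\ge f(\delta^{-1/2}x^*)\to H(Q)M(Q)^\alpha$; Markov's inequality at that threshold reproduces, as $\delta\to 1$, exactly the paper's quadratic in $\sqrt{M_\phi T}$ and hence \eqref{T_max}. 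But your displayed intermediate estimate is muddled: with your stopping time there is no $\sqrt{\EX(\cdots)}$ term to absorb and no Young step is needed, and if the $\sqrt{H(Q)}\sqrt{\EX(\cdots)}$ factor reflects a bound of $\|\nabla u\|_{L^2}$ by $\sqrt{H(u)}$, that is not legitimate in the intercritical range (that inversion of $f$ is exactly what degenerates at $x^*$, as you note yourself). Also, ``a contradiction once $T<T^*$'' is not pathwise: the energy can exceed the threshold with positive probability; the correct conclusion is $P(\tau_\delta\le T)<1$ via Markov.

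There are two genuine gaps. First, \eqref{lower_E_tau}: ``Borel--Cantelli / Fatou over $T\uparrow T^*$'' does not prove $\EX(\tau^*(u_0))\ge T^*$. Knowing $P(\tau^*(u_0)>T)>0$ for every $T<T^*$ only gives $\EX(\tau^*(u_0))\ge T\,P(\tau^*(u_0)>T)$, which is vacuous if the probabilities are small. The paper's proof is a separate argument: assuming $\tau_\delta<\infty$ a.s.\ (otherwise the bound is trivial), a.s.\ continuity gives $\EX\big(\sup_{s\le\tau_\delta}M(u_0)^\alpha H(u(s))\big)=\delta\,M(Q)^\alpha H(Q)$, and inserting this \emph{lower} bound into \eqref{upp_bd_H}, together with \eqref{Q_gradQ} and Cauchy--Schwarz, yields a reversed quadratic inequality in $Z=\sqrt{M_\phi\,\EX(\tau_\delta)}$, forcing $Z\ge Z_2(\delta)$ and hence $\EX(\tau_\delta)\ge Z_2(\delta)^2/M_\phi\to T^*$ as $\delta\to1$. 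Your framework could support the same reasoning (at your stopping time the energy is at least $f(\delta^{-1/2}x^*)$), but no such argument appears in the proposal. Second, the uniform gradient bound \eqref{E:grad-bound1}: with your stopping time the only bound on $[0,\tau_\delta]$ is $X\le\delta^{-1/2}x^*>x^*$, and your claim that the solution ``stays strictly on the subcritical branch'' is unsupported---nothing in your setup prevents $X$ from crossing $x^*$ before reaching $\delta^{-1/2}x^*$ (indeed at your $\tau_\delta$ it equals $\delta^{-1/2}x^*$). Moreover, with your definition $\tau_\delta$ \emph{increases} as $\delta$ decreases, so $\sup_{\delta<1}\tau_\delta$ is essentially $\tau^*(u_0)$ and $\sup_{\delta<1}\delta^{-1/2}=\infty$, so the displayed limit is not meaningful. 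The bound $X\le x^*$ in the paper comes from a trapping argument that needs the energy-based stopping time: on $[0,\tau_\delta]$ one has $f(X(t))\le\delta f(x^*)<f(x^*)$, and since $X(0)<x^*$ by \eqref{Hyp_gradM} and $X$ is a.s.\ continuous, $X$ cannot reach the peak, which gives \eqref{Q_gradQ} and \eqref{E:grad-bound1}; this is where \eqref{Hyp_gradM} genuinely enters, and it is missing from your argument.
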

\begin{proof}
We first prove that the given assumptions on the initial condition imply a certain upper bound on the gradient of the solution.

For any stopping time $\tau < \tau^*(u_0)$ a.s., 
we introduce a quantity similar to \eqref{E:defX}, that is, 
\begin{equation}\label{E:X-1}		
X(\tau)=\sup_{s\leq \tau} \| \nabla u(s)\|_{L^2(\RR^n)} \|u_0\|_{L^2(\RR^n)}^\alpha. 
\end{equation}
Using the mass conservation \eqref{mass_conserv}, the definition of the energy \eqref{M-H} and the Gagliardo-Nirenberg inequality \eqref{GN}, 
in a similar fashion as for \eqref{upper_gradient} we deduce that for $t<\tau^*(u_0)$ a.s.
\begin{align*}
\| \nabla u(t)\|_{L^2(\RR^n)}^2& \| u(t)\|_{L^2(\RR^n)}^{2\alpha} =  \; 
\| \nabla u(t)\|_{L^2(\RR^n)}^2 \| u_0\|_{L^2(\RR^n)}^{2\alpha}\\
 \leq & \; 2 H\big( u(t)\big) \|u_0\|_{L^2(\RR^n)}^{2\alpha} + B
 \| \nabla  u(t)\|_{L^2(\RR^n)}^{n\sigma} \| u_0\|_{L^2(\RR^n)}^{2-(n-2)\sigma + 2\alpha} \quad \mbox{\rm a.s.},
\end{align*}
where $C_{GN}$ is  defined in \eqref{constant_GN}  and $B = \frac{C_{GN}}{\sigma +1}$ as in \eqref{E:X}.
The definition of $\alpha$ implies $2-(n-2)\sigma +2\alpha = \alpha n\sigma$, and hence, we get
$$
X(\tau)^2 \leq 2 \sup_{s\leq \tau} H\big( u(s)\big) \| u_0\|_{L^2(\RR^n)}^{2\alpha} 
+  B X(\tau)^{n\sigma},
$$
which we re-write as the following analog of \eqref{E:X}, 
\begin{equation}\label{E:X-2}
X(\tau)^2 - B X(\tau)^{n\sigma} \leq 2 M(u_0)^\alpha \sup_{s\leq \tau} H\big( u(s)\big).
\end{equation}
Take $\delta \in (\beta,1)$ and let 
\begin{equation}		\label{tau_delta}
\tau_\delta = \inf\Big\{ t\geq 0 \; : \; \sup_{s\leq t} M(u_0)^\alpha H\big( u(s)\big) \geq \delta M(Q)^\alpha H(Q)\Big\} \wedge \tau. \
\end{equation}
Since $u\in C([0, \tau_\delta];H^1(\RR^n))$ a.s., from \eqref{E:X-2} and definition of $\tau_\delta$ we deduce  
\begin{equation}\label{ineq_delta}
X(\tau_\delta)^2 - B X(\tau_\delta)^{n\sigma} \leq 2\delta H(Q) M(Q)^\alpha\quad \mbox{\rm a.s.}. 
\end{equation}
Set $f(x)=\frac{1}{2} \big( x^2-Bx^{n\sigma}\big)$ for $x\geq 0$. 
The function $f$ achieves its maximum at 
\begin{equation}		\label{def_x*}
x^*=\Big( \frac{2}{Bn\sigma}\Big)^{\frac{1}{n\sigma -2}} >0.
\end{equation}
Recall that $n\sigma >2$ (and $B>0$), hence, the function $f$ is an inverted down parabola, which is increasing on the interval $(0,x^*)$ and decreasing
 on $(x^*,\infty)$. Using the definition of $B$ (and Pokhozhaev identities \eqref{E:normQ1}, \eqref{E:normQ2}, \eqref{H_Q}, see also the deterministic case 
 \cite{HR2007}, \cite{HR2008}), the values of $x^*$ and $f(x^*)$ are related with the ground state $Q$ as (see also Figure \ref{F:1})
\begin{equation}\label{identification}
x^*=\| \nabla Q\|_{L^2(\RR^n)} \|Q\|_{L^2(\RR^n)}^\alpha \quad \mbox{\rm  and } \quad f(x^*)=  H(Q) M(Q)^\alpha.
\end{equation}

\vspace{-.5cm}
\begin{figure}[h!]
\includegraphics[width=0.49\hsize,height=0.32\hsize]{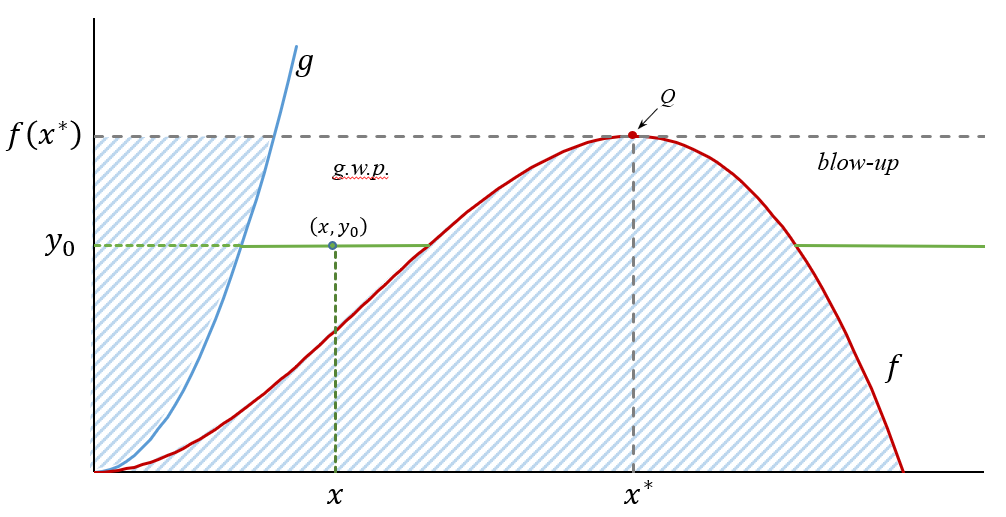}
\includegraphics[width=0.5\hsize,height=0.32\hsize]{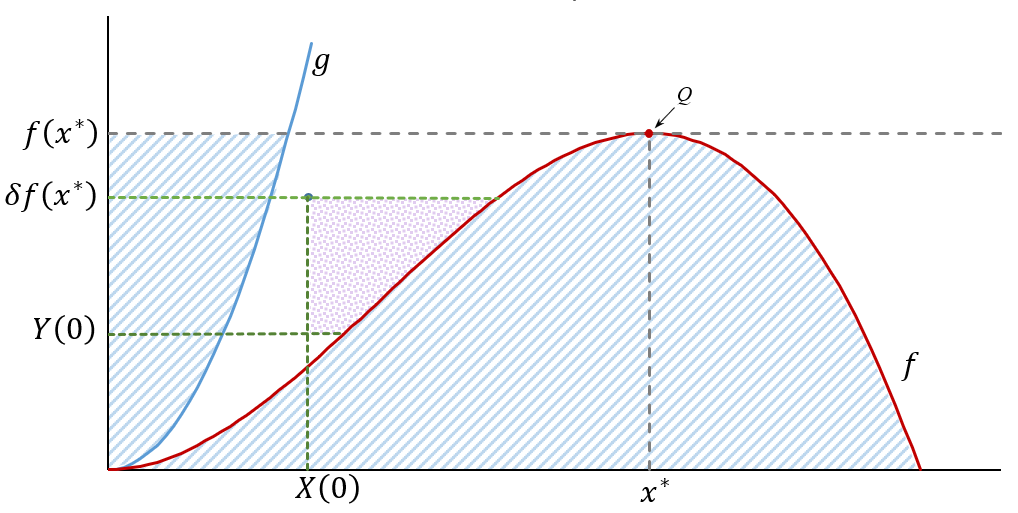}
\caption{\small {Comparison of {\it deterministic} (left) and {\it stochastic} (right) cases in the intercritical setting: $g(x) = \frac12 x^2$, $f(x) = \frac12 (x^2-Bx^{n\sigma})$,   
 $x^{\ast} = \|\nabla Q\|_{L^2} \|Q\|_{L^2}^\alpha$, $f(x^\ast) = H(Q)M(Q)^\alpha$. In the deterministic case initially starting at time $t=0$ at $x = \|\nabla u(t)\|_{L^2} \|u_0\|_{L^2}^\alpha$,
  the solution stays on the green line $y_0 = H(u_0)M(u_0)^\alpha$ (conserved in time), meaning that $\|\nabla u(t)\|_{L^2}$ is bounded for all time $t$, being trapped on the left of $f$
   (and thus, globally well-posed), while in the stochastic case starting at $X(0) = \|\nabla u_0\|_{L^2} \|u_0\|^\alpha_{L^2}$, $Y(0) =  H(u_0) M(u_0)^\alpha$, 
   the solution stays in the purple area up to time $\tau_\delta: = \inf\{s \geq 0: H(u(s))M(u_0)^\alpha \geq \delta f(x^\ast)\}$ with
    $X(\tau_\delta) = \sup_{s \leq \tau_\delta} \|\nabla u(s)\|_{L^2} \|u_0\|^\alpha_{L^2}$, $Y(\tau_\delta) = \sup_{s \leq \tau_\delta} H(u(s)) M(u_0)^\alpha \leq \delta f(x^\ast)$. } }
\label{F:1}
\end{figure}

Since  $X(0)=\| \nabla u_0\|_{L^2(\RR^n)} \|u_0\|_{L^2(\RR^n)}^\alpha <x^*= \|\nabla Q\|_{L^2(\RR^n)} \|Q\|_{L^2(\RR^n)}^\alpha$, 
 by a.s. continuity of $\| \nabla u(t)\|_{L^2(\RR^n)}$ on the (random) time
interval $[0, \tau_\delta]$, we deduce that $X(\tau_\delta)<x^*$ a.s. Therefore,  
\begin{equation}\label{Q_gradQ}
\|\nabla u(s)\|_{L^2(\RR^n)} \|u_0\|_{L^2(\RR^n)}^\alpha < \| \nabla Q\|_{L^2(\RR^n)}
\|Q\|_{L^2(\RR^n)}^\alpha\quad \mbox{\rm a.s. for}~~ s\leq \tau_\delta,
\end{equation}
see also Figure \ref{F:1} right.
We point out that due to the estimate \eqref{Q_gradQ} on the time interval $[0,\tau_\delta]$, we have a uniform bound on the gradient of $u$, namely, 
\begin{equation}\label{E:grad-bound1}
\sup_{s\leq \tau^*(u_0)} \|\nabla u(s)\|^2_{L^2(\RR^n)} \leq \frac{n}{2(1-s_c)}  \frac{M(Q)^{1+\alpha}}{M(u_0)^\alpha}
\quad \mbox{a.s..}
\end{equation}

Let $T>0$. We next find constraints on $M_\phi$ to ensure that $P(\tau_\delta >T)>0$. The Markov inequality, the
 upper estimate \eqref{upp_bd_H} and the inequality \eqref{Q_gradQ}  imply
\begin{align}\label{upper_P}
P(\tau_\delta & \leq T)= P\Big( \sup_{s\leq T\wedge \tau_\delta} M(u_0)^\alpha H\big(u(s)\big)  \geq \delta M(Q)^\alpha H(Q)\Big)  \nonumber \\
\leq & \frac{1}{\delta M(Q)^\alpha H(Q)} \Big[ M(u_0)^\alpha H(u_0) + \frac{1}{2} M_\phi M(u_0)^{\alpha +1} T +
3 \sqrt{M_\phi} M(u_0)^{ \alpha +\frac{1}{2}}  \sqrt{T}  \frac{\|Q\|_{L^2}^\alpha}{\|u_0\|_{L^2}^\alpha} \|\nabla Q\|_{L^2} \Big] \nonumber \\
\leq & \frac{\beta}{\delta} + \frac{1}{2} M_\phi T \, \frac{M(u_0)^{\alpha + 1}}{\delta M(Q)^\alpha H(Q)} + 3 \sqrt{M_\phi T} \, 
\frac{ M(u_0)^{\frac{\alpha+ 1}{2}} \|Q\|_{L^2}^\alpha \|\nabla Q\|_{L^2}}{\delta M(Q)^\alpha  H(Q)} .
\end{align}
Note that on the right-hand side we have a quadratic polynomial in the variable $Y \defeq \sqrt{M_\phi T}$. 

Given $\gamma \in \big( \frac{\beta}{\delta},1\big)$, we would like to 
find $\epsilon(\gamma, \delta )>0$ such that 
\begin{equation}		\label{**}
 P(\tau_\delta \leq T)\leq \gamma \quad \mbox{\rm for } \; M_\phi T\leq \epsilon(\gamma, \delta).
\end{equation}
Denoting 
\begin{equation}		\label{a_b_c}
a=\frac{1}{2} M(u_0)^{\alpha+1}, \; b=3 M(u_0)^{ \frac{\alpha+1}{2}} \|Q\|_{L^2}^\alpha \| \nabla Q\|_{L^2} \; ~\mbox{\rm and } \; ~
c=\Big( \gamma - \frac{\beta}{\delta}\Big) \delta M(Q)^\alpha H(Q),
\end{equation}
the inequality $P(\tau_\delta \leq T) \leq \gamma$ can be rewritten as $aY^2+bY-c \leq 0$ for the above variable $Y$. 
Using \eqref{E:normQ1}, \eqref{E:normQ2} and \eqref{H_Q}, we rewrite  
\begin{equation}\label{E:bc}
b=3 \Big(\frac{n}{2(1-s_c)} \Big)^{\frac12}   
\big( M(u_0) M(Q) \big)^{ \frac{\alpha +1}{2}}  \quad \mbox{\rm and}\quad  
c=(\delta \gamma-\beta ) \frac{s_c}{2(1-s_c)}
M(Q)^{\alpha +1}.
\end{equation} 
The discriminant $D(\gamma, \delta)$ of the second degree polynomial $aY^2+bY-c$ is
$$
D(\gamma, \delta)= \frac{9n + 2 s_c(\gamma \delta - \beta)}{2(1-s_c)}
\big( M(u_0)M(Q) \big)^{\alpha+1} >0,
$$  
since $s_c<1$ and $\gamma > \beta/\delta$. 
 
Furthermore, 
$b^2= \frac{9n}{2(1-s_c)} \big( M(u_0) M(Q) \big)^{\alpha +1}$, and thus, $b^2 < D$. 
Hence, the roots of this polynomial are $Y_1(\gamma, \delta)=\frac{-b-\sqrt{D(\gamma, \delta)}}{2a}<0$ and $Y_2(\gamma, \delta)=\frac{-b+\sqrt{D(\gamma, \delta)}}{2a}>0$. For $Y\geq 0$ we have
$aY^2+bY-c\leq 0$ if and only if $Y\in [0, Y_2(\gamma, \delta)]$, where 
\begin{align}\label{E:Y2}
Y_2(\gamma, \delta)= 3
\Big( \frac{M(Q)}{M(u_0)}\Big)^{\frac{\alpha+1}{2}}
\frac{\sqrt{n+ { 2 s_c} (\gamma \delta - \beta)/9\,} - \sqrt{n}}{\sqrt{2(1-s_c)}}.
\end{align}
Note that as $\delta \to 1$, we have that $\gamma \to 1$ as well, and hence, 
$$ 
Y_2(\gamma, \delta) \to Y^* :=  
3 \Big(\frac{M(Q)}{M(u_0)}\Big)^{\frac{\alpha+1}{2}}  
\frac{\sqrt{n+ { 2 s_c} (1 - \beta)/9\,} - \sqrt{n}}{\sqrt{2(1-s_c)}}.
$$
Recalling that $Y^2 = M_\phi T$ and $\alpha+1 = \frac1{s_c}$ from \eqref{E:alpha}, we define 
$$
T^\ast =  \frac{(Y^\ast)^2}{M_\phi} =   \frac{(\sqrt{n+{ 2 s_c } (1 - \beta)/9\,} - \sqrt{n})^2}{2(1-s_c)}
\frac{9}{M_\phi} \Big( \frac{M(Q)}{M(u_0)}\Big)^{\frac1{s_c}}.
$$
Hence,  for $T<T^\ast$ 
we can find $\delta \in (\beta,1)$ and $\gamma \in \big(\frac{\beta}{\delta},1\big)$ such that  $\sqrt{M_\phi T} = Y_2(\gamma, \delta)$, 
which proves \eqref{T_max}. 

This provides an upper bound on the (deterministic) time $T^*$ such that
we have well-posedness of \eqref{NLS_Stra} on the time interval $[0,T^*)$ with positive probability, i.e., 
$P\big(\tau^*(u_0) \geq T^*\big) >0$.  
We remark that $T^*$ is  decreasing with respect to both the initial mass $M(u_0)$ and the parameter $\beta$ as $\beta \to 1$, which implies that $T^*$ inversely depends on 
$H(u_0) M(u_0)^{\alpha}$. It is also a decreasing
function of the intensity $M_\phi$ of the forcing noise, and as $M_\phi \to 0$, we have $T^*\to +\infty$, which leads to  global existence in the deterministic case. 
\medskip

Finally, we prove the inequality \eqref{lower_E_tau}, which obviously holds if $P(\tau_\delta <\infty)<1$, since in that case $\EX (\tau^*(u_0) )\geq \EX (\tau_\delta)=+\infty$. 
Thus, to obtain a lower bound of $\EX (\tau_\delta)$, we may assume  $\tau_\delta <\infty$ a.s..
By a.s. continuity, we have 
$$
\EX \big( \sup_{s\leq \tau_\delta} M(u_0)^\alpha H( u(s)) \big)=\delta M(Q)^\alpha H(Q).
$$
Plugging this into \eqref{upp_bd_H} and using  the upper bound  \eqref{Q_gradQ} and the Cauchy-Schwarz inequality, we obtain 
\begin{align}		\label{delta_MH_Q}
&\delta M(Q)^\alpha H(Q) \leq \beta M(Q)^\alpha H(Q) +  \tfrac12\,  M_\phi M(u_0)^{\alpha+1} \EX (\tau_\delta) 
+3 \sqrt{M_\phi} M(u_0)^{\frac{\alpha+1}{2}}  \EX (\sqrt{\tau_\delta})  \|Q\|_{L^2}^\alpha \|\nabla Q\|_{L^2}. 
\end{align} 
Set $Z=\sqrt{M_\phi \EX ( \tau_\delta)}$. Then the above inequality can be rewritten as 
${a}Z^2+b Z-\tilde{c} \geq 0$ for $a,b$ defined by \eqref{a_b_c} and $\tilde{c} = (\delta - \beta) M(Q)^\alpha H(Q)$. 
Hence, replacing $c$ by $\tilde{c}$ in the above computations \eqref{E:bc}-\eqref{E:Y2}, we deduce that we must have 
$M_\phi \EX ( \tau_\delta) \geq Z_2^2$, where 
\begin{equation}
Z_2(\delta)= 3
\Big( \frac{M(Q)}{M(u_0)}\Big)^{\frac{\alpha+1}{2}}
\frac{\sqrt{n+ { 2 s_c } (\delta - \beta)/9\,} - \sqrt{n}}{\sqrt{2(1-s_c)}}.
\end{equation}
Since $\EX (\tau^*(u_0)) \geq \EX (\tau_\delta)$ for any $\delta \in (\beta,1)$, taking $\delta \to 1$ we deduce \eqref{lower_E_tau}.
\bigskip
\end{proof}

We next extend the above results to a {\it random} initial condition. 

\begin{Th}\label{th-wp-inter-multi-randomIC}
Consider \eqref{NLS_Stra} with $\sigma \in \mathfrak R_{inter}$ and $\alpha$ as in \eqref{E:alpha}. 
Let   $u_0$ be an $H^1(\RR^n)$-valued, ${\mathcal F}_0$-measurable random variable, which satisfies 
$E\big( M(u_0)^{1/s_c}\big) <\infty$ 
and such that 
\begin{equation} 		\label{Hyp_HM_random}
H(u_0) M(u_0)^\alpha \leq \beta H(Q) M(Q)^\alpha\quad \mbox{\rm  a.s. for some } \; \beta\in (0,1) 
\end{equation}
and
\begin{equation}		\label{Hyp_gradM_random}
\| \nabla u_0\|_{L^2(\RR^n)} \|u_0\|_{L^2(\RR^n)}^\alpha < \|\nabla Q\|_{L^2(\RR^n)} \|Q\|_{L^2(\RR^n)}^\alpha \; \quad \mbox{\rm a.s.} .
\end{equation}

For $M_\phi$ as in \eqref{M_phi}, define 
\begin{equation}\label{T_max_gene_random}
T^\ast = 
\frac{\big[\sqrt{n+2 s_c(1 - \beta)/9\,} - \sqrt{n}\big]^2}{2(1-s_c)}
\frac{9}{M_\phi} \frac{M(Q)^{\frac1{s_c}}}{ \EE \big( M(u_0)^{\frac1{s_c}}\big)}.
\end{equation}
Then $P\big(\tau^*(u_0) >T\big) >0$ for $T<T^*$. 
\smallskip

Furthermore,  if $M(u_0)>0$ a.s., we have 
\begin{equation}\label{lower_E_tau_gene_random}
\EX \big(\tau^*(u_0) M(u_0)^{\frac1{s_c}} \big) \geq {T}^* \,  
{\EX \big(M(u_0)^{\frac1{s_c}} \big) } \equiv \frac{\big[\sqrt{n+2 s_c(1 - \beta)/9\,} - \sqrt{n}\big]^2}{2(1-s_c)} \frac{9}{M_\phi} {M(Q)^{\frac1{s_c}}}.
\end{equation}
\end{Th}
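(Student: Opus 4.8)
The plan is to mirror the proof of Theorem \ref{th-wp-inter-multi} but carry the extra factor $M(u_0)^{1/s_c}$ (which is $\mathcal F_0$-measurable and, by hypothesis, integrable) through every estimate, so that the random initial mass is absorbed inside the expectation. First I would introduce, for a stopping time $\tau<\tau^*(u_0)$ and $\delta\in(\beta,1)$, the same stopping time
\[
\tau_\delta=\inf\Big\{t\ge 0:\ \sup_{s\le t}M(u_0)^\alpha H(u(s))\ge \delta M(Q)^\alpha H(Q)\Big\}\wedge\tau,
\]
and run the identical Gagliardo--Nirenberg/mass-conservation computation that gave \eqref{E:X-2}, now holding $\omega$-by-$\omega$. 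Since \eqref{Hyp_gradM_random} says $X(0)<x^*$ a.s., the $f$-analysis with $x^*$ and $f(x^*)$ identified via \eqref{identification} shows, by a.s. continuity of $\|\nabla u(s)\|_{L^2}$ on $[0,\tau_\delta]$, that \eqref{Q_gradQ} holds a.s. for $s\le\tau_\delta$; in particular the gradient bound \eqref{E:grad-bound1} holds pathwise. The only real change is that the constants $H(u_0)$, $M(u_0)$ are now random, so I would multiply Lemma \ref{time_evol_H_2.4}'s inequality \eqref{upp_bd_H} by the $\mathcal F_0$-measurable factor and keep everything under $\EE$.

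Next, for the probability estimate $P(\tau^*(u_0)>T)>0$: applying Markov's inequality to $P(\tau_\delta\le T)$ and then \eqref{upp_bd_H} (now with the random $M(u_0)^\alpha$ inside) together with the pathwise bound \eqref{Q_gradQ}, I get
\[
P(\tau_\delta\le T)\le \frac{\EE\big(M(u_0)^\alpha H(u_0)\big)+\tfrac12 M_\phi T\,\EE\big(M(u_0)^{\alpha+1}\big)+3\sqrt{M_\phi T}\,\EE\big(M(u_0)^{\frac{\alpha+1}{2}}\big)\,\|Q\|_{L^2}^\alpha\|\nabla Q\|_{L^2}}{\delta M(Q)^\alpha H(Q)}.
\]
Here the first term is $\le \beta/\delta$ by \eqref{Hyp_HM_random}, and I would bound $\EE\big(M(u_0)^{\frac{\alpha+1}{2}}\big)\le \big(\EE M(u_0)^{\alpha+1}\big)^{1/2}$ by Cauchy--Schwarz (this is why only $\EE(M(u_0)^{1/s_c})<\infty$, i.e. $\EE(M(u_0)^{\alpha+1})<\infty$, is needed). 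With $a=\tfrac12\EE(M(u_0)^{\alpha+1})$, $b=3\big(\EE M(u_0)^{\alpha+1}\big)^{1/2}\|Q\|_{L^2}^\alpha\|\nabla Q\|_{L^2}$, $c=(\gamma-\beta/\delta)\delta M(Q)^\alpha H(Q)$, the inequality $P(\tau_\delta\le T)\le\gamma$ becomes $aY^2+bY-c\le 0$ with $Y=\sqrt{M_\phi T}$; rewriting $b$ and $c$ through \eqref{E:normQ1}, \eqref{E:normQ2}, \eqref{H_Q} and using $\EE(M(u_0)^{\alpha+1})=\EE(M(u_0)^{1/s_c})$ reproduces exactly the same discriminant computation as before with $M(u_0)$ replaced by $\EE(M(u_0)^{1/s_c})^{s_c}$. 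Letting $\delta\uparrow 1$ (hence $\gamma\uparrow 1$) gives the threshold $Y^*$ with $(Y^*)^2/M_\phi=T^*$ as in \eqref{T_max_gene_random}, so $P(\tau_\delta>T)>0$ for $T<T^*$, and since $\tau^*(u_0)\ge\tau_\delta$ we get $P(\tau^*(u_0)>T)>0$.

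For the lower bound \eqref{lower_E_tau_gene_random}, assuming $M(u_0)>0$ a.s. (and, as before, we may assume $\tau_\delta<\infty$ a.s., else the left side is $+\infty$), a.s. continuity gives $\EE\big(\sup_{s\le\tau_\delta}M(u_0)^\alpha H(u(s))\big)=\delta M(Q)^\alpha H(Q)$. I would instead work with the quantity $\tau_\delta M(u_0)^{1/s_c}$: plug the pathwise bound \eqref{Q_gradQ} into \eqref{upp_bd_H}, and before taking expectations note that the key noise term is controlled by $3\sqrt{M_\phi}\,\EE\big(\sqrt{\tau_\delta}\,M(u_0)^{\frac{\alpha+1}{2}}\big)\|Q\|_{L^2}^\alpha\|\nabla Q\|_{L^2}$; by Cauchy--Schwarz this is $\le 3\sqrt{M_\phi}\,\big(\EE(\tau_\delta M(u_0)^{1/s_c})\big)^{1/2}\big(\EE M(u_0)^{1/s_c}\big)^{1/2}\|Q\|_{L^2}^\alpha\|\nabla Q\|_{L^2}$. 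With $Z=\sqrt{M_\phi\,\EE(\tau_\delta M(u_0)^{1/s_c})}$ this yields $aZ^2+bZ-\tilde c\ge 0$ with $a,b$ as above in terms of $\EE(M(u_0)^{1/s_c})$ and $\tilde c=(\delta-\beta)M(Q)^\alpha H(Q)$, forcing $Z\ge Z_2(\delta)$; letting $\delta\uparrow 1$ and using $\tau^*(u_0)\ge\tau_\delta$ gives $\EE\big(\tau^*(u_0)M(u_0)^{1/s_c}\big)\ge (T^*)\,\EE(M(u_0)^{1/s_c})$, and multiplying the definition of $T^*$ clears the denominator to give the displayed closed form. The main obstacle is bookkeeping: making sure every application of Cauchy--Schwarz couples the random mass powers so that only $\EE(M(u_0)^{1/s_c})$ (and not higher moments, nor $\EE(1/M(u_0))$) appears, and checking that the $\mathcal F_0$-measurability of $M(u_0)^\alpha$ genuinely lets it pass through the Davis/Burkholder step inside Lemma \ref{time_evol_H_2.4} in this random-initial-data setting.
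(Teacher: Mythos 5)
Your argument for the first assertion ($P(\tau^*(u_0)>T)>0$ for $T<T^*$) is correct and is the paper's own argument: Markov's inequality, \eqref{upp_bd_H} with the $\mathcal F_0$-measurable factor $M(u_0)^\alpha$ kept inside the expectation, the pathwise bound \eqref{Q_gradQ}, and the Cauchy--Schwarz estimate $\EE\big(M(u_0)^{(\alpha+1)/2}\big)\le\{\EE(M(u_0)^{\alpha+1})\}^{1/2}$, leading to the same quadratic in $Y=\sqrt{M_\phi T}$ with $M(u_0)^{\alpha+1}$ replaced by its expectation.

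However, your derivation of \eqref{lower_E_tau_gene_random} has a genuine gap. The inequality you invoke,
\[
\EE\big(\sqrt{\tau_\delta}\,M(u_0)^{\frac{\alpha+1}{2}}\big)\le\big\{\EE\big(\tau_\delta M(u_0)^{1/s_c}\big)\big\}^{1/2}\big\{\EE\big(M(u_0)^{1/s_c}\big)\big\}^{1/2},
\]
is not an instance of Cauchy--Schwarz and is false in general: since $\sqrt{\tau_\delta}\,M(u_0)^{(\alpha+1)/2}=\sqrt{\tau_\delta M(u_0)^{1/s_c}}$, taking $\tau_\delta$ deterministic and $M(u_0)^{1/s_c}\equiv 1/4$ makes the left side exceed the right side (a Cauchy--Schwarz factorization producing your right-hand side would require the integrand $\sqrt{\tau_\delta}\,M(u_0)^{1/s_c}$, not $\sqrt{\tau_\delta}\,M(u_0)^{1/(2s_c)}$). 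Moreover, the bookkeeping is inconsistent: with $Z=\sqrt{M_\phi\,\EE(\tau_\delta M(u_0)^{1/s_c})}$ the drift term $\tfrac12 M_\phi\,\EE\big(M(u_0)^{1/s_c}\tau_\delta\big)$ equals $\tfrac12 Z^2$, not $\tfrac12\EE(M(u_0)^{1/s_c})\,Z^2$, so the coefficients ``$a,b$ as above in terms of $\EE(M(u_0)^{1/s_c})$'' do not describe the inequality you actually obtain; and even granting your quadratic, its positive root only gives $\EE(\tau_\delta M(u_0)^{1/s_c})\ge T^*$, which is not \eqref{lower_E_tau_gene_random} (the two differ unless $\EE(M(u_0)^{1/s_c})=1$). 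The repair is the paper's simpler step: bound $\EE\big(\sqrt{\tau_\delta M(u_0)^{1/s_c}}\big)\le\{\EE(\tau_\delta M(u_0)^{1/s_c})\}^{1/2}$ by Cauchy--Schwarz against $1$, so that the resulting quadratic $\tfrac12\tilde Z^2+\tilde b\tilde Z-\tilde c\ge 0$ has the mass-free coefficients $\tilde b=3\|Q\|_{L^2}^\alpha\|\nabla Q\|_{L^2}$ and $\tilde c=(\delta-\beta)M(Q)^\alpha H(Q)$; its positive root then yields
\[
M_\phi\,\EE\big(\tau_\delta M(u_0)^{1/s_c}\big)\ \ge\ \frac{9\,M(Q)^{1/s_c}}{2(1-s_c)}\Big(\sqrt{n+2s_c(\delta-\beta)/9}-\sqrt{n}\Big)^2,
\]
which as $\delta\to1$ is exactly $M_\phi$ times the right-hand side of \eqref{lower_E_tau_gene_random}, with no dependence on $\EE(M(u_0)^{1/s_c})$.
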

\begin{remark}
Similarly, the $L^2$ norm of the gradient remains uniformly bounded a.s. on the interval $[0, t]$ for any $\displaystyle t \leq \sup_{\delta <1} \{ \tau_\delta \} $,
 where $\tau_\delta$ is defined in \eqref{tau_delta} 
and its expected value can be upper estimated in terms of some negative moment of the mass of the initial  data (see \eqref{E:grad-bound2}). 
To be precise, this bound requires $M(u_0) $ to be ``not too small".
\end{remark} 

\noindent {\it Proof of Theorem \ref{th-wp-inter-multi-randomIC}.}
~ Using the same notation for $\tau_\delta$ due to the mass conservation \eqref{mass_conserv} and similar arguments as in the previous theorem, the upper estimate \eqref{upper_P} is rephrased as
 \begin{align*}		
P(\tau_\delta & \leq T)= P\Big( \sup_{s\leq T\wedge \tau_\delta} M(u_0)^\alpha H\big(u(s)\big)  \geq \delta M(Q)^\alpha H(Q)\Big)   \\
\leq & \frac{1}{\delta M(Q)^\alpha H(Q)} \Big[ \EX \big( M(u_0)^\alpha H(u_0) \big) + \frac{1}{2} T  M_\phi \EX \big( M(u_0)^{\alpha +1}\big)
+ 3 \sqrt{M_\phi} \sqrt{T} \, \|Q\|_{L^2}^\alpha \|\nabla Q\|_{L^2} \EX \big( M(u_0)^{ \frac{\alpha +1}{2}} \big)  \Big]  \\
\leq & \frac{\beta}{\delta} + \frac{1}{2} M_\phi T \frac{\EX \big( M(u_0)^{1+\alpha}\big)}{\delta M(Q)^\alpha H(Q)} + 3 \sqrt{M_\phi T} \,
\frac{ \big\{ \EX \big( M(u_0)^{\alpha+1}\big) \big\}^{\frac{1}{2}} \|Q\|_{L^2}^\alpha \|\nabla Q\|_{L^2}}{\delta M(Q)^\alpha  H(Q)},
\end{align*}
where the last line is deduced from the Cauchy-Schwarz inequality. 
 
The proof of \eqref{T_max_gene_random} is completed using similar to the deterministic initial data argument in Theorem \ref{th-wp-inter-multi}, replacing $M(u_0)^{\alpha +1}$ by
 $\EX \big( M(u_0)^{\alpha +1}\big)$ and $M(u_0)^{\frac{\alpha +1}{2}}$ by $\big\{ \EX \big( M(u_0)^{\alpha +1}\big) \big\}^{\frac{1}{2}}$, respectively.

The inequality \eqref{E:grad-bound1} still holds, which implies a bound of the expected value of the gradient of $u$, namely, 
\begin{equation}\label{E:grad-bound2}
\EE\Big( \sup_{s\in [0, \tau_\delta} \|\nabla u(s)\|^2_{L^2(\RR^n)} \Big) \leq  \frac{n}{2(1-s_c)} M(Q)^{1+\alpha} \EE(M(u_0)^{-\alpha}).
\end{equation}
 
When $u_0$ is random, in order to prove \eqref{lower_E_tau_gene_random}, we may similarly
suppose that
$M(u_0)^{\alpha+1} \tau_\delta$ is a.s. finite. Since $M(u_0)>0$ a.s., we have $\tau_\delta<\infty$ a.s..  
The inequality \eqref{delta_MH_Q} rewrites as
\begin{align*}
\delta M(Q)^\alpha H(Q) &\leq \beta M(Q)^\alpha H(Q) +  \frac12 { M_\phi}  \EX \big( M(u_0)^{\alpha+1} \tau_\delta\big) 
+3 \sqrt{M_\phi}  \|Q\|_{L^2}^\alpha \|\nabla Q\|_{L^2} \EX \big( M(u_0)^{\frac{\alpha+1}{2}}  \sqrt{\tau_\delta}\big)  \\
& \leq  \beta M(Q)^\alpha H(Q) +  \frac12 {M_\phi} \EX \big(  M(u_0)^{\alpha+1} \tau_\delta\big)
+3  \|Q\|_{L^2}^\alpha \|\nabla Q\|_{L^2} \big\{ M_\phi \, \EX \big( M(u_0)^{\alpha+1}  \tau_\delta\big)\big\}^{\frac{1}{2}},
\end{align*}
where the last estimate follows from the Cauchy-Schwarz inequality. 
 
Let $\tilde{Z}=\big\{ M_\phi\, \EX \big( M(u_0)^{\alpha +1} \tau_\delta\big) \big\}^{\frac{1}{2}}$. 
The above inequality can be rewritten as 
$\tilde{a}\tilde{Z}^2+\tilde{b}  \tilde{Z}-\tilde{c} \geq 0$ with $\tilde{a}=\frac{1}{2}$, $\tilde{b}=3 \|Q\|_{L^2}^\alpha \|\nabla Q\|_{L^2}$ and 
$\tilde{c} = (\delta - \beta) M(Q)^\alpha H(Q)$. Using the identities \eqref{E:normQ1}, \eqref{E:normQ2} and \eqref{H_Q}, 
$$
\tilde b= 3 \bigg( \frac{n }{{ 2(1-s_c)}} M(Q)^{\alpha +1} \bigg)^{\frac{1}{2}}  \quad \mbox{\rm and}\quad  
\tilde c=(\delta -\beta ) \frac{s_c}{2(1-s_c)}
M(Q)^{\alpha +1}.
$$

The discriminant of the quadratic polynomial $\tilde{a}\tilde{Z}^2+\tilde{b}  \tilde{Z}-\tilde{c} $ is 
$$
\tilde{D}(\delta) = 
\frac{9n +2 s_c(\delta - \beta)}{2(1-s_c)}
 M(Q)^{\alpha+1} >\tilde{b}^2>0,
$$
and the roots are
$\tilde{Z}_1=-\tilde{b}-\sqrt{\tilde{D}}<0$ and $\tilde{Z}_2=-\tilde{b}+\sqrt{\tilde{D}}>0$. 
Hence, the inequality 
$\tilde{a}\tilde{Z}^2+\tilde{b}  \tilde{Z}-\tilde{c} \geq 0$ for non-negative $\tilde{Z}$ is satisfied for $\tilde{Z}\geq \tilde{Z}_2$. 
As $\delta \to 1$, we obtain \eqref{lower_E_tau_gene_random}. 
\hfill $\Box$

\section{Maximal existence time - Additive noise} \label{S:A} 
In this section, we consider the SNLS \eqref{E:NLS} with an additive noise and obtain a maximal time existence interval (or global-type) solutions with the controlled quantities 
such as mass and gradient.

\subsection{Preliminaries on additive noise and local well-posedness.} \label{S:noise-add}

We suppose that $W$ is a complex-valued noise with 
\begin{equation}\label{E:add-noise}
W(t)= \sum_{k\geq 0} \phi e_k \beta_k(t),
\end{equation}
where 
$\{ e_k\}_{k\geq 0}$ is an orthonormal basis of $L^2(\RR^n)$, $\phi\in L_2^{0,1}$, that is,  a Hilbert-Schmidt operator 
from $L^2(\RR^n)$ to $H^1(\RR^n)$, and $\{ \beta_k\}_{k\geq 0}$ are independent one dimensional Brownian motions. 
Note that in this case the driving noise is complex-valued. With this additive noise $W$, we consider the stochastic NLS equation 
\begin{equation}\label{NLS_add}
\left\{
\begin{array}{l}
i d_t u(t) - \big( \Delta u(t) + |u(t)|^{2\sigma} u(t) \big) dt =  dW(t), \\
u(0)=u_0 \in H^1.
\end{array}
\right.
\end{equation}

We recall the following local well-posedness result together with the blow-up alternative from \cite{deB_Deb_H1}.
 (Note that in the additive noise case, we do not have a restriction $\sigma \geq \frac12$ in local well-posedness as we do in the multiplicative noise case, see the next theorem.) 

\begin{Th}{\cite[Thm 3.1]{deB_Deb_H1} } \label{lwp_add}
Consider $s_c<1$, or equivalently, $\sigma \geq 0$ for $n=1,2$ 
or $\sigma \in \big[ 0, \frac{2}{n-2} \big)$ for $n\geq 3$, 
and let $\phi \in L^{0,1}_2$. 
Then for any ${\mathcal F}_0$-measurable  initial condition $u_0 \in H^1$ a.s., 
there exists a stopping time $\tau^*(u_0) >0$ a.s. such that 
there exists a unique solution to \eqref{NLS_add} on the random time
interval $[0, \tau^*(u_0))$, which belongs  to $C([0,\tau^*(u_0));H^1)$ a.s.. Furthermore, either  
$$
\tau^*(u_0)(\omega)=+\infty \quad \mbox{or} \quad \lim_{t\to \tau^*(u_0)(\omega)} \|u(t)(\omega) \|_{H^1}=+\infty.
$$
\end{Th}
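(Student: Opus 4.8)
The plan is to recast \eqref{NLS_add} in its mild (Duhamel) form and solve it pathwise by a contraction argument in a Strichartz space, after absorbing the noise into a stochastic convolution. Write $S(t)=e^{it\Delta}$ for the free Schr\"odinger group, which is unitary on $H^1(\RR^n)$, and recall the Strichartz estimates: for any admissible pair $(r,p)$ as in \eqref{E:pair} one has $\|S(\cdot)v\|_{L^r([0,T];W^{1,p})}\lesssim \|v\|_{H^1}$ together with the dual and inhomogeneous versions. A solution of \eqref{NLS_add} should satisfy
\[
u(t)=S(t)u_0 - i\int_0^t S(t-s)\big(|u(s)|^{2\sigma}u(s)\big)\,ds \;-\; i\,Z(t),\qquad Z(t):=\int_0^t S(t-s)\,dW(s),
\]
so the first task is to control $Z$, and the second is a deterministic, energy-subcritical local theory applied pathwise to $v:=u+iZ$, which solves the random equation $v(t)=S(t)u_0 - i\int_0^t S(t-s)\big(|v(s)-iZ(s)|^{2\sigma}(v(s)-iZ(s))\big)\,ds$ with the same initial datum $u_0$.

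First I would establish the regularity of the stochastic convolution. Since $\phi\in L^{0,1}_2$ is Hilbert--Schmidt from $L^2(\RR^n)$ to $H^1(\RR^n)$ and $S(t)$ is unitary on $H^1$, the factorization method of Da Prato--Kwapie\'n--Zabczyk (as used in \cite{deB_Deb_H1}) gives that $Z$ has an a.s.\ continuous $H^1$-valued modification; applying Strichartz estimates to the factorized kernel upgrades this to $Z\in L^r([0,T];W^{1,p})$ a.s.\ for every admissible pair and every deterministic $T>0$, with all moments finite. In particular $\omega\mapsto \|Z(\omega)\|_{C([0,T];H^1)\cap L^r([0,T];W^{1,p})}$ is a.s.\ finite.

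Next I would run a fixed-point argument for $v$ pathwise. On the ball of radius $2\big(\|u_0\|_{H^1}+\|Z\|_{C([0,T];H^1)}\big)$ in $X_T:=C([0,T];H^1)\cap L^r([0,T];W^{1,p})$, the map $\Phi$ sending $v$ to $S(\cdot)u_0 - i\int_0^\cdot S(\cdot-s)\big(|v-iZ|^{2\sigma}(v-iZ)\big)\,ds$ is shown to be a contraction for $T=T(\omega)$ small enough (depending on $\|u_0\|_{H^1}$ and $\|Z(\omega)\|_{X_T}$), using the inhomogeneous Strichartz estimate, the fractional Leibniz rule to place $|w|^{2\sigma}w$ in a dual Strichartz space after H\"older in space and time, and the restriction $\sigma<\tfrac2{n-2}$ (for $n\ge 3$), which keeps the relevant exponents admissible. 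This yields existence and uniqueness of $v$, hence of $u=v-iZ\in C([0,T(\omega)];H^1)$, on a strictly positive random interval; a standard measurability/adaptedness check (approximating by problems with deterministic times and using that $Z$ is adapted) shows $T(\omega)$ may be taken a stopping time and $u$ adapted. The maximal solution is then defined by $\tau^*(u_0)=\sup\{T:\text{a solution exists on }[0,T]\}$, and the blow-up alternative follows from a continuation argument: on $\{\tau^*(u_0)<\infty\}$, if $\liminf_{t\uparrow\tau^*}\|u(t)\|_{H^1}<\infty$ then along $t_k\uparrow\tau^*$ with $\|u(t_k)\|_{H^1}$ bounded one restarts from $u(t_k)$; since the local existence time is bounded below in terms of $\|u(t_k)\|_{H^1}$ and $\|Z\|_{X_{[0,\tau^*+1]}}$ (a.s.\ finite), the solution extends past $\tau^*$, a contradiction, so a.s.\ either $\tau^*(u_0)=+\infty$ or $\|u(t)\|_{H^1}\to+\infty$ as $t\uparrow\tau^*(u_0)$.

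The main obstacle is the bookkeeping for the stochastic convolution: one must verify that $Z$ lives in the \emph{same} Strichartz spaces used to close the nonlinear fixed point, not merely in $C([0,T];H^1)$, which requires combining the factorization method with Strichartz estimates and the Hilbert--Schmidt bound on $\phi$. Once this is in place, the argument reduces to the deterministic energy-subcritical local well-posedness theory applied for each $\omega$, plus routine (if somewhat delicate) checks that the random existence time is a stopping time and the solution is adapted.
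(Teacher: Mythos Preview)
The paper does not prove this theorem: it is quoted verbatim from \cite[Thm~3.1]{deB_Deb_H1} and used as a black box, so there is no ``paper's own proof'' to compare against. Your sketch is essentially the strategy carried out in the original reference \cite{deB_Deb_H1}: absorb the additive noise into the stochastic convolution $Z(t)=\int_0^t S(t-s)\,dW(s)$, show via the factorization method combined with Strichartz estimates that $Z\in C([0,T];H^1)\cap L^r([0,T];W^{1,p})$ a.s.\ (this is where $\phi\in L^{0,1}_2$ is used), then run a pathwise contraction for the shifted unknown in a Strichartz ball, and finally obtain the blow-up alternative by the standard continuation argument. So your proposal is correct and aligned with the cited source; the only point worth flagging is that you should be careful that the local existence time depends on the $H^1$ size of the data \emph{and} on the Strichartz norm of $Z$ on a fixed ambient interval, which is exactly what you note in the last paragraph.
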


\subsection{Estimates for mass and energy}\label{general_add}

Unlike the multiplicative stochastic perturbation, the mass in the additive case is no longer preserved a.s. in time. 
We recall the  time dependence of mass 
 from  \cite[Prop. 3.2]{deB_Deb_H1}. In particular, for any positive stopping time $\tau < \tau^*(u_0)$ a.s. we have
\begin{equation}		\label{M_add}
M\big( u(\tau)\big) = M(u_0) + \tau \|\phi\|_{L_2^{0,0}}^2 - 2 \, \mbox{\rm Im } \Big( \sum_{k\geq 0} \int_0^\tau \!\!\int_{\RR^n} u(s,x)
\overline{\phi e_k(x)} dx d\beta_k(s) \Big).
\end{equation}
 
We next state the time dependence of energy.
Recalling the energy $H(u(\tau))$ from \cite[Proposition 3.3]{deB_Deb_H1} and the definition \eqref{E:add-noise} of the infinite-dimensional driving noise $W$, we bound the energy as follows: 
for any stopping time $\tau < \tau^*(u_0)$ a.s.  
\begin{align}\label{H_add}
H\big(& u(\tau)\big) = H(u_0) + \tfrac{1}{2} \tau { \sum_{k\geq 0} \| \nabla \phi e_k\|_{L^2(\RR^n)}^2 }
- \mbox{\rm Im } \Big( \int_0^\tau \!\!\int_{\RR^n} \big[  
| u(s,x)|^{2\sigma} \overline{u(s,x)}  + \Delta \overline{u(s,x)}\big] dx dW(s) \Big) 
\\
& \quad -\frac{1}{2} \sum_{k\geq 0} \int_0^\tau \!\!\int_{\RR^n} \Big[ | u(s,x)|^{2\sigma} |\phi e_k(x)|^2 + 2\sigma |u(s,x)|^{2\sigma -2} 
\big( \mbox{\rm Re }\big(\overline{u(s,x)}
\phi e_k(x) \big)\big)^2 
\Big] 
dx ds \label{E:drop-term1} \\
&\leq H(u_0) + \frac{1}{2} \tau \|\phi\|_{L_2^{0,1}}^2 + \mbox{\rm Im } \Big(\sum_{k\geq 0} \int_0^\tau \!\! \int_{\RR^n} 
\nabla \overline{u(s,x)} \nabla (\phi e_k)(x) dx d\beta_k(s) \Big) \nonumber \\
&\quad  - \mbox{\rm Im } \Big( \sum_{k\geq 0}  \int_0^\tau \!\! \int_{\RR^n}  
| u(s,x)|^{2\sigma} \overline{u(s,x)} (\phi e_k)(x) dx d\beta_k(s) \Big), \label{H_add_final}
\end{align}
where to get the last inequality we bounded the $\dot{H}^1$ of the second term from \eqref{H_add} by the $H^1$ norm,  then
we used integration by parts in the third term from \eqref{H_add} and dropped the negative term in \eqref{E:drop-term1}. 

For convenience, and to control the size of additive noise, we denote by $C(\phi)$ the following constant:
\begin{equation}\label{E:Const-phi}
C(\phi) = \|\phi\|_{L^{0,1}_2}^{\frac{n\sigma}{2\sigma+2}} \; \| \phi\|_{L^{0,0}_2}^{\frac{2-(n-2)\sigma}{2\sigma+2}}.
\end{equation}

To proceed, we use the localization to obtain some control on the time evolution of the mass. 
The following lemma describes bounds on the expected value of the mass and localized energy uniformly on the time interval $[0,\tau]$ for any stopping time {$\tau < \tau^\ast(u_0)$ a.s.}

\begin{lemma}\label{lem_MH}
Let the assumptions from Theorem \ref{lwp_add} be satisfied and $\tau^*(u_0)$ be a stopping time as defined there. 

(i) Suppose that $\EX \big(M(u_0)\big)<\infty$. Then for any stopping time $\tau < \tau^*(u_0)$ a.s. and every $\epsilon \in (0,1)$, we have 
\begin{equation}\label{E_sup_M}
\EX \Big( \sup_{s\leq \tau} M\big(u(s)\big) \Big) \leq \frac{1}{1-\epsilon} \Big[ \EX \big( M(u_0)\big) 
+ \frac{9+\epsilon}{\epsilon}  \|\phi\|_{L^{0,0}_2}^2  \EX(\tau)\Big].
\end{equation}

(ii) Let $\{ \Omega(t)\}_{t\geq 0}$ be a decreasing family of $\{ {\mathcal F}_t\}_{t\geq 0}$-measurable sets. Then
\begin{align}\label{E_sup_H}
\EX \Big( \sup_{s\leq \tau} 1_{\Omega(s)} H\big(u(s)\big)\Big) \leq &\; \EX \big(H(u_0) \big)
+ \frac{1}{2} \|\phi\|_{L^{0,1}_2}^2 \EX (\tau) + 3 \|\phi\|_{L^{0,1}_2} \EX \Big(
\sqrt{\tau}  \sup_{s\leq \tau} \big\{ 1_{\Omega(s)} 
\|\nabla u(s)\|_{L^2} \big\} \Big) \nonumber \\
&\; + 3 (C_{GN})^{\frac1{2\sigma+2}}
C(\phi)  \EX \Big( \sqrt{\tau} \sup_{s\leq \tau} \big\{ 1_{\Omega(s)}
 \|u(s)\|_{L^{2\sigma +2}}^{2\sigma +1}\big\} \Big),
\end{align}
where $C(\phi)$ is as defined in \eqref{E:Const-phi}
and $C_{GN}$ as in \eqref{constant_GN}. 
\end{lemma}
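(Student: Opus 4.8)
\textbf{Proof plan for Lemma \ref{lem_MH}.}

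The plan is to use the stopping-time localization together with the integral representations \eqref{M_add} for the mass and \eqref{H_add}--\eqref{H_add_final} for the energy, and to close the resulting estimates via the Davis (Burkholder--Davis--Gundy) inequality for the stochastic integrals, followed by Cauchy--Schwarz and Young inequalities to absorb the terms carrying $\sup_{s\le\tau}$ on the right-hand side. For part (i), I would first introduce a truncation stopping time $\tau_R=\inf\{t\ge 0:\|u(t)\|_{L^2}\ge R\}\wedge\tau$ so that all quantities are a priori finite, take $\sup_{s\le\tau_R}$ in \eqref{M_add}, apply $\EX$, and bound the martingale term: by Davis' inequality the expected supremum of $\big|\sum_k\int_0^{\cdot}\!\!\int u\,\overline{\phi e_k}\,dx\,d\beta_k\big|$ is controlled by $3\,\EX\big(\big\{\int_0^{\tau_R}\sum_k(\int u\,\overline{\phi e_k}dx)^2 ds\big\}^{1/2}\big)$, and the inner sum is $\le \|\phi\|_{L^{0,0}_2}^2\,\|u(s)\|_{L^2}^2$ by Parseval. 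This produces a term $3\|\phi\|_{L^{0,0}_2}\EX\big(\sqrt{\tau_R}\sup_{s\le\tau_R}\sqrt{M(u(s))}\big)$, which I split using Cauchy--Schwarz and then Young's inequality $xy\le \tfrac{1}{2\epsilon'}x^2+\tfrac{\epsilon'}{2}y^2$ (choosing the weights so the coefficient in front of $\EX(\sup M)$ is $\epsilon$); the numerics give the constant $\tfrac{9+\epsilon}{\epsilon}$. Since $\EX(\sup_{s\le\tau_R}M(u(s)))\le R^2<\infty$, the absorbed term can legitimately be moved to the left, yielding \eqref{E_sup_M} with $\tau$ replaced by $\tau_R$; letting $R\to\infty$ and invoking monotone convergence (using $\EX(M(u_0))<\infty$ and $\EX(\tau)<\infty$ implicitly, or stating the bound allowing $+\infty$) gives the claim.

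For part (ii), I would work from \eqref{H_add_final}, multiplying by the indicator $1_{\Omega(s)}$; here the monotonicity of $\{\Omega(t)\}_t$ is essential, because it lets me pull $1_{\Omega(s)}$ inside in a way compatible with taking $\sup_{s\le\tau}$ — on the event $\Omega(s)$ all earlier indicators $1_{\Omega(r)}$, $r\le s$, equal $1$, so $1_{\Omega(s)}H(u(s))\le H(u_0)+\tfrac12\tau\|\phi\|_{L^{0,1}_2}^2+ 1_{\Omega(s)}\big(\text{martingale}_s\big)$, and the martingale part is itself dominated by the supremum of the full (un-indicated) martingale whenever we are careful. More precisely, I would bound $\sup_{s\le\tau}1_{\Omega(s)}H(u(s))$ by $H(u_0)+\tfrac12\tau\|\phi\|_{L^{0,1}_2}^2$ plus the supremum over $s\le\tau$ of the absolute values of the two stochastic integrals from \eqref{H_add_final}, each cut off by the indicator, then apply Davis' inequality to each. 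The first stochastic integral $\sum_k\int_0^\cdot\!\!\int \nabla\bar u\cdot\nabla(\phi e_k)\,dx\,d\beta_k$ gives, after Davis and Parseval, a contribution $\le 3\|\phi\|_{L^{0,1}_2}\EX\big(\sqrt{\tau}\sup_{s\le\tau}\{1_{\Omega(s)}\|\nabla u(s)\|_{L^2}\}\big)$; the second, $\sum_k\int_0^\cdot\!\!\int |u|^{2\sigma}\bar u\,(\phi e_k)\,dx\,d\beta_k$, gives (using $\big|\int |u|^{2\sigma}\bar u\,\phi e_k\,dx\big|\le \|\,|u|^{2\sigma+1}\|_{L^{p}}\|\phi e_k\|_{L^{p'}}$ with the exponent choice making $\| |u|^{2\sigma+1}\|$ into $\|u\|_{L^{2\sigma+2}}^{2\sigma+1}$ and $\|\phi e_k\|$ summable to $C(\phi)^2$ after Gagliardo--Nirenberg-type interpolation between $L^{0,0}_2$ and $L^{0,1}_2$) a contribution $\le 3(C_{GN})^{1/(2\sigma+2)}C(\phi)\,\EX\big(\sqrt\tau\sup_{s\le\tau}\{1_{\Omega(s)}\|u(s)\|_{L^{2\sigma+2}}^{2\sigma+1}\}\big)$. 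Summing the three pieces gives \eqref{E_sup_H}.

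The main obstacle I anticipate is the bookkeeping in part (ii): handling the indicators $1_{\Omega(s)}$ correctly inside the supremum and inside the Davis inequality (one must estimate $\EX\sup_{s}|1_{\Omega(s)}M_s|$ where $M_s$ is a martingale, not simply a martingale supremum), and verifying that the measurability/monotonicity of $\{\Omega(t)\}_t$ makes the truncation argument go through so that the absorbed $\sup$-terms are finite before being moved; this is where the decreasing family hypothesis does real work. The second delicate point is pinning down the exact Hölder/interpolation exponents so that the bound on $\sum_k\|\phi e_k\|_{L^{p'}}^2$ collapses exactly into $C(\phi)^2=\|\phi\|_{L^{0,1}_2}^{n\sigma/(\sigma+1)}\|\phi\|_{L^{0,0}_2}^{(2-(n-2)\sigma)/(\sigma+1)}$ and the $\|u\|$ factor becomes $\|u\|_{L^{2\sigma+2}}^{2\sigma+1}$; this uses $p'$ matched to the Sobolev/Gagliardo--Nirenberg scaling $\tfrac{n\sigma}{2\sigma+2}+\tfrac{2-(n-2)\sigma}{2\sigma+2}=1$ for the split of $\phi e_k$ between $L^2$ and $H^1$. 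The remaining steps (Davis, Cauchy--Schwarz, Young, truncation plus monotone convergence) are routine once these two points are set up.
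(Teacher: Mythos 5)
Your plan is essentially the paper's own argument: truncation by a stopping time plus the identities \eqref{M_add} and \eqref{H_add}--\eqref{H_add_final}, the Davis inequality, Cauchy--Schwarz/H\"older, Young's inequality to absorb the supremum in part (i) with monotone convergence in $R$, and in part (ii) the same use of the decreasing adapted family $\{\Omega(t)\}$ (via the local property of stochastic integrals, i.e.\ $1_{\Omega(s)}M_s=1_{\Omega(s)}\int_0^s 1_{\Omega(r)}dM_r$) together with the per-mode Gagliardo--Nirenberg and H\"older bound on $\sum_k\|\phi e_k\|_{L^{2\sigma+2}}^2$ yielding the constant $(C_{GN})^{\frac{1}{2\sigma+2}}C(\phi)$. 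The only bookkeeping point is that the martingale in \eqref{M_add} carries a factor $2$, so Davis gives the coefficient $6$ rather than $3$; this is precisely what produces the constant $\frac{9+\epsilon}{\epsilon}$ you correctly quote, so the slip is immaterial.
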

\begin{proof}
(i) 
Let $\tau<\tau^*(u_0)$ be a stopping time. For $R>0$ set
\begin{equation}\label{E:explR} 
\tau_R=\inf\{ t\geq 0 \; : \; \|u(t)\|_{H^1} \geq R\} \wedge \tau.
\end{equation}
The identity \eqref{M_add}, the Davis and Cauchy-Schwarz inequalities imply 
\begin{align*}
\EX \Big( \sup_{s\leq \tau_R} M\big(u(s)\big) \Big) \leq &\;  \EX \big( M(u_0)\big)  + \|\phi\|_{L^{0,0}_2}^2 \EX (\tau_R)
+ 6 \,\EX  \Big[ \Big\{ \int_0^{\tau_R}  
\| u(s)\|_{L^2}^2  \Big( \sum_{k\geq 0} \| \phi e_k\|_{L^2}^2\Big) ds \Big\}^{\frac{1}{2}} \Big] \\
\leq & \; \EX \big(  M(u_0) \big) + \|\phi\|_{L^{0,0}_2}^2 \EX(\tau_R) + 6 \, \|\phi\|_{L^{0,0}_2} \EX \Big( \sqrt{\tau_R} \; \sqrt{\sup_{s\leq \tau_R} 
M\big( u(s)\big) } \Big) \\
\leq & \; \EX \big( M(u_0)\big)  +  \epsilon \, \EX \Big( \sup_{s\leq \tau_R} M\big(u(s)\big) \Big)  + \|\phi\|_{L^{0,0}_2}^2 \EX (\tau_R) 
\Big( 1+\frac{9}{\epsilon}\Big),
\end{align*}
where the last  estimate is a consequence of the Young inequality for any $\epsilon >0$.  Taking $\epsilon \in (0,1)$ and using the inequality 
$\EX \big[ \sup_{s\leq \tau_R} M\big(u(s)\big)\big] \leq R^2<\infty$ from the definition \eqref{E:explR} of $\tau_R$, we deduce \eqref{E_sup_M} with the supremum of the mass on the time interval $[0,\tau_R]$ instead of $[0,\tau]$. As $R\to +\infty$, $\tau_R\to \tau$ a.s., and the monotone convergence theorem concludes the proof of \eqref{E_sup_M}.
\smallskip

(ii) Using the identity \eqref{H_add}, the Davis  inequality,  the local property of stochastic integrals  and the Cauchy-Schwarz and H\"older's 
inequalities,  we obtain
\begin{align*}
\EX \Big( \sup_{s\leq \tau} &1_{\Omega(s)} H\big(u(s)\big)\Big) \leq \;\EX \big( H(u_0) \big) + \frac{1}{2}  \|\phi\|_{L^{0,1}_2}^2 \EX (\tau)\\
& \; + 3\EX \Big( \Big\{ \int_0^\tau \! 1_{\Omega(s)} \sum_{k\geq 0} \Big( \int_{\RR^n}\! \! |\nabla u(s,x)| 
 |\nabla (\phi e_k)(x)| dx \Big)^2 ds \Big\}^{\frac{1}{2}} \Big)\\
 &\;  + 3 \EX  \Big( \Big\{\int_0^\tau1_{\Omega(s)}  \sum_{k\geq 0} \Big( \int_{\RR^n} |u(s,x)|^{2\sigma +1} |\phi e_k(x)| \, dx\Big)^2
 \Big\}^{\frac{1}{2}} \Big) \\
 \leq & \; \EX \big( H(u_0) \big) + \frac{1}{2}  \|\phi\|_{L^{0,1}_2}^2 \EX (\tau)
 +3 \EX \Big(  \Big\{ \int_0^\tau \! 1_{\Omega(s)} \sum_{k\geq 0} \|\nabla u(s)\|_{L^2}^2 \|\nabla \phi e_k\|_{L^2}^2 ds \Big\}^{\frac{1}{2}} \Big)     \\
&\;   +3 \EX \Big( \Big\{ \int_0^\tau \! 1_{\Omega(s)} \sum_{k\geq 0}  \Big( \int_{\RR^n} 
|u(s,x)|^{2\sigma +2} dx \Big)^{\frac{2(2\sigma +1)}{2\sigma +2}}
 \Big( \int_{\RR^n} |\phi e_k(x)|^{2\sigma +2} dx \Big)^{\frac{2}{2\sigma +2}} ds \Big\}^{\frac{1}{2}} \Big) .
\end{align*}
Since $\sigma$ is subcritical (i.e., $\sigma < \frac{2}{(n-2)}$ in dimensions 3 and higher, and 
$\sigma$ is any positive number in dimensions 1 and 2), 
the Gagliardo-Nirenberg inequality \eqref{GN} and H\"older's inequality  imply
\begin{align*}
 \sum_{k\geq 0} \|\phi e_k\|_{L^{2\sigma +2}}^2 \leq &( C_{GN})^{\frac{1}{\sigma +1}} \sum_{k\geq 0}
 \|\nabla \phi e_k\|_{L^2}^{\frac{n\sigma}{\sigma +1}} \|\phi e_k\|_{L^2}^{\frac{2- (n-2)\sigma}{\sigma +1}} \\
 \leq & ( C_{GN})^{\frac{1}{\sigma +1}}  \Big( \sum_{k\geq 0} \| \nabla \phi e_k\|_{L^2}^2\Big) ^{ \frac{n\sigma}{2(\sigma +1)}} 
 \Big( \sum_{k\geq 0} \| \phi e_k\|_{L^2}^2\Big) ^{\frac{2- (n-2)\sigma}{2(\sigma +1)}} 
 \\
 \leq & ( C_{GN})^{\frac{1}{\sigma +1}}   \|\phi\|_{L^{0,1}_2}^{\frac{n\sigma}{\sigma +1}}  \| \phi\|_{L^{0,0}_2}^{\frac{2- (n-2)\sigma}{\sigma +1}} \equiv ( C_{GN})^{\frac{1}{\sigma +1}} \, C(\phi)^{\frac12}.
\end{align*}
Therefore, we deduce
\begin{align*}
\EX \Big( \sup_{s\leq \tau} 1_{\Omega(s)} H\big(u(s)\big)\Big) &\leq \;\EX \big( H(u_0) \big) + \frac{1}{2}  \|\phi\|_{L^{0,1}_2}^2 \EX (\tau)
+3 \|\phi\|_{L^{0,1}_2} \EX \Big( \sqrt{\tau} \sup_{s \leq \tau}\Big\{ 1_{\Omega(s)}  \|\nabla u(s)\|_{L^2}\Big\} \Big)  \\
 &  + 3 (C_{GN})^{\frac1{2\sigma+2}} 
C(\phi) \EX \Big( \sqrt{\tau} \sup_{s\leq \tau} \Big\{1_{\Omega(s)} 
  \|u(s)\|_{L^{2\sigma +2}}^{2\sigma +1}\Big\} \Big).
\end{align*}
thus, proving \eqref{E_sup_H} and finishing the proof. 
\end{proof}

 
\subsection{$L^2$-critical case}\label{critical_add}
Consider $s_c=0$, or equivalently, $\sigma = \frac{2}{n}$. In this case as observed in Section \ref{S:Q} in \eqref{E:sc=0}, for $u\in H^1$ we have
\begin{equation}\label{E:grad-bound}
\| \nabla u\|_{L^2}  \Big(1 -  \Big( \tfrac{\|u\|_{L^2}}{\|Q\|_{L^2}}\Big)^{\frac{4}{n}} \Big) \leq 2 H(u). 
\end{equation}

Given $u_0 \in {H}^1$ and the time $\tau^*(u_0)$ as in Theorem \ref{lwp_add}, we take $\beta \in (0,1)$ such that 
\begin{equation}\label{Hyp_add_mass}
\left\{
\begin{array}{l} 
\|u_0\|_{L^2} = \beta \|Q\|_{L^2},\; \mbox{\rm   if } \; u_0\;  \mbox{\rm is deterministic,}\\
\|u_0\|_{L^2} \leq \beta \|Q\|_{L^2} \; \mbox{\rm a.s., if } u_0 \; \mbox{\rm is random.}
\end{array}
\right.
\end{equation}
Let $\delta \in (\beta,1)$. Then for any $R>0$ we define
\begin{align}\label{E:tau-delta1}
&\tau_\delta = \inf\{ t\geq 0 : \|u(t)\|_{L^2} \geq \delta \|Q\|_{L^2} \} \wedge \tau^*(u_0) \; \mbox{\rm and } \\
&\tau_{\delta,R} =\tau_\delta \wedge  \inf\{ t\geq 0 : \|\nabla u(t)\|_{L^2} \geq R \}.\label{E:tau-delta-R}
\end{align}
The following result proves the well-posedness on a set where the mass of the solution remains controlled by the mass of the ground state. 

\begin{lemma}\label{wp_tau_delta}
Let $\sigma = \frac{2}{n}$,  $\phi\in L_2^{0,1}$, 
$u_0$ be ${\mathcal F}_0$-measurable, belong to  $H^1$ a.s. and  satisfy the condition \eqref{Hyp_add_mass} for some $\beta \in (0,1)$. Take $\delta \in (\beta,1)$ and define $\tau_\delta$ as in \eqref{E:tau-delta1}.
Then the problem \eqref{NLS_add} is well-posed on the random time interval $[0,\tau_\delta]$ for $\tau_\delta \leq  \tau^*(u_0)$ a.s., and hence, the mass if the solution remains bounded by the ground state mass a.s. on this time interval. 
\end{lemma}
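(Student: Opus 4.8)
The plan is to adapt the argument from the proof of Theorem~\ref{gwp_critical} to the additive setting: on $[0,\tau_\delta]$ the $L^2$-norm of the solution stays below $\|Q\|_{L^2}$ by construction, so the sharp $L^2$-critical bound \eqref{E:sc=0} converts control of the gradient into control of the (non-conserved) energy, which is then estimated in expectation via Lemma~\ref{lem_MH}(ii); the cut-off stopping time $\tau_{\delta,R}$ in \eqref{E:tau-delta-R} is there to start the bootstrap from a finite quantity.

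First I would record the mass bound and its consequences. By the definition \eqref{E:tau-delta1} of $\tau_\delta$ and a.s.\ continuity of $s\mapsto\|u(s)\|_{L^2}$ on $[0,\tau_\delta]$, one has $\|u(s)\|_{L^2}\le\delta\|Q\|_{L^2}<\|Q\|_{L^2}$ for all $s\le\tau_\delta$ --- this is already the claimed control on the mass. Setting $\mu:=1-\delta^{4/n}\in(0,1)$, \eqref{E:sc=0} then gives, a.s.\ for $s\le\tau_\delta$,
\begin{equation*}
0\le\tfrac{\mu}{2}\|\nabla u(s)\|_{L^2}^2\le H(u(s)),
\qquad
\|u(s)\|_{L^{2\sigma+2}}^{2\sigma+1}\le\Big(\tfrac{2(\sigma+1)}{\mu}\Big)^{\frac{2\sigma+1}{2\sigma+2}}H(u(s))^{\frac{2\sigma+1}{2\sigma+2}},
\end{equation*}
the second inequality following from $\|u(s)\|_{L^{2\sigma+2}}^{2\sigma+2}=(2\sigma+2)\big(\tfrac12\|\nabla u(s)\|_{L^2}^2-H(u(s))\big)\le(\sigma+1)\|\nabla u(s)\|_{L^2}^2$ and the first inequality. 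So on $[0,\tau_\delta]$ both $\|\nabla u(s)\|_{L^2}$ and $\|u(s)\|_{L^{2\sigma+2}}^{2\sigma+1}$ are dominated by \emph{sub-linear} powers of $H(u(s))$.

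Next I would estimate the energy uniformly in time. Fix $T>0$ and apply Lemma~\ref{lem_MH}(ii) with the stopping time $\tau:=\tau_{\delta,R}\wedge T$ --- which satisfies $\tau<\tau^*(u_0)$ a.s., since if $\tau^*(u_0)<\infty$ the blow-up alternative of Theorem~\ref{lwp_add} forces $\inf\{t:\|\nabla u(t)\|_{L^2}\ge R\}<\tau^*(u_0)$, and if $\tau^*(u_0)=+\infty$ then $\tau\le T<\infty$ --- and with the decreasing $\mathcal F_s$-measurable family $\Omega(s):=\{s\le\tau_\delta\}$ (intersected with $\{\|u_0\|_{H^1}\le m\}$ in the random-data case, to render the $\EX(H(u_0))$-type term on the right of \eqref{E_sup_H} finite). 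Since $1_{\Omega(s)}=1$ on the range of the supremum, plugging the two bounds from the previous step into \eqref{E_sup_H} yields, for $g_R(T):=\EX\big(\sup_{s\le\tau_{\delta,R}\wedge T}H(u(s))\big)$ (which is finite, $\le R^2/2$, by \eqref{E:tau-delta-R}), an inequality of the schematic form
\begin{equation*}
g_R(T)\le C_0+C_1 T+C_2\sqrt{T}\,\sqrt{g_R(T)}+C_3\sqrt{T}\,g_R(T)^{\frac{2\sigma+1}{2\sigma+2}},
\end{equation*}
with $C_0,C_1,C_2,C_3$ depending only on $\mu$, $\|\phi\|_{L^{0,1}_2}$, $C_{GN}$, $C(\phi)$ and the (now finite) expected initial energy. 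Because both exponents of $g_R(T)$ on the right are strictly less than $1$, Young's inequality absorbs those terms into the left-hand side and gives $g_R(T)\le C(T)$ with $C(T)$ independent of $R$. Letting $R\to\infty$ (so $\tau_{\delta,R}\uparrow\tau_\delta$ a.s.) and using monotone convergence, and then $m\to\infty$, we conclude $\sup_{s<\tau_\delta\wedge T}H(u(s))<\infty$ a.s., hence by the previous step and the mass bound $\sup_{s<\tau_\delta\wedge T}\|u(s)\|_{H^1}<\infty$ a.s.\ for every $T>0$.

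Finally, this precludes explosion at $\tau_\delta$: if $P(\tau_\delta=\tau^*(u_0)<\infty)>0$, then on that event the blow-up alternative of Theorem~\ref{lwp_add} would force $\|u(s)\|_{H^1}\to\infty$ as $s\uparrow\tau_\delta$, contradicting the bound just obtained (with $T$ chosen large). Hence $\tau_\delta<\tau^*(u_0)$ a.s.\ on $\{\tau_\delta<\infty\}$ and $\tau^*(u_0)=+\infty$ on $\{\tau_\delta=\infty\}$, so in all cases $u\in C([0,\tau_\delta];H^1)$ a.s.\ and \eqref{NLS_add} is well posed on $[0,\tau_\delta]$, with $\|u(t)\|_{L^2}\le\delta\|Q\|_{L^2}<\|Q\|_{L^2}$ throughout $[0,\tau_\delta]$. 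I expect the main obstacle to be the energy estimate: the nonlinear term $\|u\|_{L^{2\sigma+2}}^{2\sigma+1}$ appearing in the energy evolution \eqref{E_sup_H} is only controllable because the mass stays below $\|Q\|_{L^2}$, which via \eqref{E:sc=0} turns it into a sub-linear power of $H(u)$ that Young's inequality can absorb; keeping track of the cut-off $\tau_{\delta,R}$ and the auxiliary localization in $m$ so that the absorption starts from a finite quantity is the technical part.
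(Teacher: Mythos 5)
Your proposal is correct and follows essentially the same route as the paper: control the mass on $[0,\tau_\delta]$ by construction, use \eqref{E:sc=0} to dominate the gradient by the energy, feed this into Lemma \ref{lem_MH}(ii) with the cut-off $\tau_{\delta,R}$, absorb the resulting sub-linear powers of the energy by Young's inequality, and let $R\to\infty$ and then $T\to\infty$. The only (harmless) variations are that you bound $\|u\|_{L^{2\sigma+2}}^{2\sigma+1}$ directly from the energy identity instead of Gagliardo–Nirenberg (same exponent $\tfrac{4+n}{4+2n}$) and you add a localization in $\|u_0\|_{H^1}$ to keep the initial-energy term finite, a point the paper does not spell out.
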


\begin{proof}
Using \eqref{E:grad-bound} and recalling $\tau_\delta$ from \eqref{E:tau-delta1}, one easily obtains
\begin{equation} 	\label{upp_grad_tau}
\| \nabla u(s)\|_{L^2}^2 \leq \frac{2}{1-\delta^{\frac{4}{n}}} H\big(u(s)\big)\quad \mbox{\rm for ~any~} s\leq \tau_{\delta}.
\end{equation}
Recall $C(\phi)$ from \eqref{E:Const-phi}, which in this case, can be written as $C(\phi)= \| \phi\|_{L^{0,1}_2}^{\frac{n}{n+2}} \; \| \phi \|_{L^{0,0}_2}^{\frac{2}{n+2}}$. 
Using \eqref{E_sup_H} with $\Omega(s)=\Omega$ (the whole probability space) for every $t\geq 0$ together with \eqref{GN} and \eqref{constant_GN}, we deduce 
\begin{align*}
\EX \Big( \sup_{s\leq \tau_{\delta,R}}& H\big( u(s)\big)\Big)  \leq \; \EX \big( H(u_0)\big)  + \frac{1}{2} \| \phi\|_{L^{0,1}_2}^2 \EX ( \tau_{\delta,R}) \\
&\; + 3 \,\| \phi\|_{L^{0,1}_2} 
\sqrt{\frac{2}{1-\delta^{\frac{4}{n}}} } \, \EX \Big( \sqrt{\tau_{\delta,R}}  \sup_{s\leq \tau_{\delta,R}}
 \sqrt{H\big( u(s)\big)} \Big) \\
& \; + 3\, (C_{GN})^{\frac{n}{4+2n}}
C(\phi) 
\EX \Big( \sqrt{\tau_{\delta,R}}  \sup_{s\leq \tau_{\delta,R}} \Big\{ 
(C_{GN})^{\frac{4+n}{4+2n}}
 \| \nabla u(s)\|_{L^2}^{2  \frac{4+n}{4+2n}} \| u(s)\|_{L^2}^{\frac{4}{n}  \frac{4+n}{4+2n}} \Big\}  \Big)\\
 \leq & \; \EX \big( H(u_0) \big) + \frac{1}{2} \| \phi\|_{L^{0,1}_2}^2 \EX (\tau_{\delta,R})  + 3 \| \phi\|_{L^{0,1}_2} \frac{\sqrt{2}}{({1-\delta^{\frac{4}{n}}})^{\frac12} } 
 \big\{ \EX (\tau_{\delta,R})\big\}^{\frac{1}{2}} \Big\{\EX\Big( \sup_{s\leq \tau_{\delta,R}} H\big( u(s) \big)\Big)\Big\}^{\frac{1}{2}}  \\
 &\;  + 3 \, C_{GN} \,
 C(\phi) \big( \delta \|Q\|_{L^2}\big)^{\frac{4}{n} \frac{4+n}{4+2n}} 
 \Big( \frac{2}{1-\delta^{\frac{4}{n}}}\Big)^{\frac{4+n}{4+2n}} \EX \Big[ \sqrt{\tau_{\delta,R}} \Big( \sup_{s\leq \tau_{\delta,R}} 
 H\big( u(s)\big) \Big)^{\frac{4+n}{4+2n}} \Big],
 \end{align*}
where in the last upper estimate we have used the Cauchy-Schwarz inequality.
Let $\epsilon_1, \epsilon_2 >0$ and use  the Cauchy-Schwarz, H\"older  inequalities, and the Young inequality with conjugate exponents $(2,2)$ and $(\frac{4+2n}{n}, \frac{4+2n}{4+n} )$, respectively, to obtain 
 \begin{align*} 
 \EX \Big( \sup_{s\leq \tau_{\delta,R}} H\big( u(s)\big)\Big) \leq 
 \; & \EX \big( H(u_0)\big) + (\epsilon_1 + \epsilon_2)  \EX \Big( \sup_{s\leq \tau_{\delta,R}} 
 H\big( u(s)\big) \Big) 
 + \Big( \frac{1}{2} + \frac{9}{2\epsilon_1\big(1-\delta^{\frac{4}{n}}\big)} \Big)
 \| \phi\|_{L^{0,1}_2}^2 \EX ( \tau_{\delta,R}) \\
 &\;   + \frac{n}{4+2n}  \frac{1}{\epsilon_2^{\frac{4+n}{n}}} 
\Big( \frac{4+n}{4+2n} 3K C(\phi) \Big)^{\frac{4+2n}{n}} \frac{ \delta^{\frac{4(4+n)}{n^2}}}{\|Q\|_{L^2}^{\frac{4}{n}}}  \; 
 \Big( \frac{2}{1-\delta^{\frac{4}{n}}} \Big)^{\frac{4+n}{n}} \EX \Big(\tau_{\delta,R}^{\frac{2+n}{n}}\Big).  
\end{align*}

Since from the energy definition $H(u)\leq \frac{1}{2} \|\nabla u\|_{L^2}^2$ and recalling \eqref{E:tau-delta-R}, we obtain 
$\EX \big[ \sup_{s\leq \tau_{\delta,R}} H\big( u(s)\big) \big] \leq \frac{R^2}{2}<\infty$
for any $R>0$. Choosing  $\epsilon_1, \epsilon_2$ such that $\epsilon_1 + \epsilon_2 <1$,
since $\tau_{\delta,R}\leq \tau_\delta$, we deduce that for any $R>0$,
\begin{align*}
 \EX \Big( \sup_{s\leq \tau_{\delta,R}}& H\big( u(s)\big)\Big)  \leq   \frac{1}{1-(\epsilon_2+\epsilon_2)}\Big[
 \EX \big( H(u_0)\big) + \Big( \frac{1}{2} + \frac{9}{2\epsilon_1\big(1-\epsilon^{\frac{4}{n}}\big)} \Big) \| \phi\|_{L^{0,1}_2}^2 \EX ( \tau_\delta) \\
 &\;  + \frac{n}{4+2n}  \frac{1}{\epsilon_2^{\frac{4+n}{n}}} 
  \Big( \frac{4+n}{4+2n} 3K C(\phi) \Big)^{\frac{4+2n}{n}} \frac{ \delta^{\frac{4(4+n)}{n^2}}}{\|Q\|_{L^2}^{\frac{4}{n}}}  \; 
 \Big( \frac{2}{1-\delta^{\frac{4}{n}}} \Big)^{\frac{4+n}{n}} \EX \big(\tau_\delta^{\frac{2+n}{n}}\big)\Big].  
\end{align*} 
As $R\to +\infty$, the monotone convergence theorem implies that 
$\EX \big[ \sup_{s\leq \tau_\delta} H\big( u(s)\big) \big] $ is bounded by the right-hand side of the above inequality. 

Replacing the stopping time $\tau_\delta$ by $\tau_\delta \wedge T$ for a fixed $T>0$,
we deduce  $\EX  \big[ \sup_{s\leq \tau_\delta \wedge T} H(u(s)) \big] <\infty$, 
and using   \eqref{upp_grad_tau}, we obtain
$\EX \big( \sup_{s\leq \tau_\delta \wedge T} \| \nabla u(s)\|_{L^2}^2\big) <\infty$. Therefore, we have well-posedness on the (random) time interval
$[0, \tau_\delta \wedge T]$. Since $T$ can be arbitrary, we have well-posedness on the time interval $[0,\tau_\delta]$,
 where the mass of the solution $u(t)$ remains below the mass of the ground state a.s. for $t \leq \tau_{\delta}$, where $\delta$ depends on the initial mass of $u_0$ in \eqref{Hyp_add_mass}. 
\end{proof}

The following theorem is the main result of this section. As in Section \ref{mass_critical_Stra}, it is only necessary to control the mass of the 
initial condition. 

\begin{Th}\label{Th_mass_crit_add}
Let $\phi$ satisfy the assumptions of Theorem \ref{lwp_add}, 
$\sigma = \frac{2}{n}$ (i.e., $s_c=0$),
$u_0\in H^1$ a.s. be ${\mathcal F}_0$-measurable such that 
$\|u_0\|_{L^2} \leq  \beta \|Q\|_{L^2}$ a.s. 
for some $\beta \in (0,1)$. Set 
\begin{equation} 		\label{T*_add_mass_crit}
\epsilon^*=3\sqrt{10 - \beta^2}-9\quad  \mbox{\rm and } \; 
 {T}^*= 
\frac{\epsilon^* [ 1-\beta^2-\epsilon^*]}{9+\epsilon^*} \; \frac{M(Q)}{\|\phi\|_{L^{0,0}_2}^2}.
\end{equation}
Let $\tau^*(u_0)$ denote the stopping time defined in Theorem  \ref{lwp_add}. Then 
\begin{equation} \label{crit_add_bound}
 \EX \big(\tau^*(u_0)\big) \geq  {T}^*  \; 
\end{equation}
and 
 with a strictly positive probability the solution exists 
up to any time $T>0$, provided that $T$ is strictly less then $T^\ast$, i.e., 
\begin{equation}
 P\big(T<\tau^*(u_0) \big) >0 ~~ \mbox{\rm  if }\;  T< {T}^*.
\end{equation}
 
\begin{remark}
Note that $T^\ast$ is a decreasing function of both $\beta$ and 
of the strength of the noise $\| \phi\|_{L^{0,0}_2}$. 
\end{remark}
\end{Th}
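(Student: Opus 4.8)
The plan is to run the argument of Theorem~\ref{th-wp-inter-multi} with the mass playing the role that the energy (times $M(u_0)^\alpha$) played there: in the $L^2$-critical regime what must stay below the ground state threshold is $\|u(t)\|_{L^2}$, and $\|u(t)\|_{\dot H^1}$ is then automatically controlled through \eqref{E:grad-bound}. Fix $\delta\in(\beta,1)$ and recall $\tau_\delta$ from \eqref{E:tau-delta1} and $\tau_{\delta,R}$ from \eqref{E:tau-delta-R}. By Lemma~\ref{wp_tau_delta} the problem is well posed on $[0,\tau_\delta]$, $\tau_\delta\le\tau^*(u_0)$, and $M(u(s))\le\delta^2M(Q)$ for $s\le\tau_\delta$; combined with the blow-up alternative of Theorem~\ref{lwp_add} this forces $\tau_\delta<\tau^*(u_0)$ a.s.\ on $\{\tau_\delta<\infty\}$, so that on $\{\tau_\delta\le T\}$ continuity of $s\mapsto M(u(s))$ gives $M(u(\tau_\delta))=\delta^2M(Q)$, hence $\sup_{s\le\tau_\delta\wedge T}M(u(s))\ge\delta^2M(Q)$.

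First I would bound $P(\tau_\delta\le T)$. By the previous observation and Markov's inequality,
\[
P(\tau_\delta\le T)\le\frac{1}{\delta^2M(Q)}\,\EE\Big(\sup_{s\le\tau_\delta\wedge T}M(u(s))\Big),
\]
and applying the mass estimate \eqref{E_sup_M} of Lemma~\ref{lem_MH}(i) to the stopping time $\tau_\delta\wedge T$ (legitimized by the localization through $\tau_{\delta,R}$ and monotone convergence $R\to\infty$, as in the proof of Lemma~\ref{wp_tau_delta}), together with $\EE(M(u_0))\le\beta^2M(Q)$ and $\tau_\delta\wedge T\le T$, yields for every $\epsilon\in(0,1)$
\[
P(\tau_\delta\le T)\le\frac{1}{1-\epsilon}\Big[\frac{\beta^2}{\delta^2}+\frac{9+\epsilon}{\epsilon}\,\frac{\|\phi\|_{L^{0,0}_2}^2\,T}{\delta^2M(Q)}\Big].
\]
Letting $\delta\uparrow1$, the right-hand side is strictly below $1$ precisely when $\beta^2+\tfrac{9+\epsilon}{\epsilon}\|\phi\|_{L^{0,0}_2}^2T/M(Q)<1-\epsilon$, i.e.\ when $T<g(\epsilon)\,M(Q)/\|\phi\|_{L^{0,0}_2}^2$ with $g(\epsilon)=\epsilon(1-\epsilon-\beta^2)/(9+\epsilon)$. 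A one-variable optimization of $g$ over $\epsilon\in(0,1-\beta^2)$ gives $g'(\epsilon)=0$ equivalent to $\epsilon^2+18\epsilon-9(1-\beta^2)=0$, whose positive root is $\epsilon^*=3\sqrt{10-\beta^2}-9\in(0,1-\beta^2)$, and then $g(\epsilon^*)=\epsilon^*(1-\beta^2-\epsilon^*)/(9+\epsilon^*)$ reproduces exactly the $T^*$ of \eqref{T*_add_mass_crit}. Hence for $T<T^*$ one may choose $\epsilon$ near $\epsilon^*$ and $\delta$ near $1$ with $P(\tau_\delta\le T)<1$; since $\{T<\tau_\delta\}\subseteq\{T<\tau^*(u_0)\}$, this gives $P(T<\tau^*(u_0))\ge P(T<\tau_\delta)>0$.

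For the lower bound \eqref{crit_add_bound} on $\EE(\tau^*(u_0))$ I would follow the end of the proof of Theorem~\ref{th-wp-inter-multi}: the inequality is trivial if $P(\tau_\delta=\infty)>0$, so assume $\tau_\delta<\infty$ a.s.; then continuity gives $\sup_{s\le\tau_\delta}M(u(s))=\delta^2M(Q)$, and inserting this into \eqref{E_sup_M} with $\tau=\tau_\delta$ yields
\[
\delta^2M(Q)\le\frac{1}{1-\epsilon}\Big[\beta^2M(Q)+\frac{9+\epsilon}{\epsilon}\|\phi\|_{L^{0,0}_2}^2\,\EE(\tau_\delta)\Big],
\]
so $\EE(\tau_\delta)\ge\tfrac{\epsilon}{9+\epsilon}\,[(1-\epsilon)\delta^2-\beta^2]\,M(Q)/\|\phi\|_{L^{0,0}_2}^2$. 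Since $\tau^*(u_0)\ge\tau_\delta$ for every $\delta<1$, letting $\delta\uparrow1$ and then optimizing over $\epsilon\in(0,1-\beta^2)$ (the same computation, the same $\epsilon^*$) gives $\EE(\tau^*(u_0))\ge T^*$.

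The main obstacle is the bookkeeping around $\tau_\delta$: one must be sure that on $\{\tau_\delta\le T\}$ the mass has genuinely reached the level $\delta^2M(Q)$ — i.e.\ that $\tau_\delta$ cannot equal $\tau^*(u_0)$ while $\|u\|_{L^2}$ stays small — which is exactly what Lemma~\ref{wp_tau_delta} together with the blow-up alternative of Theorem~\ref{lwp_add} rules out; and to invoke Lemma~\ref{lem_MH}(i) at the (not a priori sub-$\tau^*$) stopping time $\tau_\delta\wedge T$ one needs the localization by $\tau_{\delta,R}$ followed by monotone convergence, precisely as in the $L^2$-critical global-existence arguments of Lemma~\ref{wp_tau_delta} and Theorem~\ref{gwp_critical}. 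Once that is in place, the rest is the elementary $(\epsilon,\delta)$ optimization, which produces the stated $\epsilon^*$ and $T^*$.
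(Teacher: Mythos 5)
Your proposal is correct and follows essentially the same route as the paper: Markov's inequality plus the mass estimate \eqref{E_sup_M} applied at $\tau_\delta\wedge T$ to bound $P(\tau_\delta\le T)$, the same inequality evaluated at $\tau_\delta$ (with $\sup_{s\le\tau_\delta}M(u(s))=\delta^2M(Q)$ when $\tau_\delta<\infty$) for the lower bound on $\EX(\tau_\delta)$, and the identical $(\epsilon,\delta)$ optimization yielding $\epsilon^*=3\sqrt{10-\beta^2}-9$ and $T^*$. The only cosmetic difference is that you optimize in $\epsilon$ after letting $\delta\uparrow 1$ while the paper optimizes at fixed $\delta$ first; the outcome is the same.
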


\begin{proof}
Fix $\delta \in (\beta,1)$. We first prove a lower estimate of $\EX (\tau_\delta)$.
 We may suppose that $\tau_\delta < +\infty$ a.s. and that
$\EX (\tau_\delta)<+\infty$, since otherwise there is nothing to prove. 
Then $\sup_{s\leq \tau_\delta}  M( u(s)) = \delta^2  M(Q) $. Since $\|u_0\|_{L^2} \leq \beta \|Q\|_{L^2}$ a.s., we have
$E\big( M(u_0)\big) \leq \beta^2 M(Q)$. 
Hence, \eqref{E_sup_M} implies for $\epsilon \in (0,1)$, 
$$
\delta^2 M(Q) \leq \frac{1}{1-\epsilon} \Big[ \beta^2 M(Q) + \frac{9+\epsilon}{\epsilon} \|\phi\|_{L^{0,0}_2}^2 E(\tau_\delta)\Big].
$$
Choose $\epsilon$ such that $\frac{\beta^2}{1-\epsilon}<\delta^2$, that is, $\epsilon < 1-\frac{\beta^2}{\delta^2}$. Then
\[ 
\EX (\tau_\delta) \geq \frac{ \epsilon\, [\delta^2 (1-\epsilon) - \beta^2]}{9+\epsilon} \; \frac{M(Q)}{\|\phi\|_{L^{0,0}_2}^2}.
\] 
We optimize the above 
estimate by choosing $\epsilon \in \big( 0, 1-\frac{\beta^2}{\delta^2}\big)$. 
Let $\varphi_\delta(\epsilon) = \frac{\epsilon [ \delta^2(1-\epsilon) - \beta^2]}{9+\epsilon}$. 
Then $\varphi_\delta'(\epsilon)=\frac{\psi_\delta(\epsilon)}{(9+\epsilon)^2}$, where 
$\psi_\delta(\epsilon)=-\delta^2 \epsilon^2 - 18 \delta^2 \epsilon
+9(\delta^2-\beta^2)$. The discriminant of this polynomial is $4 \tilde{\Delta}$, where $\tilde{\Delta} = (9\delta^2)^2 +
9\delta^2(\delta^2-\beta^2)\geq (9 \delta^2)^2>0$. 
The equation $\psi_\delta(\epsilon)=0$ has two solutions, 
\[ \epsilon_\delta(1)= - \tfrac{9\delta^2 + \sqrt{\tilde{\Delta}}}{\delta^2}<0 \quad \mbox{\rm and} \quad
\epsilon_\delta(2)= \tfrac{\sqrt{\tilde{\Delta}}-9\delta^2}{\delta^2}>0.
\]
The function $\varphi_\delta$ is increasing on the interval $[0, \epsilon_\delta(2)]$ and achieves its maximum at $\epsilon_\delta(2)$. 
Furthermore, 
$$ 
\epsilon_\delta(2)= \sqrt{9^2+9\big( 1-\tfrac{\beta^2}{\delta^2} \big) } -9 = \big( 1-\tfrac{\beta^2}{\delta^2}\big) 
\frac{1}{1+\sqrt{ 1+ \frac{1}{9}
\big( 1-\tfrac{\beta^2}{\delta^2}\big)}} < \frac{1}{2} \Big(1-\tfrac{\beta^2}{\delta^2}\Big).
$$
Therefore,
$$
\EX (\tau_\delta) \geq \frac{\epsilon_\delta(2) \big[ \delta^2 (1-\epsilon_\delta(2)) - \beta^2\big]}{9+\epsilon_\delta(2)} \; 
\frac{M(Q)}{\|\phi\|_{L^{0,0}_2}^2}.
$$
As $\delta \to 1$, $\epsilon_\delta(2)\to \epsilon^* = 3\sqrt{10 - \beta^2}-9 < \frac{1-\beta^2}{2}$,  and since $\EX (\tau^*(u_0)) \geq \EX (\tau_\delta)$
for every $\delta \in (0,1)$, we obtain
$$
\EX (\tau^*(u_0) ) \geq \limsup_{\delta \to 1} \EX (\tau_\delta) \geq {T}^* := \frac{\epsilon^* \big[ 1-\beta^2 - \epsilon^*\big]}{9+\epsilon^*}\; 
\frac{M(Q)}{\|\phi\|_{L^{0,0}_2}^2}.
$$
This lower bound is an increasing function of $\beta$
and a decreasing function of the strength $\| \phi\|_{L^{0,0}_2}$ of the stochastic
perturbation. 

We next prove that $P\big(T<\tau^*(u_0) \big) >0$ for  $T<{T}^*$.  Given $T>0$ and $\epsilon \in (0,1)$ we have for $\delta \in (\beta,1)$, 
\begin{align*}
P(\tau_\delta \leq T) =& P\Big( \sup_{s\leq T \wedge \tau^*(u_0)} M(u(s)) \geq \delta^2 M(Q)\Big) \\
\leq & \frac{1}{\delta^2 M(Q)} \EX \Big( \sup_{s\leq T \wedge \tau^*(u_0)} M(u(s)) \Big)\\
\leq & \frac{1}{\delta^2 M(Q)} \frac{1}{1-\epsilon} \Big[ \beta^2 M(Q) + \frac{9+\epsilon}{\epsilon} \|\phi\|_{L^{0,0}_2}^2 T\Big],
\end{align*}
where we have used \eqref{E_sup_M}  in the last inequality. 
To have well-posedness on the time interval $[0,T]$ with positive probability, we require  $P(\tau_\delta \leq T)<1$, that is,
\[ 
\frac{\beta^2}{(1-\epsilon)\delta^2} + \frac{9+\epsilon}{\epsilon (1-\epsilon)}\;  \frac{T\, \|\phi\|_{L^{0,0}_2}^2 }{\delta^2 M(Q)}<1.\]
Let $\epsilon < 1-\frac{\beta^2}{\delta^2}$. Then the above  inequality is satisfied if
\[ T<{T}^*_{\epsilon,\delta}:=  \epsilon \frac{\delta^2 (1-\epsilon)-\beta^2}{9+\epsilon} \; \frac{M(Q)}{\|\phi\|_{L^{0,0}_2}^2}.\]
Using the above computations, the optimal value of this upper bound is obtained when $\epsilon= \epsilon_\delta(2)$. As $\delta \to 1$, we deduce that
\[ P\big(T<\tau^*(u_0)\big)>0 \quad \mbox{\rm if } \; T<{T}^*=\frac{\epsilon^* [ 1-\beta - \epsilon^*]}{9+\epsilon^*} \; \frac{M(Q)}{\|\phi\|_{L^{0,0}_2}^2} 
\quad \mbox{\rm and} \; \epsilon^*=3 \sqrt{10 - \beta^2} -9,\] 
which completes the proof of this theorem. 
\end{proof}

\subsection{Intercritical case}	\label{inter_add}

In this section we consider the intercritical range $0<s_c<1$, i.e.,  
$\sigma > \frac{2}{n}$ for $n=1,2$, and $\sigma \in \big(\frac{2}{n}, \frac{2}{n-2})$ if $n\geq 3$. To ease notation, we denote this range for all dimensions as $\big(\frac2{n}, \frac{2}{(n-2)^+}\big)$.

Suppose that $\|u_0\|_{L^2}\leq \beta \|Q\|_{L^2}$ a.s. for some $\beta \in (0,1)$ and let $\delta \in (\beta,1)$. 
Note that $\EX \big( M(u_0)\big) \leq \beta^2 M(Q)$. 
Unlike the multiplicative case in Section \ref{inter_Stra}, the mass is not conserved, therefore, we first localize the mass $M(u)$ in terms of $M(Q)$. 
For $t>0$ set
\begin{equation}\label{Omega_t}
\Omega_t = \Big\{ \omega : \; t<\tau^*(u_0)(\omega), \; \sup_{s\leq t} \|u(s)\|_{L^2} \leq \delta \|Q\|_{L^2}\Big\} .
\end{equation}
Then \eqref{E_sup_M}, together with the Markov inequality, imply that for $\epsilon >0$,
\begin{align}\label{E:star}
P(\Omega_t^c) \leq &\; \frac{1}{\delta^2 M(Q)} \; \EX \Big( \sup_{s\leq t\wedge \tau^*(u_0)} \|u(s)\|_{L^2}^2\Big) 
\leq \frac{1}{\delta^2 M(Q)} \; \frac{1}{1-\epsilon}\; \Big[ \beta^2 M(Q) + \frac{9+\epsilon}{\epsilon} \, \| \phi\|_{L^{0,0}_2}^2\, t\Big].
\end{align}
If $\epsilon < 1 - \frac{\beta^2}{\delta^2}$, we obtain  $P(\Omega_t^c)<1$ (or in other words, $P(\Omega_t)>0$) for
$t\leq \frac{\epsilon [\delta^2 (1-\epsilon) - \beta^2]}{9+\epsilon}
\; \frac{M(Q)}{\| \phi\|_{L^{0,0}_2}^2}$. 

We also control the energy of the initial condition by  the energy  of the ground state. Suppose that 
\begin{equation}\label{Hyp_add_H}
H(u_0) \leq \gamma H(Q)\quad \mbox{\rm  a.s. for some } \; \gamma \in [0,1),
\end{equation}
and let $A\in (\gamma,1)$. Set
\begin{equation}\label{tauA}
\tau_A=\inf \{ t\geq 0\, : \; H(u(t)) \geq A H(Q) \} \wedge \tau^*(u_0),
\end{equation}
where $\tau^*(u_0)$, the stopping time, defined in Theorem  \ref{lwp_add}.

The following result proves well-posedness on a set, where mass, energy and kinetic  energy of the initial condition  are controlled by the
 corresponding quantities of the ground state. 
\begin{lemma}\label{wp_add_tauA}
Let $u_0$, $\phi$ and $\sigma \in ( \frac{2}{n}, \frac{2}{(n-2)^+})$ (i.e., $0<s_c<1$) satisfy the assumptions of Theorem \ref{lwp_add}. 
Suppose furthermore that $u_0$ satisfies the conditions \eqref{Hyp_add_mass}, \eqref{Hyp_add_H} with positive constants $\beta$ and $\gamma$ such that 
$\beta,\gamma \in (0,1)$ and $\|\nabla u_0\|_{L^2}  < \|\nabla Q\|_{L^2}$ a.s.. 
Let $T > 0$. Then for $\delta \in (\beta,1)$ such that $A\in (\gamma,1)$, the equation \eqref{NLS_add} is well-posed 
on the random time interval $[0, \tau_A\wedge T]$ on the set $\Omega_T$, where $\tau_A$ is defined in \eqref{tauA} and 
$\Omega_T$ is defined in \eqref{Omega_t},   with the uniform bound given in \eqref{E:grad-bound3} on the set $\Omega_T$.
\end{lemma}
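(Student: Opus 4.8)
The plan is to mimic the structure of the proof of Lemma~\ref{wp_tau_delta}, but now carrying the localization on the \emph{two} sets $\Omega_T$ (mass control) and the stopping time $\tau_A$ (energy control) simultaneously, and using the intercritical Gagliardo--Nirenberg bound \eqref{upper_gradient} instead of the $L^2$-critical identity \eqref{E:grad-bound}. First I would observe that on the random interval $[0,\tau_A]$, by definition \eqref{tauA} we have $H(u(s))\le A\,H(Q)$, while on the set $\Omega_T$ (for $t\le T$) we have $\|u(s)\|_{L^2}\le \delta\|Q\|_{L^2}$ for $s\le t$. Feeding both into \eqref{upper_gradient} and writing $X(s)=\|\nabla u(s)\|_{L^2}\|u(s)\|_{L^2}^\alpha$ as in \eqref{E:defX}, the inequality \eqref{E:X} becomes, on $\Omega_T$ for $s\le \tau_A\wedge T$,
\[
X(s)^2 - B\, X(s)^{n\sigma} \le 2 H(u(s)) M(u(s))^\alpha \le 2A\,\delta^{2\alpha}\, H(Q) M(Q)^\alpha,
\]
and since the right-hand side is strictly less than $2H(Q)M(Q)^\alpha = 2 f(x^*)$ (because $A\delta^{2\alpha}<1$), the hypothesis $\|\nabla u_0\|_{L^2}<\|\nabla Q\|_{L^2}$ together with a.s.\ continuity of $\|\nabla u(s)\|_{L^2}$ traps $X(s)$ strictly to the left of $x^*=\|\nabla Q\|_{L^2}\|Q\|_{L^2}^\alpha$ (this is the same barrier argument as in Theorem~\ref{th-wp-inter-multi}, using that $f$ is an inverted-parabola-type function increasing on $(0,x^*)$). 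This yields the uniform a.s.\ gradient bound, which I would record as \eqref{E:grad-bound3}, of the shape
\[
\sup_{s\le \tau_A\wedge T}\ 1_{\Omega_T}\,\|\nabla u(s)\|_{L^2}^2 \le \frac{n}{2(1-s_c)}\,\frac{M(Q)^{1+\alpha}}{(\underline{m})^\alpha}
\]
for a suitable lower mass quantity (or more simply in terms of $\|\nabla Q\|_{L^2}$ and $\delta$), using the identification \eqref{identification} and \eqref{E:normQ1}.

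The analytic core is then to upgrade this pointwise-on-$\Omega_T$ bound to an $L^1(\Omega)$ control of $\sup_{s\le \tau_A\wedge T} 1_{\Omega_T} H(u(s))$, so that the blow-up alternative of Theorem~\ref{lwp_add} can be excluded on $\Omega_T$ up to time $\tau_A\wedge T$. For this I would apply Lemma~\ref{lem_MH}(ii) with the decreasing family $\Omega(s)=\Omega_T\cap\{s<\tau^*(u_0)\}$ (note $\Omega_T\subset\Omega_t$ for $t\le T$, so the family is decreasing as required), replacing $\tau$ by the localized stopping time $\tau_{A}\wedge T\wedge\inf\{t:\|\nabla u(t)\|_{L^2}\ge R\}$ to guarantee a priori finiteness, exactly as the $\tau_{\delta,R}$ device in Lemma~\ref{wp_tau_delta}. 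In the resulting inequality \eqref{E_sup_H} the term $\sup_s 1_{\Omega(s)}\|\nabla u(s)\|_{L^2}$ is bounded by the constant from the first paragraph, and the term $\sup_s 1_{\Omega(s)}\|u(s)\|_{L^{2\sigma+2}}^{2\sigma+1}$ is bounded, via Gagliardo--Nirenberg \eqref{GN}, by a constant times $\|\nabla u(s)\|_{L^2}^{\sigma(2\sigma+1)n/(2\sigma+2)}\|u(s)\|_{L^2}^{\cdots}$, hence again by an a.s.\ constant on $\Omega_T$ (using both the gradient trap and $\|u(s)\|_{L^2}\le\delta\|Q\|_{L^2}$). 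Therefore every term on the right of \eqref{E_sup_H} is finite, giving $\EX\big(\sup_{s\le\tau_A\wedge T}1_{\Omega_T}H(u(s))\big)<\infty$; combined with \eqref{upper_gradient} this forces $\EX\big(\sup_{s\le\tau_A\wedge T}1_{\Omega_T}\|\nabla u(s)\|_{L^2}^2\big)<\infty$, and then letting $R\to\infty$ by monotone convergence and invoking the blow-up alternative of Theorem~\ref{lwp_add} shows that on $\Omega_T$ the solution cannot blow up before $\tau_A\wedge T$. Since $\Omega_T$ is $\mathcal F_T$-measurable and the argument is the standard one, well-posedness on $[0,\tau_A\wedge T]$ restricted to $\Omega_T$ follows, and on that set mass, energy and kinetic energy stay controlled by $\delta^2M(Q)$, $AH(Q)$ and $\|\nabla Q\|_{L^2}$ respectively.

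The step I expect to be the main obstacle is the simultaneous bookkeeping of the two localizations: $\tau_A$ controls the energy but the energy evolution formula \eqref{H_add}--\eqref{H_add_final} involves $\|u\|_{L^{2\sigma+2}}$ and $\|\nabla u\|_{L^2}$, which we can only bound \emph{after} we already know the gradient is trapped, and the gradient trap in turn needs the mass bound which only holds on $\Omega_T$, not on the whole space. So one must be careful that the indicator $1_{\Omega(s)}$ in Lemma~\ref{lem_MH}(ii) is exactly what makes the Gagliardo--Nirenberg estimates of $\|u(s)\|_{L^{2\sigma+2}}^{2\sigma+1}$ legitimate (the bound fails off $\Omega_T$), and that the $R$-truncation does not destroy the decreasing-family structure. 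A second minor point is ensuring $A\in(\gamma,1)$ and $\delta\in(\beta,1)$ can be chosen so that $A\delta^{2\alpha}<1$ strictly, which is automatic since each factor is $<1$; this strict inequality is what provides the gap between the trapped value of $X$ and $x^*$, hence the explicit constant in \eqref{E:grad-bound3}.
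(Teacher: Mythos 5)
Your overall strategy (a barrier argument on $f(x)=\tfrac12(x^2-Bx^{n\sigma})$, localized by the mass event $\Omega_T$ and the energy stopping time $\tau_A$) is the same as the paper's, but the way you set up the trapped quantity leaves a genuine gap in the key conclusion \eqref{E:grad-bound3}. You take $X(s)=\|\nabla u(s)\|_{L^2}\,\|u(s)\|_{L^2}^{\alpha}$ with the \emph{actual, time-dependent} mass as weight. The trap then only gives $\|\nabla u(s)\|_{L^2}\,\|u(s)\|_{L^2}^{\alpha}<\|\nabla Q\|_{L^2}\|Q\|_{L^2}^{\alpha}$ on $\Omega_T$ for $s\le\tau_A\wedge T$, and to convert this into a bound on $\|\nabla u(s)\|_{L^2}$ alone you need a strictly positive \emph{lower} bound on $\|u(s)\|_{L^2}$ — which is exactly what is unavailable in the additive case, where mass is not conserved and $\Omega_T$ only provides an upper bound. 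Your own hedge (``for a suitable lower mass quantity, or more simply in terms of $\|\nabla Q\|_{L^2}$ and $\delta$'') signals the problem but does not resolve it: from your $X$ one cannot reach \eqref{E:grad-bound3}. The paper's proof avoids this by weighting with the \emph{constant} $(\delta\|Q\|_{L^2})^{\alpha}$, i.e.\ $X(\tau)=(\delta\|Q\|_{L^2})^{\alpha}\sup_{s\le\tau}\bigl(1_{\Omega_s}\|\nabla u(s)\|_{L^2}\bigr)$; the mass upper bound on $\Omega_s$ is used only in the term $\|u\|_{L^2}^{2-(n-2)\sigma}$ (positive exponent, so an upper bound suffices), the identity $2-(n-2)\sigma+2\alpha=\alpha n\sigma$ closes the inequality, and the trap then yields directly $\delta^{\alpha}\,1_{\Omega_s}\|\nabla u(s)\|_{L^2}\le\|\nabla Q\|_{L^2}$, hence \eqref{E:grad-bound3} after letting $\delta\to1$. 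That change of weight is the missing idea.

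Two further remarks. First, your second step (applying Lemma \ref{lem_MH}(ii) with an $R$-truncation to get $\EX\bigl(\sup 1_{\Omega_T}H(u(s))\bigr)<\infty$ and then invoking the blow-up alternative) is unnecessary for this lemma: by the definition \eqref{Omega_t}, $\Omega_T$ already contains the event $\{T<\tau^*(u_0)\}$, so existence of the solution on $[0,\tau_A\wedge T]$ on $\Omega_T$ is automatic from Theorem \ref{lwp_add}; the whole content is the pathwise gradient bound, and the paper's proof uses no expectation estimates at all (those appear later, in Theorem \ref{th-wp-inter-add}). Second, if you do run that step, your choice $\Omega(s)=\Omega_T\cap\{s<\tau^*(u_0)\}$ is not admissible in Lemma \ref{lem_MH}(ii): that lemma requires $\Omega(s)\in\mathcal F_s$ (adaptedness is what legitimizes the indicator inside the stochastic integrals), whereas $\Omega_T\in\mathcal F_T$; the adapted decreasing family to use is $\{\Omega_s\}_{s\ge0}$ itself.
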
 
\begin{proof}
Choose $\tau < \tau^*(u_0)$ and $\tau \leq T$ to define (in a spirit of \eqref{E:defX})
\begin{equation*}
X(\tau) = \big( \delta \|Q\|_{L^2}\big)^\alpha \sup_{s\leq \tau} \big( 1_{\Omega_s} \| \nabla u(s)\|_{L^2} \big)
\end{equation*}
for any $\delta \in (\beta,1)$.
Since $\tau < \tau^*(u_0)$, we know that $X(\tau)<\infty$ a.s.. 
Similar to the derivation of \eqref{upper_gradient}-\eqref{E:X}, 
we obtain
\begin{align*}
X(\tau)^2 
\leq & \; 2  (\delta \|Q\|_{L^2})^{2\alpha} \sup_{s\leq \tau} \big[ 1_{\Omega_s} H(u(s))\big]  + 
B\, \big(\delta \|Q\|_{L^2}^{2-(n-2)\sigma +2\alpha}  \big) 
\sup_{s\leq \tau} \big( 1_{\Omega_s} \| \nabla u(s)\|_{L^2}^{n\sigma} \big) \\
\leq & \; 2  (\delta \|Q\|_{L^2})^{2\alpha} \sup_{s\leq \tau} \big[ 1_{\Omega_s} H(u(s))\big] 
+ B \, X(\tau)^{n\sigma}.
\end{align*}
Rewriting, the above inequality becomes (the analog of \eqref{upper_gradient})
$$
X(\tau)^2 - B X(\tau)^{n\sigma} \leq 2\, (\delta \|Q\|_{L^2})^{2\alpha} \sup_{s\leq \tau} \big[ 1_{\Omega_s} H(u(s))\big].
$$

Let $x^*:= \|\nabla Q\|_{L^2} \|Q\|_{L^2}^\alpha$ and  suppose that $X(0)<x^\ast$ a.s., or equivalently, 
\begin{equation}\label{Hyp_add_nablau0}
(\delta \|Q\|_{L^2} )^\alpha \| \nabla u_0\|_{L^2 }  < \| \nabla Q\|_{L^2} \|Q\|_{L^2}^\alpha\quad \mbox{ a.s.}. 
\end{equation}

Let $f(x)=\frac{1}{2}(x^2-Bx^{n\sigma})$ be defined for $x\geq 0$ as in Section \ref{inter_Stra}, see also Figure \ref{F:1}. 
 Using the definition of $x^*$ and the value of $f(x^*)$ from \eqref{identification}, we deduce that  for $s\in [0, \tau_A]$ we have $f(X(s)) \leq A f(x^*)<f(x^*)$ a.s..
The a.s. continuity of $\| \nabla u(s)\|_{L^2}$ implies
$$
(\delta \|Q\|_{L^2})^\alpha  \sup_{s\leq \tau_A} \big[ 1_{\Omega_s}\, \| \nabla u(s)\|_{L^2} \big] \equiv X(\tau_A)<x^* \quad \mbox{\rm a.s.}. 
$$
Therefore, for any $\delta \in (\beta,1)$
\begin{equation}\label{up_grad}
\delta^\alpha  \; 1_{\Omega_s} \; \|\nabla u(s)\|_{L^2}  \leq \| \nabla Q\|_{L^2} <\infty \quad \mbox{\rm a.s. \, for } s\leq \tau_A.
\end{equation}
 Since $\Omega_T \subset \Omega_s$ for $s\leq T$, this proves that the solution $X_s$ exists almost surely 
on the subset $\Omega_T$ for $s\leq \tau_A\wedge T$. 
Letting $\delta \to 1$ we also deduce the bound
\begin{equation}\label{E:grad-bound3}
\|\nabla u(s)\|_{L^2}  \leq \| \nabla Q\|_{L^2} \quad { \mbox{\rm a.s. \, on $\Omega_T$ \, for } } s\leq \tau_A \wedge T.
\end{equation}
\end{proof}
 
The next theorem is the main result of this section. It gives some bound on the (random) time interval, where \eqref{NLS_add} is well-posed
with strictly positive probability. As expected, the time bound is a decreasing function of the noise strength, which converges to $+\infty$ as
the Hilbert-Schmidt norms of $\phi$ converge to 0. 
\begin{theorem}\label{th-wp-inter-add}
Let $\phi \in L^{0,1}_2$ and $u_0$ be an ${\mathcal F}_0$-measurable random variable taking values in $H^1$ a.s., and such that for some
positive constants $\beta$ and $\gamma$ such that $\beta^2+\gamma<1$, one has a.s.
\begin{equation}\label{E:ME-add-inter}
\|u_0\|_{L^2} \leq \beta \|Q\|_{L^2}, \quad  H(u_0)\leq \gamma H(Q) \quad \mbox{\rm and } \|\nabla u_0\|_{L^2} < \| \nabla Q\|_{L^2}.
\end{equation}
Let $\tau^*(u_0)$ denote the stopping time defined in Theorem  \ref{lwp_add} and set $\tilde{\epsilon}=\frac{1-\gamma - \beta^2}{4(1-\gamma)-\beta^2}$. 

Then $P(\tau^*(u_0 )>T)>0$ for $T<\tilde{T}$, where 
\[ \tilde{T}:=  \Big( \frac{2 F(\tilde{\epsilon})}{b^*+\sqrt{ (b^*)^2 + G(\tilde{\epsilon}) F(\tilde{\epsilon})}}\Big)^2
\]
with 
\begin{align*}
 F(\tilde{\epsilon})&=1-\gamma - \frac{\beta^2}{1-\tilde{\epsilon}}, \quad
G(\tilde{\epsilon})=\frac{4 }{M(Q)} \|\phi\|_{L^{0,1}_2}^2 \; \Big( \frac{9+\tilde{\epsilon}}{\tilde{\epsilon}(1-\tilde{\epsilon})} 
+ \frac{1-s_c}{s_c}\Big), \\
b^*&= \;  \frac{3 \|Q\|_{L^2}}{H(Q)}  \Big[  \| \phi\|_{L^{0,1}_2} \, \Big( \frac{n}{2(1-s_c)}\Big)^{\frac{1}{2}}  + 
 K C(\phi)  \Big( \frac{n}{2(1-s_c)}\Big)^{n \sigma \, \frac{2\sigma +1}{2(2\sigma +2)}}\Big] ,
\end{align*}
and $C(\phi)$ as in \eqref{E:Const-phi}. 
\end{theorem}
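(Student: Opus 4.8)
The plan is to realize $\{\tau^*(u_0)>T\}$ as containing the intersection of a mass–localization event and an energy–localization event on which Lemma~\ref{wp_add_tauA} forces the solution to exist past $T$ with the a priori bound $\|\nabla u(s)\|_{L^2}\le\|\nabla Q\|_{L^2}$, and then to estimate the probabilities of the two complementary ``bad'' events by means of Lemma~\ref{lem_MH}. Concretely I would fix $\delta\in(\beta,1)$, $A\in(\gamma,1)$ and set $\epsilon=\tilde\epsilon$. The three hypotheses \eqref{E:ME-add-inter} give $\EX(M(u_0))\le\beta^2M(Q)<\infty$ and $\EX(\|u_0\|_{H^1}^2)<\infty$, so Lemmas~\ref{lem_MH} and~\ref{wp_add_tauA} apply, and $\|\nabla u_0\|_{L^2}<\|\nabla Q\|_{L^2}$ a.s.\ with $\delta<1$ yields \eqref{Hyp_add_nablau0}. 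With $\Omega_T$ as in \eqref{Omega_t} and $\tau_A$ as in \eqref{tauA}, Lemma~\ref{wp_add_tauA} shows the solution exists on $[0,\tau_A\wedge T]$ on $\Omega_T$; since $\Omega_T\subset\{T<\tau^*(u_0)\}$ this gives
\[
P\big(\tau^*(u_0)>T\big)\ \ge\ P\big(\Omega_T\cap\{\tau_A>T\}\big)\ \ge\ 1-P(\Omega_T^c)-P\big(\Omega_T\cap\{\tau_A\le T\}\big),
\]
so it suffices to make the right-hand side strictly positive for $T<\tilde T$.

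For the first probability I would invoke the already established estimate \eqref{E:star} with $\epsilon=\tilde\epsilon$, letting $\delta\uparrow1$ to get the bound $\frac{\beta^2}{1-\tilde\epsilon}+\frac{9+\tilde\epsilon}{\tilde\epsilon(1-\tilde\epsilon)}\,\frac{\|\phi\|_{L^{0,0}_2}^2T}{M(Q)}$. For the second I would note that on $\Omega_T$ one has $1_{\Omega_s}=1$ for $s\le T$, so when $\tau_A\le T$ the passage-time property of $\tau_A$, continuity of $H(u(\cdot))$, and $\tau_A<\tau^*(u_0)$ on $\Omega_T$ give $\sup_{s\le T\wedge\tau_A}1_{\Omega_s}H(u(s))=H(u(\tau_A))=A\,H(Q)$; Markov's inequality then bounds $P(\Omega_T\cap\{\tau_A\le T\})$ by $\frac{1}{A\,H(Q)}\EX\!\big(\sup_{s\le T\wedge\tau_A}1_{\Omega_s}H(u(s))\big)$. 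I would estimate this expectation by Lemma~\ref{lem_MH}(ii) with $\tau=T\wedge\tau_A$ and $\Omega(s)=\Omega_s$ (a decreasing $\mathcal F_s$-measurable family), using $\EX(\tau)\le T$, $\EX(\sqrt\tau)\le\sqrt T$, the gradient bound $\delta^\alpha 1_{\Omega_s}\|\nabla u(s)\|_{L^2}\le\|\nabla Q\|_{L^2}$ from \eqref{up_grad}, the mass bound $1_{\Omega_s}\|u(s)\|_{L^2}\le\delta\|Q\|_{L^2}$, and \eqref{GN} to control $1_{\Omega_s}\|u(s)\|_{L^{2\sigma+2}}^{2\sigma+1}$. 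Invoking the Pokhozhaev identities \eqref{E:normQ1}, \eqref{E:normQ2}, \eqref{H_Q} together with $C_{GN}=K/\|Q\|_{L^2}^{2\sigma}$, all the $\|Q\|_{L^2}$-powers collapse (the exponent check $2-(n-2)\sigma+2\alpha=\alpha n\sigma$ is the key one), and after $\delta\uparrow1$ the two martingale terms merge into $H(Q)\,b^{*}\sqrt T$, so that $\EX\!\big(\sup_{s\le T\wedge\tau_A}1_{\Omega_s}H(u(s))\big)\le\gamma H(Q)+\tfrac12\|\phi\|_{L^{0,1}_2}^2T+H(Q)\,b^{*}\sqrt T$, whence (letting $A\uparrow1$) $P(\Omega_T\cap\{\tau_A\le T\})\le\gamma+\frac{\|\phi\|_{L^{0,1}_2}^2T}{2H(Q)}+b^{*}\sqrt T$.

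Summing the two bounds, using $\|\phi\|_{L^{0,0}_2}\le\|\phi\|_{L^{0,1}_2}$ and $H(Q)=\frac{s_c}{2(1-s_c)}M(Q)$, the requirement $1-P(\Omega_T^c)-P(\Omega_T\cap\{\tau_A\le T\})>0$ reduces to the quadratic inequality $\tfrac14 G(\tilde\epsilon)\,T+b^{*}\sqrt T<F(\tilde\epsilon)$ in $\sqrt T$, with $F,G,b^{*}$ exactly as in the statement; one checks $1-\tilde\epsilon=\frac{3(1-\gamma)}{4(1-\gamma)-\beta^2}$, and then $F(\tilde\epsilon)=\frac{(1-\gamma-\beta^2)(3(1-\gamma)-\beta^2)}{3(1-\gamma)}>0$ precisely because of the hypothesis $\beta^2+\gamma<1$. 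Solving the quadratic and rationalizing the positive root gives $\sqrt{\tilde T}=\frac{2F(\tilde\epsilon)}{b^{*}+\sqrt{(b^{*})^2+G(\tilde\epsilon)F(\tilde\epsilon)}}$, so the inequality holds for all $T<\tilde T$, which proves the claim (for $\delta,A$ chosen close enough to $1$). The value $\tilde\epsilon=\frac{1-\gamma-\beta^2}{4(1-\gamma)-\beta^2}$ should be presented just as a convenient admissible choice of $\epsilon$ keeping $F(\tilde\epsilon)>0$; no optimality over $\epsilon$ is needed.

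The main obstacle I anticipate is the second probability estimate: correctly handling the localization interplay among $\Omega_T$, $\tau_A$ and the blow-up alternative (so that the derived $H^1$-bound genuinely yields $\tau^*(u_0)>T$ and not merely a conditional statement), verifying the hypotheses of Lemma~\ref{lem_MH}(ii) for the stopped, indicator-weighted energy, and carrying out the constant bookkeeping that turns the mixture of $\|\phi\|_{L^{0,1}_2}$-, $C(\phi)$- and Gagliardo--Nirenberg-constants into the precise $b^{*}$, $G$, $F$ of the statement together with the two-parameter limit $\delta,A\uparrow1$. By contrast, the reduction to the quadratic inequality and its solution are routine.
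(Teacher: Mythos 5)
Your proposal is correct and follows essentially the same route as the paper: the same decomposition into the mass-localized set $\Omega_T$ (estimated via \eqref{E:star}) and the energy hitting time $\tau_A$ (estimated via Markov's inequality and Lemma \ref{lem_MH}(ii) with the indicator family, the gradient bound \eqref{up_grad}, Gagliardo--Nirenberg and the Pokhozhaev identities), leading to the same quadratic inequality $\tfrac14 G(\tilde\epsilon)T+b^*\sqrt T<F(\tilde\epsilon)$ after letting $\delta,A\uparrow 1$. The only difference is cosmetic: the paper additionally discusses why $\tilde\epsilon$ is a reasonable (sub-optimal) choice of $\epsilon$, whereas you simply verify it is admissible ($F(\tilde\epsilon)>0$), which suffices for the statement.
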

\begin{proof}
We first prove upper estimates of the expected value of energy localized on the subset $\Omega_t$ where mass is controlled.
Note that by the assumption, for any $\delta \in (\beta,1)$, the inequalities  \eqref{Hyp_add_nablau0} and \eqref{up_grad} are satisfied. 
Since by assumption $\EX \big( H(u_0)\big) \leq \gamma H(Q)$, 
multiplying \eqref{E_sup_H} by $(\delta \|Q\|_{L^2})^{2 \alpha}$, we deduce that for every $T>0$, 
\begin{align}\label{E_sup_H_loc}
\EX \Big( \sup_{s\leq \tau_A\wedge T} \big[ 1_{\Omega_s} H(u(s))  (\delta \|Q\|_{L^2})^{2\alpha} \big] \Big) 
&\leq  
\big[ \gamma H(Q)+ \frac{1}{2} \| \phi\|_{L^{0,1}_2}^2 \EX (\tau_A\wedge T)\big] (\delta \|Q\|_{L^2})^{2\alpha}\\ 
&+T(1)+T(2), \label{E:T1-2}
\end{align}
where  
\begin{align*}
T(1)=& \; 3\, \| \phi\|_{L^{0,1}_2}  \EX \Big( \sqrt{\tau_A \wedge T} \; (\delta \|Q\|_{L^2})^{2\alpha} \; \sup_{s\leq \tau_A \wedge T}  { \big[ 1_{\Omega_s} 
 \| \nabla u(s)\|_{L^2}    \big]   } \Big) , \\
T(2)=&\;  3\, (c_{GN})^{\frac1{2\sigma+2}}
\, C(\phi) \, \EX \Big( \sqrt{\tau_A \wedge T} 
\; (\delta \|Q\|_{L^2})^{2\alpha} \; \sup_{s\leq \tau_A \wedge T} \big[ 1_{\Omega_s} \|u(s)\|_{L^{2\sigma +2}}^{2\sigma +1} \big] \Big).
\end{align*}
The upper bound \eqref{up_grad} together with \eqref{E:normQ1} imply 
\begin{align}\label{T1}
T(1)\leq  &\; 3\, \| \phi\|_{L^{0,1}_2}  \| \nabla Q \|_{L^2} M(Q)^\alpha \delta^\alpha \, \EX \big( \sqrt{\tau_A \wedge T}\big) \nonumber  \\
\leq & \; 3\, \| \phi\|_{L^{0,1}_2} \delta^\alpha \Big( \frac{n}{2(1-s_c)} \Big)^{\frac{1}{2}} \|Q\|_{L^2}^{2\alpha +1} 
\, \EX \big( \sqrt{\tau_A \wedge T}\big).
\end{align}
On the other hand, the inequalities \eqref{GN}, \eqref{up_grad} together with \eqref{constant_GN} and \eqref{E:normQ2} yield
\begin{align}	\label{T2}
&T(2)\leq   \; 3 \Big( \frac{K}{\|Q\|_{L^2}^{2\sigma}} \Big)^{\frac{1}{2\sigma +2} + \frac{2\sigma+1}{2\sigma +2}} \, C(\phi) 
(\delta \|Q\|_{L^2})^{2\alpha} \nonumber \\
&\qquad \qquad \times \EX \Big( \sqrt{\tau_A\wedge T} \sup_{s\leq \tau_A\wedge T} \Big\{ 1_{\Omega_s}\, 
 \| \nabla u(s)\|_{L^2}^{n\sigma \, \frac{2\sigma+1}{2\sigma+2}} 
  \|u(s)\|_{L^2}^{[2-(n-2)\sigma]\frac{2\sigma +1}{2\sigma +2}} \Big\} \Big) \nonumber  \\  
 & \quad \leq 
 \; 3 \frac{K}{\|Q\|_{L^2}^{2\sigma}} 
\, C(\phi) \, \EX \Big( \sqrt{\tau_A \wedge T} \sup_{s\leq \tau_A\wedge T} \Big\{  1_{\Omega_s} 
\big( \| \nabla u(s)\|_{L^2} \delta ^\alpha\big)^{n\sigma \, \frac{2\sigma+1}{2\sigma+2}} \Big\} \nonumber \\
&\qquad \qquad \times 
\|Q\|_{L^2}^{2\alpha + [2-(n-2)\sigma)] \,\frac{2\sigma +1}{2\sigma +2}}\; 
 \delta^{2\alpha + [2-(n-2)\sigma-n\alpha \sigma ] \frac{2\sigma +1}{2\sigma +2}}   \; 
\Big) \nonumber  \\
&\quad \leq  \; 3 \frac{K\, C(\phi) }{\|Q\|_{L^2}^{2\sigma}}  \, \EX \big(\sqrt{\tau_A\wedge T}\big)\, 
 \|\nabla Q\|_{L^2}^{n\sigma \, \frac{2\sigma +1}{2\sigma +2}}  \; 
\|Q\|_{L^2}^{2\alpha +  [2-(n-2)\sigma]  \frac{2\sigma+1}{2\sigma +2}} \; 
 \delta^{2\alpha + 2\sigma +1 - n\sigma (1+\alpha)  \frac{2\sigma+1}{2\sigma+2}}   \nonumber  \\
& \quad \leq \; 3 K C(\phi)   \Big( \frac{n}{2(1-s_c)}\Big)^{\frac{n \sigma}{2}  \, \frac{2\sigma +1}{2\sigma +2}} 
 \|Q\|_{L^2}^{2\alpha +1}
\;  \delta^{2\alpha +2\sigma +1  - n \sigma \, (1+\alpha)  \frac{2\sigma+1}{2\sigma+2}}   \; \EX \big(\sqrt{\tau_A\wedge T}\big).
\end{align}
Plugging the  estimates \eqref{T1} and \eqref{T2} into \eqref{E_sup_H_loc}-\eqref{E:T1-2} and using the fact that $\EX (\tau_A \wedge T)\leq T$ and
$\EX \big(\sqrt{\tau_A \wedge T}\big) \leq \sqrt{T}$, we deduce 
\begin{align}
\EX \Big(& \sup_{s\leq \tau_A\wedge T} \big[ 1_{\Omega_s} H(u(s))  \big] \Big) \leq  \gamma H(Q)
+ \frac{1}{2} \|\phi\|_{L^{0,1}_2}^2 T 
+  3  \frac{\| \phi\|_{L^{0,1}_2}}{ \delta^\alpha} \Big( \frac{n}{2(1-s_c)} \Big)^{\frac{1}{2}} \|Q\|_{L^2}
\, \sqrt{T} \nonumber \\
&+ 3 K C(\phi)   \Big( \frac{n}{2(1-s_c)}\Big)^{\frac{n \sigma}{2}  \, \frac{2\sigma +1}{2\sigma +2}} 
 \|Q\|_{L^2}
\;   \delta^{2\sigma +1  - n \sigma (1+\alpha) \, \frac{2\sigma+1}{2\sigma+2}}   \; \sqrt{T} .
\end{align}   
Using Lemma \ref{wp_add_tauA}, given some positive time $T$,
 we need to upper estimate the probability of the ``bad" set $\Omega_T^c \cup \{ \tau_A <T\}$, where we do
not know if the equation \eqref{NLS_add} is well-posed on the time interval $[0,T]$. 
The upper estimate \eqref{E:star} and the inclusion $\Omega_t\subset \Omega_s$ for $s\leq t$ imply 
\begin{align*}
P(\Omega_T^c \cup & \{ \tau_A <T\}) = P(\Omega_T^c) + P(\Omega_T \cap \{ \tau_A <T\}) \\
\leq & \frac{\beta^2}{\delta^2(1-\epsilon)} + \frac{9+\epsilon}{\epsilon(1-\epsilon)} \frac{\|\phi\|_{L^{0,1}_2}^2 T}{\delta^2 M(Q)} +
P\Big( \Omega_T \cap \Big\{ \sup_{s\leq \tau_A\wedge T} \big[ 1_{\Omega_s} H(u(s)) \big] \geq A H(Q)\Big\} \Big) \\
\leq &  \frac{\beta^2}{\delta^2(1-\epsilon)} + \frac{9+\epsilon}{\epsilon(1-\epsilon)} \frac{\|\phi\|_{L^{0,1}_2} ^2 T}{\delta^2 M(Q)} + 
\frac{1}{A H(Q)} \EX \Big( \sup_{s\leq \tau_A \wedge T}  \big[ 1_{\Omega_s} H(u(s)) \big] \Big)\\
\leq & a  T + b \sqrt{T} +c,
\end{align*}
where by \eqref{E_sup_H_loc} --  \eqref{T2} and \eqref{H_Q}, we have 
\begin{align*}
a= &\|\phi\|_{L^{0,1}_2}^2 \Big[ \frac{9+\epsilon}{\epsilon(1-\epsilon)\delta^2 M(Q)} +  \frac{1}{2 A H(Q)} \Big] =
\frac{\|\phi\|_{L^{0,1}_2}^2 }{M(Q)} \Big[  \frac{9+\epsilon}{\epsilon(1-\epsilon)\delta^2 } + \frac{1-s_c}{A\, s_c}
\Big]      , \\
b=&  \frac{3  \|Q\|_{L^2}}{A H(Q)}  \Big[   \| \phi\|_{L^{0,1}_2}   \Big( \frac{n}{2(1-s_c)}\Big)^{\frac{1}{2}}  \delta^{-\alpha}  
+  K C(\phi)  \Big( \frac{n}{2(1-s_c)}\Big)^{\frac{n \sigma}{2}  \, \frac{2\sigma +1}{2\sigma +2}} 
\;  \delta^{ 2\sigma +1  - n \sigma  \, (1+\alpha)  \frac{2\sigma+1}{2\sigma+2}} \Big],   \\
c=&\frac{\beta^2}{\delta^2(1-\epsilon)}  + \frac{\gamma}{A}.
\end{align*}
To make sure that $c<1$, we require $\frac{\beta^2}{\delta^2}  + \frac{\gamma}{A} <1$. Then choose $\epsilon>0$ such that 
\[  c = \frac{\beta^2}{\delta^2(1-\epsilon)}  + \frac{\gamma}{A}  <1\; ; \quad \mbox{\rm set } \; \rho:=1-c.\]
If these constraints are satisfied, we have to choose $T>0$ such that $P(\Omega_T^c \cup  \{ \tau_A <T\}) <1$, namely, $ aT+b\sqrt{T}-\rho<0$. 
The discriminant of the polynomial $a x^2 +bx -\rho$ is $\overline{\Delta}=b^2 +4a\rho>0$ and the roots are 
$x_1=- \frac{b+\sqrt{\overline{\Delta}}}{2a}<0$ and $x_2=\frac{\sqrt{\overline{\Delta}}-b}{2a}>0$. Thus, 
$a T + b\sqrt{T}<\rho$ for $0<T\leq x_2^2$. 

Let $\|u_0\|_{L^2} \leq \beta \|Q\|_{L^2}$ and $H(u_0)\leq\gamma H(Q)$ a.s. as in the statement of the theorem \eqref{E:ME-add-inter}, 
and suppose that $\beta^2 + \gamma <1$ and let $\epsilon \in (0,1)$ be such that  $\frac{\beta^2}{1-\epsilon}
 + \gamma<1$, that is, $\epsilon \in \big(0, \frac{1-\beta^2-\gamma}{1-\gamma}\big)$. 
Let $\delta \to 1$ and $A\to 1$. Then $P(T<\tau^*(u_0))>0$ for $T<T^*$ with $T^*=(x^*_2)^2$, where $x_2^*$ is the positive root of the polynomial
$a^* x^2 + b^* x -\rho^*=0$ with 
\begin{align*}
 a^* =& \; \frac{\|\phi\|_{L^{0,1}_2}^2 }{M(Q)} \Big[ \frac{9+\epsilon}{\epsilon(1-\epsilon)} + \frac{1-s_c}{s_c} \Big], \\
 b^*=& \;  \frac{3 \|Q\|_{L^2}}{H(Q)} \Big[  \| \phi\|_{L^{0,1}_2} \, \Big( \frac{n}{2(1-s_c)}\Big)^{\frac{1}{2}}  + 
  K C(\phi)  \Big( \frac{n}{2(1-s_c)}\Big)^{\frac{n \sigma}{2} \, \frac{2\sigma +1}{2\sigma +2}} \Big] ,  \\
\rho^*= &1-c^*>0, \quad \mbox{and}\\ 
 c^*=&\frac{\beta^2}{1-\epsilon}  + \gamma.
\end{align*} 
The corresponding discriminant is $\Delta^* = (b^*)^2 +4 a^* \rho^*$ and $x_2^* (\epsilon)= \frac{\sqrt{\Delta^*} -b^*}{2a^*}$.

We next look for the optimal constant $\epsilon \in \big(0, \frac{1-\beta^2-\gamma}{1-\gamma}\big)$. The
conditions required for this parameter are non-trivial 
to solve explicitly; we, therefore, only provide a sub-optimal estimate. 
We have  $x_2^* := x_2^*(\epsilon)= \frac{2 \rho^*}{b^* + \sqrt{(b^*)^2+4 a^*\rho^*}}$ and we want to maximize $x_2^*(\epsilon)$.
 Let 
$ g=\frac{2-(n-2)\sigma}{n\sigma -2} \equiv \frac{1-s_c}{s_c}$ (hence, $1+g = \frac1{s_c}$), $ \tilde{\rho}=1-\gamma-\beta^2>0$, and set  
\[ F(\epsilon)=\frac{\tilde{\rho}-\epsilon(1-\gamma)}{1-\epsilon}\quad \mbox{ and } \;  
G(\epsilon)=\frac{4 \|\phi\|_{L^{0,1}_2}^2 }{M(Q)} \; \Big( \frac{9+\epsilon(1+g) - \epsilon^2 g}{\epsilon(1-\epsilon)} \Big)= \frac{4 \|\phi\|_{L^{0,1}_2}^2 }{M(Q)} \; \Big( \frac{9+\epsilon}{\epsilon(1-\epsilon)} + g\Big).\]
Then 
\[ x_2^*(\epsilon) = \frac{F(\epsilon)}{b^* + \sqrt{(b^*)^2 + G(\epsilon) F(\epsilon)}}.\]
We have $F'(\epsilon) = -\frac{\beta^2}{(1-\epsilon)^2}$ and $G'(\epsilon)= \frac{4}{M(Q)} \|\phi\|_{L^{0,1}_2}^2 \frac{\epsilon^2 + 18 \epsilon -9}{\epsilon^2
(1-\epsilon)^2}$, and $(x_2^*(\epsilon))' = C(\epsilon) D(\epsilon)$, where $C(\epsilon)>0$ and
\begin{align*}
D(\epsilon) = &\Big\{  2 b^*\sqrt{ (b^*)^2 + G(\epsilon) F(\epsilon)} +2\big[ (b^*)^2 + G(\epsilon) F(\epsilon)\big]\Big\} F'(\epsilon) 
-  F(\epsilon) \Big\{  F'(\epsilon) G(\epsilon) + F(\epsilon) G'(\epsilon)  \Big\}\\
\geq &\;  4 F'(\epsilon)  (b^*)^2 + 2 G(\epsilon) F(\epsilon)F'(\epsilon)  -\,   F(\epsilon) \big\{  F'(\epsilon) G(\epsilon) + F(\epsilon) G'(\epsilon)  \big\} \\
\geq & \; A(\epsilon):= 4 F'(\epsilon)  (b^*)^2 +  F(\epsilon) F'(\epsilon) G(\epsilon) - F(\epsilon)^2 G'(\epsilon).
\end{align*}
Furthermore, 
\begin{align*} 
A(\epsilon)& = 
- \frac{4 (b^*)^2 \beta^2}{(1-\epsilon)^2} - \frac{\tilde{\rho} -\epsilon(1-\gamma)}{1-\epsilon} \times \frac{\beta^2}{(1-\epsilon)^2} \times 
\frac{4 \|\phi\|_{L^{0,1}_2}^2 }{M(Q)} \; \times  \frac{9+\epsilon(1+g) - \epsilon^2 g}{\epsilon(1-\epsilon)}  \\
&\qquad - \frac{(\tilde{\rho} -\epsilon(1-\gamma))^2}{(1-\epsilon)^2} \times 
\frac{4\|\phi\|_{L^{0,1}_2}^2}{M(Q)} \times \frac{\epsilon^2 + 18 \epsilon -9}{\epsilon^2 (1-\epsilon)^2}  
= \frac{1}{\epsilon^2 (1-\epsilon)^4} B(\epsilon),
\end{align*} 
where
\begin{align*}
B(\epsilon)=& -4 (b^*)^2 \epsilon^2(1-\epsilon)^2 - \epsilon \big[ \tilde{\rho} -\epsilon(1-\gamma)\big] \beta^2 \frac{4 \|\phi\|_{L^{0,1}_2}^2 }{M(Q)}
\big(9+\epsilon(1+g) - \epsilon^2 g\big)\\
& - \frac{4\|\phi\|_{L^{0,1}_2}^2}{M(Q)} \big( \tilde{\rho} - \epsilon (1-\gamma)\big)^2 \big( \epsilon^2 + 18\epsilon -9\big)\\
=& \frac{36  \|\phi\|_{L^{0,1}_2}^2 }{M(Q)} \tilde{\rho}^2 -\epsilon  \frac{4\|\phi\|_{L^{0,1}_2}^2}{M(Q)}   \big[ 9 \tilde{\rho} \beta^2 + 18 \tilde{\rho}^2
+ 18 \tilde{\rho}(1-\gamma) \big] + C_2 \epsilon^2 + C_3 \epsilon^3 + C_4 \epsilon^4.
\end{align*} 
The linear part of $B(\epsilon)$ is $\frac{4}{M(Q)} \|\phi\|_{L^{0,1}_2}^2 \tilde{B}(\epsilon)$, where
\[ \tilde{B}(\epsilon) = 9 \tilde{\rho}^2 -\epsilon\big[ 9 \tilde{\rho} \beta^2 + 18 \tilde{\rho}^2 + 19 \tilde{\rho} (1-\gamma)\big].\]
Hence, the function $\tilde{B}(\epsilon)$ is positive 
on the interval $[0, \tilde{\epsilon}]$,
where 
\[ \tilde{\epsilon} = \frac{9 \tilde{\rho}^2}{9 \tilde{\rho} \beta^2+ 18 \tilde{\rho}(1-\gamma)  +18 \tilde{\rho}^2} 
= \frac{\tilde{\rho}} {\beta^2 + 2(1-\gamma) +2 \tilde{\rho}}
= \frac{1-\gamma - \beta^2}{4(1-\gamma) - \beta^2}.\]
It is easy to see that since $\gamma + \beta^2<1$, $0<\tilde{\epsilon} < \frac{1-\gamma -\beta^2}{1-\gamma}$, so that the (suboptimal) constant 
$\tilde{\epsilon}$  belongs to the desired interval, and the proof is complete. 
\end{proof}

\section{Blow-up in finite time}\label{S:B}
In this section we show that solutions to \eqref{NLS_Stra} in the intercritical case blow up before some time $T>0$ with positive probability in both multiplicative and additive cases, 
 given that the initial conditions remain under the so-called `mass-energy' threshold (and thus, have positive energy) and sufficiently large $L^2$-norm of the gradient 
 (e.g., as in the deterministic case in \cite{HR2007} or see Theorem \ref{T:main-deter}, Part 2, in the introduction). We require the intensity of the noise to be small.  
\smallskip

As in the deterministic case, the argument is based on the variance $V(u) $ defined on the set 
\begin{equation}\label{def_Sigma}
\Sigma = \Big\{ u\in H^1(\RR^n)\, : \int_{\RR^n}  |x|^2 |u(x)|^2 dx  \Big\}
\end{equation}
by 
\begin{equation}        \label{def_V}
 V(u)=\int_{\RR^n} |x|^2 |u(x)|^2 dx,\quad  u\in \Sigma. 
\end{equation}
We also introduce the momentum 
\begin{equation}    \label{def_G}
G(u) = {\rm Im}\int_{\RR^n} u(x) \, x\!\cdot\!\nabla \bar{u}(x) \, dx, \quad u\in \Sigma.
\end{equation}
We first address the case of the multiplicative noise in \S \ref{S:M-bup} and then the case of the additive noise in \S \ref{S:A-bup}. 

\subsection{Multiplicative noise}\label{S:M-bup}
We start with solutions to \eqref{NLS_Ito}. 
The following lemma is a rephrasing of \cite[Proposition~3.2]{deB_Deb_AnnProb}.
\begin{lemma}   \label{lem_v-G-multi}
Let $\phi$ satisfy the condition {\bf (H1)},  $u$ be the solution to \eqref{NLS_Stra} and assume that $u_0\in \Sigma$ a.s.. Then for any
stopping time $\tau<\tau^*(u_0)$ a.s., we have
\begin{equation}    \label{V(u)}
V(u(\tau)) = V(u_0) +4 \int_0^\tau G(u(s)) ds\quad a.s.
\end{equation}
and 
\begin{align} \label{G(u)}
G(u(\tau)) = G(u_0) + & 2n\sigma \int_0^\tau H(u(s)) ds - 2\sigma s_c \int_0^\tau \|\nabla u(s)\|_{L^2}^2 ds \nonumber \\
&+ \sum_{l\geq 0} \int_0^\tau \int_{\RR^n} |u(s,x)|^2 \, x\!\cdot\!\nabla(\phi e_l)(x) \, dx \, d\beta_l(s)\quad a.s. .
\end{align}
\end{lemma}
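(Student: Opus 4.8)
The identities \eqref{V(u)} and \eqref{G(u)} are the stochastic analogues of the classical virial (Morawetz) computations, so the plan is to apply the Itô formula to the functionals $V(u(t))$ and $G(u(t))$ along the solution of \eqref{NLS_Ito}, using the Itô form of the equation (not the Stratonovich form) since we need a martingale decomposition. Because $V$ and $G$ involve the unbounded weight $|x|^2$ and $x$, the first step is a standard truncation/localization argument: introduce a smooth cutoff $\theta_R(x) = \theta(x/R)$ with $\theta$ compactly supported, $\theta \equiv |x|^2$ (resp.\ the relevant weight) on $\{|x|\le 1\}$, apply Itô to the truncated functionals $V_R(u) = \int \theta_R |u|^2\,dx$, then let $R\to\infty$ using $u_0\in\Sigma$ a.s.\ and the a.s.\ continuity of $t\mapsto u(t)$ in $H^1$ together with the propagation of the $\Sigma$-bound (this last fact is part of \cite[Prop.~3.2]{deB_Deb_AnnProb}, which the lemma is quoted from, so I may invoke it). Throughout, all statements are understood for $\tau<\tau^*(u_0)$ and one works first with the further stopping time $\tau\wedge\inf\{t:\|u(t)\|_{H^1}+\|\,|x|u(t)\|_{L^2}\ge N\}$ and sends $N\to\infty$ at the end.

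\textbf{The $V$ identity.} For \eqref{V(u)}, writing $i\,d_tu = (\Delta u + |u|^{2\sigma}u)\,dt + u\,dW - \tfrac{i}{2}F_\phi u\,dt$, one computes $d\,V(u(t)) = 2\,\mathrm{Re}\int |x|^2 \bar u\,d_tu$. The key point is that the Itô correction (quadratic variation) term and the $F_\phi$ term \emph{cancel}: the noise $u\,dW$ is purely multiplicative and the bracket $\langle u\,dW\rangle$ produces exactly $F_\phi|u|^2$, which is annihilated against the $-\tfrac i2 F_\phi u$ drift after taking the real part with weight $|x|^2$ (this cancellation is precisely why the Stratonovich formulation conserves mass, and the same mechanism applies to $V$). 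The stochastic integral $\int |x|^2\bar u\,u\,dW$ also has zero real part because $|x|^2|u|^2$ is real and $dW$ real-valued multiplies it — more precisely $2\,\mathrm{Re}\int|x|^2\bar u(u\,dW) = 2\int|x|^2|u|^2\,dW$ is real, but then one checks its contribution against the $F_\phi$ term; the net deterministic part reduces to the classical $4\,\mathrm{Re}\int|x|^2\bar u\cdot(-i)\Delta u\,dt = 4G(u)\,dt$ after integration by parts, and the nonlinear term $\int|x|^2\bar u|u|^{2\sigma}u$ has zero real part. This yields \eqref{V(u)}.

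\textbf{The $G$ identity.} For \eqref{G(u)}, apply Itô to $G(u(t)) = \mathrm{Im}\int u\, x\cdot\nabla\bar u\,dx$. The deterministic part reproduces the classical virial identity: the $\Delta$ and nonlinear contributions combine, after integration by parts and using $\int x\cdot\nabla(|u|^{2\sigma+2}) = -n\int|u|^{2\sigma+2}$, into $2n\sigma H(u) - 2\sigma s_c\|\nabla u\|_{L^2}^2$ (using $n\sigma - 2 = n\sigma s_c$ and regrouping $\|\nabla u\|^2$ versus $\|u\|_{2\sigma+2}^{2\sigma+2}$ exactly as in the deterministic computation; this algebra is routine and can be cited from \cite{HR2007}). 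The martingale part comes from the cross term between $d_tu$ in the $u$-factor and the $x\cdot\nabla\bar u$ factor: $\mathrm{Im}\int (u\,dW)\,x\cdot\nabla\bar u + \mathrm{Im}\int u\, x\cdot\nabla(\overline{u\,dW})$, which after integration by parts and using that $W$ is real collapses to $\sum_l\int|u|^2\,x\cdot\nabla(\phi e_l)\,dx\,d\beta_l$; the It\^o--Stratonovich correction term $-\tfrac i2 F_\phi u$ contributes $\mathrm{Im}(-\tfrac i2)\int F_\phi(u\,x\cdot\nabla\bar u + \text{c.c.}) = -\tfrac12\int F_\phi\,x\cdot\nabla(|u|^2) = \tfrac n2\int F_\phi|u|^2 + \tfrac12\int (x\cdot\nabla F_\phi)|u|^2$, and the quadratic variation term from $u\,dW$ contributes an equal and opposite quantity, so these again cancel. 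This is the delicate bookkeeping step and the main obstacle: one must verify that all the It\^o corrections involving $F_\phi$ and $\nabla(\phi e_l)$ exactly cancel the Stratonovich drift, so that no extra deterministic term survives in \eqref{G(u)} — this is what makes the Stratonovich noise "geometrically compatible" with the virial structure, and it is the content one is really borrowing from \cite[Prop.~3.2]{deB_Deb_AnnProb}. I would carry out this cancellation carefully at truncated level, then pass to the limit. Regularity assumption (H1) (the $W^{1,\kappa}$-radonifying property) is what guarantees $x\cdot\nabla(\phi e_l)$ integrates against $|u|^2$ and that the series of stochastic integrals converges in $L^2(\Omega)$, so the martingale term is well-defined.
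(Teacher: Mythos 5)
Your proposal is correct and follows essentially the same route as the paper: the paper also obtains \eqref{V(u)} and \eqref{G(u)} from the stochastic virial computations of de Bouard--Debussche (identity (3.4) and the It\^o-formula expression for $G$ on p.~1095 of that reference, which you are allowed to invoke), and then performs exactly your algebraic regrouping of $2\|\nabla u\|_{L^2}^2-\tfrac{n\sigma}{\sigma+1}\|u\|_{L^{2\sigma+2}}^{2\sigma+2}$ into $2n\sigma H(u)-2\sigma s_c\|\nabla u\|_{L^2}^2$; your truncation/localization sketch just redoes what the citation supplies. Two harmless bookkeeping slips: the correct identity is $n\sigma-2=2\sigma s_c$ (not $n\sigma s_c$), and in the $V$ computation the stochastic integral vanishes because $du$ carries the factor $-i$ against the real noise (so $2\operatorname{Re}(-i|x|^2|u|^2\,dW)=0$), not because of any interplay with the $F_\phi$ term; neither affects the conclusion.
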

\begin{proof} The equation \eqref{V(u)} is proved in \cite{deB_Deb_AnnProb}, see (3.4). To prove \eqref{G(u)}, we re-write $G(u(\tau))$ using It\^o's formula as on page 1095 in \cite{deB_Deb_AnnProb}. This yields
\begin{align*} 
G(u(\tau)) = & G(u_0) + 2\int_0^\tau \int_{\RR^d} |\nabla u(s,x)|^2 dx ds - \frac{\sigma n}{\sigma +1} \int_0^\tau \int_{\RR^n}
|u(s,x)|^{2\sigma +2} dx \\
&+\sum_{l\geq 0} \int_0^\tau \int_{\RR^n} |u(s,x)|^2 \, x\!\cdot\!\nabla(\phi e_l)(x) \, dx \, d\beta_l(s)\\
= & G(u_0) +2n\sigma \int_0^\tau H(u(s)) ds + (2-n\sigma)  \int_9^\tau \| \nabla u(s)\|_{L^2}^2 ds\\
&+\sum_{l\geq 0} \int_0^\tau \int_{\RR^n} |u(s,x)|^2 \, x\!\cdot\!\nabla(\phi e_l)(x) \, dx d\beta_l(s).
\end{align*}
Since $n\sigma -2 = 2\sigma s_c$, this completes the proof of \eqref{G(u)}.
\end{proof}

The following result describes a sufficient condition on the initial condition and on some deterministic positive time with the 
blow-up occurring before that time with positive probability. 
Recall the notation from the condition (H2) that $f^1_\phi = \sum_{l\geq 0} |\nabla (\phi e_l)|^2$ and $M_\phi=\| f^1_\phi\|_{L^\infty}$.
To highlight the main idea of the proof, we first assume that $u_0$ is deterministic. 

\begin{theorem}\label{th_blowup1}
Let $ \sigma \in \mathfrak R_{inter}$. 
Consider $\phi$ satisfying the conditions {\bf(H1)} and {\bf (H2)}.
Let $u_0 \in \Sigma $ be deterministic 
and suppose 
that for some constants $\beta, \delta_0$ such that $0<\beta < 1 < \delta_0$, we have 
\begin{equation} \label{cond-u_0-Q-det}
H(u_0) M(u_0)^\alpha = \beta H(Q) M(Q)^\alpha  \quad \mbox{\rm  and}\quad  \| \nabla u_0\|_{L^2} \|u_0\|_{L^2}^\alpha
= \delta_0 \|\nabla Q\|_{L^2} \|Q\|_{L^2}^\alpha.
\end{equation}
Then for $T>0$ large enough and $M_\phi$ small enough, we have $P\big(\tau^*(u_0)\leq T\big) >0$, where $\tau^*(u_0)$ is defined in Theorem \ref{local_wp}.
\end{theorem}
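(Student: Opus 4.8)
\textbf{Proof plan for Theorem \ref{th_blowup1}.}
The strategy is the classical virial/variance convexity argument adapted to the stochastic setting, following the scheme of de Bouard--Debussche but combined with the mass-energy threshold bookkeeping from the intercritical deterministic theory (Theorem \ref{T:main-deter}, Part 2). From Lemma \ref{lem_v-G-multi} we have $V(u(\tau)) = V(u_0) + 4\int_0^\tau G(u(s))\,ds$ and an It\^o expansion for $G(u(s))$ involving $2n\sigma \int_0^\tau H(u(s))\,ds - 2\sigma s_c\int_0^\tau \|\nabla u(s)\|_{L^2}^2\,ds$ plus a martingale term driven by $\sum_l \int_0^\tau\!\int |u|^2\, x\!\cdot\!\nabla(\phi e_l)\,dx\,d\beta_l$. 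The first step is to rewrite the drift purely in terms of $X(s) := \|\nabla u(s)\|_{L^2}\|u_0\|_{L^2}^\alpha$ and the conserved mass (mass is a.s. conserved by \eqref{mass_conserv}), using the energy evolution \eqref{energy} to trade $H(u(s))$ for $H(u_0)$ plus controlled noise terms. Multiplying through by appropriate powers of $M(u_0)$ and invoking the function $f(x) = \tfrac12(x^2 - Bx^{n\sigma})$ from \eqref{identification}: on the ``blow-up side'' $X(s) > x^* = \|\nabla Q\|_{L^2}\|Q\|_{L^2}^\alpha$, the combination $2n\sigma H(u(s)) - 2\sigma s_c \|\nabla u(s)\|_{L^2}^2$ (after multiplying by $M(u_0)^\alpha$) is bounded \emph{above} by a strictly negative quantity, quantitatively $\leq -c_0 < 0$, because $\beta < 1 < \delta_0$ forces $X(s)$ to stay above $x^*$ by a coercivity/trapping argument analogous to \eqref{Q_gradQ} but with the reversed inequality.

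The second step is to set up the trapping on a well-chosen stopping time. Define $\theta = \inf\{t \geq 0 : X(t) \leq \delta_1 x^*\} \wedge \sigma_R \wedge \tau^*(u_0)$ for some $\delta_1 \in (1,\delta_0)$ and a localization $\sigma_R$ bounding the relevant quantities, and show that on $\{t < \theta\}$ we have the deterministic-type lower bound $M(u_0)^\alpha H(u(t)) \geq$ something bounded below by a constant $> \beta H(Q)M(Q)^\alpha$ \emph{modulo} the energy-fluctuation martingale and the $\tfrac12 M_\phi M(u_0)\tau$ drift from \eqref{energy}; here is where $M_\phi$ small is used, to guarantee that with positive probability the energy stays trapped so that $X(t) > x^*$ persists. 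Concretely, one shows $\EE(\sup_{s \leq t\wedge\theta}|H(u(s)) - H(u_0)|)$ is $O(\sqrt{M_\phi} + M_\phi)$ via Davis/BDG on the stochastic integral in \eqref{energy} (using condition (H2) to bound $\sum_k |\nabla\phi e_k|^2 \leq M_\phi$ and the already-established gradient bound on $[0,\theta]$), so on an event $A_T$ of probability bounded below (uniformly as $M_\phi \to 0$) we keep $X(s) > x^*$ for $s \leq T\wedge\theta$.

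The third step closes the convexity argument. On the event $A_T$, for $s \leq T\wedge\theta$ the drift of $G(u(s))$ is $\leq -c_0 < 0$, hence $\EE(1_{A_T} G(u(s)))$ decreases linearly, and the martingale term $\sum_l \int_0^s\!\int |u|^2 x\!\cdot\!\nabla(\phi e_l)\,dx\,d\beta_l$ has quadratic variation controlled by $\|x\,u\|_{L^2}^2 \, M_\phi$-type quantities, i.e. by $V(u(s))$ times $M_\phi$ (Cauchy--Schwarz on $|x\!\cdot\!\nabla(\phi e_l)| \leq |x|\,|\nabla(\phi e_l)|$). Then $V(u(s))$ satisfies, on $A_T$, an inequality of the form $V(u(s)) \leq V(u_0) + 4 G(u_0) s - 2c_0 s^2 + (\text{martingale with QV} \lesssim M_\phi \int_0^s V(u(r))\,dr)$, a stochastic version of the negative-definite quadratic $V(u_0) + 4G(u_0)t - 2c_0 t^2$. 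Choosing $T$ large enough that this deterministic parabola would become negative well before $T$, and $M_\phi$ small enough that the martingale perturbation (controlled by a Gronwall/Doob argument on $\EE(1_{A_T}\sup_{s\leq t}V(u(s)))$) cannot compensate, we get $P(\{\exists\, s \leq T : V(u(s)) < 0\} \cap A_T) > 0$; since $V \geq 0$ on $\Sigma$, this forces $\tau^*(u_0) \leq T$ on that event, i.e. $P(\tau^*(u_0) \leq T) > 0$. The main obstacle is the third step: controlling the variance martingale without an a priori bound on $V(u(s))$ (it could in principle grow), which requires either a localization in $V$ that one then removes, or a careful stopping-time argument showing the negative drift $-2c_0 s^2$ dominates before the martingale can push $V$ back up — this is precisely the delicate point already present in the deterministic-noise literature and must be handled by keeping the event $A_T$ inside a set where $\|\nabla u\|_{L^2}$, and hence indirectly the growth rate of $V$, is controlled by the trapping from Step 2.
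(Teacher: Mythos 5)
Your plan follows the paper's core mechanism — the virial identities of Lemma \ref{lem_v-G-multi}, the compensation that turns $2n\sigma H-2\sigma s_c\|\nabla u\|_{L^2}^2$ into a strictly negative drift once the energy (times $M(u_0)^\alpha$) is kept below a $\gamma$-threshold and hence $X(s)$ is trapped above $\delta x^*$ with $\delta>1$, smallness of $M_\phi$ to control the noise corrections, Gronwall for the variance martingale, and the contradiction with $V\geq 0$ — but you close the argument differently at the step the paper treats as the crux. The paper (Proposition \ref{Prop-blow-det}) takes plain expectations with the stopping time $\tau=\tau_\delta\wedge\tilde\tau_N$ and needs $\EE\big((T\wedge\tau)^2\big)$ to exceed the positive root of a quadratic; it obtains this by distinguishing $\EE(\tau_\delta^2)=\infty$ from $\EE(\tau_\delta^2)<\infty$ and, in the second case, by evaluating the energy identity \eqref{energy} at the threshold-crossing time $\tilde\sigma_\gamma$ (introducing $\sigma_0$, checking the stochastic integral is centered, dominated convergence, and showing $P(\sigma_0<\tilde\sigma_\gamma)$ is small), which yields $\{\EE(\tilde\sigma_\gamma^2)\}^{1/2}\gtrsim (\gamma-\beta)/M_\phi$. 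You instead propose to show, by Davis/BDG on \eqref{energy} with the gradient localized below a level $R$, that for a fixed large $T$ the energy stays under the $\gamma$-threshold up to $T$ with probability close to one as $M_\phi\to 0$, so that $\tau\geq T$ with probability $\geq 1-\eta$ and hence $\EE\big((T\wedge\tau)^2\big)\geq(1-\eta)T^2$, after which the same parabola argument applies. This is a legitimate and arguably more elementary substitute: it avoids the $\sigma_0$/centering/dominated-convergence machinery, while the paper's route gives in addition the quantitative lower bound on the expected exit time and organizes the $T$ versus $M_\phi$ interplay through an explicit quadratic root.

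Three points need repair in a complete write-up. First, the persistence estimate requires an upper cut-off on $\|\nabla u\|_{L^2}$ (the quadratic variation of the energy martingale involves $\|\nabla u(s)\|_{L^2}^2$), and choosing $R$ with $P(\sigma_R\leq T)$ small is only possible after the dichotomy ``either $P(\tau^*(u_0)\leq T)>0$, and we are done, or the solution exists beyond $T$ a.s.''; you never set this up, and without it the phrase ``probability bounded below uniformly as $M_\phi\to 0$'' is not justified. Second, restricting expectations by the indicator of the non-adapted event $A_T$ does not annihilate or tame the stochastic integrals; replace it by stopping times (as the paper does) or by a decreasing adapted family of sets as in Section \ref{S:A}. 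Third, your closing remark attributes the control of the growth of $V$ and of the variance martingale to the trapping of Step 2, but the trapping only bounds $\|\nabla u\|_{L^2}$ from \emph{below}; what legitimizes the Gronwall step is the gradient cut-off $\tilde\tau_N$ together with the a priori bound $\sup_{s\leq t}V(u(s\wedge\tau))\leq[4N^2t+V(u_0)]e^t$ from \cite{deB_Deb_AnnProb} (similarly, the sentence in Step 2 claiming a lower bound on $M(u_0)^\alpha H(u(t))$ above $\beta H(Q)M(Q)^\alpha$ has the trapping backwards: the correct implication, which you do state afterwards, is that the energy staying \emph{below} the $\gamma$-level forces the gradient to stay \emph{above} the $\delta$-level). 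With these repairs your scheme goes through and plugs into the same final inequality as the paper's.
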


We first prove a result similar to \cite[Theorem~4.1]{deB_Deb_AnnProb} but for our case when $H(u_0)\geq 0$.
\begin{Prop}\label{Prop-blow-det}
Let $u_0$ and $\phi$ satisfy the assumptions of Theorem \ref{th_blowup1}. 
Suppose that, for some $\epsilon >0$ and $N>0$, a positive time $t$ and $M_\phi$ satisfy the
following conditions
\begin{align}
   4 M(u_0)^{\alpha +1} M_\phi t^2 \big( 1+\frac{t}{3}\big)  &\leq \epsilon,       \label{cond_1}\\
   \frac{32}{15} n \sigma M(u_0)^{\alpha + \frac{1}{2}} \sqrt{M_\phi} N t^{\frac{5}{2}} & \leq \epsilon.    \label{cond_2} 
\end{align}
For $1<\delta<\delta_0$ set 
\begin{align}
    \tau_\delta &= \inf\{ s\geq 0 : \|\nabla u(s)\|_{L^2} \|u_0\|_{L^2}^\alpha \leq \delta \|\nabla Q\|_{L^2} \|Q\|_{L^2}^\alpha\}\wedge \tau^*(u_0),    \label{def-tau_delta-multi}\\
    \tilde{\tau}_N
& = \inf\{ t\geq 0 : \|\nabla u(s)\|_{L^2} \geq N\} \wedge \tau^*(u_0).
\label{def-tildetauK-multi}
\end{align}
Let $\tau = \tau_\delta \wedge \tilde{\tau}_N$ and suppose that
\begin{equation}    \label{cond-3}
    V(u_0) M(u_0)^\alpha + 2\epsilon +4 G(u_0) M(u_0)^\alpha \EE(t\wedge \tau) 
    - 4\sigma s_c (\delta^2-\beta) \|\nabla Q\|_{L^2}^2 M(Q)^\alpha\EE\big( (t\wedge \tau)^2\big) <0. 
\end{equation}
Then $P(\tau^*(u_0)\leq t)>0$. 
\end{Prop}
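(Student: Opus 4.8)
The plan is to argue by contradiction, as in the spirit of the classical virial/variance blow-up argument. Suppose $P(\tau^*(u_0) > t) = 1$, i.e. the solution exists on $[0,t]$ almost surely. Then the stopping time $\tau = \tau_\delta \wedge \tilde\tau_N$ satisfies $\tau \wedge t$ is a bona fide bounded stopping time, and the variance quantity $V(u(s))$ is well-defined and finite for $s \le \tau \wedge t$ since $u_0 \in \Sigma$ and the $\Sigma$-regularity is propagated (this follows from Lemma~\ref{lem_v-G-multi}, which gives the evolution of $V$ and $G$). The idea is to combine the two identities \eqref{V(u)} and \eqref{G(u)} to get a closed inequality for $\EE(V(u(t\wedge\tau))M(u_0)^\alpha)$ that forces it to be negative, contradicting $V \ge 0$.

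The key steps, in order: First, multiply \eqref{V(u)} by the ${\mathcal F}_0$-measurable constant $M(u_0)^\alpha$ and insert \eqref{G(u)} (also multiplied by $M(u_0)^\alpha$) to obtain, for $r \le \tau \wedge t$,
\begin{align*}
V(u(r)) M(u_0)^\alpha = &\; V(u_0)M(u_0)^\alpha + 4 G(u_0)M(u_0)^\alpha r + 8 n\sigma M(u_0)^\alpha \int_0^r\!\!\int_0^s H(u(\theta))\,d\theta\,ds \\
&\; - 8\sigma s_c M(u_0)^\alpha \int_0^r\!\!\int_0^s \|\nabla u(\theta)\|_{L^2}^2\,d\theta\,ds + 4 M(u_0)^\alpha \int_0^r \mathcal{N}(s)\,ds,
\end{align*}
where $\mathcal{N}(s) = \sum_{l\ge0}\int_0^s\int_{\RR^n}|u(\theta,x)|^2\, x\!\cdot\!\nabla(\phi e_l)(x)\,dx\,d\beta_l(\theta)$ is the martingale term. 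Second, take expectation at $r = t\wedge\tau$; the martingale term must be controlled. By the Cauchy-Schwarz inequality applied to the spatial integral, $\big(\int |u|^2 x\cdot\nabla(\phi e_l)\,dx\big)^2 \le \int|x|^2|u|^2\,dx \cdot \int |u|^2|\nabla(\phi e_l)|^2\,dx$, so summing over $l$ and using (H2) gives a quadratic variation bounded by $M_\phi M(u(\theta)) V(u(\theta)) = M_\phi M(u_0)V(u(\theta))$ (using mass conservation \eqref{mass_conserv}). Using the Davis/BDG inequality on $\EE(\sup_{r\le t\wedge\tau}|M(u_0)^\alpha\int_0^r\mathcal N|)$ together with Young's inequality should absorb the resulting term, producing the bound $\le \epsilon$ after using condition \eqref{cond_1} to handle the $t$ and $t^2/3$-type contributions — this is where \eqref{cond_1} is designed to enter. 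Here one needs $V(u(\theta)) \le$ something controllable; on $[0,\tilde\tau_N]$ one has $\|\nabla u\| \le N$, which together with $\|u\|_{L^2}$ bounded is used to bound $V$ growth via \eqref{V(u)}--\eqref{G(u)}, feeding into \eqref{cond_2}.

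Third, and crucially, the two deterministic integral terms must be combined. Since $H(u) = \frac12\|\nabla u\|_{L^2}^2 - \frac{1}{2\sigma+2}\|u\|_{L^{2\sigma+2}}^{2\sigma+2}$, one rewrites $2n\sigma H(u) - 2\sigma s_c\|\nabla u\|_{L^2}^2 = (n\sigma - 2\sigma s_c)\|\nabla u\|_{L^2}^2 - \frac{n\sigma}{\sigma+1}\|u\|_{L^{2\sigma+2}}^{2\sigma+2}$; but the cleaner route, matching de Bouard--Debussche, is to write $G(u(s)) = G(u_0) + \int_0^s\big[2n\sigma H(u(\theta)) - 2\sigma s_c\|\nabla u(\theta)\|^2\big]d\theta + \text{mart.}$ and then use that on $[0,\tau_\delta]$ we have $\|\nabla u(s)\|_{L^2}\|u_0\|_{L^2}^\alpha \ge \delta\|\nabla Q\|_{L^2}\|Q\|_{L^2}^\alpha$, combined with the mass-energy constraint \eqref{cond-u_0-Q-det} and the identities \eqref{E:normQ1}, \eqref{H_Q}, to get $n\sigma H(u(s)) - \sigma s_c\|\nabla u(s)\|_{L^2}^2 \le -\sigma s_c(\delta^2 - \beta)\|\nabla Q\|_{L^2}^2 M(Q)^\alpha M(u_0)^{-\alpha}$ a.s. — this is the key coercivity estimate that produces the strictly negative $-4\sigma s_c(\delta^2-\beta)\|\nabla Q\|_{L^2}^2 M(Q)^\alpha$ coefficient on the $\EE((t\wedge\tau)^2)$ term in \eqref{cond-3}. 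Assembling everything: $0 \le \EE(V(u(t\wedge\tau))M(u_0)^\alpha)$ is bounded above by exactly the left-hand side of \eqref{cond-3}, which is assumed negative — contradiction. Hence $P(\tau^*(u_0) \le t) > 0$.

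The main obstacle I anticipate is the careful bookkeeping of the martingale term: one must bound $\EE\big(\sup_{r\le t\wedge\tau} |M(u_0)^\alpha \int_0^r \mathcal N(s)\,ds|\big)$ where $\mathcal N$ is itself a stochastic integral, so this is a time-integrated martingale rather than a martingale, and one needs Fubini plus BDG applied at the right level, together with the a priori bound on $V(u(\theta))$ on the truncated interval (which itself comes from \eqref{V(u)}–\eqref{G(u)} and the $\tilde\tau_N$ cutoff). Getting the powers of $t$ (the $t^2(1+t/3)$ in \eqref{cond_1} and $t^{5/2}$ in \eqref{cond_2}) to match requires tracking a double time integral of a stochastic integral, and this is precisely why two separate smallness conditions on $M_\phi$ appear. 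The rest is the coercivity computation, which is essentially the deterministic virial identity of \cite{HR2007} combined with the Pokhozhaev identities already recorded in Section~\ref{S:Q}.
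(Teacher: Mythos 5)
Your overall skeleton (contradiction, combine \eqref{V(u)} and \eqref{G(u)}, coercivity producing the $-4\sigma s_c(\delta^2-\beta)$ coefficient, Gronwall after an a.s.\ bound on $V$ up to $\tilde\tau_N$) is the right one, but there is a genuine gap at the step you call ``crucial''. You keep $H(u(\theta))$ inside the double time integral and then claim the a.s.\ estimate $n\sigma H(u(s)) - \sigma s_c\|\nabla u(s)\|_{L^2}^2 \leq -\sigma s_c(\delta^2-\beta)\|\nabla Q\|_{L^2}^2 M(Q)^\alpha M(u_0)^{-\alpha}$ for $s\leq \tau$, invoking \eqref{cond-u_0-Q-det}. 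But \eqref{cond-u_0-Q-det} constrains only $H(u_0)$; in the stochastic setting the energy is not conserved, and neither $\tau_\delta$ (a lower cutoff on the gradient) nor $\tilde\tau_N$ (an upper cutoff) gives you $H(u(s))M(u_0)^\alpha \leq \beta H(Q)M(Q)^\alpha$ for $s>0$. A large gradient is perfectly compatible with a large energy, so the claimed bound is simply not available at positive times, and your contradiction inequality does not close. The paper avoids this by first substituting the energy evolution identity \eqref{energy} into the double integral, so that the coercivity \eqref{compensation} is applied to $H(u_0)$ only, and the price is two extra stochastic/correction terms: the It\^o correction $\frac12\sum_k\int |u|^2|\nabla(\phi e_k)|^2$ integrated three times in time (the term $T_1$, of size $\frac43 M(u_0)^{\alpha+1}M_\phi t^3$) and the energy-evolution martingale with integrand $\bar u\,\nabla u\cdot\nabla(\phi e_l)$ integrated three times (the term $T_2$, of size $\frac{32}{15}n\sigma M(u_0)^{\alpha+\frac12}\sqrt{M_\phi}\,N\,t^{5/2}$, where $N$ enters precisely as the bound on $\|\nabla u\|_{L^2}$ up to $\tilde\tau_N$).

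This also explains why your accounting of the hypotheses does not match: \eqref{cond_2} is not there to control the growth of $V$; it is exactly the smallness needed for $T_2$. And \eqref{cond_1} is not solely for the $G$-martingale: it combines the $T_1$ contribution ($\frac{t^3}{3}$ part) with the residual $4M(u_0)^{\alpha+1}M_\phi t^2$ left after treating the $G$-martingale term $T_3$ by Davis plus Young (with the Young parameter chosen so that $\bar\epsilon\, t = M(u_0)^\alpha$), the other half of that Young step being the $\int_0^t M(u_0)^\alpha\EE(V(u(s\wedge\tau)))\,ds$ term that feeds Gronwall. Your treatment of $T_3$ itself (spatial Cauchy--Schwarz, mass conservation, quadratic variation bounded by $M_\phi M(u_0)V$) and of the a priori finiteness of $\sup_{s\le t}V$ via the $\tilde\tau_N$ cutoff is in line with the paper; what is missing is the substitution of \eqref{energy} before the coercivity step, without which the argument fails.
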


\begin{proof}
Assume that $t<\tau^*(u_0)$ a.s. and write $V(u)$ using \eqref{V(u)}, \eqref{G(u)} and \eqref{energy}. Then
\begin{align*}
V(u(t&\wedge \tau)) = V(u_0) + 4 G(u_0) (t\wedge \tau)  + 4n\sigma H(u_0) (t\wedge \tau)^2  - 8\sigma s_c \int_0^{t\wedge \tau} ds \int_0^s \|\nabla u(r)\|_{L^2}^2 dr \\
& +8n\sigma \int_0^{t\wedge \tau} \int_0^s \int_0^r \int_{\RR^n} |u(r_1,x)|^2 f^1_\phi(x) \,
dx dr_1 dr ds\\
&-8n\sigma \int_0^{t\wedge \tau} \int_0^s \int_0^r {\rm Im}\sum_{l\geq 0} \int_{\RR^n}  \bar{u}(r_1,x) \nabla u(r_1,x)\!\cdot\!\nabla(\phi e_l)(x) \, dx
d\beta_l(r_1) dr ds\\
& +4 \int_0^{t\wedge \tau}  \int_0^s \sum_{l\geq 0} \int_{\RR^n} |u(r,x)|^2 \, x\!\cdot\!\nabla(\phi e_l)(x) \, dx d\beta_l(r) ds.
\end{align*}
Multiplying by $M(u_0)^\alpha$ and taking expected values, we deduce 
\begin{align}    
  &M(u_0)^\alpha \EE\big(V(t\wedge \tau)\big)   \leq  M(u_0)^\alpha V(u_0) + 4 M(u_0)^\alpha G(u_0) \EE\big( t\wedge \tau\big) 
\nonumber \\
  &  \qquad 
  +4 n\sigma M(u_0)^\alpha  H(u_0) \EE\big( (t\wedge \tau) ^2\big)-8\sigma s_c M(u_0)^\alpha \EE\int_0^{t\wedge \tau} ds \int_0^s \|\nabla u(r)\|_{L^2}^2 dr + \sum_{i=1}^3 T_i,  \label{E:MEV-1b}
\end{align}
where 
\begin{align*}
 T_1= &8 \, \EE\Big( M(u_0)^\alpha \int_0^{t\wedge \tau} ds \int_0^s dr \int_0^r \int_{\RR^n}   |u(r_1,x)|^2 f^1_\phi(x) dx  \, dr_1 \Big), \\
T_2=&  -8n\sigma \EE\Big( M(u_0)^\alpha  \int_0^{t\wedge \tau}ds \int_0^s dr\int_0^r {\rm Im}\sum_{l\geq 0} \int_{\RR^n} 
\bar{u}(r_1,x)\,  \nabla u(r_1,x)\!\cdot\! \nabla(\phi e_l)(x) dx
d\beta_l(r_1) \Big),\\
T_3=&4  \EE\Big(M(u_0)^\alpha \int_0^{t\wedge \tau}  ds\int_0^s \sum_{l\geq 0} \int_{\RR^n} |u(r,x)|^2 \, x\!\cdot\!\nabla(\phi e_l)(x) dx d\beta_l(r) \Big).
\end{align*}
Since for the time $r\leq \tau_\delta$ a.s. we have
\[ M(u_0)^\alpha \|\nabla u(r)\|_{L^2}^2 \geq \delta^2 M(Q)^\alpha \|\nabla Q\|_{L^2}^2, \quad \mbox{\rm  while}\quad 
M(u_0)^\alpha H(u_0) \leq \beta M(Q)^\alpha H(Q),\]
using  the identity \eqref{H_Q},
we deduce that  
\begin{align}\label{compensation}
4n\sigma &M(u_0)^\alpha H(u_0) \EE\big( (t\wedge \tau)^2\big) -8 \sigma s_c M(u_0)^\alpha \,
\EE\Big(\int_0^{t\wedge \tau} ds \int_0^s \|\nabla (u(r))\|_{L^2}^2 dr \Big) 
\nonumber \\
& \leq 4\sigma s_c \beta \|\nabla Q\|_{L^2}^2 M(Q)^\alpha \, \EE\big( (t\wedge \tau)^2\big)  - 8\sigma s_c \EE\Big( \int_0^{t\wedge \tau} ds \int_0^s
\delta^2 \|\nabla Q\|_{L^2}^2 M(Q)^\alpha dr \Big) \nonumber \\
&\leq -4\sigma s_c (\delta^2-\beta) \|\nabla Q\|_{L^2}^2 M(Q)^\alpha \, \EE\big( (t\wedge \tau)^2\big).
\end{align}
The upper estimates  \eqref{E:MEV-1b} and \eqref{compensation} yield 
\begin{align}\label{MEV-2}
   M(u_0)^\alpha \, \EE\big( V(t\wedge \tau)\big)  \leq &   M(u_0)^\alpha V(u_0) + 4 M(u_0)^\alpha G(u_0) \EE\big( t\wedge \tau\big)  \nonumber \\
  & -4 \sigma s_c (\delta^2-\beta) \|\nabla Q\|_{L^2}^2 M(Q)^\alpha \EE\big( (t\wedge \tau)^2\big) + \sum_{i=1}^3 T_i.
\end{align}
We now estimate the last three terms. Since $M_\phi = \|f^1_\phi\|_{L^\infty}$, we have
\begin{equation}    \label{estim_T1}
T_1\leq 8 M(u_0)^{\alpha +1} M_\phi \int_0^t ds \int_0^s dr \int_0^r dr_1 = \frac{4}{3} M(u_0)^{\alpha +1}M_\phi t^3. 
\end{equation}
Using Fubini's Theorem and the Cauchy-Schwarz inequality with respect to $dP$  (the expected value) and then to $dx$, we obtain
\begin{align}\label{estim_T2}
|T_2|\leq & 8n\sigma M(u_0)^\alpha \int_0^t ds \int_0^s dr \, \EE \Big( \Big| \int_0^{r\wedge \tau} \sum_{l\geq 0} \int_{\RR^n} \bar{u}(r_1,x) \nabla u(r_1,x)\!\cdot\!\nabla (\phi e_l) (x) dx d\beta_l(r_1)\Big| \Big) \nonumber \\
\leq &  8n\sigma M(u_0)^\alpha \int_0^t ds \int_0^s dr \Big\{ \EE\Big( \int_0^{t\wedge \tau}  \sum_{l\geq 0}  \Big| 
\int_{\RR^n} \bar{u}(r_1,x) \nabla u(r_1,x)\!\cdot\!\nabla (\phi e_l) (x) dx\Big|^2 dr_1 \Big) \Big\}^{\frac{1}{2}}\nonumber \\
\leq & 8n\sigma M(u_0)^\alpha \int_0^t ds \int_0^s dr \Big\{ \EE\Big[ \int_0^{t\wedge \tau}  \sum_{l\geq 0}  \Big( \int_{\RR^n} |\bar{u}(r_1,x)|^2 
|\nabla (\phi e_l)(x)|^2 dx \Big) \nonumber \\
&\hspace{2cm}  \times \Big( \int_{\RR^n} |\nabla u(r_1,x)|^2 dx\Big) dr_1 \Big) \Big] \Big\}^{\frac{1}{2}} \nonumber \\
\leq & 8\sigma M(u_0)^{\alpha + \frac{1}{2}} \sqrt{M_\phi} \int_0^t ds \int_0^s dr \Big\{ \EE\big( {N^2} (r\wedge \tau)\big) \Big\}^{\frac{1}{2}} ~~({\mbox{recalling ~that}} ~~\tau \leq \tilde\tau_N)  \nonumber \\
\leq & 8n\sigma M(u_0)^{\alpha + \frac{1}{2}} \sqrt{M_\phi} N \frac{4}{15} t^{\frac{5}{2}}.
\end{align}
Similar computations yield
\begin{align*}       
 |T_3|\leq & 4 M(u_0)^\alpha \int_0^t ds \,\EE\Big( \Big| \int_0^{s\wedge \tau} \sum_{l \geq 0}l \int_{\RR^n} |u(r,x)|^2 \,x\!\cdot\!\nabla(\phi e_l)(x) dx d\beta_l(r) \Big| \Big)
 \nonumber \\
 \leq & 4 M(u_0)^\alpha \int_0^t ds \Big\{ \EE\Big( \int_0^{s\wedge \tau} \sum_{l\geq 0} \Big|\int_{\RR^n} |u(r,x)|^2 \, x\!\cdot\! \nabla(\phi e_l)(x) dx\Big|^2  dr  \Big)\Big\}^{\frac{1}{2}}
 \nonumber \\
 \leq & 4 M(u_0)^\alpha \int_0^t ds \Big\{ \EE\Big( \int_0^{s\wedge \tau} \sum_{l\geq 0}  \Big( \int_{\RR^n} |u(r,x)|^2 |\nabla(\phi e_l)(x)|^2 dx\Big)
 \Big( \int_{\RR^n }|x|^2 |u(r,x)|^2 dx\Big) dr \Big] \Big\}^{\frac{1}{2}} \nonumber \\
 \leq & 4 M(u_0)^{\alpha + \frac{1}{2}} \sqrt{M_\phi} \int_0^t ds \Big\{ \EE\int_0^s V\big( u(r\wedge \tau)\big) dr \Big) \Big\}^{\frac{1}{2} }\nonumber \\
 \leq & 4 M(u_0)^{\alpha + \frac{1}{2}} \sqrt{M_\phi} \sqrt{t} \Big\{ \int_0^t ds \int_0^s {\EE} \big( V(u(r\wedge \tau)) dr \big) \Big\}^{\frac{1}{2}}.
\end{align*}
Young's inequality implies that for $\bar{\epsilon}>0$, 
\[ |T_3|\leq \bar{\epsilon} \int_0^t ds \int_0^s \EE\big( V(u(r\wedge \tau))\big) dr + \frac{4}{\bar{\epsilon}} M(u_0)^{2\alpha +1} M_\phi t.  
\]
Choosing $\bar{\epsilon}$ such that $t\bar{\epsilon} = M(u_0)^\alpha$, we obtain
\begin{equation}        \label{estim_T3}
    |T_3| \leq \int_0^t M(u_0)^\alpha \EE\big( V(u(r\wedge \tau))\big) dr + 4 M(u_0)^{\alpha +1} M_\phi t^2. 
\end{equation} 
Collecting estimates for $T_1, T_2, T_3$ and recalling the bounds \eqref{cond_1}-\eqref{cond_2}, we obtain 
$$
\Big| \sum_{i=1}^3 T_i \Big| \leq \int_0^t M(u_0)^\alpha \EE\big( V(u(s\wedge \tau))\big) ds + 2\epsilon. 
$$
Putting this into the estimate \eqref{MEV-2}, we get an inequality on the expected value of the variance, from which we would like to extract the bound on it. 
However, before proceeding (and applying Gronwall's inequality), we need to make sure that this expected value is bounded a.s.. 
For that we refer to \cite{deB_Deb_AnnProb}: since $\tau \leq \tilde{\tau}_N$ and $t<\tau^*(u_0)$ a.s., the upper estimate (6.2) on page 1095 in \cite{deB_Deb_AnnProb} 
implies that $u(s\wedge \tau)\in \Sigma$ a.s. for
every $s\leq t$ and 
\[ \sup_{s\leq t} V(s\wedge \tau)\leq [4N^2t+V(u_0)] e^t \quad \mbox{\rm a.s.}.\]
Therefore, $\sup_{r\leq t} \EE(V(r\wedge \tau)) <\infty $, and Gronwall's lemma yields { for every $t>0$} 
\begin{align}       \label{estim_EV_Gronwall}
    M(u_0)^\alpha \EE\big[ V\big( u(t\wedge \tau)\big) \big] \leq &\big[ { M(u_0)^\alpha V(u_0)} + 2\epsilon +4 M(u_0)^\alpha G(u_0)) \EE(t\wedge \tau) \nonumber \\
 &   -4\sigma s_c (\delta^2-\beta) \|\nabla Q\|_{L^2}^2 M(Q)^\alpha \EE\big( (t\wedge \tau)^2\big)   ] e^t.
\end{align}
Using the assumption \eqref{cond-3}, we deduce that $M(u_0)^\alpha \, \EE\big( V(u(t\wedge \tau))\big) <0$, which brings a contradiction, since
$V(u)\geq 0$ for every $u\in \Sigma$. Therefore,  $P(  \tau^*(u_0) \leq t)>0$, which completes the proof. 
\end{proof}
We next prove Theorem \ref{th_blowup1}.
\smallskip

\noindent {\it Proof of Theorem~\ref{th_blowup1}}. ~
First, we note that if $P(\tau^*(u_0)<\infty)>0$, there exists $T>0$ such that $P(\tau^*(u_0)\leq T)>0$ and the proof is complete. Thus, we suppose that $\tau^*(u_0)=\infty$ a.s. and look for
a contradiction. 

Recalling that  for $B= \frac{C_{GN}}{\sigma +1}$ 
the function 
$f(x)= \frac{1}{2} (x^2-Bx^{n\sigma})$, defined on $[0,+\infty)$,  is strictly increasing on the interval $(0,x^*)$ and strictly decreasing on $(x^*, \infty)$,
where $x^*=\|\nabla Q\|_{L^2} \|Q\|_{L^2}^\alpha$ and $f(x^*) =H(Q) M(Q)^\alpha$,  see  Figure \ref{F:1}, we proceed as follows. 

Since $\|\nabla u(t)\|_{L^2}$ is a.s. continuous in $t$ and initially we have $x=\|\nabla u_0\|_{L^2} \|u_0\|_{L^2}^\alpha$
 greater than $\|\nabla Q\|_{L^2} \|Q\|_{L^2}^\alpha$ (that is, $x > x^*$ implying that $x$ is located on the decreasing side of the graph of $f$, see Figure \ref{F:1}), 
there exists $\delta \in (1,\delta_0)$ and $\gamma \in (\beta,1)$ such that for any $t>0$  we have a.s. 
\begin{equation}     \label{gamma-delta}
\sup_{s\in [0,t] }H(u(s)) M(u_0)^\alpha \leq \gamma H(Q) M(Q)^\alpha \; \Longrightarrow 
\inf_{s\in [0,t]}\|\nabla u(s)\|_{L^2} \|u_0\|_{L^2}^\alpha \geq \delta \|\nabla Q\|_{L^2} \|Q\|_{L^2}^\alpha .
\end{equation}

Let $\tau_\delta$ be as in Proposition \ref{Prop-blow-det} and define the stopping time $\tilde \sigma_\gamma$ by 
\begin{equation}\label{sigma_gamma1} \tilde{\sigma}_\gamma = \inf\{ s\geq 0 : H(u(s)) M(u_0)^\alpha
\geq \gamma H(Q) M(Q)^\alpha\} . 
\end{equation} 

Then \eqref{gamma-delta} implies that $\tilde{\sigma}_\gamma \leq \tau_\delta$.
As $T\to \infty$, we have $T\wedge \tilde\sigma_\gamma \to \tilde\sigma_\gamma$, 
and  the monotone convergence theorem  implies $\EE\big((T\wedge \tau_\delta)^2 \big)
\to \EE\big(\tau_\delta^2\big)$. 
We consider two cases, depending on the size of the last quantity. 
\smallskip

$\bullet$ If $\EE\big(\tau_\delta^2\big)=\infty$, given any $M>0$ there exists $T$ large enough, call it $T_0>0$, to ensure $\EE\big((T_0\wedge \tau_\delta)^2\big) \geq M^2$.\smallskip

$\bullet$ If $\EE(\tau_\delta^2)<\infty$, given any $\lambda \in (0,1)$, we may choose $T_0$ large enough to have 
$\EE\big((T_0 \wedge \tau_\delta)^2\big) \geq \lambda^2 \EE(\tau_\delta^2)$. 

Furthermore, since $\|\nabla u(t)\|_{L^2} \to \infty$ as $t\to \tau^*(u_0)$, we deduce that for $\tilde{\tau}_N$ defined by \eqref{def-tildetauK-multi},
 we have $\tilde{\tau}_N \to \tau^*(u_0)$ as $N \to \infty$. 
Hence, there exists $N_0$ such that for $\bar{\tau}_0=\tau_\delta \wedge \tilde{\tau}_{N_0}$ we have
\[ \EE \big((T_0 \wedge \bar{\tau}_0)^2\big) = \EE\big((T_0 \wedge \tau_\delta \wedge \tilde{\tau}_{N_0})^2\big) \geq 
\lambda^2 \EE\big((T_0 \wedge \tau_\delta)^2\big),\]
which implies that either $\EE\big((T_0 \wedge \bar{\tau}_0 )^2\big) \geq \lambda^2 M^2$
or $\EE\big((T_0 \wedge \bar{\tau}_0)^2\big) \geq \lambda^4 \EE(\tau_\delta^2)$. 

Now, that $T_0$ and $N_0$ have been defined, given $\epsilon >0$, choose $M_\phi$ small enough to ensure that
\begin{equation}    \label{cond1-2}
     4 M(u_0)^{\alpha +1}M_\phi T_0^2\Big( 1+\tfrac{T_0}{3}\Big) \leq \epsilon \quad \mbox{\rm and} \quad \tfrac{32}{15} n\sigma \sqrt{M_\phi} K_0 T_0^{\frac{5}{2}} 
\leq \epsilon, 
\end{equation}
and thus, the conditions \eqref{cond_1} and \eqref{cond_2} are satisfied.  

We next show that 
there exists $\epsilon>0$ (close to 0) and $T>0$ (large enough) such that for $\tau= \tau_\delta \wedge \tilde{\tau}_{N_0}$    
$$
 M(u_0)^\alpha V(u_0)  + 2\epsilon +4 M(u_0)^\alpha G(u_0) \EE(T\wedge \tau)
-4\sigma s_c (\delta^2-\beta) \|\nabla Q\|_{L^2}^2 M(Q)^\alpha \EE\big( (T\wedge \tau)^2\big) <0,
$$
i.e., the expression in square brackets on the right-hand side of \eqref{estim_EV_Gronwall} is negative  for a large time $T$. 

Set 
$$
a=4\sigma s_c (\delta^2-\beta) \|\nabla Q\|_{L^2}^2 M(Q)^\alpha, 
\quad b=4 |G(u_0)| M(u_0)^\alpha \quad \mbox{\rm and} \quad
c=M(u_0)^\alpha V(u_0) +2\epsilon, 
$$
and denote $X:= \big\{ \EE\big( (T\wedge \bar{\tau}_0)^2\big) \big\}^{\frac{1}{2}}$. 
Then the Cauchy-Schwarz inequality implies that 
$$ 
-a \EE\big( (T\wedge \bar{\tau}_0)^2\big) +b\EE\big( T\wedge \bar{\tau}_0 \big) +c \leq  -a X^2 +bX+c.
$$
Therefore, if $-aX^2+bX+c<0$, the condition \eqref{cond-3} is satisfied with $T\wedge \bar{\tau}_0$ instead of $t\wedge \tau$, 
and Prop. \ref{Prop-blow-det} implies that $P(T<\tau^*(u_0)) >0$. 
Let $X_1<X_2$ denote the roots of the polynomial $-aX^2+bX+c$. 
\smallskip

$\bullet$ If $\EE(\tau_\delta^2)=\infty$, we may choose $M$ large enough and $\lambda $ close to one to have $M\lambda >X_2$, which implies 
$X>X_2$. 

$\bullet$ If $\EE(\tau_\delta^2)<\infty$, then $\tilde{\sigma}_\gamma \leq \tau_\delta <\infty$ a.s.
Thus,  the a.s.  continuity  of the energy $H(u(\cdot))$ and  the definition of $\tilde{\sigma}_\gamma$ in \eqref{sigma_gamma1} imply
$H(u(\tilde{\sigma}_\gamma))  M(u_0)^\alpha = \gamma H(Q) M(Q)^\alpha$.
Multiplying \eqref{energy} by $M(u_0)^\alpha$, using \eqref{cond-u_0-Q-det} and $f^1_\phi = \sum_{l\in \NN} |\nabla (\phi e_l)|^2$,  
we deduce that for any stopping time $\tau < \tau^*(u_0)$, we have 
\begin{align}  \label{Mu_0H(u)}
M(u_0)^\alpha  H(u(\tau))= &\beta H(Q) M(Q)^\alpha +\frac{1}{2} M(u_0)^\alpha \int_0^\tau ds \int_{\RR^n} |u(s,x)|^2 f^1_\phi(x) dx  - M(u_0)^\alpha {\rm  Im}\big(I(\tau)\big), 
\end{align}
where  
\[ I(\tau):=  \int_0^\tau \sum_{l\geq 0}  \int_{\RR^n} 
\bar{u}(s,x) \nabla u(s,x)\!\cdot\!\nabla(\phi e_l)(x) dx d\beta_l(s)  .
\]

Let $\sigma_0$ be the stopping time defined by
\[ \sigma_0 = \inf\{ s\geq 0 \; : \; H(u(s)) \leq 0\} .\] 
Then the Cauchy-Schwarz inequality implies 
\begin{align} \label{square-int-mart}
 \EE\big( \big|   {\rm Im}\big(  I(\tilde{\sigma}_\gamma \wedge \sigma_0 \wedge \tilde{\tau}_k) \big) \big|^2 \big)
\leq \EE\Big( \int_0^{\tilde{\sigma}_\gamma\wedge \sigma_0 \wedge \tilde{\tau}_k} \|f^1_\phi\|_{L^\infty}  M(u_0) \, \|\nabla u(s)\|_{L^2}^2 ds\Big) <\infty. 
\end{align}
Therefore, for any $k>0$  the above stochastic integral is square integrable, hence, centered; thus, the Cauchy-Schwarz inequality yields
\[ M(u_0)^\alpha \EE\big(H(u(\tilde{\sigma}_\gamma\wedge  \sigma_0 \wedge \tilde{\tau}_k))\big) \leq \beta H(Q) M(Q)^\alpha + \frac{1}{2}
M(u_0)^{\alpha+1}  M_\phi \EE(\tilde{\sigma}_\gamma).
\]
For a fixed $k>0$ recall that $\tilde{\tau}_k = \inf\{ s\geq 0 : \|\nabla u(s)\|_{L^2} \geq k\}\wedge \tau^*(u_0)$. Then as $k\to \infty$, we get $\tilde{\tau}_k\to \tau^*(u_0)$ a.s.
By a.s. continuity  of $H(u(\cdot))$ we have  $M(u_0)^\alpha H(u(\tilde{\sigma}_\gamma \wedge \sigma_0 \wedge \tilde{\tau}_k))\to M(u_0)^\alpha H(u(\tilde{\sigma}_\gamma \wedge \sigma_0))$ 
as $k\to \infty $  and 
$$ 
M(u_0)^\alpha H(u(\tilde{\sigma}_\gamma \wedge \sigma_0)) \leq  \gamma H(Q) M(Q)^\alpha  1_{\{ \tilde{\sigma}_\gamma \leq  \sigma_0\} }
+ 0 \cdot M(Q)^\alpha \cdot 1_{\{ \sigma_0 < \tilde{\sigma}_\gamma\}} \qquad \mbox{\rm a.s.}.
$$
Furthermore,   
$ M(u_0)^\alpha H(u(\tilde{\sigma}_\gamma \wedge \sigma_0 \wedge \tilde{\tau}_k)) \in [0, \gamma\; H(Q) M(Q)^\alpha]$,  and hence, 
the dominated convergence theorem implies 
\begin{equation} 		\label{upper_1}
\gamma H(Q) M(Q)^\alpha\; P( \tilde{\sigma}_\gamma \leq  \sigma_0)  \leq \beta H(Q) M(Q)^\alpha + \frac{1}{2}
M(u_0)^{\alpha+1}  M_\phi \EE(\tilde{\sigma}_\gamma).
\end{equation} 

We next prove that when the strength of the noise is small enough,  $P( \tilde{\sigma}_\gamma \leq  \sigma_0) $ can be made as close to one as desired. 
Let $\epsilon \in (0,\frac{1}{2})$ be chosen such that $\gamma (1-2\epsilon) >\beta$. Since $\tilde{\sigma}_\gamma<\tau^*(u_0)$ a.s., we may choose $k_0$  and $T_0$ large enough 
to have $P(\tilde{\sigma}_\gamma
\geq  \tilde{\tau}_{k_0} \wedge T_0 ) \leq \epsilon$. 

Furthermore, on the set (of $\omega$) where
$\{\sigma_0\leq \tilde{\sigma}_\gamma \wedge \tilde{\tau}_{k_0}\wedge T_0\}$,  
the identity \eqref{Mu_0H(u)} implies  
$0\geq \beta H(Q) M(Q)^\alpha - M(u_0)^\alpha  {\rm Im}\big( I(\sigma_0)\big)$, so we have the following inclusion:
\begin{equation}		\label{inclusion}
 \{ \sigma_0 \leq \tilde{\sigma}_\gamma \wedge \tilde{\tau}_{k_0} \wedge T_0\} \subset \Big\{ \sup_{s\leq \tilde{\sigma}_\gamma \wedge \tilde{\tau}_{k_0}\wedge T_0}
 M(u_0)^\alpha  {\rm Im}\big(I(s)\big) \geq
\beta H(Q) M(Q)^\alpha  \Big\}.
\end{equation}
Then the Markov and Davis inequalities imply
\begin{align}		\label{estim-IM(u)}
P\Big(& \sup_{s\leq \tilde{\sigma}_\gamma \wedge \tilde{\tau}_{k_0}\wedge T_0}\; M(u_0)^\alpha {\rm Im}\big( I(s)\big) \Big)  \leq \frac{1}{\beta H(Q) M(Q)^\alpha}
 \EE\Big( \sup_{s\leq \tilde{\sigma}_\gamma \wedge \tilde{\tau}_{k_0}\wedge T_0}
M(u_0)^\alpha |I(s)| \Big)  \nonumber \\
\leq & \; \frac{3 M(u_0)^\alpha }{\beta H(Q) M(Q)^\alpha} \EE\Big(  \Big\{ \int_0^{\tilde{\sigma}_\gamma \wedge \tilde{\tau}_{k_0}\wedge T_0} \sum_l \Big( \int_{\RR^n} \bar{u}(s,x) \nabla u(s,x) \, . \nabla (\phi e_l)(x) dx\Big)^2 ds 
\Big\}^{\frac{1}{2}} \Big) \nonumber  \\
\leq &\; \frac{3 M(u_0)^\alpha }{\beta H(Q) M(Q)^\alpha} \EE\Big( \Big\{ \int_0^{\tilde{\sigma}_\gamma \wedge \tilde{\tau}_{k_0}\wedge T_0}  M(u_0) \|f^1_\phi\|_{L^\infty} 
 \Big(\int_{\RR^n} |\nabla u(s)|^2 dx\Big) ds  \Big\}^{\frac{1}{2}} \Big) \nonumber \\
 \leq &\; \frac{3 \sqrt{ k_0 T_0}  M(u_0)^{\alpha + \frac{1}{2}}}{\beta H(Q) M(Q)^\alpha}  \sqrt{M_\phi}, 
\end{align}
where in the last bound we used the definition of $\tilde{\tau}_{k_0}$ and in the one before the last one, the Cauchy-Schwarz inequality with respect to $dx$, 
and  the conservation of mass  identity $M(u(s))=M(u_0)$ a.s. 
Therefore, if $M_\phi$ is small enough, we have $P(\sigma_0 \leq \tilde{\sigma}_\gamma \wedge \tilde{\tau}_{k_0} \wedge T_0) \leq \epsilon$.
Finally,  we obtain the upper bound
\[ P( \sigma_0 < \tilde{\sigma}_\gamma ) \leq P(\sigma_0 \leq \tilde{\sigma}_\gamma  \wedge \tilde{\tau}_{k_0} \wedge T_0) + P(\tilde{\sigma}_\gamma \geq \tilde{\tau}_{k_0} \wedge T_0) \leq 2\epsilon.
\] 
For $M_\phi$ small enough,  the inequality \eqref{upper_1} and the Cauchy-Schwarz inequality imply 
\begin{align*}
 (1-2\epsilon) \gamma H(Q) M(Q)^\alpha \leq&  \;   \beta H(Q) M(Q)^\alpha + \frac{1}{2}
M(u_0)^{\alpha+1}  M_\phi \EE(\tilde{\sigma}_\gamma) \\
\leq &\; 
\beta H(Q) M(Q)^\alpha + \frac{1}{2}
M(u_0)^{\alpha+1}  M_\phi \big\{\EE\big( (\tilde{\sigma}_\gamma)^2\big) \big\}^{\frac{1}{2}}.
\end{align*} 
Hence, for $M_\phi$ small enough, we have 
\[ X = \Big\{ \EE\big( (T\wedge \bar{\tau}_0)^2\big) \big\}^{\frac{1}{2}} \geq \frac{2\lambda^2 (\gamma(1-2\epsilon) -\beta) H(Q) M(Q)^\alpha}{M(u_0)^{\alpha +1} 
M_\phi} > X_2,
\]
hence the condition \eqref{cond-3} is satisfied, which concludes the proof. 
\hfill $\Box$
\smallskip

We next consider {\it random} initial data and prove a result similar to Theorem \ref{th_blowup1} when $u_0$ is random with positive energy.

\begin{theorem}\label{th_blowup2}
Let $ \sigma \in \mathfrak R_{inter}$ 
and $\phi$ satisfy the conditions {\bf(H1)} and {\bf (H2)}.
Let $u_0 \in \Sigma $  a.s. be ${\mathcal F}_0$-measurable such that ~$\EE\big( M(u_0)^{2\alpha +2}\big) <\infty$, 
~$\EE \big( M(u_0)^\alpha V(u_0)\big) <\infty$ and ~ $\EE \big( G(u_0)^2 M(u_0)^{2\alpha}\big) <\infty$. 
Assume that $f^1_\phi$ is bounded and $\tau^*(u_0)$ is the stopping time as defined in Theorem \ref{local_wp}.

Suppose that for some constants $\beta$ and $\delta_0$ such that $0<\beta < 1 < \delta_0$, we have 
\begin{equation} \label{cond-u_0-Q-det-blowup}
H(u_0) M(u_0)^\alpha \leq  \beta H(Q) M(Q)^\alpha \; {\mbox{\rm a.s.} \quad \mbox{\rm  and}\quad 
\| \nabla u_0\|_{L^2} \|u_0\|_{L^2}^\alpha
\geq  \delta_0 \|\nabla Q\|_{L^2} \|Q\|_{L^2}^\alpha\; \mbox{\rm} a.s. .}
\end{equation}
Then for $T>0$ large enough and $M_\phi$ small enough, we have $P\big(\tau^*(u_0) \leq T\big) >0$. 
\end{theorem}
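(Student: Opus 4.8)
\textbf{Proof proposal for Theorem \ref{th_blowup2}.}

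The plan is to reduce the random case to the deterministic machinery developed in Proposition \ref{Prop-blow-det}, using a stopping-time argument that localizes all the relevant random quantities ($M(u_0)$, $V(u_0)$, $G(u_0)$, and the energy of the solution) so that the Gronwall-type bound \eqref{estim_EV_Gronwall} carries over with $M(u_0)^\alpha$ replaced by the appropriate conditional/expected values. First I would argue by contradiction: suppose $\tau^*(u_0) = \infty$ a.s. (otherwise there is nothing to prove). As in the proof of Theorem \ref{th_blowup1}, the a.s. continuity of $\|\nabla u(t)\|_{L^2}$ together with the strict inequality $\|\nabla u_0\|_{L^2}\|u_0\|_{L^2}^\alpha \geq \delta_0 \|\nabla Q\|_{L^2}\|Q\|_{L^2}^\alpha$ a.s. and the geometry of $f(x)=\frac12(x^2 - Bx^{n\sigma})$ (strictly decreasing on $(x^*,\infty)$, with $f(x^*)=H(Q)M(Q)^\alpha$) gives, for suitable $\delta\in(1,\delta_0)$ and $\gamma\in(\beta,1)$, the a.s. implication \eqref{gamma-delta}: as long as $\sup_{s\le t} H(u(s))M(u_0)^\alpha \le \gamma H(Q)M(Q)^\alpha$, one has $\inf_{s\le t}\|\nabla u(s)\|_{L^2}\|u_0\|_{L^2}^\alpha \ge \delta\|\nabla Q\|_{L^2}\|Q\|_{L^2}^\alpha$. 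Define $\tilde\sigma_\gamma$, $\tau_\delta$ and $\tilde\tau_N$ exactly as in \eqref{sigma_gamma1}, \eqref{def-tau_delta-multi}, \eqref{def-tildetauK-multi}, so that $\tilde\sigma_\gamma \le \tau_\delta$ a.s.

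Next I would re-run the variance computation of Proposition \ref{Prop-blow-det} with $\tau = \tau_\delta \wedge \tilde\tau_N$, but now multiplying \eqref{V(u)}--\eqref{G(u)} by $M(u_0)^\alpha$ \emph{before} taking expectations and keeping $M(u_0)^\alpha$ inside every integral (it is $\mathcal F_0$-measurable, hence passes through the stochastic integrals). The key compensation \eqref{compensation} becomes an a.s. statement using the two inequalities $M(u_0)^\alpha H(u_0)\le \beta H(Q)M(Q)^\alpha$ and $M(u_0)^\alpha\|\nabla u(r)\|_{L^2}^2 \ge \delta^2 M(Q)^\alpha\|\nabla Q\|_{L^2}^2$ for $r\le\tau_\delta$, so the negative quadratic-in-time term survives. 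The stochastic-integral error terms $T_1, T_2, T_3$ are estimated exactly as before, except that the bare powers $M(u_0)^{\alpha+1}$, $M(u_0)^{\alpha+\frac12}$ now appear inside expectations; the Cauchy--Schwarz and Young steps go through verbatim provided $\EE(M(u_0)^{2\alpha+2})<\infty$ (needed to control $T_1$ and the $T_3$ Young term) and the finite-$N$ cutoff $\tilde\tau_N$ controls $T_2$. One needs $\EE(\sup_{s\le t} V(u(s\wedge\tau)))<\infty$ to justify Gronwall; this follows from the a.s. bound $\sup_{s\le t}V(s\wedge\tau)\le [4N^2 t + V(u_0)]e^t$ (the estimate (6.2) of \cite{deB_Deb_AnnProb}) together with $\EE(M(u_0)^\alpha V(u_0))<\infty$. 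The outcome is the analog of \eqref{estim_EV_Gronwall}:
\[
\EE\big[ M(u_0)^\alpha V(u(t\wedge\tau))\big] \le \Big[ \EE\big(M(u_0)^\alpha V(u_0)\big) + 2\epsilon + 4\,\EE\big(M(u_0)^\alpha G(u_0)(t\wedge\tau)\big) - 4\sigma s_c(\delta^2-\beta)\|\nabla Q\|_{L^2}^2 M(Q)^\alpha \,\EE\big((t\wedge\tau)^2\big)\Big] e^t,
\]
with $\epsilon$ absorbing the $T_i$-errors once $M_\phi$ is small relative to $T$ (conditions of the form \eqref{cond1-2} with $M(u_0)$-moments in place of $M(u_0)$).

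It then remains to show the bracket can be made negative for $T$ large and $M_\phi$ small, which is where the main work lies. Writing $a = 4\sigma s_c(\delta^2-\beta)\|\nabla Q\|_{L^2}^2 M(Q)^\alpha$, $b = 4\sqrt{\EE(G(u_0)^2 M(u_0)^{2\alpha})}$ (applying Cauchy--Schwarz to the cross term, which is finite by hypothesis), $c = \EE(M(u_0)^\alpha V(u_0)) + 2\epsilon$, and $X = \sqrt{\EE((T\wedge\bar\tau_0)^2)}$ with $\bar\tau_0 = \tau_\delta\wedge\tilde\tau_{N_0}$, one reduces to forcing $-aX^2 + bX + c < 0$, i.e. $X > X_2$, the larger root. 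The point where the \emph{positivity of the energy} enters — and the main obstacle — is establishing a lower bound on $X$ uniform as $M_\phi\to 0$. As in Theorem \ref{th_blowup1} I would split into the case $\EE(\tau_\delta^2)=\infty$ (trivial: choose $T$ large) and $\EE(\tau_\delta^2)<\infty$, in which case $\tilde\sigma_\gamma\le\tau_\delta<\infty$ a.s., so by a.s. continuity $M(u_0)^\alpha H(u(\tilde\sigma_\gamma)) = \gamma H(Q)M(Q)^\alpha$ on $\{\tilde\sigma_\gamma<\infty\}$. Introducing $\sigma_0 = \inf\{s : H(u(s))\le 0\}$ and using the energy evolution \eqref{energy} multiplied by $M(u_0)^\alpha$, together with a martingale/Davis estimate of the stochastic term controlled by $\|f^1_\phi\|_{L^\infty}$ and $\EE(M(u_0)^{2\alpha+1})$, one shows $P(\sigma_0 < \tilde\sigma_\gamma)\le 2\epsilon$ once $M_\phi$ is small (the solution cannot lose positivity of energy before $\tilde\sigma_\gamma$), hence $(1-2\epsilon)\gamma H(Q)M(Q)^\alpha \le \beta H(Q)M(Q)^\alpha + \frac12 \EE(M(u_0)^{\alpha+1}) M_\phi \{\EE(\tilde\sigma_\gamma^2)\}^{1/2}$ via dominated convergence along $\tilde\tau_k\uparrow\tau^*(u_0)$. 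This yields $\EE(\tilde\sigma_\gamma^2)^{1/2}\gtrsim (\gamma(1-2\epsilon)-\beta)/M_\phi \to\infty$ as $M_\phi\to 0$, and since $\tilde\sigma_\gamma \le \tau_\delta$ forces $\EE(\tau_\delta^2)$ large, a choice of $N_0$ with $\EE((T\wedge\tau_\delta\wedge\tilde\tau_{N_0})^2)\ge \lambda^2\EE((T\wedge\tau_\delta)^2)$ makes $X > X_2$. Then the bracket is negative, contradicting $V\ge 0$ on $\Sigma$, so $P(\tau^*(u_0)\le T) > 0$. The delicate bookkeeping is ensuring that all the moment hypotheses on $u_0$ are exactly what is consumed by the error estimates and by the dominated-convergence step — in particular that $\EE(M(u_0)^{2\alpha+2})<\infty$ suffices everywhere $M(u_0)^{\alpha+1}$ or $M(u_0)^{\alpha+\frac12}$ appears under an expectation after Cauchy--Schwarz.
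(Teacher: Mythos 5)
Your proposal is correct and follows essentially the same route as the paper, which proves a random-data analogue of Proposition \ref{Prop-blow-det} (multiplying by the $\mathcal F_0$-measurable weight $M(u_0)^\alpha$ before taking expectations, bounding the cross term by $4\{\EE(G(u_0)^2M(u_0)^{2\alpha})\}^{1/2}$, and then running the $\tilde\sigma_\gamma$/$\sigma_0$ energy argument to force $X>X_2$ for small $M_\phi$). One small correction: in your displayed inequality the term should read $\tfrac12 M_\phi\{\EE(M(u_0)^{2\alpha+2})\}^{1/2}\{\EE(\tilde\sigma_\gamma^2)\}^{1/2}$, since $M(u_0)$ and $\tilde\sigma_\gamma$ are not independent and the splitting is by Cauchy--Schwarz (which is exactly why the hypothesis $\EE(M(u_0)^{2\alpha+2})<\infty$ is imposed), as you yourself note at the end.
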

As in the case of a deterministic initial condition, the proof relies on the following technical result.

\begin{Prop}\label{Prop-blow-random}
Let $u_0$ and $\phi$ satisfy the assumptions of Theorem \ref{th_blowup2}, 
replacing $\delta_0$ by $\delta>1$ in the statement of \eqref{cond-u_0-Q-det}. Suppose that, for some $\epsilon >0$ and $N>0$, $t$ and $M_\phi$ satisfy the
following conditions
\begin{align}
    \EE\big(M(u_0)^{\alpha +1} \big) M_\phi t^3 \Big( \frac{4}{3} +18 \Big)  &\leq \epsilon,       \label{cond_1_Bis}\\
   \frac{32}{15} n  \sigma \EE\big( M(u_0)^{\alpha + \frac{1}{2}}\big) \sqrt{M_\phi} K t^{\frac{5}{2}} & \leq \epsilon.    \label{cond_2_Bis} 
\end{align}
Let $\tau_\delta$ and $\tilde{\tau}_N$ be defined by 
\eqref{def-tau_delta-multi}
and \eqref{def-tildetauK-multi}, respectively. Let $\tau = \tau_\delta \wedge \tilde{\tau}_N$, and suppose that
\begin{align}    \label{cond-3-Bis}
   \EE\big( V(u_0) M(u_0)^\alpha\big)& + 2\epsilon +4 \big\{ \EE\big( G(u_0)^2 M(u_0)^{2\alpha}\big) \big\}^{\frac{1}{2}} \big\{  \EE\big((t\wedge \tau)^2\big) \big\}^{\frac{1}{2}} \nonumber \\
  &  - 4\sigma s_c (\delta^2-\beta) \|\nabla Q\|_{L^2}^2 M(Q)^\alpha\EE\big( (t\wedge \tau)^2\big) <0. 
\end{align}
Then $P(\tau^*(u_0)\leq t)>0$. 
\end{Prop}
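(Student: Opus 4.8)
The plan is to run the argument of Proposition~\ref{Prop-blow-det} with every pathwise manipulation replaced by an expectation estimate, using the Cauchy--Schwarz inequality to separate the powers of $M(u_0)$ from the other random quantities wherever $u_0$ appears as a genuine random variable. Assume $t<\tau^*(u_0)$ a.s., otherwise there is nothing to prove. Writing $V(u(t\wedge\tau))$ from \eqref{V(u)}, \eqref{G(u)} and \eqref{energy} exactly as in the deterministic proof, multiplying by the ${\mathcal F}_0$-measurable factor $M(u_0)^\alpha$ and taking expectations, one obtains
\begin{align*}
M(u_0)^\alpha\,\EE\big(V(u(t\wedge\tau))\big)
&\le \EE\big(M(u_0)^\alpha V(u_0)\big)+4\,\EE\big(M(u_0)^\alpha G(u_0)(t\wedge\tau)\big)+4n\sigma\,\EE\big(M(u_0)^\alpha H(u_0)(t\wedge\tau)^2\big)\\
&\quad -8\sigma s_c\,\EE\Big(M(u_0)^\alpha\int_0^{t\wedge\tau}\int_0^s\|\nabla u(r)\|_{L^2}^2\,dr\,ds\Big)+\sum_{i=1}^3 T_i,
\end{align*}
where $T_1,T_2,T_3$ are the three stochastic remainders, now carrying the weight $M(u_0)^\alpha$ inside the expectation.

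The compensation step \eqref{compensation} goes through unchanged: for $r\le\tau_\delta$ one has $M(u_0)^\alpha\|\nabla u(r)\|_{L^2}^2\ge\delta^2 M(Q)^\alpha\|\nabla Q\|_{L^2}^2$ a.s.\ by \eqref{def-tau_delta-multi}, while $M(u_0)^\alpha H(u_0)\le\beta M(Q)^\alpha H(Q)$ a.s.\ by \eqref{cond-u_0-Q-det} (with $\delta_0$ replaced by $\delta$), so, using $n\sigma-2=2\sigma s_c$ and \eqref{H_Q}, the two curvature contributions combine into $-4\sigma s_c(\delta^2-\beta)\|\nabla Q\|_{L^2}^2 M(Q)^\alpha\,\EE\big((t\wedge\tau)^2\big)$. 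The genuinely new point compared with Proposition~\ref{Prop-blow-det} is the linear term: instead of $4M(u_0)^\alpha G(u_0)\EE(t\wedge\tau)$ one now has $4\,\EE\big(M(u_0)^\alpha G(u_0)(t\wedge\tau)\big)$, which I bound by Cauchy--Schwarz in $dP$ by $4\big\{\EE\big(G(u_0)^2 M(u_0)^{2\alpha}\big)\big\}^{1/2}\big\{\EE\big((t\wedge\tau)^2\big)\big\}^{1/2}$, precisely the quantity appearing in \eqref{cond-3-Bis} and the reason for the hypothesis $\EE\big(G(u_0)^2 M(u_0)^{2\alpha}\big)<\infty$.

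The remainders $T_1,T_2,T_3$ are estimated exactly as in \eqref{estim_T1}--\eqref{estim_T3}, with the deterministic factors $M(u_0)^{\alpha+1}$ and $M(u_0)^{\alpha+\frac{1}{2}}$ replaced by $\EE\big(M(u_0)^{\alpha+1}\big)$ and $\EE\big(M(u_0)^{\alpha+\frac{1}{2}}\big)$ (both finite since $\EE\big(M(u_0)^{2\alpha+2}\big)<\infty$), and with the weight $M(u_0)^\alpha$ kept pathwise on the variance term generated by $T_3$; here $f^1_\phi$ is bounded by $M_\phi$, $\tau\le\tilde\tau_N$ bounds $\|\nabla u\|_{L^2}$ by $N$ (cf.\ \eqref{def-tildetauK-multi}), mass conservation \eqref{mass_conserv} gives $M(u(r))=M(u_0)$ a.s., and one Cauchy--Schwarz in $dx$, one It\^o isometry (or Davis' inequality) in $dP$, and Young's inequality then give $T_1\le\frac{4}{3}\EE\big(M(u_0)^{\alpha+1}\big)M_\phi t^3$, $|T_2|\le\frac{32}{15}\,n\sigma\,\EE\big(M(u_0)^{\alpha+\frac{1}{2}}\big)\sqrt{M_\phi}\,N t^{5/2}$, and $|T_3|\le\int_0^t\EE\big(M(u_0)^\alpha V(u(r\wedge\tau))\big)\,dr+18\,\EE\big(M(u_0)^{\alpha+1}\big)M_\phi t^3$; the conditions \eqref{cond_1_Bis}--\eqref{cond_2_Bis} are calibrated exactly so that $\big|\sum_{i=1}^3 T_i\big|\le\int_0^t\EE\big(M(u_0)^\alpha V(u(r\wedge\tau))\big)\,dr+2\epsilon$. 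Before invoking Gronwall's lemma one checks that $\sup_{r\le t}\EE\big(M(u_0)^\alpha V(u(r\wedge\tau))\big)<\infty$: since $\tau\le\tilde\tau_N$ and $t<\tau^*(u_0)$, the a priori bound $\sup_{s\le t}V(u(s\wedge\tau))\le[4N^2 t+V(u_0)]e^t$ a.s.\ from \cite{deB_Deb_AnnProb}, together with $\EE\big(M(u_0)^\alpha V(u_0)\big)<\infty$ and $\EE\big(M(u_0)^\alpha\big)<\infty$, provides it. Gronwall's lemma then yields, for every $t>0$,
\begin{align*}
M(u_0)^\alpha\,\EE\big(V(u(t\wedge\tau))\big)\le\Big[&\EE\big(M(u_0)^\alpha V(u_0)\big)+2\epsilon+4\big\{\EE\big(G(u_0)^2 M(u_0)^{2\alpha}\big)\big\}^{1/2}\big\{\EE\big((t\wedge\tau)^2\big)\big\}^{1/2}\\
&-4\sigma s_c(\delta^2-\beta)\|\nabla Q\|_{L^2}^2 M(Q)^\alpha\,\EE\big((t\wedge\tau)^2\big)\Big]e^t,
\end{align*}
which is strictly negative by \eqref{cond-3-Bis}; since $V\ge0$ on $\Sigma$ this is a contradiction, hence $P(t<\tau^*(u_0))<1$, i.e.\ $P(\tau^*(u_0)\le t)>0$.

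The main difficulty, as in the deterministic case, is the bookkeeping of the nested time integrals of stochastic integrals in $T_2$ and $T_3$: one has to keep the martingales square integrable --- which is exactly why the localization at $\tilde\tau_N$ and the a priori variance estimate of \cite{deB_Deb_AnnProb} are indispensable --- and, in the random setting, to arrange the repeated Cauchy--Schwarz and Young steps so that the surviving powers of $M(u_0)$ land precisely on the moments assumed finite in the statement and so that the residual deterministic polynomial in $t$ matches the constants declared in \eqref{cond_1_Bis}--\eqref{cond_2_Bis}. Once these estimates are in place the proof is a direct transcription of that of Proposition~\ref{Prop-blow-det}.
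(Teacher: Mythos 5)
Your proposal is correct and follows essentially the same route as the paper's own proof: the same expansion of $V$ via \eqref{V(u)}, \eqref{G(u)}, \eqref{energy}, the same compensation step, the same Cauchy--Schwarz treatment of the $G(u_0)$ term, the same bounds on $T_1,T_2,T_3$ calibrated to \eqref{cond_1_Bis}--\eqref{cond_2_Bis}, and the same Gronwall-plus-contradiction conclusion using the a priori variance bound from \cite{deB_Deb_AnnProb}. The only blemish is notational: since $u_0$ is random, the weighted variance should be written $\EE\big(M(u_0)^\alpha V(u(t\wedge\tau))\big)$ rather than $M(u_0)^\alpha\,\EE\big(V(u(t\wedge\tau))\big)$, which is clearly what your Gronwall step is actually applied to.
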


\begin{proof}
Suppose that $t<\tau^*(u_0)$ a.s.. Write $V(u)$ using \eqref{V(u)}, \eqref{G(u)} and \eqref{energy}. Multiplying by $M(u_0)^\alpha$ and taking 
expected values, we deduce an analog of \eqref{E:MEV-1b}, where the terms $T_i, i=1,2,3$, are defined as in the proof of Prop. \ref{Prop-blow-det}. We thus obtain
\begin{align}   \label{MEV-1Bis}
&\EE\big( M(u_0)^\alpha V(t\wedge \tau) \big) \leq \EE\big( M(u_0)^\alpha V(u_0) \big) + 4\EE\big( M(u_0) G(u_0) (t\wedge \tau)\big) \nonumber \\
&+ 4n\sigma \EE\big( M(u_0)^\alpha H(u_0) (t\wedge \tau)^2 \big) - 8\sigma s_c \,\EE\Big( M(u_0)^\alpha \int_0^{t\wedge \tau}\!\! 
ds \int_0^s \! \|\nabla u(r)\|_{L^2}^2 dr\Big) + \sum_{i=1}^3 T_i.
\end{align} 
Once more, for $r\leq \tau_\delta$ we have a.s. 
\[ M(u_0)^\alpha \|\nabla u(r)\|_{L^2}^2 \geq \delta^2 M(Q)^\alpha \|\nabla Q\|_{L^2}^2 \quad \mbox{\rm and} \quad 
M(u_0)^\alpha H(u_0) \leq \beta M(Q)^\alpha H(Q).
\]
Hence, a computation similar to the one proving \eqref{compensation} yields
\begin{align}   \label{compensation-Bis}
4 n\sigma \EE\big( M(u_0)^\alpha H(u_0) &(t\wedge \tau)^2 \big)- 8 \sigma s_c \EE\Big( M(u_0)^\alpha \int_0^{t\wedge \tau} ds \int_0^s \|\nabla u(r)\|_{L^2}^2
dr \Big)  \nonumber \\
&\leq -4\sigma s_c (\delta^2-\beta) \|\nabla Q\|_{L^2}^2 M(Q)^\alpha \EE\big( (t\wedge \tau)^2\big) .
\end{align}
The upper bounds \eqref{MEV-1Bis}, \eqref{compensation-Bis} and  the Cauchy-Schwarz inequality  imply 
\begin{align}       \label{MEV-2Bis}
 \EE\big( M(u_0)^\alpha V(u(t\wedge \tau))\big) \leq &\; \EE\big( M(u_0)^\alpha V(u_0)\big) 
 + 4\Big\{ \EE\big( M(u_0)^{2\alpha} G(u_0)^2 \big) \big\}^{\frac{1}{2}} \big\{ \EE \big( (t\wedge \tau)^2\big) \big\}^{\frac{1}{2}} \nonumber \\
& -4 \sigma s_c (\delta^2-\beta) \|\nabla Q\|_{L^2}^2 M(Q)^\alpha \EE\big( (t\wedge \tau)^2\big) + \sum_{i=1}^3 T_i.
 \end{align}
We next upper estimate the terms $T_i$, $i=1,2,3$.
A straightforward extension of \eqref{estim_T1} implies
\begin{equation}        \label{estim_T1-Bis}
    T_1 \leq \frac{4}{3} \EE\big( M(u_0)^{\alpha +1} \big) M_\phi t^3. 
\end{equation}
Furthermore, similarly to proving \eqref{estim_T2}, we bound
\begin{align}       \label{estim-T2Bis}
|T_2| \leq& \;  8n\sigma \int_0^t ds \int_0^s dr \,\EE\Big( \Big| \int_0^{r\wedge \tau} M(u_0)^\alpha \sum_{l\geq 0}  \int_{\RR^n} 
\bar{u}(r_1,x) \nabla u(r_1,x)\!\cdot\! \nabla(\phi e_l)(x) dx d\beta_l(r_1)\Big| \Big)  \nonumber \\
\leq & \; 8n\sigma \EE\big( M(u_0)^{\alpha + \frac{1}{2}} \big) \sqrt{M_\phi} N \frac{4}{15} t^{\frac{5}{2}}.
\end{align}
For the upper bound $|T_3|$, we use the Davis and the Cauchy-Schwarz inequalities. This yields for $\tilde{\epsilon}>0$
\begin{align*}
    |T_3|\leq & \; 12 \int_0^t ds \, \EE\Big( \Big\{ \int_0^{s\wedge \tau} M(u_0)^{2\alpha} \sum_{l\geq 0}  \Big| \int_{\RR^n} |u(r,x) |^2 \, x\!\cdot\! \nabla (\phi e_l)(x)
    dx\Big|^2 dr \Big\}^{\frac{1}{2}} \Big) \\ 
    \leq & \; 12 \int_0^t  ds \, \EE\Big( \Big\{ \int_0^{s\wedge \tau} M(u_0)^{2\alpha +1} M_\phi V(u(r)) dr \Big\}^{\frac{1}{2}} \Big)\\ 
    \leq &\;  { 12 } \Big\{ \int_0^t ds \, \EE \Big( \int_0^s M(u_0)^\alpha V(u(r\wedge \tau)) dr \Big) \Big\}^{\frac{1}{2}} \Big\{ \int_0^t ds \, \EE\Big( \int_0^s
    M(u_0)^{\alpha +1} M_\phi dr \Big) \Big\}^{\frac{1}{2}} \\ 
    \leq & \;  \tilde{\epsilon} \int_0^t ds \int_0^s \EE\big(M(u_0)^\alpha V(u(r\wedge \tau))\big) dr  + \frac{36}{\tilde{\epsilon}} \int_0^t ds\int_0^s
    M_\phi \EE\big(M(u_0)^{\alpha +1}\big) dr,
\end{align*}
where the last upper estimate  is deduced from Young's inequality. 
Choosing $\tilde{\epsilon}$ such that $\tilde{\epsilon} t =1$, we obtain
\begin{equation}        \label{estim-T3-random1}
    |T_3| \leq \int_0^t \EE\big(M(u_0)^\alpha  V(s\wedge \tau)\big)\, ds + 18 t^3 M_\phi \EE\big( M(u_0)^{\alpha +1}\big).
\end{equation}

Collecting the upper estimates \eqref{MEV-2Bis}-\eqref{estim-T3-random1}  and using the conditions \eqref{cond_1_Bis} and \eqref{cond_2_Bis}, we deduce
\begin{align*} 
\EE\big( M(u_0)^\alpha& V(u(t\wedge \tau) \big)  \leq  \EE\big( M(u_0)^\alpha V(u_0)\big) + 2\epsilon + 4 \Big\{ \EE(M(u_0)°^{2\alpha} G(u_0)^2 \big) 
\big\}^{\frac{1}{2}}{ \big\{ \EE\big( (t\wedge \tau)^2\big) \big\}^{\frac{1}{2}} }\\
&\; -4 \sigma s_c (\delta^2-\beta) \|\nabla Q\|_{L^2}^2 M(Q)^\alpha {  \EE\big( (t\wedge \tau)^2 \big) }+ \int_0^t { \EE\big(M(u_0)^\alpha  V(u(s\wedge \tau)\big) } \, ds.
\end{align*}
Recall that since $t<{\tau^*(u_0) } $ a.s. and $\tau \leq \tilde{\tau}_N$, once more the upper bound (6.2) in \cite{deB_Deb_AnnProb} implies
$$ 
\sup_{s\leq t} V(s\wedge \tau) \leq [4 N^2 t + V(u_0)] e^t \quad {\rm a.s.}.
$$ 
Since $\EE\big( V(u_0) M(u_0)^\alpha\big) <\infty$, we deduce that $\sup_{s\leq t} \EE\big( M(u_0)^\alpha  V(u(s\wedge \tau)) <\infty$. 
Therefore, 
\begin{align*}
  \EE\big( M(u_0)^\alpha V(u(t\wedge \tau) \big)  \leq & \big[ \EE\big( M(u_0)^\alpha V(u_0)\big) + 2\epsilon + 4 \Big\{ \EE(M(u_0°^{2\alpha} G(u_0)^2 \big) 
\big\}^{\frac{1}{2}} \big\{ \EE\big( t\wedge \tau\big) \big\}^{\frac{1}{2}}\\  
&\quad -4 \sigma s_c (\delta^2-\beta) \|\nabla Q\|_{L^2}^2 M(Q)^\alpha \EE\big( t\wedge \tau\big) \big] e^t. 
\end{align*}

The condition \eqref{cond-3-Bis} yields $\EE\big( M(u_0)^\alpha V(t\wedge \tau)\big) <0$, which brings a contradiction. Therefore, $P( \tau^*(u_0)\leq T) >0$, which completes the proof.
\end{proof}

We now prove Theorem \ref{th_blowup2}.
\smallskip

{\it Proof of Theorem \ref{th_blowup2}.}  Once more we suppose that $\tau^*(u_0)=\infty$ a.s. and look for a contradiction.
As in the proof of Theorem \ref{th_blowup1}, for $\delta_0$ defined by \eqref{cond-u_0-Q-det-blowup} choose $\delta\in (1, \delta_0)$ and $\gamma \in (\beta,1)$ such that \eqref{gamma-delta} holds. For
$\tilde{\sigma}_\gamma = \inf\{ s\geq 0:  H(u(s)) M(u_0)^\alpha \geq \gamma H(Q) M(Q)^\alpha\}$ 
we have $\tilde{\sigma}_\gamma \leq \tau_\delta$.
Recall that  $\tau_\delta$ (resp. $\tilde{\tau}_{N_0}$) is defined by \eqref{def-tau_delta-multi} (resp. \eqref{def-tildetauK-multi}). Note that as $N \to \infty$, we have $\tau_N \to \infty$ a.s., since $\tau^\ast(u_0) = \infty$ a.s.  Setting $\bar{\tau}_0 = \tau_\delta \wedge \tilde{\tau}_{N_0}$, where 
$N_0$ and $T$ are chosen large enough such that
we can make sure that $\bar{\tau}_0 \wedge T$ is as close to $\tau_\delta$ as needed and 
for some fixed $\lambda \in (0,1)$ we also have 
\smallskip

$\bullet$ given some fixed (large) $M_0>0$, one has $\EE(\big(T\wedge \bar{\tau}_0)^2\big)  \geq \lambda^2 M_0^2$ if $\EE(\tau_\delta^2)=\infty$;

$\bullet$ $\EE\big(  (T\wedge \bar{\tau}_0)^2\big) \geq \lambda^2 \EE(\tau_\delta^2)$, if $\EE(\tau^2_\delta)<\infty$.

For this choice of $T$  and $N_0$ 
given $\epsilon >0$ choose $M_\phi$ small enough to ensure that the conditions 
\eqref{cond_1_Bis} and \eqref{cond_2_Bis} are satisfied. 
Set 
\[ a=4\sigma s_c (\delta^2-\beta) \|\nabla Q\|_{L^2}^2 M(Q)^\alpha, \quad b=4\Big\{ \EE\big( G(u_0)^2 M(u_0)^{2\alpha}\big) \big\}^{\frac{1}{2}}
\quad \mbox{\rm and } ~~c=\EE\big( V(u_0) M(u_0)^\alpha\big) +2\epsilon.
\]

Then, if we set $X=\big\{\EE\big( (T\wedge \bar{\tau}_0)^2\big)\big\}^{\frac{1}{2}}$, Proposition \ref{Prop-blow-random}
implies that if $-aX^2+bX+c<0$, we have {$P\big( \tau^*(u_0)\leq T\big) >0$.}

Let $X_1<X_2$ denote the roots of the polynomial $-aX^2+bX+c$. 
\smallskip

$\bullet$ If $\EE(\tau_\delta^2)=\infty$, we may choose $N$ large enough and $\lambda $ close to one to have $N\lambda >X_2$, which implies $X>X_2$. 

$\bullet$ If $\EE(\tau_\delta^2)<\infty$, then $\tilde{\sigma}_\gamma \leq  \tau_\delta <\infty$ a.s., which implies $H(u(\tilde{\sigma}_\gamma))  M(u_0)^\alpha
= \gamma H(Q) M(Q)^\alpha$ by a.s. continuity of the energy $H(u(\cdot))$. 
Multiplying \eqref{energy} by $M(u_0)^\alpha$, we deduce 
that the identity \eqref{Mu_0H(u)} holds for every stopping time $\tau<\tau^*(u_0)$. Furthermore, the inequality \eqref{square-int-mart} holds, so that the martingale in the right-hand side of \eqref{Mu_0H(u)} is centered.  

As in the proof of Theorem \ref{th_blowup1}, let $\sigma_0=\inf\{ s\geq 0 : \; H(u(s)) \leq 0\}$ and choose $\epsilon \in (0, \frac{1}{2})$ such that $\gamma (1-2\epsilon) > \beta$. 
Taking expected values in \eqref{Mu_0H(u)}, we obtain 
\begin{align*}
 \EE\big( M(u_0)^\alpha{H(u(\tilde{\sigma}_\gamma\wedge \tilde{\tau}_k  \wedge \sigma_0) } \big)  
 \leq &\beta H(Q) M(Q)^\alpha + \frac{1}{2} M_\phi \EE\big( M(u_0)^{\alpha +1} (\tilde{\sigma}_\gamma   \wedge \tilde{\tau}_k \wedge \sigma_0) \big) \\
 \leq & \beta H(Q) M(Q)^\alpha + \frac{1}{2} M_\phi \big\{ \EE\big( M(u_0)^{2\alpha +2}\big) \big\}^{\frac{1}{2}} 
 \big\{ \EE\big( \tilde{\sigma}_\gamma^2 \big) \big\}^{\frac{1}{2}}.
\end{align*}
Since as $k\to \infty$, the sequence $M(u_0)^\alpha H(u(\tilde{\sigma}_\gamma \wedge \tilde{\tau}_k  \wedge \sigma_0)) $ converges a.s. to 
$ \gamma M(Q)^\alpha H(Q) 1_{\{ \tilde{\sigma}_\gamma \leq \sigma_0\}}$ and belongs a.s. to the interval $[0, \gamma H(Q) M(Q)^\alpha]$, the integrability assumption made on $M(u_0)$
implies that one can apply the dominated convergence theorem to deduce 
\[ \gamma H(Q)  M(Q)^\alpha  P(\tilde{\sigma}_\gamma \leq \sigma_0) \leq  \beta H(Q) M(Q)  +  \frac{1}{2} M_\phi \big\{ \EE\big( M(u_0)^{2\alpha +2}\big) \big\}^{\frac{1}{2}} 
 \big\{ \EE\big( \tilde{\sigma}_\gamma^2 \big) \big\}^{\frac{1}{2}}. \]
 As in the proof of Theorem \ref{th_blowup1} we next prove that for $M_\phi$ small enough, we have $P(\sigma_0<\tilde{\sigma}_\gamma)<\epsilon$. 
First, for $k_0$ and $T_0$ defined above,  using \eqref{inclusion}  and the fact that $M(u_0)$ is ${\mathcal F}_0$-measurable,  a small modification of the  proof of  \eqref{estim-IM(u)} implies 
\begin{align*}
P(\sigma_0 &< \tilde{\sigma}_\gamma \wedge \tilde{\tau}_{k_0} \wedge T_0) \leq 
P\Big( \sup_{s\leq \tilde{\sigma}_\gamma \wedge \tilde{\tau}_{k_0}\wedge T_0}\; M(u_0)^\alpha {\rm Im}\big( I(s)\big) \Big) \\
&  \leq \frac{1}{\beta H(Q) M(Q)^\alpha}
 \EE\Big( \sup_{s\leq \tilde{\sigma}_\gamma \wedge \tilde{\tau}_{k_0}\wedge T_0}
M(u_0)^\alpha |I(s)| \Big) \\
\leq & \; \frac{3 }{\beta H(Q) M(Q)^\alpha} \EE\Big( M(u_0)^\alpha  \Big\{ \int_0^{\tilde{\sigma}_\gamma \wedge \tilde{\tau}_{k_0}\wedge T_0} 
\sum_l \Big( \int_{\RR^n} \bar{u}(s,x) \nabla u(s,x) \, . \nabla (\phi e_l)(x) dx\Big)^2 ds 
\Big\}^{\frac{1}{2}} \Big) \\
\leq &\; \frac{3 \sqrt{ k_0 T_0}  \EE\big(M(u_0)^{\alpha + \frac{1}{2}}\big) }{\beta H(Q) M(Q)^\alpha}  \sqrt{M_\phi}, 
\end{align*}
so that for $M_\phi$ small enough, we deduce $P(\sigma_0 < \tilde{\sigma}_\gamma \wedge \tilde{\tau}_{k_0} \wedge T_0)<\epsilon$, and then $P(\tilde{\sigma}_\gamma < \sigma_0) \leq 2\epsilon$.
 Since $\big\{ \EE\big( (T\wedge \bar{\tau}_0^2 \big) \big\}^{\frac{1}{2}} \geq \lambda^2 \EE\big(\tilde{\sigma}_\gamma^2\big) $, if $M_\phi$
is small enough to ensure
\[ 
\frac{2\lambda^2 (\gamma  (1-2\epsilon) - \beta) 
 H(Q)  M(Q)^\alpha}{M_\phi \big\{ \EE\big( M(u_0)^{2\alpha +2}\big) \big\}^\frac{1}{2}} >X_2,
\]
then condition \eqref{cond-3-Bis} is satisfied, and $P\big( \tau^*(u_0)\leq T\big) >0$, concluding the proof.
\hfill $\Box$


\subsection{Additive stochastic perturbation} \label{S:A-bup}
In this section we study the solution to \eqref{NLS_add} in the intercritical range with 
$\sigma \in \big(\frac{2}{n}, \frac{2}{n-2}\big)$. 
For an additive stochastic perturbation, we make an additional assumption on the complex-valued noise. 
\medskip

\noindent$\bullet$ {\bf Condition (H4)} ({\it Additional restriction on the additive noise.}) 
\begin{itemize}
\item[(i)] 
The operator $\phi$ is Hilbert-Schmidt from $L^2(\RR^n)$ to $H^1(\RR^n)$ and from $L^2(\RR^n)$ to $\Sigma$, that is,
\begin{equation}\label{C-phi-sigma}
C_\phi^\Sigma := \sum_{k\geq 0}  \| x \, \phi e_k\|_{L^2(\RR^n)}^2 <\infty.
\end{equation} 
\item[(ii)] 
The operator $\phi$ is $\gamma$-Radonifying from $L^2(\RR^n)$ to $L^{2\sigma+2}(\RR^n)$, that is, 
\begin{equation}\label{C-phi}
C(\phi):=\sum_{k\geq 0}  \|\phi e_k\|_{L^{2\sigma +2}(\RR^n)}^2<\infty.
\end{equation}
\end{itemize}

We will also use the constants $C^{(1)}_\phi$ and $C^{(2)}_\phi$ defined by 
$$
C^{(1)}_\phi: = \sum_{k\geq 0} \|\nabla(\phi e_k)\|_{L^2}^2 \leq \| \phi\|_{L^{0,1}_2}^2  \quad \mbox{\rm and} \quad C^{(2)}_\phi: = {\rm Im }\sum_{k\geq 0}  \int_{\RR^n} \phi e_k(x)\  x\!\cdot\!\nabla(\overline{\phi e_k})(x) dx.
$$
Note that the Cauchy-Schwarz inequality implies that under 
the  condition {\bf (H4)} on $\phi$, we have 
$$
C^{(2)}_\phi \leq \sqrt{ C^{(1)}_\phi C^\Sigma_\phi} \leq  \|\phi\|_{L^{0,1}_2(\RR^n)} \sqrt{ C^\Sigma_\phi}.
$$

The following lemma is an adaptation of 
\cite[Lemma~2.2]{deB_Deb_PTRF} to our setting. 
\begin{lemma}   \label{lem_v-G-add}
Let $\phi$ satisfy the condition {\bf (H4)},  $u$ be the solution to \eqref{NLS_add} and assume that $u_0\in \Sigma$ a.s., and $u$ belongs to $L^\infty((0,\tau ); \Sigma)$ a.s.. 
Then for any stopping time $\tau<\tau^*(u_0)$ a.s., we have a.s. 
\begin{equation}    \label{V(u)-add-1}
V(u(\tau)) = V(u_0) +4 \int_0^\tau G(u(s)) ds +2 {\rm Im} \int_0^\tau \sum_{k\geq 0} \int_{\RR^n} |x|^2 \overline{{u}(s,x)} \phi e_k(x) dx d\beta_k(s) + \tau C_\phi^\Sigma, 
\end{equation}
and
\begin{align} \label{G(u)-add}
G(u(\tau)) = & G(u_0) + 2n\sigma \int_0^\tau H(u(s)) ds - 2\sigma s_c \int_0^\tau \|\nabla u(s)\|_{L^2}^2 ds \nonumber \\
&+ {\rm Re}\int_0^\tau  \sum_{l\geq 0}  \int_{\RR^n} |u(s,x)|^2 \big[ 2\ x\!\cdot\!\nabla(\phi e_l)(x) +n \phi e_l(x)\big] dx d\beta_l(s) + \tau C_\phi^\Sigma. 
\end{align}
\end{lemma}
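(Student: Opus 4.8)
\textbf{Plan of proof for Lemma~\ref{lem_v-G-add}.}
The strategy is to mimic the corresponding computation in the multiplicative setting (Lemma~\ref{lem_v-G-multi}) and in \cite[Lemma~2.2]{deB_Deb_PTRF}, but now carefully tracking the It\^o correction terms produced by the \emph{additive} noise $dW(t) = \sum_k \phi e_k\, d\beta_k(t)$, which is complex-valued. Throughout one works on the random interval $[0,\tau]$ with $\tau < \tau^*(u_0)$, using the standing hypothesis $u \in L^\infty((0,\tau);\Sigma)$ a.s., which guarantees that all the spatial integrals below (in particular those weighted by $|x|^2$ and by $|x|$) are finite a.s.\ and that the relevant stochastic integrals are genuine martingales; as in \cite{deB_Deb_AnnProb, deB_Deb_PTRF} one should first localize by a stopping time of the form $\inf\{t: \|u(t)\|_{H^1} + \||x|u(t)\|_{L^2} \ge N\}$ and pass to the limit $N\to\infty$ at the end via monotone/dominated convergence.

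\emph{Step 1: evolution of $V(u)$.} Apply the It\^o formula to the functional $u \mapsto V(u) = \int_{\RR^n} |x|^2 |u(x)|^2\,dx$ along the solution of \eqref{NLS_add}. The drift from the deterministic part $i\,d_t u = (\Delta u + |u|^{2\sigma}u)\,dt$ reproduces, exactly as in the deterministic and multiplicative cases, the term $4\int_0^\tau G(u(s))\,ds$ (the potential term $|u|^{2\sigma}u$ does not contribute because $\mathrm{Im}\int |x|^2 |u|^{2\sigma}|u|^2 = 0$). The martingale part is $2\,\mathrm{Im}\int_0^\tau \sum_k \int_{\RR^n} |x|^2 \overline{u(s,x)}\,\phi e_k(x)\,dx\,d\beta_k(s)$, coming from the cross term $\langle |x|^2 u, dW\rangle$. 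The new feature is the It\^o (quadratic-variation) correction: since $V$ is quadratic in $u$ with Hessian $2|x|^2$, the second-order term contributes $\sum_k \int_{\RR^n} |x|^2 |\phi e_k(x)|^2\,dx\, d s = C_\phi^\Sigma\, ds$, giving the $\tau\, C_\phi^\Sigma$ term. Collecting these yields \eqref{V(u)-add-1}.

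\emph{Step 2: evolution of $G(u)$.} Apply It\^o's formula to $G(u) = \mathrm{Im}\int u\, x\!\cdot\!\nabla\bar u\,dx$. The deterministic drift is computed exactly as on p.~1095 of \cite{deB_Deb_AnnProb} (reproduced in the proof of Lemma~\ref{lem_v-G-multi}): it equals $2\int_0^\tau\|\nabla u(s)\|_{L^2}^2\,ds - \frac{\sigma n}{\sigma+1}\int_0^\tau\|u(s)\|_{L^{2\sigma+2}}^{2\sigma+2}\,ds$, which using the energy identity and $n\sigma - 2 = 2\sigma s_c$ rewrites as $2n\sigma\int_0^\tau H(u(s))\,ds - 2\sigma s_c\int_0^\tau \|\nabla u(s)\|_{L^2}^2\,ds$. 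The martingale term comes from pairing the additive noise with the (first) Fr\'echet derivative of $G$; differentiating $G(u) = \mathrm{Im}\int u\, x\!\cdot\!\nabla\bar u\,dx$ and integrating by parts produces the factor $2\,x\!\cdot\!\nabla(\phi e_l) + n\,\phi e_l$ against $|u|^2$, i.e. $\mathrm{Re}\int_0^\tau\sum_l\int_{\RR^n}|u(s,x)|^2[2\,x\!\cdot\!\nabla(\phi e_l)(x) + n\,\phi e_l(x)]\,dx\,d\beta_l(s)$. Finally the It\^o correction: $G$ is a bilinear form in $(u,\bar u)$, so the second-order term pairs $\phi e_k$ with $\phi e_k$ through the bilinear structure of $G$; a short computation (integration by parts to symmetrize) shows this correction equals $\mathrm{Im}\sum_k\int_{\RR^n}\phi e_k(x)\,x\!\cdot\!\nabla(\overline{\phi e_k})(x)\,dx\cdot ds$, which is $C_\phi^{(2)}\,ds$ — and here one should double-check whether the stated \eqref{G(u)-add} really wants $C_\phi^\Sigma$ or $C_\phi^{(2)}$ as the coefficient of $\tau$; the natural outcome of the It\^o correction for $G$ is $C_\phi^{(2)}$, so this is a point to verify against \cite[Lemma~2.2]{deB_Deb_PTRF}.

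\emph{Main obstacle.} The delicate part is the rigorous justification of It\^o's formula for the $|x|^2$-weighted functional $V$ (and for $G$) in infinite dimensions: these functionals are not bounded on $H^1$, so one cannot apply an abstract It\^o formula directly. The standard remedy, following de Bouard--Debussche, is to (i) work with the truncated/regularized weights $|x|^2\wedge m$ or $\langle x\rangle_m^2 := |x|^2/(1+|x|^2/m)$, apply It\^o there where everything is smooth and bounded, and (ii) let $m\to\infty$ using the a priori bound $u\in L^\infty((0,\tau);\Sigma)$ together with the Hilbert--Schmidt assumptions \eqref{C-phi-sigma}, \eqref{C-phi} of Condition {\bf (H4)} to pass to the limit in each term (dominated convergence for the drift and Burkholder--Davis--Gundy plus dominated convergence for the martingale parts). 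Getting the correction terms to survive this limiting procedure with the correct constants $C_\phi^\Sigma$ and $C_\phi^{(2)}$ is the computationally demanding step; everything else is bookkeeping parallel to Lemma~\ref{lem_v-G-multi}.
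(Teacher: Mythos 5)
Your plan follows essentially the same route as the paper: the paper does not redo the It\^o-plus-truncation argument you sketch but simply imports the identities (2.3) and (2.4) of de Bouard--Debussche \cite{deB_Deb_PTRF} (which are obtained exactly that way) and then performs precisely your algebraic rewriting of the drift using the energy and $n\sigma-2=2\sigma s_c$. Your doubt about the correction constant is vindicated: the paper's own proof yields $\tau C^{(2)}_\phi$ in \eqref{G(u)-add}, so the $\tau C^\Sigma_\phi$ appearing in the displayed statement is a typo. One caution on your Step 2: for an additive noise the martingale term in the $G$-identity is linear in $u$; pairing $G'(u)$ with the noise direction $-i\phi e_l$ and integrating by parts gives ${\rm Re}\int_0^\tau\sum_{l\geq 0}\int_{\RR^n}\overline{u(s,x)}\,\big[2\,x\cdot\nabla(\phi e_l)(x)+n\,\phi e_l(x)\big]\,dx\,d\beta_l(s)$, which is what the paper's proof displays, whereas the factor $|u(s,x)|^2$ in the lemma's display (which you echoed and attributed to the differentiation of $G$) is another typo carried over from the multiplicative case and would not come out of your own computation.
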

\begin{proof} The equation \eqref{V(u)-add-1} is reformulation of (2.3) in \cite{deB_Deb_PTRF}.

To prove \eqref{G(u)-add}, we start with writing  $G(u(\tau))$ using the identity (2.4)  in \cite{deB_Deb_PTRF} page 84 and \eqref{M-H}, which yields 
\begin{align*}
G\big( u(\tau)\big) = &\;  G(u_0) + 4\int_0^\tau H\big(u(s)\big) ds + \frac{2-n\sigma}{\sigma+1} \int_0^\tau \int_{\RR^n} |u(s,x)|^{2\sigma +2} dx ds \\
&\;  + {\rm Re} \int_0^\tau\sum_{k\geq 0}  \int_{\RR^n}  \overline{{u}(s,x)} \big[ 2 \, x\!\cdot\! \nabla(\phi e_k)(x) + n\phi e_k(x)\big] dx d\beta_k(s) + \tau C^{(2)}_\phi \\
= &\; G(u_0) +2n\sigma \int_0^\tau H\big(u(s)\big) ds  + (2-n\sigma) \int_0^\tau \|\nabla u(s)\|_{L^2}^2 ds \\
&\;  + {\rm Re} \int_0^\tau\sum_{k\geq 0}  \int_{\RR^n}  \overline{{u}(s,x) } \big[ 2 \, x\!\cdot\!\nabla(\phi e_k)(x) + n\phi e_k(x)\big] dx d\beta_k(s) + \tau C^{(2)}_\phi .
\end{align*}
Recalling that $2-n\sigma = -2\sigma s_c$, concludes the proof of \eqref{G(u)-add}.
\end{proof} 

Let $u_0, \sigma$ and $\phi$ be as in Lemma~\ref{lem_v-G-add}. Then plugging the identities \eqref{H_add} and \eqref{G(u)-add} in \eqref{V(u)-add-1}, we deduce 
that for any stopping time $\tau<\tau^*(u_0)$ a.s., we have a.s.  
\begin{align}		\label{V(u)-add-2}
V\big( u(\tau)\big) = & \; V(u_0) + 4\tau G(u_0) + 4n\sigma H(u_0) \tau^2 - 8 \sigma s_c \int_0^\tau ds \int_0^s \| \nabla u(r)\|_{L^2}^2 dr \nonumber \\
& \; + 2 C_\phi^{(2)} \tau^2 + \frac{4}{3} n\sigma  C^{(1)}_\phi  \tau^3 +  C_\phi^{\Sigma} \tau + \sum_{j=1}^4 T_j(\tau),
\end{align}
where
\begin{align*}
T_1(\tau) = 
&\;   8n\sigma\! \! \int_0^\tau \!\! ds\!\!  \int_0^s \!\! dr \!\! \int_0^r \! \sum_{k\geq 0} {\rm Im} \int_{\RR^n} \!\! 
 \big[ \nabla \overline{{u}(r_1,x)} \nabla (\phi e_k)(x) - |u(r_1,x)|^{2\sigma} \overline{{u}(r_1,x)} \phi e_k(x) \big] dx d\beta_k(r_1) , \\
T_2(\tau) =&\; - 2n\sigma \!  \int_0^\tau \!\! ds\!  \int_0^s \!\! dr \! \int_0^r \!\! dr_1 \sum_{k\geq 0} \int_{\RR^n} \big[ 
|u(r_1,x)|^{2\sigma} |\phi e_k(x)|^2  \\
&\qquad \qquad \qquad \qquad  
+ 2\sigma |u(r_1,x)|^{2\sigma -2} \big( {\rm Re} (\overline{{u}(r_1,x)} \phi e_k(x)) \big)^2\big]  dx , \\
T_3(\tau) =&\; +4 {\rm Re} \int_0^\tau ds \int_0^s \sum_{k\geq 0}  \int_{\RR^n}\big[ \, \overline{{u}(r,x)} \big( 2 \,x\!\cdot\! \nabla(\phi e_k)(x) +n \phi e_k(x) \big) \big]  dx d\beta_k(r),  \\
T_4(\tau) =&\; +2 {\rm Im} \int_0^\tau \sum_{k\geq 0}  \int_{\RR^n} |x|^2 \overline{{u}(s,x) }\phi e_k(x) dx d\beta_k(s). 
\end{align*} 

We extend the identity \eqref{E_sup_M} to higher moments of the mass. 
\begin{lemma}		\label{lem-sup-Mp}
Let  $u_0$ be ${\mathcal F}_0$-measurable, $\phi$ be Hilbert-Schmidt 
and $p\in [2,\infty)$ be such that $ \EE\big( M(u_0)^p\big) <\infty$. Then for any $\tilde{\epsilon}\in (0,1)$ 
and any  $t\in (0,\tau^*(u_0))$ a.s.,  we have
\begin{equation}		\label{E_sup_Mp}	
\EE\Big( \sup_{s\leq  t} [M\big(u(s)\big)]^p\Big) \leq {\mathcal C}(p,u_0,t,\phi, \tilde{\epsilon}),
\end{equation}
where
\begin{equation} \label{def_tildeC}
{\mathcal C}(p,u_0,t,\phi, \tilde{\epsilon}) = 
 \frac{1}{1-\tilde{\epsilon}} \Big[ \EE\big([M(u_0)]^p\big)  + \| \phi\|_{L^{0,0}_2}^2 p \Big( 2p-1+\frac{9p}{\tilde{\epsilon}}\Big)t\Big]
e^{  \| \phi\|_{L^{0,0}_2}^2 p \Big( 2p-1+\frac{9p}{\tilde{\epsilon}}\Big)t }.
\end{equation}
\end{lemma}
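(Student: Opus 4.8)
The plan is to use the It\^o formula for the real-valued process $M(u(t))$ whose dynamics are given by the identity \eqref{M_add}, and then apply It\^o's formula to the function $x\mapsto x^p$ together with the Burkholder--Davis--Gundy inequality to control the supremum. From \eqref{M_add} we have $dM(u(t)) = \|\phi\|_{L^{0,0}_2}^2\, dt - 2\,\mathrm{Im}\big(\sum_{k\geq 0}(\int_{\RR^n} u(t,x)\overline{\phi e_k(x)}dx)\,d\beta_k(t)\big)$, so $[M(u)]^p$ satisfies a stochastic equation with drift $p[M(u)]^{p-1}\|\phi\|_{L^{0,0}_2}^2 + \frac12 p(p-1)[M(u)]^{p-2}\cdot 4\sum_k|\int u\,\overline{\phi e_k}|^2$ and a martingale part. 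The quadratic variation term is bounded using Cauchy--Schwarz by $\sum_k|\int u\,\overline{\phi e_k}|^2 \leq M(u)\|\phi\|_{L^{0,0}_2}^2$, which gives the second drift contribution $\leq 2p(p-1)[M(u)]^{p-1}\|\phi\|_{L^{0,0}_2}^2$.

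The next step is to introduce, as in the proof of Lemma \ref{lem_MH}(i), the localizing stopping times $\tau_R = \inf\{t\geq 0: \|u(t)\|_{H^1}\geq R\}\wedge t$ (so that all expectations below are a priori finite, using $\EE[\sup_{s\leq\tau_R}M(u(s))^p]\leq R^{2p}$), take the supremum over $[0,\tau_R]$, apply the Davis inequality to the martingale term, and bound
$$
\EE\Big[\Big(\int_0^{\tau_R}[M(u(s))]^{2p-2}\sum_k\Big|\int u(s)\overline{\phi e_k}\Big|^2 ds\Big)^{1/2}\Big]
\leq \|\phi\|_{L^{0,0}_2}\,\EE\Big[\Big(\int_0^{\tau_R}[M(u(s))]^{2p-1}ds\Big)^{1/2}\Big].
$$
Then I would pull out $\sup_{s\leq \tau_R}[M(u(s))]^{p}$ as a factor $(\sup_{s\leq\tau_R}[M(u(s))]^p)^{1/2}$ times $(\int_0^{\tau_R}[M(u(s))]^{p-1}ds)^{1/2}$, use Cauchy--Schwarz and then Young's inequality with a parameter $\tilde\epsilon$ to absorb the $\sup$ term into the left-hand side, leaving a factor $\tfrac{9p^2}{\tilde\epsilon}\|\phi\|_{L^{0,0}_2}^2$ in front of $\EE\int_0^{\tau_R}[M(u(s))]^{p-1}ds$. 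One more elementary bound, $[M(u(s))]^{p-1}\leq 1 + [M(u(s))]^p$ (or rather $[M(u)]^{p-1}\le \frac{p-1}{p} + \frac1p[M(u)]^p$ by Young again, and then just dominate by $\EE\sup_{r\le s}[M(u(r))]^p$ up to constants), converts everything into an integral inequality for $\varphi_R(s):=\EE[\sup_{r\leq \tau_R\wedge s}[M(u(r))]^p]$ of the form $\varphi_R(t)\leq \tfrac{1}{1-\tilde\epsilon}[\EE(M(u_0)^p) + C_1 t + C_1\int_0^t \varphi_R(s)ds]$ with $C_1 = \|\phi\|_{L^{0,0}_2}^2 p(2p-1+\tfrac{9p}{\tilde\epsilon})$ (after collecting the two drift constants $p\|\phi\|^2$, $2p(p-1)\|\phi\|^2$ and the Young constant $\tfrac{9p^2}{2\tilde\epsilon}\|\phi\|^2$ — a short arithmetic check that these sum to the stated coefficient). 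Gr\"onwall's lemma then yields \eqref{E_sup_Mp} with the constant \eqref{def_tildeC} for $\varphi_R$, and letting $R\to\infty$ with monotone convergence finishes it.

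The step I expect to require the most care is the bookkeeping of constants: matching the drift contribution $p[M]^{p-1}\|\phi\|^2$, the It\^o-correction contribution $2p(p-1)[M]^{p-1}\|\phi\|^2$, and the Young-inequality residual $\tfrac{9p^2}{2\tilde\epsilon}[M]^{p-1}\|\phi\|^2$ (the factor $9$ coming from the Davis constant $3$ squared), together with the conversion $[M]^{p-1}\rightsquigarrow 1 + \EE\sup[M]^p$, so that the effective Gr\"onwall constant is exactly $\|\phi\|_{L^{0,0}_2}^2 p(2p-1+\tfrac{9p}{\tilde\epsilon})$ as claimed; the analytic content (It\^o, Davis, Cauchy--Schwarz, Young, Gr\"onwall, monotone convergence) is entirely routine and parallels the $p=1$ case already done in Lemma \ref{lem_MH}. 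I would also note that the hypothesis $p\geq 2$ is what makes $x\mapsto x^p$ twice continuously differentiable with the needed growth and guarantees $[M(u)]^{p-2}$ is well behaved (no singularity at $M(u)=0$).
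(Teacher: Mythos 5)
Your proposal is correct and follows essentially the same route as the paper's proof: It\^o's formula for $x\mapsto x^p$ applied to the mass identity, Cauchy--Schwarz on the quadratic variation, the Davis inequality plus Young's inequality with parameter $\tilde\epsilon$ to absorb the supremum, the bound $[M]^{p-1}\le 1+[M]^p$, localization, Gronwall and monotone convergence (the paper localizes with $\tau_N=\inf\{t: M(u(t))\ge N\}$ rather than an $H^1$ cutoff, an immaterial difference). The only point to tidy is the constant bookkeeping: the Young residual is $\tfrac{9p^2}{\tilde\epsilon}\|\phi\|_{L^{0,0}_2}^2$ (not $\tfrac{9p^2}{2\tilde\epsilon}$), which combined with the drift contribution $p(2p-1)\|\phi\|_{L^{0,0}_2}^2$ gives exactly the stated coefficient $p\big(2p-1+\tfrac{9p}{\tilde\epsilon}\big)\|\phi\|_{L^{0,0}_2}^2$.
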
 

\begin{proof}
Recall that by (2.1) in \cite{deB_Deb_PTRF} we have for $t<\tau^*(u_0)$ 
\[ M\big(u(t)\big) = M(u_0) -2{\rm Im} \int_0^t \sum_{k\geq 0}  \int_{\RR^n} u(s,x) \overline{\phi e_k}(x) dx \beta_k(s) + \|\phi\|_{L^{0,0}_2}^2 t. \]
It\^o's formula implies that for $p\in [2,\infty)$,  we have
\begin{align*}
[M\big( u(t) \big)]^p \leq  & \; [M(u_0)]^p -2p\int_0^t [M\big(u(s)\big)]^{p-1} {\rm Im} \sum_{k\geq 0} \int_{\RR^n} u(s,x) \overline{\phi e_k}(x) dx \beta_k(s)  \\
& \; + p\int_0^t [M\big( u(s)\big)]^{p-1} \|\phi\|_{L^{0,0}_2}^2 ds  \\
 & + \frac{p(p-1)}{2} \int_0^t  4 [M\big( u(s)\big)]^{p-2} \sum_{k\in \NN} \Big( \int_{\RR^n} |u(s,x)| | \phi e_k(x)| dx \Big)^2  ds \\
\leq & \;   [M(u_0)]^p  + (2p^2-p)  \|\phi\|_{L^{0,0}_2}^2  
\int_0^t [M\big( u(s)\big)]^{p} ds   + (2p^2-p)  \|\phi\|_{L^{0,0}_2}^2  t  \\
& \;  -2p \int_0^t [M(u(s))]^{p-1} {\rm Im} \sum_{k\geq 0} \int_{\RR^n} u(s,x) \overline{\phi e_k}(x) \, dx \beta_k(s) . 
 \end{align*}
The Davis inequality and the Cauchy-Schwarz and Young inequalities imply for any $\tilde{\epsilon} \in (0,1)$, 
\begin{align*}
\EE\Big( \sup_{s\leq t} \big[M\big(u(s)\big)\big]^p \Big) \leq &\;  \EE\big([M(u_0)]^p\big) +  (2p^2-p)  \|\phi\|_{L^{0,0}_2}^2  t 
+ (2p^2-p)  \|\phi\|_{L^{0,0}_2}^2 \! \int_0^t \!  \EE\Big( \sup_{r\leq s} \big[M\big( u(r)\big)\big]^p \Big) ds  \\
& \; + 6p \EE\Big( \Big\{ \int_0^t \big[M\big(u(s)\big)\big]^{2p-2} \sum_{k\geq 0}  \Big( \int_{\RR^n} |u(s,x)| |\phi e_k(x)| dx\Big)^2 ds \Big\}^{\frac{1}{2}} \Big) 
\\
 \leq &\; \EE([M(u_0)]^p) +  (2p^2-p)  \|\phi\|_{L^{0,0}_2}^2  t  + (2p^2-p)  \|\phi\|_{L^{0,0}_2}^2 \! \int_0^t \!  \EE\Big( \sup_{r\leq s} \big[M\big( u(r)\big)\big]^p \Big) ds  \\
&+6p \|\phi\|_{L^{0,0}_2} \EE\Big( \sup_{s\leq t} \big[M\big( u(s)\big)\big]^{\frac{p}{2}} \Big\{ \int_0^t \big[M\big( u(s)\big)\big]^{p-1} ds \Big\}^{\frac{1}{2}} \Big)\\
\leq & \; \EE([M(u_0)]^p) +  (2p^2-p)  \|\phi\|_{L^{0,0}_2}^2  t  + (2p^2-p)  \|\phi\|_{L^{0,0}_2}^2 \! \int_0^t \!  \EE\Big( \sup_{r\leq s} \big[M\big( u(r)\big)\big]^p \Big) ds  \\
&+ \tilde{\epsilon} \, \EE\Big(  \sup_{s\leq t} \big[M\big( u(s)\big)\big]^p\Big)  + \Big[ 2p^2-p+\frac{9p^2}{\tilde{\epsilon}}\Big]    \|\phi\|_{L^{0,0}_2}^2   t  \\
& +  \Big[ 2p^2-p+\frac{9p^2}{\tilde{\epsilon}}\Big]    \|\phi\|_{L^{0,0}_2}^2   \int_0^t \EE\Big( \sup_{r\leq s} \big[M\big( u(r)\big)\big]^p\Big)  ds.
\end{align*}
Given $N>0$, let $\tau_N = \inf\{ t\geq 0 : M\big(u(t)\big) \geq N\}$ and rewrite the above inequality with $t\wedge \tau_N $ instead of $t$. Since 
$\int_0^{t\wedge \tau_N} \sup_{r\leq s} \big[M\big( u(r)\big)\big]^p ds \leq \int_0^{t} \sup_{r\leq s\wedge \tau_N} \big[M\big( u(r)\big)\big]^p ds $, 
Gronwall's lemma implies that
\begin{align*}
\EE \Big( \sup_{s\leq  t\wedge \tau_N}  [M\big( u(s)\big)]^p \Big) \leq 
& \frac{1}{1-\tilde{\epsilon}} \Big[  \EE\big( [M(u_0)]^p \big)+ \Big( 2p^2-p+   \frac{9p^2}{\tilde{\epsilon}}  
 \Big)    \|\phi\|_{L^{0,0}_2}^2  \EE(t\wedge \tau_N)  \Big]    \\
& \times 
\exp\Big( t \Big( 2p^2-p+   \frac{9p^2}{\tilde{\epsilon}}  
 \Big)    \|\phi\|_{L^{0,0}_2}^2   \Big).
\end{align*}
Since $\EE\big( [M(u_0)]^p\big) <\infty$, as $N\to \infty$  the sequence $\sup_{s\leq t\wedge \tau_N} M\big( u(s)\big)$ increases to  
$\sup_{s\leq t} M\big( u(s)\big)$, and the monotone convergence theorem concludes the proof. 
\end{proof}

The following lemma provides an upper bound of the expected value of the
maximum of the mass slightly different from \eqref{E_sup_M}. \begin{lemma}       \label{lem_sup_M-M0}
Let  $u_0$ be ${\mathcal F}_0$-measurable {such that $\EE(M(u_0))<\infty$}, $\phi$ be Hilbert-Schmidt from $L^2(\RR^n)$ to $L^2(\RR^n)$. 
Then for any $\tilde{\epsilon}\in (0,1)$ 
and any  $t\in (0,\tau^*(u_0))$ a.s.,  we have
\begin{equation}	  \label{E_sup_M-M0}
\EE\Big( \sup_{s\leq t}  \big[ M(u(s))-M(u_0)\big] \Big) \leq \frac{\tilde{\epsilon}}{1-\tilde{\epsilon}} \, \EE\big( M(u_0)\big) 
+\ { \frac{1}{1-\tilde{\epsilon}} \Big( \frac{9}{\tilde{\epsilon}} +1\Big) } t \|\phi\|_{L^{0,0}_2}^2.
\end{equation}
Furthermore, if $u_0\in L^2$ is deterministic, then for any $t\in (0, \tau^*(u_0))$ a.s. and every $
\mu >1$, given $\epsilon>0$ there exists $\delta_0$ such that
\begin{equation}    \label{upp_P_OmegaMc}
\| \phi\|_{L^{0,0}_2} < \delta_0 \quad \mbox{\rm implies } \quad P\Big(  \sup_{s\leq t } M(u(s)) \geq  \mu M(u_0) \Big) \leq \epsilon.    
\end{equation}
\end{lemma}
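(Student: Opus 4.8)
<br>

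The plan is to prove the two parts of Lemma~\ref{lem_sup_M-M0} using the mass identity \eqref{M_add} together with the martingale machinery already developed in Lemma~\ref{lem_MH}(i).

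\medskip

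\textbf{Step 1: the refined bound \eqref{E_sup_M-M0}.} Starting from \eqref{M_add}, I would write, for any stopping time $\tau<\tau^*(u_0)$ a.s.,
\[
\sup_{s\leq t}\big[M(u(s))-M(u_0)\big] \leq t\,\|\phi\|_{L_2^{0,0}}^2 + 2\sup_{s\leq t}\Big|\,{\rm Im}\sum_{k\geq 0}\int_0^s\!\!\int_{\RR^n} u(r,x)\overline{\phi e_k(x)}\,dx\,d\beta_k(r)\Big|.
\]
Apply the Davis inequality to the martingale term exactly as in the proof of Lemma~\ref{lem_MH}(i): the bracket is bounded by $\int_0^t \|u(s)\|_{L^2}^2\big(\sum_k\|\phi e_k\|_{L^2}^2\big)ds = \|\phi\|_{L_2^{0,0}}^2\int_0^t M(u(s))\,ds$, so the expected supremum of $|\cdot|$ is $\le 3\|\phi\|_{L_2^{0,0}}\,\EE\big(\sqrt{t}\,\sqrt{\sup_{s\le t}M(u(s))}\big)$. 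Using $\sup_{s\le t}M(u(s)) \le M(u_0) + \sup_{s\le t}[M(u(s))-M(u_0)]$ inside the square root (subadditivity of $\sqrt{\cdot}$), then Young's inequality with parameter $\tilde\epsilon\in(0,1)$, I absorb $\tilde\epsilon\,\EE(\sup_{s\le t}[M(u(s))-M(u_0)])$ on the left and pick up the terms $\tilde\epsilon\,\EE(M(u_0))$ and $\tfrac{9}{\tilde\epsilon}t\|\phi\|_{L_2^{0,0}}^2$ on the right, together with the leading $t\|\phi\|_{L_2^{0,0}}^2$. To make the absorption rigorous I first localize via $\tau_R$ as in \eqref{E:explR}, where $\EE(\sup_{s\le\tau_R}[M(u(s))-M(u_0)])\le R^2<\infty$, then let $R\to\infty$ and invoke monotone convergence. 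Dividing by $1-\tilde\epsilon$ yields \eqref{E_sup_M-M0}.

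\medskip

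\textbf{Step 2: the probability estimate \eqref{upp_P_OmegaMc} for deterministic $u_0$.} Here $M(u_0)$ is a fixed positive number. By the Markov inequality,
\[
P\Big(\sup_{s\le t}M(u(s))\ge \mu M(u_0)\Big) \le P\Big(\sup_{s\le t}[M(u(s))-M(u_0)]\ge (\mu-1)M(u_0)\Big) \le \frac{1}{(\mu-1)M(u_0)}\,\EE\Big(\sup_{s\le t}[M(u(s))-M(u_0)]\Big).
\]
Plug in \eqref{E_sup_M-M0}: the right-hand side is $\le \frac{1}{(\mu-1)M(u_0)}\big[\frac{\tilde\epsilon}{1-\tilde\epsilon}M(u_0) + \frac{1}{1-\tilde\epsilon}(\frac{9}{\tilde\epsilon}+1)\,t\,\|\phi\|_{L_2^{0,0}}^2\big]$. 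Given $\epsilon>0$, first choose $\tilde\epsilon$ small enough that $\frac{\tilde\epsilon}{(1-\tilde\epsilon)(\mu-1)}\le \epsilon/2$; then, with $\tilde\epsilon$ now fixed, choose $\delta_0>0$ small enough that $\|\phi\|_{L_2^{0,0}}<\delta_0$ forces $\frac{1}{(1-\tilde\epsilon)(\mu-1)M(u_0)}(\frac{9}{\tilde\epsilon}+1)\,t\,\delta_0^2\le\epsilon/2$. This gives \eqref{upp_P_OmegaMc}.

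\medskip

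\textbf{Main obstacle.} The only delicate point is the absorption of the $\sup$ term on the left-hand side in Step~1: one must ensure the quantity being absorbed is a priori finite before moving it, which is why the localization by $\tau_R$ (as already done in Lemma~\ref{lem_MH}(i)) is essential, followed by monotone convergence as $R\to\infty$. Everything else is a routine application of Davis' and Young's inequalities and the Markov inequality; the two-stage choice of constants ($\tilde\epsilon$ first, then $\delta_0$) in Step~2 is the standard way to make the ``$\|\phi\|$ small'' statement precise.
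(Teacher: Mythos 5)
Your proposal is correct and follows essentially the same route as the paper: the mass identity \eqref{M_add}, the Davis and Cauchy--Schwarz inequalities, Young's inequality with parameter $\tilde\epsilon$ and absorption of the supremum, then Markov's inequality with the two-stage choice of $\tilde\epsilon$ and $\delta_0$. The only (harmless) deviation is that you justify the absorption by re-running the $\tau_R$-localization and monotone convergence, whereas the paper simply cites the a priori finiteness of $\EE\big(\sup_{s\le t}M(u(s))\big)$ already established in \eqref{E_sup_M}.
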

\begin{proof}
Rewriting \eqref{M_add} and using the Davis inequality, we deduce
\begin{align*}
 \EE\Big( \sup_{s\leq t} \big[ M(u(s))-M(u_0)\big]   \Big) \leq &\;  6 \EE\Big( \Big\{ \int_0^t \| \phi^* u(s)\|_{L^2}^2 ds \Big\}^{\frac{1}{2}} \Big) 
+  t \|\phi\|_{L^{0,0}_2}^2 
 \\
 \leq &\;  6 \EE\Big( \Big\{ \int_0^t \| u(s)\|_{L^2}^2  \| \phi\|_{L^{0,0}_2}^2 
 ds \Big\}^{\frac{1}{2}} \Big)
+ t \|\phi\|_{L^{0,0}_2}^2 
 \\
 \leq &\; 6 \EE\Big( \sup_{s\leq t} \|(u(s))\|_{L^2} \sqrt{t} \|\phi\|_{L^{0,0}_2}\Big) +  t \|\phi\|_{L^{0,0}_2}^2   \\
\leq &\; \tilde{\epsilon}\, \EE \Big(  \sup_{s\leq t} [ M(u(s)-M(u_0) ] \Big) + \tilde{\epsilon}  \, \EE(M(u_0)) 
+ \frac{9}{\tilde{\epsilon}} t \|\phi\|_{L^{0,0}_2}^2
+ t \|\phi\|_{L^{0,0}_2}^2 .
\end{align*}
Since $\EE\Big( \sup_{s\leq t} [M(u(s))-M(u_0)]\Big)<\infty$  by \eqref{E_sup_M}, we deduce \eqref{E_sup_M-M0}.
  
Let $u_0$ be deterministic. For $\mu>1$ and $t\in (0, \tau^*(u_0))$ the Markov inequality implies
  \[ P\Big( \sup_{s\leq t } [M(u(s))-M(u_0)] \geq (\mu-1) M(u_0) \Big) \leq \frac{\tilde{\epsilon}}{(1-\tilde{\epsilon})(\mu-1)} + 
{\frac{1}{ (1-\tilde{\epsilon})  (\mu-1)} \Big( \frac{9}{\tilde{\epsilon}} +1\Big)} t \|\phi\|_{L^{0,0}_2}^2. 
  \]
Fix $\epsilon >0$ and choose $\tilde{\epsilon}>0$ such that $\frac{\tilde{\epsilon}}{(1-\tilde{\epsilon})(\mu-1)}< \epsilon/2$. Then choose
$\delta_0 > 0$ such that  {$\frac{1}{ (1-\tilde{\epsilon})(\mu-1) } \Big( \frac{9}{\tilde{\epsilon}} +1\Big) t \delta_0^2 < \epsilon/2$.} This completes the
 proof of \eqref{upp_P_OmegaMc}. 
\end{proof}

Let $\sigma \in \big( \frac{2}{n}, \frac{2}{(n-2)^+}\big)$ and $\alpha$ as in \eqref{E:alpha}. 
The following result describes a sufficient condition on the initial condition and on some deterministic positive time to have blow-up before that time with positive probability. 
To highlight the main idea in the additive case as well as clarity and conciseness of the proof, we only consider the {\it deterministic} initial data, i.e., $u_0$ is deterministic. 
It would be interesting to investigate the {\it random} initial data, 
in particular, an analog of the previous lemma, as well as conditions and control of the energy.
Note that for an additive noise, blow-up with positive probability before any fixed time $T>0$ has been proven in  \cite{deB_Deb_PTRF} under restrictive conditions
on the nonlinearity and a more regular noise, while in \cite{deB_Deb_AnnProb} this result was obtained for initial data with negative energy (see \cite[Remarks 4.2 and 4.4]{deB_Deb_AnnProb}).

\begin{theorem}\label{th_blowup1-add}
Let $ \sigma \in 
(\frac{2}{n}, \frac{2}{(n-2)^{+}})  $ 
satisfy the condition {\bf (H3)},  $\phi$ satisfy the conditions {\bf(H4)} and  $u_0 \in \Sigma $ be deterministic. 
Suppose that for some constants $\beta_0$ and $\delta_0$ such that $\beta_0 < 1 < \delta_0$ we have 
\begin{equation} \label{cond-u_0-Q-det-add}
H(u_0) M(u_0)^\alpha = \beta_0 H(Q) M(Q)^\alpha \quad \mbox{\rm  and}\quad  \| \nabla u_0\|_{L^2} \|u_0\|_{L^2}^\alpha
= \delta_0 \|\nabla Q\|_{L^2} \|Q\|_{L^2}^\alpha.
\end{equation}
Then for $T$ large enough,  
$\|\phi\|_{L^{0,0}_2}, \|\phi\|_{L^{0,1}_2}, C(\phi)$ and $C^\Sigma_\phi$ 
small enough, we have $P\big(\tau^*(u_0)\leq T\big) >0$, where $\tau^*(u_0)$ is defined in Theorem \ref{lwp_add}.
\end{theorem}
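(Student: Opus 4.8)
The plan is to mirror the proof of Theorem~\ref{th_blowup1}, replacing the multiplicative variance/momentum/energy identities by their additive counterparts from Lemma~\ref{lem_v-G-add} and~\eqref{V(u)-add-2}, with one new ingredient: since the mass is not conserved, it must be localized. Assume for contradiction that $\tau^*(u_0)=\infty$ a.s. Fix $\mu>1$, $1<\delta<\delta_0$, $\beta_0<\gamma<1$ and $N>0$, and introduce the stopping times $\sigma_\mu=\inf\{s\ge0:M(u(s))\ge\mu M(u_0)\}$, $\tilde\sigma_\gamma=\inf\{s\ge0:M(u_0)^\alpha H(u(s))\ge\gamma M(Q)^\alpha H(Q)\}$, $\tau_\delta=\inf\{s\ge0:\|\nabla u(s)\|_{L^2}\|u_0\|_{L^2}^\alpha\le\delta\|\nabla Q\|_{L^2}\|Q\|_{L^2}^\alpha\}$ and $\tilde\tau_N=\inf\{s\ge0:\|\nabla u(s)\|_{L^2}\ge N\}$, all capped at $\tau^*(u_0)$. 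Estimate~\eqref{upp_P_OmegaMc} of Lemma~\ref{lem_sup_M-M0} makes $P(\sigma_\mu\le T)$ small for small $\|\phi\|_{L^{0,0}_2}$, so the mass stays below $\mu M(u_0)$ with high probability; on that event, combining the Gagliardo--Nirenberg inequality~\eqref{GN} in the form $X(s)^2-BX(s)^{n\sigma}\le2H(u(s))M(u(s))^\alpha$ with $X(s)=\|\nabla u(s)\|_{L^2}\|u(s)\|_{L^2}^\alpha$, the bound $M(u(s))\le\mu M(u_0)$, and $M(u_0)^\alpha H(u(s))\le\gamma M(Q)^\alpha H(Q)$ on $[0,\tilde\sigma_\gamma]$, one gets $f(X(s))\le\gamma\mu^\alpha H(Q)M(Q)^\alpha<H(Q)M(Q)^\alpha$ for $\mu$ close to $1$. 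Since $X(0)=\delta_0 x^*$ lies on the decreasing branch of $f$ (recall $x^*=\|\nabla Q\|_{L^2}\|Q\|_{L^2}^\alpha$, $f(x^*)=H(Q)M(Q)^\alpha$ from~\eqref{identification}), a.s. continuity of $\|\nabla u(\cdot)\|_{L^2}$ traps $X(s)$ on that branch above $\delta'x^*$ for some $\delta'>1$, whence $\|\nabla u(s)\|_{L^2}\|u_0\|_{L^2}^\alpha\ge\delta\|\nabla Q\|_{L^2}\|Q\|_{L^2}^\alpha$ on $[0,\tilde\sigma_\gamma\wedge\sigma_\mu]$ for a suitable $\delta\in(1,\delta')$; hence $\tilde\sigma_\gamma\wedge\sigma_\mu\le\tau_\delta$, and $\delta\to1$ as $\gamma,\mu\to1$.

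Next I would prove the additive analog of Proposition~\ref{Prop-blow-det}. Set $\tau=\tau_\delta\wedge\tilde\tau_N\wedge\sigma_\mu$, write $V(u(t\wedge\tau))$ from~\eqref{V(u)-add-2}, multiply by the \emph{deterministic} factor $M(u_0)^\alpha$ and take expectations. The decisive cancellation is
\[
4n\sigma M(u_0)^\alpha H(u_0)\,\EE\big((t\wedge\tau)^2\big)-8\sigma s_c M(u_0)^\alpha\,\EE\!\int_0^{t\wedge\tau}\!\!ds\!\int_0^s\!\|\nabla u(r)\|_{L^2}^2dr\le-4\sigma s_c(\delta^2-\beta_0)\|\nabla Q\|_{L^2}^2M(Q)^\alpha\,\EE\big((t\wedge\tau)^2\big),
\]
which follows from the gradient lower bound (valid since $\tau\le\tau_\delta$), the identity $M(u_0)^\alpha H(u_0)=\beta_0M(Q)^\alpha H(Q)$ and $H(Q)=\tfrac{s_c}{n}\|\nabla Q\|_{L^2}^2$ from~\eqref{H_Q}. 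The remaining deterministic drifts in~\eqref{V(u)-add-2} (the $2C_\phi^{(2)}\tau^2$, $\tfrac43n\sigma C_\phi^{(1)}\tau^3$, $C_\phi^\Sigma\tau$ terms) are made small by first fixing $T$ and then taking $C_\phi^{(1)},C_\phi^{(2)},C_\phi^\Sigma$ small; the four stochastic terms $T_1(\tau),\dots,T_4(\tau)$ are controlled exactly as $T_1,T_2,T_3$ in the proof of Proposition~\ref{Prop-blow-det}, by the Davis, Cauchy--Schwarz, H\"older and Young inequalities, using $\|\nabla u\|_{L^2}\le N$ on $[0,\tilde\tau_N]$, the Gagliardo--Nirenberg bound for the $\|u\|_{L^{2\sigma+2}}^{2\sigma+1}$ factor appearing in the energy evolution, the moment bound of Lemma~\ref{lem-sup-Mp} for the powers of $M(u(s))$ that arise, and $V(u(r\wedge\tau))$ for $T_4$. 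Absorbing one copy of $\int_0^t\EE(M(u_0)^\alpha V(u(s\wedge\tau)))ds$, one obtains $|\sum_jT_j(\tau)|\le\int_0^t\EE(M(u_0)^\alpha V(u(s\wedge\tau)))ds+2\epsilon$ under size conditions on the noise and $T$ analogous to~\eqref{cond_1}--\eqref{cond_2}. An a priori bound $\sup_{s\le t}\EE(V(u(s\wedge\tau)))<\infty$ (additive analog of inequality (6.2) in~\cite{deB_Deb_AnnProb}, from a Gronwall estimate on $V$ using $\|\nabla u\|_{L^2}\le N$ and the bounded $C_\phi$-drifts) legitimizes Gronwall's lemma, which yields $M(u_0)^\alpha\EE(V(u(t\wedge\tau)))\le\mathcal B\,e^t$ with
\[
\mathcal B=M(u_0)^\alpha V(u_0)+2\epsilon+4|G(u_0)|M(u_0)^\alpha\EE(t\wedge\tau)-4\sigma s_c(\delta^2-\beta_0)\|\nabla Q\|_{L^2}^2M(Q)^\alpha\EE\big((t\wedge\tau)^2\big)
\]
plus the small $C_\phi$-drifts. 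If $\mathcal B<0$ this contradicts $V\ge0$, so $P(\tau^*(u_0)\le t)>0$.

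Finally, I would make $\mathcal B$ negative for $T$ large and the noise small, exactly as at the end of the proof of Theorem~\ref{th_blowup1}. By Cauchy--Schwarz, $\mathcal B\le-aX^2+bX+c$ with $a=4\sigma s_c(\delta^2-\beta_0)\|\nabla Q\|_{L^2}^2M(Q)^\alpha>0$ (since $\delta>1>\beta_0$), $b=4|G(u_0)|M(u_0)^\alpha$, $c=M(u_0)^\alpha V(u_0)+2\epsilon$ and $X=\{\EE((T\wedge\bar\tau_0)^2)\}^{1/2}$, $\bar\tau_0=\tau_\delta\wedge\tilde\tau_{N_0}\wedge\sigma_\mu$; it then suffices that $X$ exceed the larger root of $-ax^2+bx+c$. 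One splits on whether $\EE(\tau_\delta^2)=\infty$ — then first choose $T$, then $N_0$, then $\mu$ so that $\EE((T\wedge\bar\tau_0)^2)$ is as large as desired — or $\EE(\tau_\delta^2)<\infty$, in which case $\tilde\sigma_\gamma\wedge\sigma_\mu\le\tau_\delta<\infty$ a.s. and one bounds $\EE((\tilde\sigma_\gamma\wedge\sigma_\mu)^2)$ from below when the noise is small: by~\eqref{H_add_final}, with $T,N_0$ fixed first, the martingale part of $M(u_0)^\alpha H(u(\cdot))$ has, by Davis and Cauchy--Schwarz, $L^2$-norm on $[0,T\wedge\tilde\tau_{N_0}]$ controlled by $N_0,T$ times $\|\phi\|_{L^{0,1}_2}$ and $C(\phi)$, while the deterministic energy drift $\tfrac12\tau\|\phi\|_{L^{0,1}_2}^2$ is also small, so $P(\tilde\sigma_\gamma\le T\wedge\tilde\tau_{N_0})$ is small and $\EE((T\wedge\bar\tau_0)^2)$ is large. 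In both cases $\mathcal B<0$, and the additive Proposition~\ref{Prop-blow-det} yields $P(\tau^*(u_0)\le T)>0$, the desired contradiction.

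The step I expect to be the main obstacle is coordinating the three localizations in the additive setting — $\tau_\delta$ (gradient bounded below), $\tilde\tau_N$ (gradient bounded above) and $\sigma_\mu$ (mass bounded above) — while the energy now carries the systematic upward drift $\tfrac12\tau\|\phi\|_{L^{0,1}_2}^2$ in~\eqref{H_add_final} that must be kept under the gap $(\gamma-\beta_0)H(Q)M(Q)^\alpha$: proving $\tilde\sigma_\gamma\wedge\sigma_\mu\le\tau_\delta$ and then a lower bound for $\EE((\tilde\sigma_\gamma\wedge\sigma_\mu)^2)$ that is quantitatively compatible with the root condition on $-ax^2+bx+c$ requires careful juggling of the order in which $T$, $N_0$, $\mu$ and the various $\phi$-norms are chosen. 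The bookkeeping of the extra deterministic $C_\phi^{(1)},C_\phi^{(2)},C_\phi^\Sigma$-drifts and of the new martingale term $T_4(\tau)$ (from the $|x|^2$-term in~\eqref{V(u)-add-2}) is routine but must be tracked, which is precisely why the statement asks for all of $\|\phi\|_{L^{0,0}_2},\|\phi\|_{L^{0,1}_2},C(\phi),C^\Sigma_\phi$ to be small.
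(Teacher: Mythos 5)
Your proposal is sound and keeps the paper's overall skeleton (the localized virial identity \eqref{V(u)-add-2}, the compensation via \eqref{H_Q} producing the $-4\sigma s_c(\delta^2-\beta)$ term, a Gronwall step, and a quadratic root condition as in Proposition \ref{Prop-blow-det-add}), but it deviates at two points. First, you localize the mass with a stopping time $\sigma_\mu$ at level $\mu M(u_0)$ and keep the deterministic weight $M(u_0)^\alpha$, whereas the paper works with the non-increasing sets $\Omega_t=\{\sup_{s\le t}\|u(s)\|_{L^2}\le\lambda\|Q\|_{L^2}\}$, inserts the indicators $1_{\Omega_s}$ inside all expectations and weights by $(\lambda^2M(Q))^\alpha$; the two devices are interchangeable here, and yours slightly simplifies the bookkeeping at the cost of re-proving the additive proposition with the extra cap $\sigma_\mu$ in $\tau$ (harmless: the mass cap only helps the moment estimates that otherwise rely on Lemma \ref{lem-sup-Mp}). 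Second, and more substantially, your endgame differs: in the case $\EE(\tau_\delta^2)<\infty$ the paper takes expectations of the energy relation at the hitting time $\tilde\sigma_\gamma$, which forces the auxiliary time $\sigma_0=\inf\{s: H(u(s))\le 0\}$, the centering of $I_1,I_2$, and a lower bound $P(\Omega_T)\ge\frac45$ via \eqref{upp_P_OmegaMc}, and yields $\EE\big((\tau_\delta\wedge T)^2\big)\gtrsim (\gamma(1-5\epsilon_0)-\beta)^2H(Q)^2\,\|\phi\|_{L^{0,1}_2}^{-4}$; you instead use \eqref{H_add_final} and a maximal (Davis/Markov) inequality to make $P(\tilde\sigma_\gamma\le T\wedge\tilde\tau_{N_0}\wedge\sigma_\mu)$ small when the noise is small, so that on the good event the trapping forces $\tau_\delta\ge T$ and hence $\EE\big((T\wedge\bar\tau_0)^2\big)\ge T^2P(\mbox{good event})$, which beats $X_2^2$ because $X_2$ depends only on $u_0,\delta,\beta_0,\epsilon$ and not on $T$ or the noise. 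This route works (it even makes your case distinction on $\EE(\tau_\delta^2)$ superfluous and avoids $\sigma_0$ altogether), but to close it you must make explicit what is currently only implied: add $P(\tilde\tau_{N_0}\le T)\le\epsilon_0$ to your requirements (available, as in the paper, since $\tilde\tau_N\to\infty$ a.s. under the contradiction hypothesis $\tau^*(u_0)=\infty$); state the deduction that on $\{\sigma_\mu>T\}\cap\{\tilde\tau_{N_0}>T\}\cap\{\tilde\sigma_\gamma>T\}$ the trapping gives $\tau_\delta\ge T$, hence $T\wedge\bar\tau_0=T$ there; and fix $T$ larger than a multiple of $X_2$ \emph{before} choosing $N_0$ and the noise thresholds, keeping in mind (as the paper also does with its $T$, $M_0$, ${\mathfrak L}_0$) that $N_0$ is chosen for the given noise, so the smallness conditions \eqref{cond_1-add}--\eqref{cond_4-add} and their analogs must be read as hypotheses on that fixed $\phi$.
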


We first prove a result similar to Proposition  \ref{Prop-blow-det}. 
\smallskip

Let ${\mathcal C}(p, u_0,t, \phi):= {\mathcal C}(p, u_0,t, \phi, \frac{1}{2})$, where the constant  ${\mathcal C}(p, u_0,t, \phi, \frac{1}{2})$ is defined in \eqref{def_tildeC} for
 $\tilde{\epsilon}=\frac{1}{2}$,
 i.e.,  we have the following bound on the moments of mass: 
\begin{equation}\label{E:M-moments}
\EE\Big( \sup_{s\leq  t} [M\big(u(s)\big)]^p\Big) \leq {\mathcal C}(p, u_0,t, \phi).
\end{equation}
 Note that 
 ${\mathcal C}(p, u_0,t, \phi)$  is an increasing function of the ``strength of the noise", namely, of $\| \phi\|_{L^{0,0}_2}$.

\begin{Prop}     \label{Prop-blow-det-add}
Let $u_0$ satisfy the assumptions of Theorem \ref{th_blowup1-add}, replacing $\delta_0$ by $\delta>1$ in the statement 
of \eqref{cond-u_0-Q-det-add}.  Let $\|u_0\|_{L^2} = \lambda_0 \|Q\|_{L^2}$,  $\lambda > \lambda_0$, and 
suppose that 
$$
\lambda^{2\alpha} H(u_0) \leq
\beta H(Q) \quad \mbox{for~~ some} \quad  \beta \in [\beta_0,1).
$$

Given $\epsilon>0$, $\mathfrak M>0$  and $t>0$, suppose that $\phi$  satisfies the following conditions:
\begin{align}
  (\lambda^2 M(Q))^{\alpha} \Big[ C_\phi^\Sigma + C_\phi^\Sigma t +  \frac{8\sqrt{2}}{3} n \|\phi\|_{L_2^{0,0}} t^{\frac{3}{2}}+ \big( 2 C_\phi^{(2)} + 32 C_\phi^{(1)} \big)  t^2 +
   \frac{4}{3} n\sigma C_{\phi}^{(1)} \Big] 
   &\leq \epsilon ,     \label{cond_1-add}
   \\
  \frac{32}{15}  n \sigma (\lambda^2 M(Q))^{\alpha }  \mathfrak M  \sqrt{C_{\phi}^{(1)}}  t^2 
  & \leq \epsilon,    \label{cond_2-add}  
   \\
  \frac{32}{15} n \sigma (\lambda^2 M(Q))^{\alpha }  
  C_{GN}^{\frac{2\sigma +1}{2\sigma +2}} \mathfrak M^{n\sigma \frac{2\sigma +1}{2\sigma +2}} 
   \mathcal{C}\Big( [2-(n-2)\sigma] \frac{2\sigma+1}{2\sigma+2},   u_0, t, \phi\Big)^{\frac{1}{2}}    \;\sqrt{C(\phi)}  t^{\frac{5}{2} } 
   & \leq \epsilon,  \label{cond_3-add}\\
  \frac{1}{3} n\sigma (2\sigma +1)    (\lambda^2 M(Q))^\alpha    C_{GN}^{\frac{\sigma}{\sigma+1}} \mathfrak M^{\frac{n\sigma^2}{\sigma+1}}
   {\mathcal C}\Big( [2-(n-2)\sigma] \frac{\sigma}{\sigma+1}, u_0, t, \phi\Big) C(\phi)  t^3 
&  \leq \epsilon.  \label{cond_4-add} 
  \end{align}
For $\delta \in (1, \delta_0]$ and $\mathfrak M>0$, set 
\begin{align}
    \tau_\delta~ &= \inf\{ s\geq 0 : \lambda^\alpha  \|\nabla u(s)\|_{L^2}    \leq \delta \|\nabla Q\|_{L^2} \}\wedge \tau^*(u_0)    \label{def-tau_delta},\\
    ~\tilde{\tau}_{\mathfrak M}
& = \inf\{ t\geq 0 : \|\nabla u(s)\|_{L^2} \geq \mathfrak M \} \wedge \tau^*(u_0).
\label{def-tildetauL-add}
\end{align}
Let $\{ \Omega_t\}_{t\geq 0} $ be a non-increasing family of $\{{\mathcal F_t}\}_{t\geq 0}$-adapted sets. Suppose that for some $t>0$, some non-null set $\Omega_t\in {\mathcal F}_t$ and the constants $\epsilon$, $\mathfrak M>0$  chosen above,  we have  
\begin{align}    \label{cond-3-add}
    P(\Omega_t)  V(u_0) (\lambda^2M(Q))^\alpha 
    &\;+4\epsilon  +4 G(u_0) (\lambda^2 M(Q))^\alpha \EE\big(1_{\Omega_t} (t\wedge \tau)\big) \nonumber \\
   &\;  - 4\sigma s_c (\delta^2-\beta) \|\nabla Q\|_{L^2}^2 M(Q)^\alpha\EE\big( 1_{\Omega_t} (t\wedge \tau)^2\big) <0, \quad \tau = \tau_\delta \wedge \tilde{\tau}_{\mathfrak M}. 
\end{align}
Then $P(\tau^*(u_0)\leq t)>0$. 
\end{Prop}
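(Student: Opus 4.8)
The plan is to argue by contradiction, exactly as in the proof of Proposition \ref{Prop-blow-det} for the multiplicative case, but now carrying the cutoff to the non-null set $\Omega_t$ throughout. Assume $t<\tau^*(u_0)$ a.s.\ (otherwise $P(\tau^*(u_0)\le t)>0$ is immediate). Start from the identity \eqref{V(u)-add-2} for $V(u(t\wedge\tau))$ with $\tau=\tau_\delta\wedge\tilde\tau_{\mathfrak M}$, multiply by $1_{\Omega_t}(\lambda^2 M(Q))^\alpha$ and take expectations. Because $\{\Omega_s\}$ is non-increasing and $\{{\mathcal F}_s\}$-adapted, $1_{\Omega_t}\le 1_{\Omega_s}$ for $s\le t$, so the stochastic-integral terms $T_1(\tau),T_3(\tau),T_4(\tau)$ can be handled by the Davis inequality and the local property of stochastic integrals after inserting $1_{\Omega_s}$, while $T_2(\tau)\le 0$ is dropped. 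The deterministic lower-order drift terms $2C^{(2)}_\phi\tau^2+\tfrac43 n\sigma C^{(1)}_\phi\tau^3+C^\Sigma_\phi\tau$ are bounded on $[0,t]$ using $\tau\le t$; together with $C^{(2)}_\phi\le \|\phi\|_{L^{0,1}_2}\sqrt{C^\Sigma_\phi}$ and $C^{(1)}_\phi\le\|\phi\|^2_{L^{0,1}_2}$ these are absorbed into the right-hand side of \eqref{cond_1-add}.

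Next I would reproduce the crucial sign-reversal step \eqref{compensation}: on $\{r\le\tau_\delta\}$ we have $\lambda^{2\alpha}\|\nabla u(r)\|_{L^2}^2\ge\delta^2\|\nabla Q\|_{L^2}^2$ while the energy hypothesis $\lambda^{2\alpha}H(u_0)\le\beta H(Q)$ and \eqref{H_Q} give $4n\sigma(\lambda^2M(Q))^\alpha H(u_0)(t\wedge\tau)^2\le 4\sigma s_c\beta\|\nabla Q\|_{L^2}^2M(Q)^\alpha(t\wedge\tau)^2$; subtracting the double time integral of $\|\nabla u\|^2_{L^2}$ yields the net bound $-4\sigma s_c(\delta^2-\beta)\|\nabla Q\|_{L^2}^2M(Q)^\alpha\,\EE(1_{\Omega_t}(t\wedge\tau)^2)$. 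Then estimate the martingale terms: $T_1$ splits into the gradient part (bounded via $\|\nabla u\|_{L^2}\le\mathfrak M$ on $\tilde\tau_{\mathfrak M}$ and $\sum_k\|\nabla\phi e_k\|^2_{L^2}=C^{(1)}_\phi$, giving \eqref{cond_2-add}) and the nonlinear part $|u|^{2\sigma}\bar u\,\phi e_k$, handled by Cauchy--Schwarz in $x$, the Gagliardo--Nirenberg inequality \eqref{GN}, $\sum_k\|\phi e_k\|^2_{L^{2\sigma+2}}=C(\phi)$, $\|\nabla u\|_{L^2}\le\mathfrak M$, and the mass-moment bound \eqref{E:M-moments} with exponent $[2-(n-2)\sigma]\tfrac{2\sigma+1}{2\sigma+2}$, giving \eqref{cond_3-add}; similarly the $T_2$-type contribution that survives from the It\^o correction to $G$ in \eqref{G(u)-add} feeds \eqref{cond_4-add}. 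The term $T_3$ is controlled by $C^{(1)}_\phi$ and $\|\phi\|_{L^{0,0}_2}$ (the $\tfrac{8\sqrt2}{3}n\|\phi\|_{L^{0,0}_2}t^{3/2}$ summand in \eqref{cond_1-add}), and $T_4$ produces, via $|x|^2|u|^2$ and $\sum_k\|x\,\phi e_k\|^2_{L^2}=C^\Sigma_\phi$, a term of the form $\bar\epsilon\int_0^t\EE(1_{\Omega_s}V(u(s\wedge\tau)))\,ds+\text{const}\cdot C^\Sigma_\phi$ after Young's inequality; choosing $\bar\epsilon$ so the prefactor is $1$ puts the self-referential integral on the right at coefficient one.

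Collecting everything, I obtain
\[
(\lambda^2M(Q))^\alpha\EE\big(1_{\Omega_t}V(u(t\wedge\tau))\big)\le P(\Omega_t)V(u_0)(\lambda^2M(Q))^\alpha+4\epsilon+4G(u_0)(\lambda^2M(Q))^\alpha\EE\big(1_{\Omega_t}(t\wedge\tau)\big)
\]
\[
-\,4\sigma s_c(\delta^2-\beta)\|\nabla Q\|_{L^2}^2M(Q)^\alpha\EE\big(1_{\Omega_t}(t\wedge\tau)^2\big)+\int_0^t(\lambda^2M(Q))^\alpha\EE\big(1_{\Omega_s}V(u(s\wedge\tau))\big)\,ds.
\]
To apply Gronwall's lemma I first need $\sup_{s\le t}\EE(1_{\Omega_s}V(u(s\wedge\tau)))<\infty$; this follows exactly as in \cite{deB_Deb_PTRF, deB_Deb_AnnProb} from the a priori bound $\sup_{s\le t}V(s\wedge\tau)\le[\,\text{const}(N,t)+V(u_0)\,]e^{Ct}$ on $\{\tau\le\tilde\tau_{\mathfrak M}\}$ (here the additive-noise variance evolution picks up extra terms bounded by $C^\Sigma_\phi$, $C^{(1)}_\phi$, $C(\phi)$ and the moments of mass, all finite). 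Gronwall then gives $(\lambda^2M(Q))^\alpha\EE(1_{\Omega_t}V(u(t\wedge\tau)))\le[\text{RHS of }\eqref{cond-3-add}]e^t$, which by \eqref{cond-3-add} is strictly negative, contradicting $V\ge 0$. Hence $P(\tau^*(u_0)\le t)>0$.

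The main obstacle I anticipate is bookkeeping the additive-noise-specific terms $C^\Sigma_\phi$, $C^{(1)}_\phi$, $C^{(2)}_\phi$, $C(\phi)$ consistently through the four martingale terms $T_1$--$T_4$ and the extra deterministic drifts in \eqref{V(u)-add-2}, and in particular verifying that the a priori $\Sigma$-regularity bound $u(s\wedge\tau)\in\Sigma$ a.s.\ with finite expected variance still holds in the additive setting so that Gronwall is legitimate — this is where one must invoke the localization $\tilde\tau_{\mathfrak M}$ and the mass-moment estimates of Lemma \ref{lem-sup-Mp} rather than the mass conservation available in the multiplicative case. The role of $1_{\Omega_t}$ (controlling the mass so that $\|u\|_{L^2}\le\lambda\|Q\|_{L^2}$ and hence $\lambda^{2\alpha}$ is the right normalizing power) must be threaded carefully: every appearance of $\|u(s)\|_{L^2}^{2\alpha}$ or $M(u(s))^\alpha$ has to be bounded by $(\lambda^2M(Q))^\alpha$ \emph{on} $\Omega_s$, which is exactly why the proposition is stated with the indicator and with $\lambda>\lambda_0$ rather than with mass conservation.
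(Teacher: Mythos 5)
Your proposal follows essentially the same route as the paper's proof: multiply \eqref{V(u)-add-2} by $1_{\Omega_t}(\lambda^2 M(Q))^\alpha$, exploit the compensation between the energy hypothesis and the definition of $\tau_\delta$ to produce the negative $-4\sigma s_c(\delta^2-\beta)$ term, bound the martingale terms via the localization $\tilde{\tau}_{\mathfrak M}$, Gagliardo--Nirenberg and the mass-moment bound \eqref{E:M-moments}, verify a priori finiteness of the expected variance, and conclude by Gronwall and the sign condition \eqref{cond-3-add}. The only harmless deviations are that you drop $T_2\le 0$ rather than bounding it through \eqref{cond_4-add}, and you record a single $\int_0^t \EE\big(1_{\Omega_s}V(u(s\wedge\tau))\big)\,ds$ term whereas the paper's estimates of $\tilde{T}_3$ and $\tilde{T}_4$ each produce one (hence $e^{2t}$ instead of $e^{t}$ in \eqref{form_V_Q}), which does not affect the contradiction.
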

\begin{proof}
Suppose that for some $t$ we have $t<\tau^*(u_0)$ a.s.. Multiplying \eqref{V(u)-add-2} by $1_{\Omega_t} (\lambda^2 M(Q))^\alpha$ and taking expected values, yields 
\begin{align}		\label{EV-add-deter}
(\lambda^2& M(Q))^\alpha \EE\big( 1_{\Omega_t} V\big(u(t\wedge \tau)\big)   = (\lambda^2 M(Q))^\alpha V(u_0) P(\Omega_t) 
+ 4 (\lambda^2 M(Q))^\alpha G(u_0) \EE\big(1_{\Omega_t} (t\wedge \tau) \big) \nonumber \\
&+ 4n\sigma (\lambda^2 M(Q))^\alpha H(u_0) \EE\big(1_{\Omega_t}  (t\wedge \tau)^2\big)  
-8 \sigma s_c (\lambda^2 M(Q))^\alpha \EE\Big( 1_{\Omega_t} \int_0^{t\wedge \tau} ds \int_0^s \|\nabla u(r)\|_{L^2}^2 dr\Big)  \nonumber \\
&+ (\lambda^2 M(Q))^\alpha \big[  C_\phi^{\Sigma}  \EE\big(1_{\Omega_t} (t\wedge \tau)\big) 
+ 2 C^{(2)}_\phi  \EE\big( 1_{\Omega_t} (t\wedge \tau)\big)^2  + \frac{4}{3} n\sigma C^{(1)}_\phi \EE\big( 1_{\Omega_t} (t\wedge \tau)^3\big) \big] + \sum_{j=1}^4 \tilde{T}_j(t),
\end{align} 
where for $j=1, ..., 4$ we set $\tilde{T}_j(t)=(\lambda^2 M(Q))^\alpha \EE\big(1_{\Omega_t} T_i(t\wedge \tau)\big)$, and the terms $T_j(t\wedge \tau)$ are defined 
in the statement of \eqref{V(u)-add-2}. 
By assumption we have $(\lambda^2 M(Q))^\alpha  H(u_0 )\leq \beta  H(Q) M(Q)^\alpha$; by definition of $\tau_\delta$,
for $0\leq r\leq \tau \leq \tau_\delta$, we have $(\lambda^2 M(Q))^\alpha \|\nabla u(r)\|_{L^2}^2
\geq \delta^2 M(Q)^\alpha \|\nabla Q\|_{L^2}^2$. Hence, \eqref{H_Q} implies
\begin{align}		\label{Hyp-Q-add}
4n\sigma (\lambda^2 M(Q))^\alpha & H(u_0) \EE\big( 1_{\Omega_t} (t\wedge \tau)^2\big) -8\sigma s_c (\lambda^2 M(Q))^\alpha \EE\Big( 1_{\Omega_t} \int_0^{t\wedge \tau} ds \int_0^s \| \nabla u(r)\|_{L^2}^2 dr \Big) \nonumber \\
&\leq \big[ 4n\sigma \beta M(Q)^\alpha H(Q) - 4\sigma s_c \delta^2 M(Q)^\alpha \|\nabla Q\|_{L^2}^2 \big] \EE\big( 1_{\Omega_t} (t\wedge \tau)^2\big)  \nonumber \\
&\leq -4\sigma s_c \big( \delta^2-\beta) M(Q)^\alpha \|\nabla Q\|_{L^2}^2 \EE\big( 1_{\Omega_t}(t\wedge \tau)^2 \big). 
\end{align}

We next give upper estimates of $|\tilde{T}_j(t)| $, $j=1, ..., 4$.  All these upper estimates contain either a multiplicative factor that depends on the strength of the noise or a time integral of $(\lambda^2 M(Q) )^\alpha \EE(1_{\Omega_.} V(\cdot \wedge \tau))$.
First, note that  $|\tilde{T}_1(t) | \leq  \tilde{T}_{1,1}(t) + \tilde{T}_{1,2}(t)$, where
\begin{align*}
\tilde{T}_{1,1}(t)=
&\;  8n\sigma (\lambda^2 M(Q))^\alpha \Big| \EE\Big( \int_0^{t\wedge \tau} \!\!\! ds \int_0^s \!dr \int_0^r \sum_{k\geq 0} \int_{\RR^n} \nabla \overline{{u}(r_1,x)} \nabla (\phi e_k)(x) dx 
d\beta_k(r_1)\Big)\Big|, \\
\tilde{T}_{1,2}(t) = 
&\; {  8 n\sigma} (\lambda^2 M(Q))^\alpha \Big| \EE\Big( \int_0^{t\wedge \tau}\! \!\! ds \int_0^s \! dr \int_0^r \sum_{k\geq 0}  \int_{\RR^n} 
|u(r_1,x)|^{2\sigma} \overline{u(r_1,x)} \phi e_k(x) dx d\beta_k(r_1)\Big) \Big|.
\end{align*}
The Cauchy-Schwarz inequality implies
\begin{align}		\label{upp-tildeT_11}
 \tilde{T}_{1,1}(t) \leq
 &\;  8n\sigma (\lambda^2 M(Q))^\alpha \int_0^t\! \! ds \int_0^s \! dr \Big\{ \EE\Big( \int_0^{r\wedge \tau}  \sum_{k\geq 0}  \Big( \int_{\RR^n} 
 \nabla \bar{u}(r_1,x) \nabla (\phi e_k)(x) dx\Big)^2 dr_1 \Big) \Big\}^{\frac{1}{2}} \nonumber \\
 \leq 
 & \; 8n\sigma (\lambda^2 M(Q))^\alpha \int_0^t\! \! ds \int_0^s \! dr \Big\{ \EE\Big( \int_0^{r\wedge \tau}  \sum_{k\geq 0} \|\nabla u(r_1)\|_{L^2}^2 \| \nabla(\phi e_k)\|_{L^2}^2 dr_1\Big) \Big\}^{\frac{1}{2}} \nonumber \\
 \leq 
 & \; 8n\sigma (\lambda^2 M(Q))^\alpha \mathfrak M    \sqrt{C_\phi^{(1)}} \frac{4}{15} t^{\frac{5}{2}},
\end{align}
where in the last upper estimate we have used the fact that  $r\leq \tau \leq \tilde{\tau}_{\mathfrak M}$. 
A similar computation using H\"older's inequality implies that
\begin{align}
 \tilde{T}_{1,2}(t) \leq & \; 8n\sigma (\lambda^2 M(Q))^\alpha \int_0^t\!\! ds \int_0^s\!\! dr \Big\{ \EE\Big(\int_0^{r\wedge \tau} \sum_{k\geq 0} \Big( \int_{\RR^d} |(u(r_1,x)|^{2\sigma +1} |\phi e_k(x)| dx \Big)^2 dr_1 \Big) \Big\}^{\frac{1}{2}} \notag \\
\leq &\;  8n\sigma (\lambda^2 M(Q))^\alpha \int_0^t\!\! ds \int_0^s\!\! dr \Big\{ \EE\Big(\int_0^{r\wedge \tau} \sum_{k\geq 0}  \|u(r_1)\|_{L^{2\sigma +2}}^{ 2(2\sigma +1)}
\| \phi e_k\|_{L^{2\sigma +2}}^2  dr_1 \Big) \Big\}^{\frac{1}{2}}. \label{E:sub1}
\end{align}
Furthermore, the Gagliardo-Nirenberg inequality implies that
\[ 
\|u(r_1)\|_{L^{2\sigma +2}}^{2(2\sigma +1)} 
\leq C_{GN}^{\frac{2\sigma +1}{\sigma +1 }}   \|\nabla u(r_1)\|_{L^2}^{n\sigma \frac{2\sigma +1}{\sigma +1}}
\|u(r_1)\|_{L^2}^{(2-(n-2)\sigma) \frac{2\sigma +1}{\sigma +1}}. 
\] 
Since $C(\phi)=\sum_k \| \phi e_k\|_{L^{2\sigma +2}}^2 $, we deduce (recalling the definition of  $\tilde{\tau}_{\mathfrak M}$  from \eqref{def-tildetauL-add}, which implies  the bound $\mathfrak M$ 
on the gradient up to the stopping  time 
$\tilde{\tau}_{\mathfrak M}$) by substituting the above Gagliardo-Nirenberg  inequality into \eqref{E:sub1} and using \eqref{E:M-moments}
\begin{equation}	\label{upp-tildeT_12}
\tilde{T}_{1,2}(t) \leq 8n\sigma (\lambda^2 M(Q))^\alpha   C_{GN}^{\frac{2\sigma +1}{2\sigma +2 }}  \sqrt{C(\phi)}  {\mathfrak M}^{\frac{n\sigma (2\sigma +1)}{2\sigma +2}}
{\mathcal C}\Big([2-(n-2)\sigma] \frac{2\sigma+1}{2\sigma+2} ,u_0,t, \phi\Big)^{\frac{1}{2}}  
\frac{4}{15} t^{\frac{5}{2}}. 
\end{equation}
Using again the Cauchy-Schwarz and H\"older inequalities, we deduce
\begin{align*}
|\tilde{T}_2(t)| \leq &\;  2n\sigma (\lambda^2 M(Q))^\alpha \int_0^t\!\! ds \int_0^s\! \! dr \, \EE\Big( \int_0^{r\wedge \tau} 
{ (2\sigma +1)} \sum_{k\geq 0}  \int_{\RR^n} |u(r_1,x)|^{2\sigma} |\phi e_k(x)|^2
 dx dr_1  \Big) \\
\leq &\; { 2n\sigma (2\sigma+1)}  (\lambda^2 M(Q))^\alpha  \int_0^t\!\! ds \int_0^s\! \! dr \, \EE\Big( \int_0^{r\wedge \tau}
\sum_{k\geq 0} {  \|u(r_1)\|_{L^{2\sigma +2}}^{2\sigma} } \|\phi e_k\|_{L^{2\sigma +2}}^2 dr_1 \Big) .
\end{align*}

Using the Gagliardo-Nirenberg inequality and a similar argument for proving \eqref{upp-tildeT_12}, we deduce that
\begin{align}		\label{upp-tilde-T2}
|\tilde{T}_2(t)| \leq 
& 
\;  2n\sigma (2\sigma+1) (\lambda^2 M(Q))^\alpha 
C_{GN}^{\frac{\sigma}{\sigma+1}} {\mathfrak M}^{\frac{n\sigma^2}{\sigma+1}} 
{\mathcal C}\Big( [2-(n-2)\sigma] \frac{\sigma}{2\sigma+2}, u_0, t, \phi\Big)  C(\phi) \frac{t^3}{6}, 
\end{align}
where in the last upper estimate we have used the inequalities $(2-(n-2)\sigma) \frac{\sigma}{\sigma +1} \leq 2$ and $ \frac{2}{2\sigma} \leq 2$. 

We next bound the term $\tilde{T}_3(t)$.  Recall that for $r\leq t$, $1_{\Omega_t} \leq 1_{\Omega_r}$. The Cauchy-Schwarz inequality implies 
\begin{align}		\label{upper-tilde-T3}
\big| \tilde{T}_3(t)\big| \leq & \; 4 (\lambda^2 M(Q))^\alpha \int_0^t \!\! ds \Big\{ \EE\Big( \int_0^{s\wedge \tau} 1_{\Omega_r} \sum_{k\geq 0} 
\Big( \int_{\RR^n} \bar{u}(r,x) \big[ 2\, x\! \cdot\! \nabla(\phi e_k)(x) + n\phi e_k(x)\big] dx \Big)^2 dr \Big)
\Big\}^{\frac{1}{2}}  \nonumber \\
\leq &\; 4 (\lambda^2 M(Q))^\alpha \int_0^t \!\! ds \Big\{ \EE\Big( \int_0^{s\wedge \tau} 1_{\Omega_r}  
\Big[ 2  \Big(\int_{\RR^n} 4 |x|^2  |u(r,x)|^2 dx\Big) C_\phi^{(1)} +2 n^2 \| \phi\|_{L^{0,0}_2}^2 \Big] dr \Big) 
\Big\}^{\frac{1}{2}}  \nonumber \\
\leq &\; 4 \sqrt{2} (\lambda^2 M(Q))^\alpha \Big[  \sqrt{C_\phi^{(1)} } 2  t  \Big\{     \int_0^{t}  \EE\big(1_{\Omega_r}  V(u(r\wedge \tau)) \big) dr \Big) \Big\}^{\frac{1}{2}}  
+ n\|\phi\|_{L^{0,0}_2} 
\frac{2}{3}  t^{\frac{3}{2}} \Big] \nonumber \\
\leq &\; 32 (\lambda^2 M(Q))^\alpha C_\phi^{(1)}  t^2 + \int_0^t (\lambda^2 M(Q))^\alpha \EE\big(1_{\Omega_s}  V(u(s\wedge \tau))\big) ds \nonumber \\
&\qquad + \frac{8\sqrt{2}}{3} (\lambda^2 M(Q))^\alpha {n} \|\phi\|_{L^{0,0}_2} t^{\frac{3}{2}},
\end{align}
where in the last upper estimates we have used the Cauchy-Schwarz and Young inequalities. 

Finally, a similar argument implies (since $\Omega_t\subset \Omega_s$ for $s\leq t$) 
\begin{align}		\label{upper-tilde-T4}
\big| \tilde{T}_4(t)\big| \leq & \; 2 (\lambda^2 M(Q))^\alpha \Big\{ \EE \Big( \int_0^{t\wedge \tau}1_{\Omega_s} 
\sum_{k\geq 0} \Big( \int_{\RR^n} |x|^2 \bar{u}(s,x) \phi e_k(x) dx \Big)^2 ds \Big) \Big\}^{\frac{1}{2}} \nonumber \\
\leq &\; 2 (\lambda^2 M(Q))^\alpha \Big\{ \EE \Big( \int_0^{t\wedge \tau} 1_{\Omega_s}  \sum_{k\geq 0}  \Big( \int_{\RR^n} |x|^2 |u(s,x)|^2 dx \Big) 
\Big( \int_{\RR^n} |x|^2 |\phi e_k(x)|^2 dx \Big) ds \Big) \Big\}^{\frac{1}{2}} 
\nonumber \\
\leq &\; 2 (\lambda^2 M(Q))^\alpha {\sqrt{C_\phi^\Sigma} }\Big\{ \int_0^t \EE\big(1_{\Omega_s}  V(u(s\wedge \tau))\big) ds \Big\}^{\frac{1}{2}}  \nonumber \\
\leq & \; (\lambda^2 M(Q))^\alpha { C_\phi^\Sigma } + \int_0^t (\lambda^2 M(Q))^\alpha \EE\big( 1_{\Omega_s}V(u(s\wedge \tau))\big) ds. 
\end{align}
Collecting the upper estimates \eqref{EV-add-deter}--\eqref{upper-tilde-T4}, we deduce
\begin{align*}
(\lambda^2& M(Q))^\alpha \EE\big(1_{\Omega_t}  V(u(t\wedge \tau))\big) \leq \; (\lambda^2 M(Q))^\alpha V(u_0) P(\Omega_t) 
+ 4 (\lambda^2 M(Q))^\alpha G(u_0) \EE\big( 1_{\Omega_t} (t\wedge \tau) \big) \\
&  -4\sigma s_c (\delta^2-\beta) M(Q)^\alpha \|\nabla Q\|_{L^2}^2 \EE\big( (t\wedge \tau)^2\big)  + (\lambda^2 M(Q))^\alpha \big[ C_\phi^\Sigma t 
 + 2  C_\phi^{(2)} t^2 + \frac{4}{3} n\sigma C_{\phi}^{(1)} t^3 \big] \\
&+ \frac{32}{15} (\lambda^2 M(Q))^\alpha n\sigma \,\mathfrak M \sqrt{C_\phi^{(1)}}  t^{\frac{5}{2}} \\
&+ \frac{32}{15}   (\lambda^2 M(Q))^\alpha n\sigma C_{GN}^{\frac{2\sigma +1}{2\sigma +2}} 
{\mathfrak M}^{n\sigma \frac{2\sigma +1}{2\sigma +2}} {\mathcal C}\Big( [2-(n-2)\sigma] \frac{2\sigma+1}{2\sigma+2}  ,u_0,t,\phi\Big) \sqrt{C(\phi)} t^{\frac{5}{2}}  \\
& 
+ \frac{1}{3} (\lambda^2 M(Q))^\alpha n\sigma (2\sigma +1) C_{GN}^{\frac{\sigma}{\sigma+1}} {\mathfrak M}^{\frac{n\sigma^2}{\sigma+1}}  
{\mathcal C}\Big([2-(n-2)\sigma] \frac{\sigma}{2\sigma+2} , u_0, t, \phi\Big) t^3  \\
&+ 32 (\lambda^2 M(Q))^\alpha C^{(1)}_\phi t^2 + \frac{8\sqrt{2}}{3} (\lambda^2 M(Q))^\alpha n \| \phi\|_{L^{0,0}_2} t^{\frac{3}{2}} 
+ (\lambda^2 M(Q))^\alpha C_\phi^\Sigma \\
&+2 \int_0^t (\lambda^2 M(Q))^\alpha \EE\big(1_{\Omega_s}   V(u(s\wedge \tau))\big) ds.
\end{align*}
Since $\| u(s)\|_{H^1}^2 \leq {\mathfrak M}^2+\EE\big( \sup_{s\leq t} M(u(s))\big) <\infty$ for $ s\leq t\wedge \tau \leq t\wedge \tilde{\tau}_{\mathfrak M}$, the argument used in
\cite[page 85]{deB_Deb_PTRF}  implies that $\EE\big( \sup_{s\in [0,t]} V(u(s))\big) <\infty$. Hence, Gronwall's lemma and the upper estimates
\eqref{cond_1-add}--\eqref{cond_4-add} imply  
\begin{align}   \label{form_V_Q}
(\lambda^2 M(Q))^\alpha &\EE\big(1_{\Omega_t}  V(u(t\wedge \tau)) \big) \leq  \;  e^{2t} 
 \Big[ (\lambda^2 M(Q))^\alpha V(u_0) P(\Omega_t) + 4 (\lambda^2 M(Q))^\alpha G(u_0) \EE\big( 1_{\Omega_t} (t\wedge \tau) \big) 
\nonumber \\
&\qquad  - 4 \sigma s_c (\delta^2-\beta)  M(Q)^\alpha \|\nabla Q\|_{L^2}^2  \EE\big(1_{\Omega_t}  (t\wedge \tau)^2\big) + { 4\epsilon }  \Big].
\end{align} 
The hypothesis \eqref{cond-3-add} implies that $(\lambda^2 M(Q))^\alpha \EE\big( V(u(t\wedge \tau))\big) <0$, which brings a contradiction. Hence, $P(t\leq \tau^*(u_0)) >0$, concluding the proof. 
\end{proof}

{\it Proof of Theorem \ref{th_blowup1-add}.} ~As in the proof of Theorem \ref{th_blowup1}, we may assume that $\tau^*(u_0)=\infty$ a.s. and look for a contradiction.

Let $\lambda_0>0$ be defined by $\|u_0\|_{L^2} = \lambda_0 \|Q\|_{L^2}$. Choose $\lambda > \lambda_0$ such that 
$\beta = \beta_0 \big( \frac{\lambda}{\lambda_0}\big)^{2\alpha} <1$.  
Since $H(u_0) M(u_0) ^\alpha\leq \beta_0  H(Q) M(Q)^\alpha$, we deduce that
$(\lambda^2 M(Q))^\alpha  H(u_0) \leq \beta H(Q) M(Q)^\alpha$. 

Let $\gamma \in (\beta,1)$,   let $\epsilon_0\in \big( 0, \frac{1}{5}\big) $ satisfy $\gamma (1-5\epsilon_0) > \beta$band proceed as in the proof of Theorem \ref{th_blowup1}. Recall {once more that 
for $B=\frac{C_{GN}}{\sigma +1}$, }
the function $f(x)= \frac{1}{2} (x^2-Bx^{n\sigma})$ defined on $[0,+\infty)$  is strictly increasing on the interval $(0,x^*)$ and strictly decreasing on $(x^*, \infty)$,
where $x^*=\|\nabla Q\|_{L^2(\RR^n)} \|Q\|_{L^2(\RR^n)}^\alpha$ and $f(x^*) =H(Q) M(Q)^\alpha$, see Figure \ref{F:1}.  
Then, given $\gamma \in (\beta,1)$, we deduce the existence of $\delta >1$ such that
\begin{equation}\label{E:IFF}
\big(  x>x^* \quad \mbox{\rm and}\quad   f(x)\leq \gamma f(x^*) \big) \quad   \Longrightarrow
\quad  x\geq \delta x^*. 
\end{equation}

Since $\|\nabla u_0\|_{L^2} \|u_0\|_{L^2}^\alpha > x^*$, $\|\nabla u_0\|_{L^2} \|u_0\|_{L^2}^\alpha $ is located on the decreasing side of the graph
of $f$ (refer again to Figure \ref{F:1}). Therefore, by a.s. continuity of $\|\nabla u(s)\|_{L^2}$ and $\|u(s)\|_{L^2}$ in $s$ we deduce that for $s\in (0, \tau^*_0)$ 
such that $H(u(s)) M(u(s))^\alpha \leq  \gamma f(x^*)$ and
$\|\nabla u(s)\|_{L^2} \|u(s)\|_{L^2}^\alpha > x^*$, we have 
$\|\nabla u(s)\|_{L^2} \|u(s)\|_{L^2}^\alpha > \delta x^*$, 
in other words, 
\begin{equation}     \label{gamma-delta-A}
    \sup_{s\in [0,\tau_0^\ast) }H(u(s)) M(u(s))^\alpha \leq \gamma H(Q) M(Q)^\alpha \; \Longrightarrow 
\inf_{s\in [0,\tau_0^\ast)}\|\nabla u(s)\|_{L^2} \|u(s)\|_{L^2}^\alpha \geq \delta \|\nabla Q\|_{L^2} \|Q\|_{L^2}^\alpha.
\end{equation} 

Hence, if $\|u(s)\|_{L^2} \leq \lambda \|Q\|_{L^2}$, we have  for some $\delta \in (1, \delta_0)$
\begin{equation}\label{E:lamba2} 
\| \nabla u(s)\|_{L^2} (\lambda \|Q\|_{L^2})^\alpha \geq \| \nabla u(s)\|_{L^2} \|u(s)\|_{L^2}^\alpha \geq \delta  \|\nabla Q\|_{L^2} \|Q\|_{L^2}^\alpha. 
\end{equation}

Suppose that for some positive $T$ to be chosen later, we have $T<\tau^*(u_0)$ a.s.. 
For $t\in [0,T]$ set 
\[ \Omega_t = \Big\{ \omega : \sup_{s\leq t }  \|u(s)\|_{L^2} \leq  \lambda \|Q\|_{L^2} \Big\}. 
\]
Then $\{ \Omega_t\}_{t\in [0,T]}$ is an non-increasing family of ${\mathcal F}_t$-adapted sets. For $\gamma$ and $\delta$ chosen above,  set 
\begin{equation} \label{sigma_gamma2}
\tilde{\sigma}_\gamma = \inf\big\{ s\geq 0 :    H(u(s)) (\lambda^2 M(Q))^\alpha \geq \gamma H(Q) M(Q)^\alpha\big\}, 
\end{equation}
and
\begin{equation} \label{tau_delta2}
\tau_\delta = \inf\big\{ s\geq 0 : \| \nabla u(s)\|_{L^2} (\lambda \|Q\|_{L^2})^\alpha  \leq  \delta
 \|\nabla Q\|_{L^2} \|Q\|_{L^2}^\alpha \big\} .
\end{equation} 
Then on the set $\Omega_T$ by \eqref{gamma-delta-A} and \eqref{E:lamba2}, we have $\tilde{\sigma}_\gamma \leq \tau_{\delta}$. Furthermore, 
given any stopping time $\tau \leq \tau_\delta$
\[ (\lambda^2 M(Q))^\alpha 1_{\Omega_T} \int_0^{T\wedge \tau}  ds \int_0^s dr \|\nabla u(r)\|_{L^2}^2 \geq 
\delta^2 1_{\Omega_T} \int_0^{T \wedge \tau} ds \int_0^s dr
\|\nabla Q\|_{L^2} ^2 \|Q\|_{L^2}^{2\alpha}.\]

Using once more the identity \eqref{H_Q}, we deduce that
 \begin{align} \label{compensation_add}
       4n\sigma (\lambda^2 M(Q))^\alpha & H(u_0)   \EE\big( 1_{\Omega_T} (T\wedge \tau)^2 \big) -8\sigma s_c (\lambda^2 M(Q))^\alpha \EE\Big( 1_{\Omega_T}
 \int_0^{T\wedge \tau } ds \int_0^s \|\nabla u(r)|_{L^2}^2 dr \Big)  \nonumber \\
 & \leq -4\sigma s_c (\delta^2-\beta) \|\nabla Q\|_{L^2}^2 M(Q)^\alpha \EE\big( 1_{\Omega_T} (T\wedge \tau)^2 \big).
 \end{align}

The upper estimate \eqref{EV-add-deter} and the argument leading to 
 \eqref{form_V_Q} imply for $\tau = \tau_\delta 
 \wedge \tilde{\tau}_N$, where 
 $\tilde{\tau}_N = \inf\{ s\geq 0 : \| \nabla u(s)\|_{L^2} \geq N\}   \wedge \tau^*(u_0) $, 
 \begin{align*}      
&(\lambda^2 M(Q))^\alpha \EE\big(1_{\Omega_T}  V(u(T\wedge \tau)) \big) \leq  \;  e^{2t} 
 \Big[ (\lambda^2 M(Q))^\alpha V(u_0) P(\Omega_T) + 4 (\lambda^2 M(Q))^\alpha G(u_0) \EE\big( 1_{\Omega_T} (T\wedge \tau) \big) 
\nonumber \\
&\quad  - 4 \sigma s_c (\delta^2-\beta)  M(Q)^\alpha \|\nabla Q\|_{L^2}^2  \EE\big(1_{\Omega_T}  (T\wedge \tau)^2\big)
+ (\lambda^2 M(Q))^\alpha \Big[ C_\phi^\Sigma T + 2 C^{(2)}_\phi T^2  { +\frac{4}{3} n\sigma C_\phi^{(1)} T^3 } \Big]
+\sum_{j=1}^4 \tilde{T}_j(T)\Big].
\end{align*}
For $\|\phi\|_{L^{0,0}_2}, \| \phi\|_{L^{0,1}_2}, C(\phi)$ and $C^\Sigma_\phi$ small enough (which implies that {$C^{(1)}_\phi$ and} $C^{(2)}_\phi$ are small),
the upper estimates of the terms $\tilde{T}_j, j=1, ...4$, yield the upper estimates {\eqref{cond_1-add}-- \eqref{cond_4-add} } are satisfied, so that 
for $\tau \leq \tau_\delta$
\begin{align}   \label{Gronwall-V-add}
(\lambda^2 &M(Q))^\alpha \EE\big(1_{\Omega_T}  V(u(T\wedge \tau)) \big) \leq  \;  e^{2T} 
 \Big[ (\lambda^2 M(Q))^\alpha V(u_0) P(\Omega_T) + 4 (\lambda^2 M(Q))^\alpha G(u_0) \EE\big( 1_{\Omega_T} (T\wedge \tau) \big) 
\nonumber \\
&\qquad  - 4 \sigma s_c (\delta^2-\beta)  M(Q)^\alpha \|\nabla Q\|_{L^2}^2  \EE\big(1_{\Omega_T}  (T\wedge \tau)^2\big)
+ { 4\epsilon } \Big] .
\end{align}

We then proceed as in the proof of Theorem \ref{th_blowup1} and first fix a large value of $T>0$. As $T\to \infty$, $T\wedge \tau_\delta \to \tau_\delta$ 
and the monotone convergence
theorem implies that $ \EE\big( (T\wedge \tau_\delta)^2 \big)\to \EE(\tau_\delta^2)$. 
\smallskip

$\bullet$  If $\EE(\tau_\delta^2)=+\infty$, for any fixed $M$ (to be chosen later on) we have
$\EE\big( (T\wedge \tau_\delta)^2\big) \geq M^2$ for some large $T$. 

$\bullet$  If $\EE(\tau_\delta^2)<\infty$, then for any $\mu\in (0,1)$ and close enough to 1, 
for $T$ large enough we have
$\EE\big( (T\wedge \tau_\delta)^2 \big) \geq \mu^2 \EE(\tau_\delta^2)$ and $P\big( \Omega_T \cap \{  T<\tilde{\sigma}_\gamma \}\big) < {\epsilon}_0$, where
the last inequality follows from the fact that $\tilde{\sigma}_\gamma \leq \tau_\delta$ on $\Omega_T$ and $\tau_\delta < \infty$ a.s.
\smallskip

Since $\|\nabla u(t)\|_{L^2} \to \infty$  as $t\to \tau^*(u_0)$, we deduce that 
for $\tilde{\tau}_{M_0} = \inf\{  s\geq 0 : \|\nabla u(s)\|_{L^2} \geq M_0\}$,
for some $M_0$ large enough, if $\bar{\tau}_0 = \tau_\delta \wedge \tilde{\tau}_{M_0}$, 
we have  
$\EE\big( (T\wedge \bar{\tau}_0)^2 \big) = \EE\big( (T\wedge \tau_\delta \wedge \tilde{\tau}_{M_0})^2\big) \geq \mu^2 
\EE\big( ( T\wedge\tau_\delta)^2 \big)$.
Hence, either $\EE\big( (T\wedge \bar{\tau}_0)^2 \big) \geq \mu^2 M_0^2$
or $\EE\big( (T\wedge \bar{\tau}_0)^2 \big) \geq \mu^4 \EE\big(\tau_\delta^2\big)$. 

For this choice of $T$, $M$ and $M_0$,  suppose that $T<\tau^*(u_0)$ a.s., 
set $X= \big\{\EE\big( 1_{\Omega_T} (T\wedge {\tau}_\delta)^2 \big)\big\}^{\frac{1}{2}}$ and consider the polynomial 
$-ax^2+bx+c$, with $a=4\sigma s_c (\delta^2-\beta^2) M(Q)^\alpha$, $b=4(\lambda^2 M(Q))^\alpha |G(u_0)|$ and
$c=(\lambda^2 M(Q))^\alpha V(u_0) + {4\epsilon}$. The Cauchy-Schwarz inequality and \eqref{Gronwall-V-add} imply 
\[ (\lambda^2 M(Q))^\alpha \EE\big( 1_{\Omega_T} V(T\wedge \bar{\tau}_0) \big)\leq e^{2T} \big[ -a X^2 +bX+c\big] .\]
Let $X_1<X_2$ be the roots of $-aX^2+bX+c$. Then for $X>X_2$, we can conclude that $\EE\big( 1_{\Omega_T} V(T\wedge \bar{\tau}_0)\big) <0$,
which proves that
$P(T\leq \tau^*(u_0))>0$.
\smallskip

We next  claim that when the noise is ``small enough", we have 
$\EE\big( (T\wedge \tau_\delta)^2 \big) >2 X_2^2 $.
Choosing $\mu$ close enough to 1, we deduce that for a ``small noise", 
$\EE\big(  (T\wedge \bar{\tau}_{0})^2 \big) >  X_2^2 $.

$\bullet$  If $\EE(\tau_\delta^2)=\infty$,  choosing $M>\sqrt{2} X_2$, we deduce that  
$\EE\big( (T\wedge \tau_\delta)^2\big) > 2 X_2^2$ for $T$ large enough, completing the claim.

$\bullet$ If $\EE(\tau_\delta^2)<\infty$, 
it remains to prove that  $\EE\big( (\tau_\delta \wedge T)^2\big) \geq 2 X_2^2$. 

Since $\EE(\tau_\delta^2)<\infty$ and 
$\tilde{\sigma}_\gamma \leq \tau_\delta$ on $\Omega_T$, by a.s. continuity of $H(u(\cdot))$ on $[0,T]$, we deduce that 
$\lambda^{2 \alpha} H(u(\tilde{\sigma}_\gamma)) = \gamma H(Q)$ a.s. on the set  $\Omega_T$.
The upper bound \eqref{H_add_final} implies that for any stopping time $\tau \leq \tau_\delta$ 
\begin{align*}
 (\lambda^2 M(Q))^\alpha  H\big( u(\tau)\big) \
\leq & (\lambda^2 M(Q))^\alpha H(u_0) + \frac{1}{2} (\lambda^2 M(Q))^\alpha  \|\phi\|_{L^{0,1}_2}^2 \tau  +  (\lambda^2 M(Q))^\alpha  (I_1(\tau) + I_2(\tau)), 
\end{align*}
where 
\begin{align}		\label{I1-I2}
I_1(\tau)= &\;  - {\rm Im} \Big( \int_0^{\tau} \sum_k \int_{\RR^n}
|u(s,x)|^{2\sigma} \overline{u(s,x) } (\phi e_k)(x) dx d\beta_k(s) \Big),  \nonumber \\
I_2(\tau)=  
&\; {\rm Im} \Big( \int_0^{\tau} \sum_k \int_{\RR^n} \nabla \overline {u(s,x)} \nabla(\phi e_k)(x) dx d\beta_k(s) \Big). 
\end{align} 
For a fixed $\mathfrak L$ 
set $\tilde{\tau}_{\mathfrak L} = \inf\{ s\geq 0: \|\nabla u(s)\|_{L^2} \geq \mathfrak L\}$.

H\"older's inequality with conjugate exponents $2\sigma +2$ and $\frac{2\sigma +2}{2\sigma +1}$ implies 
 that for any stopping time $\tau \leq \tau_\delta$, 
\begin{align}		\label{upper-I1}
\EE(1_{\Omega_T} \big| I_1(\tau \wedge T  \wedge \tilde{\tau}_{\mathfrak L} ) |^2 \big)\leq   & 
\; \EE\Big( \int_0^{\tau \wedge T \wedge  \tilde{\tau}_{\mathfrak L}} 
1_{\Omega_s} \sum_k \Big|   \int_{\RR^n}
|u(s,x)|^{2\sigma+1}  (\phi e_k)(x) dx \Big|^2  ds\Big)   \nonumber \\
\; \leq &\; \EE\Big( \int_0^{\tau \wedge T \wedge \tilde{\tau}_{\mathfrak L}} 1_{\Omega_s} \sum_k \|u(s)\|_{L^{2\sigma +2}}^{\frac{2\sigma +1}{\sigma +1} }
\|\phi e_k\|_{L^{2\sigma +2}}^2  ds \Big)  \nonumber \\
\leq &\; C(\phi) C_{GN}^{\frac{2\sigma +1}{\sigma +1}}   {\mathfrak L}^{n\sigma \frac{2\sigma +1}{\sigma +1}}  \EE\Big( \int_0^{\tau \wedge T \wedge \tilde{\tau}_{\mathfrak L}}  1_{\Omega_s}
M(u(s))^{(2-(n-2)\sigma)\frac{2\sigma +1}{2\sigma +2} }   ds \Big)   \nonumber \\
\leq &\; C(\phi) C_{GN}^{\frac{2\sigma +1}{\sigma +1}}  {\mathfrak L}^{n\sigma \frac{2\sigma +1}{\sigma +1}}   
 \EE\Big( \int_0^{\tau \wedge T \wedge \tilde{\tau}_{\mathfrak L}}  1_{\Omega_s} \big[ 1+ M(u(s))^2\big] ds \Big) ,
\end{align}
where  in the last  two upper estimate we have used \eqref{GN} and the inequality  $(2-(n-2)\sigma) \frac{2\sigma +1}{2\sigma +2}\in (0, 2]$.
Therefore, 
Lemma \ref{lem-sup-Mp} applied with $p=2$ implies that
{$\EE\big( \sup_{s\leq T} M(u(s))^2\big) \leq C<\infty$ for some constant $C$ depending on $T, \|\phi\|_{L^{0,0}_2}$ and $ M(u_0)^2$. }
 Hence, $\EE(\big| I_1(\tau\wedge T \wedge \tilde{\tau}_{\mathfrak L} ) |^2\big) <\infty$ and  we deduce 
$\EE\big( I_1(\tau_\wedge T \wedge \tilde{\tau}_{\mathfrak L} ) \big)=0$. 
A similar computation yields
\begin{align*}
 \EE(    I_2(\tau  \wedge T \wedge \tilde{\tau}_{\mathfrak L})|^2) \leq & \; \EE\Big( \int_0^{\tau \wedge T  \wedge \bar{\tau}_{\mathfrak L}}
 \sum_k \Big( \int_{\RR^n} \nabla \bar{u}(s,x) \nabla(\phi e_k)(x) dx \Big)^2 ds \Big)  \\
 \leq & \; \EE\Big( \int_0^{\tau \wedge T  \wedge \tilde{\tau}_{\mathfrak L}} \sum_k \| \nabla (\phi e_k)\|_{L^2}^2 \| \nabla u(s)\|_{L^2}^2 ds \Big)  \leq \|\phi\|_{L^{0,1}_2}^2 {\mathfrak L}^2 T <\infty. 
\end{align*}
Hence, $\EE(I_2(\tau \wedge T \wedge \tilde{\tau}_{\mathfrak L}))=0$, and we deduce that for every ${\mathfrak L}>0$ and
$\tau \leq \tau_\delta$,
\[  (\lambda^2 M(Q))^\alpha \EE(1_{\Omega_T}  H\big( u(\tau \wedge T \wedge \tilde{\tau}_{\mathfrak L})\big) \leq 
 (\lambda^2 M(Q))^\alpha H(u_0)  P(\Omega_T)
 + \frac{1}{2} (\lambda^2 M(Q))^\alpha  \|\phi\|_{L^{0,0}_2}^2 \EE\big(1_{\Omega_T}(\tau \wedge T)
 \big). \]
We next adapt the proof of Theorem \ref{th_blowup1}. 
 Let $\sigma_0= \inf\{ t\geq 0 \; : \; H(u(t))\leq 0\}$. 
  Since $\tilde{\sigma}_\gamma \leq \tau_\delta $ on $\Omega_T$, choosing $\tau = \tilde{\sigma}_\gamma \wedge \sigma_0$, we deduce  
\[ \sup_{\mathfrak L} (\lambda^2 M(Q))^\alpha 1_{\Omega_T} H(u(\tilde{\sigma}_\gamma \wedge \sigma_0 \wedge T\wedge \tilde{\tau}_{\mathfrak L})) \leq 
1_{\Omega_T} \sup_{s\leq \tilde{\sigma}_\gamma  } (\lambda^2 M(Q))^\alpha H(u(s)) 
\leq 1_{\Omega_T} \gamma M(Q)^\alpha H(Q)<\infty\quad \mbox{\rm a.s.}.
\]
Furthermore, by definition of $\sigma_0$, the sequence $\big\{ (\lambda^2 M(Q))^\alpha 1_{\Omega_T} H(u(\tilde{\sigma}_\gamma \wedge \sigma_0 \wedge T\wedge \tilde{\tau}_{\mathfrak L}))\big\}_
{\mathfrak{L}} $
is nonnegative. 
Therefore,  as ${\mathfrak L} \to \infty$ the dominated convergence theorem  implies
$  (\lambda^2 M(Q))^\alpha \EE\big( 1_{\Omega_T} H(u(\tilde{\sigma}_\gamma \wedge \sigma_0 \wedge T \wedge \tilde{\tau}_{\mathfrak L})) \big) \to 
(\lambda^2 M(Q))^\alpha \EE\big( 1_{\Omega_T} H(u(\tilde{\sigma}_\gamma \wedge \sigma_0 \wedge T )) \big)$.
 
The definition of $\tilde{\sigma}_\gamma$ in \eqref{sigma_gamma2} and the a.s. continuity of $H(u(\cdot))$ on $(0, \infty)$ imply $(\lambda^2 M(Q))^\alpha H(u(\tilde{\sigma}_\gamma\wedge \sigma_0 \wedge T)) = \gamma M(Q)^\alpha H(Q)$ a.s.  on $\Omega_T \cap \{ \tilde{\sigma}_\gamma \leq \sigma_0 \wedge T\}$.
 Thus, using once more the inequality $H(u(s))(\omega) \geq 0$ for every $s\leq \sigma_0(\omega)$ and neglecting $(\lambda^2 M(Q))^\alpha \EE\big( 1_{\Omega_T}  H(u(\sigma_0\wedge T))
 1_{\{\sigma_0\wedge T<\tilde{\sigma}_\gamma \}}\big)\geq 0 $, we obtain
\begin{align} 	\label{upp-gamma-beta} 
\gamma M(Q)^\alpha H(Q) P(\Omega_T&  \cap \{ \tilde{\sigma}_\gamma    \leq \sigma_0 \wedge T\}) 
 \leq  P(\Omega_T) (\lambda^2 M(Q))^\alpha H(u_0)  +  
\frac{1}{2} (\lambda^2 M(Q))^\alpha  \|\phi\|_{L^{0,1}_2}^2  
\EE\big( 1_{\Omega_T}  (\tilde{\sigma}_\gamma \wedge T) \big) 
\nonumber \\
\leq & \; P(\Omega_T)  \beta M(Q)^\alpha H(Q)  +  
\frac{1}{2} (\lambda^2 M(Q))^\alpha 
\|\phi\|_{L^{0,1}_2}^2
\big\{\EE( (\tau_\delta \wedge T)^2)\big\}^{\frac{1}{2}}  ,
\end{align} 
where in the last upper estimate we used the definition of $\beta$ in terms of $\lambda, \beta_0$,  the upper estimate $\tilde{\sigma}_\gamma \leq \tau_\delta$ on $\Omega_T$,
the assumption 
\eqref{cond-u_0-Q-det-add} and the Cauchy-Schwarz inequality. 

Furthermore,
\begin{align*}  P(\Omega_T \cap \{ \tilde{\sigma}_\gamma \leq T\})  & = P(\Omega_T \cap \{ \tilde{\sigma}_\gamma \leq \sigma_0\wedge T\}) +
P(\Omega_T \cap \{ \sigma_0 < \tilde{\sigma}_\gamma \leq T\})\\
&\leq  P(\Omega_T \cap \{ \tilde{\sigma}_\gamma \leq \sigma_0\wedge T\}) +  P(\Omega_T \cap \{ \sigma_0 < T\}).
\end{align*} 
On $\{ \sigma_0 <T\}$  we have 
\[ 0=H(u(\sigma_0)) \geq H(u_0)  + {\rm Im}(I_1(\sigma_0)) + {\rm Im}(I_2(\sigma_0)),\]
where  $I_1$ and $I_2$ have been defined in \eqref{I1-I2}. 
Let ${\mathfrak L}_0$ be chosen such that $P(\tilde{\tau}_{{\mathfrak L}_0} \leq T) \leq {\epsilon}_0$. 
Then 
 \[ P(\Omega_T \cap \{ \sigma_0 <T \} ) 
 \leq  P(\tilde{\tau}_{{\mathfrak L}_0}\leq T) +
\sum_{j=1,2}   P\Big( \Omega_T \cap \Big\{ \sup_{s\in [0,T \wedge \tilde{\tau}_{{\mathfrak L}_0} ]} |I_j(s)|  \geq {H(u_0)}/{2} \Big\} \Big). 
 \] 
The Markov  and Davis inequalities, and the computations that  yield \eqref{upper-I1}  imply
  \begin{align*}
  P\Big( \Omega_T \cap \Big\{ \sup_{s\in [0,T \wedge \tilde{\tau}_{{\mathfrak L}_0}  ]} & |I_1(s)|  \geq {H(u_0)}/{2} \Big\} \Big) \leq \frac{6}{H(u_0)}
   \EE\Big( \Big\{ \int_0^{T\wedge \tilde{\tau}_{{\mathfrak L}_0}}
1_{\Omega_s} \sum_k \Big|   \int_{\RR^n}
|u(s,x)|^{2\sigma+1}  (\phi e_k)(x) dx \Big|^2  ds \Big\}\Big)    \nonumber \\
\; \leq &\; \EE\Big( \Big\{ \int_0^{ T \wedge \tilde{\tau}_{{\mathfrak L}_0}} 1_{\Omega_s}  \sum_k \|u(s)\|_{L^{2\sigma +2}}^{\frac{2\sigma +1}{\sigma +1} }
\|\phi e_k\|_{L^{2\sigma +2}}^2 ds \Big\}^{\frac{1}{2}} \Big) \nonumber \\
\leq &\; C(\phi)^{\frac{1}{2}}  C_{GN}^{\frac{2\sigma +1}{2\sigma +2}}  {{\mathfrak L}_0}^{n\sigma \frac{2\sigma +1}{2\sigma +2}}   
 \EE\Big( \Big\{ \int_0^{T \wedge \tilde{\tau}_{{\mathfrak L}_0}}  1_{\Omega_s} \big[ 1+ M(u(s))^2\big] ds\Big\}^{\frac{1}{2}} \Big)\\
 \leq &\;  C_{GN}^{\frac{2\sigma +1}{2\sigma +2}}  {{\mathfrak L}_0}^{n\sigma \frac{2\sigma +1}{2\sigma +2}}    \sqrt{T}\;   [1+\lambda^2 M(Q)]^{\frac{1}{2}} C(\phi)^{\frac{1}{2}} \leq {\epsilon}_0,
 \end{align*}  
 if $C(\phi)$ is small enough. 
 
Using the Markov,  Davis inequalities, and then the Cauchy-Schwarz inequality with respect to  $dx$,  we obtain 
 \begin{align*}
  P\Big( \Omega_T \cap \Big\{ \sup_{s\in [0,T \wedge \tilde{\tau}_{{\mathfrak L}_0}  ]} & |I_2(s)|  \geq {H(u_0)}/{2} \Big\} \Big) \leq \frac{6}{H(u_0)}
   \EE\Big( \Big\{ \int_0^{T\wedge \tilde{\tau}_{{\mathfrak L}_0}}
  \sum_k \Big( \int_{\RR^n} 1_{\Omega_s} \nabla \overline{u(s,x)} \nabla \phi e_k(x) dx \Big)^2 ds \Big\}^{\frac{1}{2}} \Big)\\
  & \leq \frac{6}{H(u_0)}  \EE\Big( \Big\{ \int_0^{T\wedge \tilde{\tau}_{{\mathfrak L}_0}} {\mathfrak L}_0 \; \sum_k \| \nabla (\phi e_k)\|_{L^2}^2 ds \Big\}^{\frac{1}{2}} \Big)\\
  &\leq \frac{6}{H(u_0)} \sqrt{T {\mathfrak L}_0} \|\phi\|_{L^{0,1}_2} \leq {\epsilon}_0,
 \end{align*} 
 if $\|\phi\|_{L^{0,1}_2} $ is small enough. 
 Therefore,  we deduce  $P(\Omega_T \cap \{ \sigma_0\leq T\}) \leq 3\epsilon_0$ for $\| \phi\|_{L^{0,1}_2}$ and $C(\phi)$ small enough.  Since we have chosen
 $T$ large enough to have $P(\Omega_T \cap \{T<\tilde{\sigma}_\gamma\})\leq \epsilon_0$, we obtain 
 \[ P(\Omega_T) \leq P(\Omega_T\cap \{ T<\tilde{\sigma}_\gamma\}) + P(\Omega_T \cap \{ \sigma_0 < T\}) + P(\Omega_T\cap \{ \tilde{\sigma}_\gamma \leq \sigma_0\wedge T\})
 \leq P(\Omega_T\cap \{ \tilde{\sigma}_\gamma \leq \sigma_0\wedge T\})+4\epsilon_0. 
 \] 
 Furthermore,  for $\|\phi\|_{L^{0,0}_2}$ small enough, the upper estimate \eqref{upp_P_OmegaMc} implies that, for example,  $P(\Omega_T^c) \leq \frac{1}{5}$, and thus, 
$P(\Omega_T)\geq \frac{4}{5}$. Hence, the upper estimate \eqref{upp-gamma-beta} yields
\[ \gamma H(Q) M(Q)^\alpha P(\Omega_T) \leq \beta H(Q) M(Q)^\alpha P(\Omega_T) + 
\frac{1}{2} (\lambda^2 M(Q))^\alpha \| \phi\|_{L^{0,1}_2}^2 \big\{ \EE\big( (\tau \wedge T)^2\big) \big\}^{\frac{1}{2}} + 4 \epsilon_0 \gamma H(Q) M(Q)^\alpha  \frac{5 P(\Omega_T)}{4}, 
\] 
which implies 
\[ \EE\big(
 (\tau_\delta \wedge T)^2\big) \geq \frac{4
\big(\gamma (1-5\epsilon_0) -\beta \big)^2 H(Q)^2 P(\Omega_T)^2}
{\lambda^{4\alpha} \|\phi\|_{L^{0,1}_2}^4}.
\]
 Recall that we have chosen $\epsilon_0\in \big(0, \frac{1}{5}\big)$ such that $\gamma (1-5\epsilon_0) > \beta$. 
Therefore, if  $\|\phi\|_{L^{0,1}_2}$ is small enough, we deduce that 
$\EE\big(  (\tau_\delta\wedge T)^2\big) \geq 2 X_2^2$,
which completes the proof.  
\hfill $\square$

\end{document}